\newtheorem{theorem}{Theorem}[section]
\newtheorem{observation}[theorem]{Observation}
\newtheorem{lemma}[theorem]{Lemma}
\newtheorem{proposition}[theorem]{Proposition}
\newtheorem{corollary}[theorem]{Corollary}
\newtheorem{fact}[theorem]{Fact}
\newtheorem*{theorem*}{Theorem}
\numberwithin{equation}{section}
\theoremstyle{definition}
\newtheorem{definition}[theorem]{Definition}
\newtheorem{notation}[theorem]{Notation}
\theoremstyle{remark}
\newtheorem{remark}[theorem]{Remark}
\newtheorem{example}[theorem]{Example}
\newcommand\R{\mathbb{R}}
\newcommand\C{\mathbb{C}}
\newcommand\Q{\mathbb{Q}}
\newcommand\N{\mathbb{N}}
\newcommand{\cA}{\mathcal{A}}
\newcommand{\cC}{\mathcal{C}}
\newcommand{\cD}{\mathcal{D}}
\newcommand{\cF}{\mathcal{F}}
\newcommand{\cL}{\mathcal{L}}
\newcommand{\cM}{\mathcal{M}}
\newcommand{\cN}{\mathcal{N}}
\newcommand{\cO}{\mathcal{O}}
\newcommand{\cP}{\mathcal{P}}
\newcommand{\cU}{\mathcal{U}}
\newcommand{\bS}{\mathbf{S}}
\newcommand{\bD}{\mathbf{D}}
\newcommand{\bX}{\mathbf{X}}
\newcommand{\rT}{\mathrm{T}}
\DeclareMathOperator{\Span}{Span}
\DeclareMathOperator{\re}{Re}
\DeclareMathOperator{\im}{Im}
\DeclareMathOperator{\tr}{tr}
\DeclareMathOperator{\sa}{sa}
\DeclareMathOperator{\Aut}{Aut}
\DeclareMathOperator{\diam}{diam}
\DeclareMathOperator{\Spec}{Spec}
\DeclareMathOperator{\tp}{tp}
\DeclareMathOperator{\acl}{acl}
\DeclareMathOperator{\dcl}{dcl}
\DeclareMathOperator{\diag}{diag}
\DeclareMathOperator{\Th}{Th}
\DeclarePairedDelimiter{\norm}{\lVert}{\rVert}
\DeclarePairedDelimiter{\ip}{\langle}{\rangle}
\begin{document}
	
	\title[Optimal transport for types and convex analysis for definable predicates]{Optimal transport for types and convex analysis for definable predicates in tracial $\mathrm{W}^*$-algebras}
	
	\subjclass[2020]{46L53, 03C66, 49Q22, 49N15}
	
	
	\author{David Jekel}
	\address{\parbox{\linewidth}{Fields Institute for Research in Mathematical Sciences, \\ 222 College St., Toronto, ON M5T 3J1, Canada}}
	\email{djekel@fields.utoronto.ca}
	\urladdr{http://davidjekel.com}
	
	\begin{abstract}
		We investigate the connections between continuous model theory, free probability, and optimal transport/convex analysis in the context of tracial von Neumann algebras.  In particular, we give an analog of Monge-Kantorovich duality for optimal couplings where the role of probability distributions on $\C^n$ is played by model-theoretic types, the role of real-valued continuous functions is played by \emph{definable predicates}, and the role of continuous function $\C^n \to \C^n$ is played by definable functions.  In the process, we also advance the understanding of definable predicates and definable functions by showing that all definable predicates can be approximated by ``$C^1$ definable predicates'' whose gradients are definable functions.  As a consequence, we show that every element in the definable closure of $\mathrm{W}^*(x_1,\dots,x_n)$ can be expressed as a definable function of $(x_1,\dots,x_n)$.  We give several classes of examples showing that the definable closure can be much larger than $\mathrm{W}^*(x_1,\dots,x_n)$ in general.
	\end{abstract}
	
	\maketitle
	
	\section{Introduction}
	
	Free probability theory treats tracial (finite) von Neumann algebras as non-commutative probability spaces and investigates the way that ideas from classical probability can be applied in the non-commutative setting.  In particular, non-commutative distributions or laws often play the role of probability distributions for a tuple of random variables.  The non-commutative analog of Wasserstein distance was introduced in \cite{BV2001}, and optimal transport theory has been in the background of several significant developments in free probability and random matrix theory \cite{HU2006,GS2014,DGS2021,JLS2022}.  Non-commutative Monge-Kantorovich duality was addressed in \cite{GJNS2021}.
	
	In \cite[\S 6.1]{JekelModelEntropy} the present author suggested investigating non-commutative optimal transport theory in the framework of continuous model theory.  Continuous model theory \cite{BYU2010,BYBHU2008} made its first formal contact with tracial von Neumann algebras in \cite{GHS2013,FHS2013,FHS2014,FHS2014b} and has since had many applications \cite{Goldbring2020,Goldbring2021enforceable,Goldbring2021nonembeddable,AGKE2022}; for a survey, see \cite{GH2023}.  The non-commutative law of a tuple $(x_1,\dots,x_n)$ corresponds to the \emph{quantifier-free type} in in the model theory of tracial von Neumann algebras described in \cite{FHS2014}.  However, because diffuse classical probability spaces admit quantifier elimination, there is no distinction between the quantifier-free type and the complete type; see \cite[Fact 2.10]{BY2012}, \cite[\S 6]{BH2023}, \cite[\S 2.3]{JekelModelEntropy}.  Hence, in the non-commutative setting, many tracial von Neumann algebras do not admit quantifier elimination \cite{Farah2024}, and so the complete type of a tuple $(x_1,\dots,x_n)$ could also be viewed as an analog of the probability distribution.
	
	One way that the complete type makes a better analog than the quantifier-free type is the relationship between (complete) types and automorphisms.  In the setting of classical probability (or equivalently commutative von Neumann algebras), two tuples $(X_1,\dots,X_n)$ and $(Y_1,\dots,Y_n)$ of random variables on a diffuse probability space $(\Omega,P)$ have the same probability distribution if and only if they are approximately conjugate by automorphisms of $(\Omega,P)$; see \cite[\S 6]{BH2023}, \cite[\S 2.3]{JekelModelEntropy}.  Analogously, every tracial von Neumann algebra $\cM$ admits an elementary extension $\cN \succeq \cM$, such that two tuples $\mathbf{x}= (x_1,\dots,x_n)$ and $\mathbf{y} = (y_1,\dots,y_n)$ are conjugate by an automorphism of $\cN$ if and only if they have the same type (such an $\cN$ is called \emph{strongly $\aleph_1$-homogeneous}); see \cite[\S 7]{BYBHU2008}.  However, the quantifier-free type does not provide enough information about $(x_1,\dots,x_n)$ to determine it up to automorphism in this sense.
	
	The analog of the Wasserstein distance for types is known in continuous model theory as the $d$-metric \cite[p.\ 44]{BYBHU2008}.  The $L^1$-Wasserstein distance in classical probability was studied from a model-theoretic viewpoint by Song \cite[\S 5.3-5.4]{SongThesis}, while here we handle the $L^2$-Wasserstein distance in the non-commutative setting as in \cite[\S 6.1]{JekelModelEntropy}.  In this discussion we fix a complete theory $\mathrm{T}$, i.e. we restrict our attention to one elementary equivalence class of tracial von Neumann algebras.  For tuples $\mathbf{x} = (x_1,\dots,x_n)$ and $\mathrm{y} = (y_1,\dots,y_n)$, we write $d^{\cM}(\mathbf{x},\mathbf{y}) = \sqrt{\sum_{j=1}^n d^{\cM}(x_j,y_j)^2} = \norm{\mathbf{x} - \mathbf{y}}_{L^2(\cM)^n}$.  For two types $\mu$ and $\nu$ relative to $\mathrm{T}$, the distance $d_W(\mu,\nu)$ is defined as the infimum of $d^{\cM}(\mathbf{x},\mathbf{y})$ over all models $\cM$ of $\mathrm{T}$ and over all $\mathbf{x}, \mathbf{y} \in \cM^n$ such that $\tp^{\cM}(\mathbf{x}) = \mu$ and $\tp^{\cM}(\mathbf{y})$.  We call $(\mathbf{x},\mathbf{y})$ an \emph{optimal coupling} of $(\mu,\nu)$ if $\tp^{\cM}(\mathbf{x}) = \mu$ and $\tp^{\cM}(\mathbf{y})$ and $d^{\cM}(\mathbf{x},\mathbf{y})$ realizes the infimum.  If $\cM$ is is a $\aleph_1$-saturated model of $\mathrm{T}$ (see \S \ref{subsec: elementary extension}), then an optimal coupling of $\mu$ and $\nu$ must exist in $\cM$.  Moreover, if $\cM$ is $\aleph_1$-saturated and strongly $\aleph_1$-homogeneous (see \S \ref{subsec: elementary extension}), then the types $\mu$ and $\nu$ correspond to $\Aut(\cM)$-orbits in $\cM^n$ and $d_W(\mu,\nu)$ is the distance between these two orbits.
	
	Our first main result is a dual characterization of the Wasserstein distance using definable predicates.  In the setting of tracial von Neumann algebras, definable predicates are scalar-valued functions $\phi^{\cM}(x_1,\dots,x_n)$ that are uniform limits (on each ball) of formulas built out of traces of non-commutative polynomials using continuous functions and $\sup$ and $\inf$ operations.  The definable predicates and types are related by a Gelfand duality loosely analogous to $C_0(\C^n,\R)$ being the predual of $\mathcal{P}(\C^n)$.\footnote{Technically, in the language of classical probability spaces, the space of definable predicates would be $C(\mathcal{P}(\C^n),\R)$ rather than $C_0(\C^n,\R)$ or even $C(\C^n,\R)$.  In the general setting, we have to make do without a direct analog of ``points.''}  Indeed, definable predicates can be identified with continuous functions on the space of types, while conversely, types can be identified with continuous real-valued characters on the space of definable predicates.
	
	In classical probability theory, the Wasserstein distance has a dual characterization in terms of continuous functions as follows.  Let $\mu$ and $\nu$ be probability distributions on $\C^n$ (assume they are compactly supported for simplicity).  Let
	\[
	C(\mu,\nu) = \frac{1}{2} \left(\int |x|^2\,d\mu(x) + \int |y|^2\,d\nu(y) - d_W(\mu,\nu)^2 \right).
	\]
	Then Monge-Kantorovich duality asserts that $C(\mu,\nu)$ is the infimum of $\int \phi(x)\,d\mu(x) + \int \psi(y)\,d\nu(y)$ over all pairs $(\phi,\psi)$ of convex functions $\C^n \to \R$ satisfying $\phi(x) + \psi(y) \geq \re \ip{x,y}_{\C^n}$ for all $x, y \in \C^n$; see for instance \cite[Theorem 5.10, Particular Case 5.17]{Villani2008}.  Proposition 1.3 of \cite{GJNS2021} gave an analog of this result for quantifier-free types in tracial von Neumann algebras using $E$-convex functions, a certain analog of convex functions for tracial $\mathrm{W}^*$-algebras.  Here we give an analogous result for complete types using convex definable predicates.  Here we use the following notation:
	\begin{itemize}
		\item For $\mathbf{r} = (r_1,\dots,r_n) \in (0,\infty)^n$, let $D_{\mathbf{r}}^{\cM} = D_{r_1}^{\cM} \times \dots \times D_{r_n}^{\cM}$ be the product of operator norm balls in $\cM$.
		\item For each $\mathbf{r}$, let $\mathbb{S}_{\mathbf{r}}(\mathrm{T})$ be the space of types of elements in $D_{\mathrm{r}}^{\cM}$ for tracial von Neumann algebras $\cM$ satisfying $\mathrm{T}$, and similarly let $\mathbb{S}_n(\mathrm{T})$ be the space of types of elements in $\cM^n$.
		\item For a definable predicate $\phi$ and a type $\mu$, we write $\mu[\phi]$ for the common value of $\phi^{\cM}(\mathbf{x})$ when $\mathbf{x}$ has type $\mu$.
		\item For types $\mu$ and $\nu$, let $C(\mu,\nu)$ be the supremum of $\re \ip{\mathbf{x},\mathbf{y}}_{L^2(\cM)}$ for $\cM$ satisfying $\mathrm{T}$, and $\mathbf{x}$ and $\mathbf{y}$ having types $\mu$ and $\nu$ respectively.  Note $d_W(\mu,\nu)^2 = \mu[\sum_{j=1}^n \tr(x_j^*x_j)] + \nu[\sum_{j=1}^n \tr(x_j^*x_j)] - 2 C(\mu,\nu)$.
		\item A definable predicate $\phi$ is said to be \emph{convex} if $\phi^{\cM}$ is a convex function for each $\cM$, that is, $\phi^{\cM}((1-t)\mathbf{x} + t \mathbf{y}) \geq (1-t) \phi^{\cM}(\mathbf{x}) + t \phi^{\cM}(\mathbf{y})$ for $t \in [0,1]$ and $\mathbf{x}, \mathbf{y} \in \cM^n$.
	\end{itemize}
	
	\begin{theorem} \label{thm: MK duality}
		Fix a complete theory $\mathrm{T}$ of a tracial von Neumann algebra.  Let $\mu$ and $\nu \in \mathbb{S}_n(\mathrm{T})$ be types.  Then there exist convex $\mathrm{T}_{\tr}$-definable predicates $\phi$ and $\psi$ such that
		\begin{equation} \label{eq: admissibility}
			\phi^{\cM}(\mathbf{x}) + \psi^{\cM}(\mathbf{y}) \geq \re \ip{\mathbf{x},\mathbf{y}}_{L^2(\cM)} \text{ for all } \mathbf{x}, \mathbf{y} \in \cM^n \text{ for all } \cM \models \mathrm{T}_{\tr},
		\end{equation}
		and such that equality is achieved when $(\mathbf{x},\mathbf{y})$ is an optimal coupling of $(\mu,\nu)$.  Hence, $C(\mu,\nu)$ is the infimum of $\mu[\phi] + \nu[\psi]$ over all pairs $(\phi,\psi)$ of convex definable predicates satisfying \eqref{eq: admissibility}.
	\end{theorem}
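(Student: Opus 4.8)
The plan is to follow the classical Monge--Kantorovich argument, replacing the Legendre transform of a function on $\C^n$ by an analogous operation on definable predicates, and using the model-theoretic compactness/saturation tools promised in \S\ref{subsec: elementary extension} to control quantifiers. First I would fix an $\aleph_1$-saturated, strongly $\aleph_1$-homogeneous model $\cM \models \rT_{\tr}$, so that an optimal coupling $(\mathbf{a},\mathbf{b})$ of $(\mu,\nu)$ is realized in $\cM$ and $C(\mu,\nu) = \re\ip{\mathbf{a},\mathbf{b}}_{L^2(\cM)}$. The candidate for $\phi$ is the ``$c$-transform'' of the function $\mathbf{y} \mapsto \re\ip{\mathbf{a},\mathbf{y}}$, namely
\[
\phi^{\cM}(\mathbf{x}) = \sup_{\mathbf{y} \in D_{\mathbf{s}}^{\cM}} \bigl( \re\ip{\mathbf{x},\mathbf{y}}_{L^2(\cM)} - \psi_0^{\cM}(\mathbf{y}) \bigr), \qquad
\psi^{\cM}(\mathbf{y}) = \sup_{\mathbf{x} \in D_{\mathbf{r}}^{\cM}} \bigl( \re\ip{\mathbf{x},\mathbf{y}}_{L^2(\cM)} - \phi^{\cM}(\mathbf{x}) \bigr),
\]
where $\psi_0$ is an initial choice tied to the coupling (e.g.\ $\psi_0^{\cM}(\mathbf{y}) = \re\ip{\mathbf{b},\mathbf{y}} $ adjusted on a large ball, or one simply iterates the $c$-transform once). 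These are convex since they are suprema of affine functions of $\mathbf{x}$ (resp.\ $\mathbf{y}$), the admissibility inequality \eqref{eq: admissibility} holds by construction, and equality at $(\mathbf{a},\mathbf{b})$ follows from optimality of the coupling by the standard cyclical-monotonicity argument.

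The heart of the matter, and the step I expect to be the main obstacle, is showing that these suprema are genuinely \emph{definable predicates} for $\rT_{\tr}$. The function $\re\ip{\mathbf{x},\mathbf{y}}_{L^2}$ is definable, and $\phi^{\cM}$ is obtained by an $\inf$ (after rewriting $\psi^{\cM}$ as a $\sup$) over a \emph{bounded} set $D_{\mathbf{s}}^{\cM}$, which is exactly the kind of quantifier allowed in the construction of definable predicates---provided one checks uniform continuity of the family and independence of the formula on the choice of model. Here I would invoke the characterization of definable predicates as continuous functions on the type space $\mathbb{S}_{\mathbf{r}}(\rT)$ (the Gelfand-duality picture described in the introduction): the $c$-transform descends to a well-defined function on types because $\re\ip{\mathbf{x},\mathbf{y}}$ and $\psi_0$ do, and a sup over the (compact) type space $\mathbb{S}_{\mathbf{s}}(\rT)$ of a jointly continuous function is continuous in the remaining variable. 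One must be slightly careful that $\re\ip{\mathbf{x},\mathbf{y}}$ depends on $\mathbf{x},\mathbf{y}$ through more than their individual types---it is a function on the space of \emph{joint} types $\mathbb{S}_{2n}(\rT)$---but this is handled by noting the joint type space fibers continuously over $\mathbb{S}_n(\rT)\times\mathbb{S}_n(\rT)$ and taking the sup fiberwise.

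Finally, once $\phi$ and $\psi$ are known to be convex definable predicates with \eqref{eq: admissibility} and equality at the optimal coupling, the duality formula is immediate: for any admissible pair $(\phi',\psi')$ we have $\mu[\phi'] + \nu[\psi'] = \phi'^{\cM}(\mathbf{a}) + \psi'^{\cM}(\mathbf{b}) \ge \re\ip{\mathbf{a},\mathbf{b}} = C(\mu,\nu)$, while our constructed pair achieves $\mu[\phi]+\nu[\psi] = \phi^{\cM}(\mathbf{a})+\psi^{\cM}(\mathbf{b}) = C(\mu,\nu)$; taking the infimum gives the claim, and the stated identity $d_W(\mu,\nu)^2 = \mu[\sum_j \tr(x_j^*x_j)] + \nu[\sum_j \tr(x_j^*x_j)] - 2C(\mu,\nu)$ then translates this into a formula for the Wasserstein distance. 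A secondary technical point worth isolating as a lemma is that the $\sup$/$\inf$ over operator-norm balls can be taken over a \emph{fixed} ball depending only on $\mu,\nu$ (using that optimal couplings keep $\norm{\mathbf{x}},\norm{\mathbf{y}}$ controlled), so that no unbounded quantifier ever appears.
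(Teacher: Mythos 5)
Your high-level strategy (build $\phi$ and $\psi$ as Legendre/$c$-transforms of each other, get convexity for free as suprema of affine functions, and finish with the easy duality inequality) matches the paper's proof. But there is a genuine gap in the central step: the construction of a valid starting predicate $\psi_0$.

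Your candidate $\psi_0^{\cM}(\mathbf{y}) = \re\ip{\mathbf{b},\mathbf{y}}$ (or any variant "tied to the coupling") is defined using a specific element $\mathbf{b}$ of a specific model; it is a definable predicate \emph{over parameters} in that model, not a $\mathrm{T}_{\tr}$-definable predicate. The theorem requires $\phi$ and $\psi$ to be parameter-free $\mathrm{T}_{\tr}$-definable predicates — meaning they make sense, with uniform moduli, in every model of $\mathrm{T}_{\tr}$. There is no way to "iterate the $c$-transform" out of this difficulty either, because you still need a parameter-free starting point for the iteration. This is exactly where the real work lies. The paper's route is: (i) use compactness and metrizability of $\mathbb{S}_{\mathbf{r}}(\mathrm{T}_{\tr})$ to produce a definable predicate $\eta$ vanishing precisely on the type $\mu$; (ii) form the penalized Legendre transforms $\theta_\epsilon^{\cM}(\mathbf{y}) = \sup_{\mathbf{x}\in D_{\mathbf{r}}}[\re\ip{\mathbf{x},\mathbf{y}} - \epsilon^{-1}\eta^{\cM}(\mathbf{x})]$, which are parameter-free and decrease pointwise to $C(\tp^{\cM}(\mathbf{y}),\mu)$ — the latter established using $\aleph_1$-saturation; (iii) perform a "forced limit" construction (along the lines of Lemma \ref{lem: continuous function definable predicate}) to obtain a single definable predicate $\psi_0$ that equals $C(\mu,\nu)$ at the type $\nu$ and dominates $C(\cdot,\mu)$ everywhere. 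None of this is present in your proposal.

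A secondary omission: the statement asks for \eqref{eq: admissibility} to hold for \emph{all} $\mathbf{x},\mathbf{y}\in\cM^n$, whereas the $c$-transform construction only gives it on a fixed ball $D_{\mathbf{r}}$. The paper remedies this by adding convex penalty terms built from $\delta^{\cM}(\cdot) = d(\cdot,D_{\mathbf{r}}^{\cM})$ and its square, together with a Lipschitz estimate for the transform. Your final paragraph acknowledges the bounded-ball issue but reads it as a convenience ("no unbounded quantifier ever appears") rather than as a constraint that must be explicitly removed from the conclusion.
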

	
	The case of complete types here works out somewhat better than the case of quantifier-free types in \cite{GJNS2021}.  The $E$-convex functions described there were not necessarily definable predicates, and certainly not always quantifier-free definable predicates.  Moreover, we did not necessarily expect to be able to achieve the duality with quantifier-free definable predicates because the weak-$*$ topology on the space of quantifier-free types is weaker than the metric topology produced by the Wasserstein distance \cite[Proposition 1.7]{GJNS2021}.  When using complete types, the analog of Monge-Kantorovich duality works using convex definable predicates, which are continuous with respect to the weak-$*$ (logic) topology, despite the fact that even for complete types the logic topology is weaker than metric topology from the Wasserstein distance \cite[Proposition 12.10]{BYBHU2008}.
	
	For $E$-convex functions, one had to assume monotonicity under condition expectations, i.e.\ $\phi^{\cN}(E_{\cN}(\mathbf{x})) \leq \phi^{\cM}(\mathbf{x})$ for $\cN \subseteq \cM$ and $\mathbf{x} \in L^2(\cM)^n$, as an additional condition beside convexity of $\phi^{\cM}$ for each $\cM$ \cite[Definitions 1.2 and 3.6]{GJNS2021}.    However, for convex definable predicates, monotonicity under conditional expectations onto elementary substructures is automatic.
	
	\begin{proposition} \label{prop: monotonicity under expectation}
		Let $\phi(x_1,\dots,x_n)$ be a convex definable predicate relative to a consistent theory $\mathrm{T}$ containing $\mathrm{T}_{\tr}$.  Fix $\cM \models \mathrm{T}$ and let $\cN$ be an elementary extension of $\cM$.  Let $E_{\cM}: \cN \to \cM$ be the unique trace-preserving conditional expectation (see Fact \ref{fact: conditional expectation}).  Then for $\mathbf{z} \in \cN^n$, we have $\phi^{\cM}(E_{\cM}[\mathbf{z}]) \leq \phi^{\cN}(\mathbf{z})$.
	\end{proposition}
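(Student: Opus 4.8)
The plan is to prove a Jensen‑type inequality by realizing $E_\cM[\mathbf z]$ as an element of the $\|\cdot\|_2$‑closed convex hull of the orbit of $\mathbf z$ under automorphisms that fix $\cM$ pointwise, and by noting that the convex definable predicate $\phi$ is constant on that orbit and continuous on operator norm balls.

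First I would reduce to a convenient model. Definable predicates are preserved under elementary embeddings, and trace‑preserving conditional expectations are compatible with elementary extensions: if $\cM \preceq \cN \preceq \hat\cN$ and $\mathbf z \in \cN^n$, then $E_\cM^{\hat\cN}[\mathbf z] = E_\cM^{\cN}[\mathbf z]$ by uniqueness of the conditional expectation (both lie in $\cM^n$ and have the same inner products against every element of $\cM$, since $\tau^{\hat\cN}$ restricts to $\tau^{\cN}$), while $\phi^{\hat\cN}(\mathbf z) = \phi^{\cN}(\mathbf z)$ and $\phi^{\hat\cN}|_{\cM^n} = \phi^{\cM}$. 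Hence it suffices to prove $\phi^{\cN}(E_\cM[\mathbf z]) \le \phi^{\cN}(\mathbf z)$ after passing to an elementary extension, so I may assume $\cN$ is sufficiently saturated and strongly homogeneous relative to $|\cM| + \aleph_0$, with $\cM \preceq \cN$. Put $G = \Aut(\cN/\cM)$. Each $\alpha \in G$ is trace preserving, hence acts as a $\|\cdot\|_2$‑isometry on $L^2(\cN)^n$; it satisfies $\phi^{\cN}(\alpha \mathbf x) = \phi^{\cN}(\mathbf x)$ for all $\mathbf x$ because $\phi$ is a parameter‑free definable predicate; and since $\alpha(\cM) = \cM$ we get $E_\cM \circ \alpha = \alpha \circ E_\cM$ by uniqueness of the conditional expectation, so $E_\cM[\alpha \mathbf z] = \alpha E_\cM[\mathbf z] = E_\cM[\mathbf z]$, the last equality because $E_\cM[\mathbf z] \in \cM^n$ is fixed by $\alpha$.

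Next, choose $\mathbf r$ with $\mathbf z \in D_{\mathbf r}^{\cN}$ and set $K = \overline{\mathrm{conv}}^{\|\cdot\|_2}(G \cdot \mathbf z) \subseteq L^2(\cN)^n$. Since products of operator norm balls are convex and $\|\cdot\|_2$‑closed, $K$ is a nonempty, bounded, $\|\cdot\|_2$‑closed, convex, $G$‑invariant subset of $D_{\mathbf r}^{\cN} \subseteq \cN^n$. I would record three properties of $K$. (i) $\phi^{\cN}$ equals the constant $\phi^{\cN}(\mathbf z)$ on $G \cdot \mathbf z$; being convex and (as a definable predicate) $\|\cdot\|_2$‑continuous on $D_{\mathbf r}^{\cN}$, it therefore satisfies $\phi^{\cN} \le \phi^{\cN}(\mathbf z)$ on all of $K$. (ii) $E_\cM$ is linear, $\|\cdot\|_2$‑contractive, and equals the constant $E_\cM[\mathbf z]$ on $G \cdot \mathbf z$, hence equals $E_\cM[\mathbf z]$ on all of $K$. (iii) $K$ contains a $G$‑fixed point $\mathbf k$: the Chebyshev center of $G \cdot \mathbf z$ is unique by uniform convexity of the Hilbert space $L^2(\cN)^n$, lies in $\overline{\mathrm{conv}}(G \cdot \mathbf z) = K$, and is fixed by every $\|\cdot\|_2$‑isometry of $L^2(\cN)^n$ preserving $G \cdot \mathbf z$, in particular by $G$.

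It then remains to see $\mathbf k \in \cM^n$; granting this, (ii) gives $E_\cM[\mathbf z] = E_\cM[\mathbf k] = \mathbf k \in K$, so (i) yields $\phi^{\cN}(E_\cM[\mathbf z]) \le \phi^{\cN}(\mathbf z)$, and since $\phi^{\cN}|_{\cM^n} = \phi^{\cM}$ this is the assertion (after undoing the reduction). To see $\mathbf k \in \cM^n$: each coordinate $k_j \in \cN$ is fixed by all of $\Aut(\cN/\cM)$, so by strong homogeneity $k_j$ is the unique realization in $\cN$ of $\tp^{\cN}(k_j/\cM)$, whence by saturation this type has at most one realization in every elementary extension of $\cN$, i.e.\ $k_j \in \dcl^{\cN}(\cM)$; and $\dcl^{\cN}(\cM) = \cM$ because $\cM \preceq \cN$ (a formula over $\cM$ uniformly isolating such an element has, by elementarity, approximate realizations inside the closed set $\cM$, forcing the element into $\cM$). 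The main obstacle is exactly this last step --- identifying the automorphism‑invariant point of $K$ as an element of $\cM^n$ --- which is why the passage to a sufficiently saturated, strongly homogeneous model is essential; the facts that the Chebyshev center is isometry‑canonical and lies in the closed convex hull, and that definable predicates are $\|\cdot\|_2$‑uniformly continuous on operator norm balls, are routine and can be cited.
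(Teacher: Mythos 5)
Your proof is correct, but it follows a genuinely different route from the paper's. The paper first establishes Proposition~\ref{prop: definable subgradient}, which produces a subgradient $\mathbf{y}$ of $\phi^{\cN}$ at $\mathbf{x}=E_{\cM}[\mathbf{z}]$ lying in $L^2(\dcl^{\cN}(\mathbf{x}))^n \subseteq L^2(\cM)^n$; plugging $\mathbf{x}'=\mathbf{z}$ into the subgradient inequality, the inner-product term $\re\ip{\mathbf{z}-E_{\cM}[\mathbf{z}],\mathbf{y}}$ vanishes by orthogonality of $\mathbf{z}-E_{\cM}[\mathbf{z}]$ to $L^2(\cM)^n$, and the conclusion is immediate. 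You instead show directly that $E_{\cM}[\mathbf{z}]$ lies in the $\norm{\cdot}_2$-closed convex hull $K$ of the $\Aut(\cN/\cM)$-orbit of $\mathbf{z}$ (by identifying it as the Chebyshev center, which is both $\Aut(\cN/\cM)$-fixed and a fixed point of $E_{\cM}|_K$), and then invoke convexity, $\Aut$-invariance, and $L^2$-uniform continuity of $\phi^{\cN}$ on $K$. Both arguments pass to a saturated, strongly homogeneous elementary extension and use the automorphism characterization of $\dcl$ from Proposition~\ref{prop:DCL} together with $\dcl^{\cN}(\cM)=\cM$; and both yield side information beyond the inequality: the paper's method isolates a canonical subgradient in $L^2(\dcl)$ (which is reused for Theorem~\ref{thm: displacement interpolation} and Proposition~\ref{prop: definable subgradient 2}), while yours identifies $E_{\cM}[\mathbf{z}]$ as lying in $\overline{\mathrm{conv}}^{\norm{\cdot}_2}(\Aut(\cN/\cM)\cdot\mathbf{z})$, a barycenter-type statement that makes the ``Jensen'' reading of the proposition literal. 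Your approach is more self-contained at this point in the paper --- it needs only Hilbert-space geometry (projections onto closed convex sets, uniqueness of the Chebyshev center) rather than the subgradient existence Fact~\ref{fact: convex subgradient 2} --- at the cost of not building the tool that the subsequent results in \S\ref{sec: duality} rely on. Two small presentational points: the step ``by saturation this type has at most one realization in every elementary extension'' is a slight detour --- it is cleaner to cite Proposition~\ref{prop:DCL}(4) directly once you know $k_j$ is fixed by every $\cM$-fixing automorphism of the saturated, strongly homogeneous $\cN$; and $\dcl^{\cN}(\cM)=\cM$ follows immediately from Proposition~\ref{prop: closure and elem ext} together with $\dcl^{\cM}(\cM)=\cM$, rather than by the informal argument sketched in your parenthetical.
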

	
	In \cite[Theorem 1.5, \S 4.3]{GJNS2021}, Monge-Kantorovich duality was used to show that if $\mathbf{x}$ and $\mathbf{y}$ are an optimal coupling of \emph{quantifier-free types}, then $\mathrm{W}^*((1-t)\mathbf{x} + t \mathbf{y}) = \mathrm{W}^*(\mathbf{x},\mathbf{y})$ for $t \in (0,1)$.  In the analogous result for complete types, we replace the von Neumann algebra $\mathrm{W}^*(\mathbf{x})$ by the \emph{definable closure} $\dcl^{\cM}(\mathbf{x})$ (see \S \ref{sec: def and alg closures} for background).
	
	\begin{theorem} \label{thm: displacement interpolation}
		Fix a complete theory $\mathrm{T}$ of a tracial von Neumann algebra and let $\mu$ and $\nu \in \mathbb{S}_n(\mathrm{T})$.  If $(\mathbf{x},\mathbf{y})$ is an optimal coupling of $\mu$ and $\nu$ in $\cM$, then for each $t \in (0,1)$, we have $\dcl^{\cM}((1-t)\mathbf{x} + t \mathbf{y}) = \dcl^{\cM}(\mathbf{x},\mathbf{y})$.
	\end{theorem}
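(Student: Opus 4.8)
The plan is to deduce this from Monge--Kantorovich duality (Theorem~\ref{thm: MK duality}), following the strategy of \cite[Theorem~1.5]{GJNS2021} for quantifier-free types but replacing the von Neumann algebra $\mathrm{W}^*(-)$ by the definable closure and running the argument through automorphisms of a sufficiently saturated and homogeneous elementary extension $\mathfrak{M} \succeq \cM$ (a monster model). Write $\mathbf{z}_t = (1-t)\mathbf{x} + t\mathbf{y}$ and fix $t \in (0,1)$. The inclusion $\dcl^{\cM}(\mathbf{z}_t) \subseteq \dcl^{\cM}(\mathbf{x},\mathbf{y})$ is immediate, since each $z_{t,j} = (1-t)x_j + ty_j$ is a noncommutative $*$-polynomial in $(\mathbf{x},\mathbf{y})$ and so lies in $\dcl^{\cM}(\mathbf{x},\mathbf{y})$. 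For the reverse inclusion, by the characterization of definable closure recalled in \S\ref{sec: def and alg closures} it suffices to show that every $\alpha \in \Aut(\mathfrak{M})$ fixing $\mathbf{z}_t$ also fixes $\mathbf{x}$ and $\mathbf{y}$; this gives $\mathbf{x}, \mathbf{y} \in \dcl^{\cM}(\mathbf{z}_t)$.

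To set this up, I would first invoke Theorem~\ref{thm: MK duality} to fix convex definable predicates $\phi,\psi$ satisfying the admissibility inequality \eqref{eq: admissibility} with equality at every optimal coupling of $(\mu,\nu)$. Subtracting the equality $\phi^{\mathfrak{M}}(\mathbf{x}) + \psi^{\mathfrak{M}}(\mathbf{y}) = \re\ip{\mathbf{x},\mathbf{y}}$ from \eqref{eq: admissibility} applied at $(\mathbf{a},\mathbf{y})$ yields the subgradient-type inequality $\phi^{\mathfrak{M}}(\mathbf{a}) \geq \phi^{\mathfrak{M}}(\mathbf{x}) + \re\ip{\mathbf{a}-\mathbf{x},\mathbf{y}}$ for all $\mathbf{a} \in \mathfrak{M}^n$; that is, $\mathbf{y}$ is a ``subgradient of $\phi^{\mathfrak{M}}$ at $\mathbf{x}$.'' Now given $\alpha$ fixing $\mathbf{z}_t$, set $\mathbf{x}' = \alpha(\mathbf{x})$ and $\mathbf{y}' = \alpha(\mathbf{y})$. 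Since $\alpha$ preserves types and the $L^2$-metric, $\mathbf{x}'$ has type $\mu$, $\mathbf{y}'$ has type $\nu$, and $\norm{\mathbf{x}'-\mathbf{y}'}_{L^2} = \norm{\mathbf{x}-\mathbf{y}}_{L^2} = d_W(\mu,\nu)$; thus $(\mathbf{x}',\mathbf{y}')$ is again an optimal coupling of $(\mu,\nu)$, and the same reasoning shows $\mathbf{y}'$ is a subgradient of $\phi^{\mathfrak{M}}$ at $\mathbf{x}'$. Evaluating the first subgradient inequality at $\mathbf{a}=\mathbf{x}'$ and the analogous one for $(\mathbf{x}',\mathbf{y}')$ at $\mathbf{a}=\mathbf{x}$ and adding, the $\phi$-terms cancel, leaving the monotonicity estimate $\re\ip{\mathbf{x}-\mathbf{x}',\,\mathbf{y}-\mathbf{y}'} \geq 0$.

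The one place where $t \in (0,1)$ really enters — and the step a reader should check carefully — is the conclusion. Because $\alpha$ fixes $\mathbf{z}_t$, we have $(1-t)\mathbf{x}+t\mathbf{y} = (1-t)\mathbf{x}'+t\mathbf{y}'$, hence $\mathbf{y}-\mathbf{y}' = -\tfrac{1-t}{t}(\mathbf{x}-\mathbf{x}')$; feeding this into the monotonicity estimate gives $-\tfrac{1-t}{t}\norm{\mathbf{x}-\mathbf{x}'}_{L^2}^2 \geq 0$, and since $\tfrac{1-t}{t} > 0$ this forces $\mathbf{x}=\mathbf{x}'$, and then $\mathbf{y} = \tfrac1t(\mathbf{z}_t-(1-t)\mathbf{x}) = \mathbf{y}'$, so $\alpha$ fixes $\mathbf{x}$ and $\mathbf{y}$ as desired. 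One can repackage this in the spirit of the rest of the paper: $\mathbf{x}$ is the \emph{unique} minimizer over $\mathfrak{M}^n$ of the $\mathbf{z}_t$-definable predicate $\mathbf{a} \mapsto \phi^{\mathfrak{M}}(\mathbf{a}) + \tfrac{1-t}{2t}\norm{\mathbf{a}}_{L^2}^2 - \tfrac1t\re\ip{\mathbf{a},\mathbf{z}_t}$, which is strongly convex precisely because $1-t>0$, and an automorphism fixing $\mathbf{z}_t$ must fix its unique minimizer.

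I do not anticipate a hard analytic step; the heart of the matter is the elementary monotone-operator computation together with the sign of $\tfrac{1-t}{t}$. The main care is model-theoretic bookkeeping: checking that the automorphism characterization of $\dcl$ applies and survives the passage to $\mathfrak{M}$, and — the genuinely load-bearing observation — that the automorphic image $(\mathbf{x}',\mathbf{y}')$ is still an optimal coupling of the \emph{same} pair $(\mu,\nu)$, so that the equality case of Theorem~\ref{thm: MK duality}, and not merely the inequality \eqref{eq: admissibility}, is available for it.
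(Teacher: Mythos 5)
Your proof is correct and follows essentially the same strategy as the paper's: extract convex Kantorovich potentials from Theorem~\ref{thm: MK duality}, observe that the subgradient inequality $\phi^{\mathfrak{M}}(\mathbf{a})-\phi^{\mathfrak{M}}(\mathbf{x})\geq\re\ip{\mathbf{a}-\mathbf{x},\mathbf{y}}$ holds at an optimal coupling, and use strong convexity together with the automorphism characterization of $\dcl$ (Proposition~\ref{prop:DCL}) to conclude. Your hand-computed two-point monotonicity estimate $\re\ip{\mathbf{x}-\mathbf{x}',\mathbf{y}-\mathbf{y}'}\geq 0$ followed by the substitution $\mathbf{y}-\mathbf{y}'=-\tfrac{1-t}{t}(\mathbf{x}-\mathbf{x}')$ is exactly what the paper encodes by forming the $(1-t)$-strongly convex predicate $t\phi+\tfrac{1-t}{2}\norm{\cdot}^2$ and invoking Fact~\ref{fact: strongly convex bound} via Proposition~\ref{prop: definable subgradient 2}, so the two arguments differ only in bookkeeping, as you yourself note in the ``repackaging'' remark.
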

	
	The definable closure has the following characterization in terms of automorphisms (see Proposition \ref{prop:DCL} for details).  Suppose that $\cM$ is a strongly $\aleph_1$-homogeneous model of $\mathrm{T}$.  Then $y$ is in the definable closure of a finite or countable tuple $\mathbf{x}$ if and only if $y$ is fixed by every automorphism of $\cM$ that fixes $\mathbf{x}$.  Very little is known about the definable closure for von Neumann algebras in general, and thus in \S \ref{subsec: def and alg closure examples}, we give several examples.  In particular, we compute the definable closure for all $*$-subalgebras of finite-dimensional tracial $\mathrm{W}^*$-algebras (Example \ref{ex: finite dimensional}).  We also show that if $\cA$ is a $\mathrm{W}^*$-subalgebra of $\cM$ with trivial relative commutant and weak spectral gap, then $\dcl^{\cM}(\cA)$ contains the commutator subgroup of the normalizer of $\cA$ in $\cM$ (Proposition \ref{prop: spectral gap normalizer}).  In particular, the definable closure is in some cases much larger than the original subalgebra.
	
	Our next main result relates the definable closure with definable functions (see Definition \ref{def: definable function}).  This result is analogous to \cite[Proposition 13.6.6]{JekelThesis}, \cite[Proposition 2.4]{HJNS2021}, \cite[Propsosition 3.32]{JekelCoveringEntropy} which showed that if $A$ is a subset of a tracial von Neumann algebra $\cM$, then any $x \in \mathrm{W}^*(A)$ can be expressed as a \emph{quantifier-free} definable function over $\mathrm{T}_{\tr}$ of some $(a_j)_{j \in \N}$ in $A$.
	
	\begin{theorem} \label{thm: definable realization}
		Let $A$ be a subset of a tracial von Neumann algebra $\cM$.  Then $z$ is in the definable closure of $A$ in $\cM$ if and only if there exists $(a_j)_{j \in \N}$ in $A$ and a definable function $f$ over $\mathrm{T}_{\tr}$ in countably many variables such that $z = f((a_j)_{j \in \N})$.
	\end{theorem}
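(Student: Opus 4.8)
I would prove the two implications separately; the construction of $f$ in the forward direction is the substantial part.

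\emph{The direction ``$z = f((a_j)_{j\in\N})$ implies $z\in\dcl^{\cM}(A)$''.} Pass, via Proposition~\ref{prop:DCL}, to an $\aleph_1$-saturated and strongly $\aleph_1$-homogeneous elementary extension $\cN\succeq\cM$; since definable closure of a countable parameter set is insensitive to passing to such an extension, it suffices to show that $z$ is fixed by every $\sigma\in\Aut(\cN)$ with $\sigma(a_j)=a_j$ for all $j$. Such a $\sigma$ commutes with the evaluation of $f$ in $\cN\models\mathrm{T}_{\tr}$, because definable functions over $\mathrm{T}_{\tr}$ commute with elementary maps (equivalently, the graph of $f$ is the zero set in $\cN$ of a $\mathrm{T}_{\tr}$-definable predicate, which is $\Aut(\cN)$-invariant); hence $\sigma z=\sigma f^{\cN}((a_j))=f^{\cN}((\sigma a_j))=f^{\cN}((a_j))=z$. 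So $z\in\dcl^{\cN}(\{a_j\})$, and therefore $z\in\dcl^{\cM}(A)$.

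\emph{The direction ``$z\in\dcl^{\cM}(A)$ implies the existence of $f$'': reductions.} First I would replace $A$ by a countable subset: for each $k$, compactness applied to $\tp^{\cM}(z/A)\cup\{\,\norm{y-z}_2\ge 1/k\,\}$ isolates finitely many parameters $A_k\subseteq A$ with $\tp^{\cM}(z/A_k)\vdash\norm{y-z}_2\le 1/k$, so $A_0:=\bigcup_k A_k$ is countable, say $A_0=\{a_j:j\in\N\}$, and $z\in\dcl^{\cM}(A_0)$. After the harmless reduction to self-adjoint generators (taking real and imaginary parts and rescaling into operator-norm balls) and likewise treating $z=\re z+i\,\im z$ with $\re z,\im z\in\dcl^{\cM}(A_0)$, the element $z$ is the unique realization, in every elementary extension of $\cM$, of $\tp^{\cM}(z/(a_j))$. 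Combining this with $\aleph_1$-saturation and a standard compactness argument, I would extract a single $\Th(\cM)$-definable predicate $\Phi(y,(a_j))$ in countably many variables with $\Phi\ge 0$, vanishing exactly at $y=z$, and equipped with a modulus bounding $\norm{y-z}_2$ in terms of $\Phi^{\cN}(y,(a_j))$ uniformly over all $\cN\succeq\cM$; equivalently $z=g((a_j))$ for a $\Th(\cM)$-definable function $g$.

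\emph{Upgrading $\Th(\cM)$ to $\mathrm{T}_{\tr}$, the main obstacle.} It remains to trade the complete theory $\Th(\cM)$ for the far weaker $\mathrm{T}_{\tr}$ while keeping the value at $(a_j)$, and this is where the real work lies. Restricting $y$ to a ball $D_R$ with $R$ exceeding the operator norm of $z$ and each $a_j$ to a suitable ball $D_{r_j}$, the type space $\mathbb{S}_{\mathbf{r}}(\Th(\cM))$ is a closed subset of the compact Hausdorff space $\mathbb{S}_{\mathbf{r}}(\mathrm{T}_{\tr})$ and $\Phi$ is continuous on it, so a Tietze-type extension (followed by $\max(\cdot,0)$) yields a $\mathrm{T}_{\tr}$-definable predicate $\widetilde\Phi\ge 0$ restricting to $\Phi$ there. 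The obstacle is that $\widetilde\Phi$ need not retain uniqueness or uniform isolation of its minimizers away from the models of $\Th(\cM)$, so it does not a priori present a definable function over $\mathrm{T}_{\tr}$. I would repair this in two moves: (i) use the approximation of definable predicates by $C^1$ ones with definable gradients, together with infimal-convolution (Moreau-envelope) smoothing in the $L^2$-variable $y$, to install a quantitative strict-convexity-type estimate for $\widetilde\Phi(\cdot,(b_j))$ near its minimum, inherited uniformly from the $\Th(\cM)$-side; and (ii) observe that the set of $\mathbf{b}\in\mathbb{S}_{\mathbf{r}}(\mathrm{T}_{\tr})$ at which near-minimizers of $\widetilde\Phi(\cdot,\mathbf{b})$ fail to cluster near a single point is closed and does not contain $\tp^{\cM}((a_j))$, so by Urysohn's lemma it is separated from that point by a definable predicate whose (appropriately chosen) sublevel set yields a $\mathrm{T}_{\tr}$-definable domain $\cD\ni\tp^{\cM}((a_j))$ on which $\widetilde\Phi$ determines a genuine definable function $f$ over $\mathrm{T}_{\tr}$ (Definition~\ref{def: definable function}) in the countably many variables $(a_j)$, necessarily with $f((a_j)_{j\in\N})=z$. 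This is the exact analog, now for complete types and arbitrary definable functions, of the truncation-and-telescoping construction for quantifier-free definable functions in \cite[Proposition 13.6.6]{JekelThesis}, \cite[Proposition 2.4]{HJNS2021}; I expect the technical heart to be checking that the Tietze extension can be carried out compatibly across the countably many variables and that the strict-convexity estimate survives far enough from $\mathbb{S}_{\mathbf{r}}(\Th(\cM))$ to carve out a usable domain $\cD$. One could also hope to bootstrap from the finite-variable case of $\dcl^{\cM}(\mathrm{W}^*(x_1,\dots,x_n))$ treated elsewhere in the paper, but since definable closure need not be finitely determined, the countable-variable extension above seems unavoidable.
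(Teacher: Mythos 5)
Your argument for the easy direction (definable function $\Rightarrow$ definable closure) is correct: definable functions over $\mathrm{T}_{\tr}$ are $\Aut(\cN)$-equivariant, and combined with Propositions \ref{prop: closure and elem ext} and \ref{prop:DCL} this gives the claim.

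For the hard direction, however, there is a genuine gap precisely at the step you flag as the ``main obstacle.'' Your plan is to characterize $z$ as the (unique, uniformly isolated) minimizer of a $\Th(\cM_A)$-definable predicate $\Phi(y,\mathbf{a})$, Tietze-extend $\Phi$ to a $\mathrm{T}_{\tr}$-definable $\widetilde\Phi$, and then recover a $\mathrm{T}_{\tr}$-definable function from $\widetilde\Phi$. This fails for two reasons. First, a definable function over $\mathrm{T}_{\tr}$ (Definition \ref{def: definable function}) must be defined for \emph{all} tuples in \emph{all} models of $\mathrm{T}_{\tr}$; your construction only produces a partial function on a domain $\cD\ni\tp^{\cM}(\mathbf{a})$, and there is no mechanism supplied to globalize it while keeping $d_S(f(\mathbf{y}),w)$ a definable predicate. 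Second, the Tietze extension $\widetilde\Phi$ carries no information about the minimizer structure away from $\mathbb{S}_{\mathbf{r}}(\Th(\cM))$, and sup/inf-convolution smoothing preserves semiconvexity/semiconcavity but does not manufacture the strong convexity or uniform isolation you need; the proposed ``set of $\mathbf{b}$ where near-minimizers fail to cluster'' is not evidently closed (nor is it clear that its complement is definable), so the Urysohn step does not go through as stated. In short, any approach that tries to propagate \emph{uniqueness of minimizers} of $d(\cdot,z)$-type predicates from $\Th(\cM)$ out to $\mathrm{T}_{\tr}$ hits a wall.

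The paper sidesteps this entirely by \emph{not} encoding $z$ as a minimizer. Instead it encodes $z$ as a gradient: one passes from the distance predicate $d(\cdot,z)$ to the affine predicate $\theta^{\cM}(x)=\re\ip{x,z}_{L^2(\cM)}$, which is also definable over $A$ since $\re\ip{x,z}=\tfrac12(\norm{z}^2+\norm{x}^2-\norm{x-z}^2)$. A forced-limit construction (exactly as in Proposition \ref{prop: pre MK duality}, and analogous to what you use to get countably many parameters) lifts this to a $\mathrm{T}_{\tr}$-definable $\phi(x,\mathbf{y})$ with $\phi^{\cM}(x,\mathbf{a})=\re\ip{x,z}$ on a ball. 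Then the sup-inf convolution of Theorem \ref{thm: approximation full} produces a semiconvex-semiconcave $\psi(x,\mathbf{y})$, whose $x$-gradient is \emph{automatically} a globally defined $\mathrm{T}_{\tr}$-definable function by Proposition \ref{prop: semiconvex semiconcave gradient}(3), with no isolation or uniqueness argument required. An explicit completion-of-the-square shows $\psi^{\cM}(\cdot,\mathbf{a})$ is still affine near $0$ with gradient $z$, so $f(\mathbf{y}):=\nabla_x\psi^{\cM}(0,\mathbf{y})$ is the desired function. The structural idea you are missing is to trade ``$z$ is the unique zero of a nonnegative predicate'' for ``$z$ is the gradient of a (nearly) affine predicate'': the latter globalizes over $\mathrm{T}_{\tr}$ painlessly because the gradient of a $C^{1,1}$ predicate is always defined, whereas the former does not.
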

	
	The challenge of this result is the assertion that there is a continuous selection of an $f((y_j)_{j \in \N})$ for all $(y_j)_{j \in \N}$, which when evaluated at $(a_j)_{j \in \N}$ yields the given $z \in \dcl^{\cM}(A)$.  The proof is another application of the theory of convex definable predicates.  We first observe that if $z$ is in the definable closure, then $\re \ip{z,x}_{L^2(\cM)} = \phi(x,(a_j)_{j \in \N})$ for some definable predicate $\phi$.  We show that any definable predicate $\phi(x,(y_j)_{j\in \N})$ can be approximated by one which is differentiable with respect to $x$, whose gradient is a definable function; here the gradient is understood by viewing $L^2(\cM)$ as the real Hilbert space with $\re \ip{\cdot,\cdot}$ as the inner product.  The gradient is relevant here since $\nabla_x \phi(x,(a_j)_{j \in \N}) = \nabla_x \re \ip{x,z} =  z$; see \S \ref{subsec: definable function} for the proof.
	
	Hence, the key ingredient is a regularization technique that allows us to approximate an arbitrary definable predicate by one which is ``continuously differentiable.''  In \S \ref{subsec: regularization}, we employ the technique of semiconvex semiconcave regularization by sup-convolutions and inf-convolutions as in \cite{LL1986}, which has played a major role in the theory of Hamilton-Jacobi equations on infinite-dimensional Hilbert spaces \cite{BdP1981,BdP1985a,BdP1985b,CL1985,CL1986a,CL1986b}, to achieve the following result.
	
	\begin{theorem} \label{thm: approximation}
		Fix a theory $\mathrm{T} \models \mathrm{T}_{\tr}$.  Let $\phi(x_1,\dots,x_n,(y_j)_{j \in \N})$ be a definable predicate relative to $\mathrm{T}$.  Then for every $\mathbf{r} = (r_1,\dots,r_n)$, $\mathrm{s} = (s_j)_{j \in \N}$, and $\epsilon > 0$, there exists a definable predicate $\psi$ such that $\psi$ is differentiable in $\mathbf{x}$, $\nabla_{\mathbf{x}} \psi$ is a definable function, and $|\phi^{\cM} - \psi^{\cM}| < \epsilon$ on $D_{\mathbf{r},\mathbf{s}}^{\cM}$ for all $\cM \models \mathrm{T}$.
	\end{theorem}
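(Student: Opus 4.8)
The plan is to apply the semiconvex/semiconcave double regularization of Lasry--Lions: an inf-convolution in the variable $\mathbf{x}$ produces a semiconcave approximant, and a subsequent sup-convolution with a \emph{strictly smaller} parameter restores semiconvexity without destroying semiconcavity, yielding a predicate that is $C^{1}$ in $\mathbf{x}$ and whose $\mathbf{x}$-gradient is a proximal map, hence definable. The one point requiring care, relative to the classical Hilbert-space story, is that in continuous logic the quantifiers $\inf$ and $\sup$ must range over operator-norm balls (the sorts) rather than over all of $L^{2}(\cM)^{n}$; I carry out the convolutions with the auxiliary variables constrained to $D_{\mathbf{r}}^{\cM}$, the essential point being that $D_{\mathbf{r}}^{\cM}$ is convex and $\norm{\cdot}_{2}$-closed.

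Fix $\mathbf{r}$, $\mathbf{s}$, $\epsilon>0$; let $M$ bound $|\phi|$ on $D_{\mathbf{r},\mathbf{s}}^{\cM}$ and let $\omega$ be a modulus of $\norm{\cdot}_{2}$-uniform continuity in $\mathbf{x}$ for $\phi$ there, both uniform over $\cM\models\mathrm{T}$, since $\phi$ is a definable predicate. For $\lambda>0$ put
\[
\phi_{\lambda}^{\cM}(\mathbf{x},(y_{j})) = \inf_{\mathbf{z}\in D_{\mathbf{r}}^{\cM}}\left[\phi^{\cM}(\mathbf{z},(y_{j})) + \tfrac{1}{2\lambda}\norm{\mathbf{x}-\mathbf{z}}_{L^{2}(\cM)^{n}}^{2}\right].
\]
Since $\norm{\mathbf{x}-\mathbf{z}}_{2}^{2}=\sum_{j}\tr((x_{j}-z_{j})^{*}(x_{j}-z_{j}))$ is a quantifier-free definable predicate and the infimum over a sort of a definable predicate is again one, $\phi_{\lambda}$ is a definable predicate; expanding $\norm{\mathbf{x}-\mathbf{z}}_{2}^{2}=\norm{\mathbf{x}}_{2}^{2}-2\re\ip{\mathbf{x},\mathbf{z}}_{L^{2}(\cM)}+\norm{\mathbf{z}}_{2}^{2}$ displays $\phi_{\lambda}^{\cM}(\cdot,(y_{j}))-\tfrac{1}{2\lambda}\norm{\cdot}_{2}^{2}$ as an infimum of affine functions of $\mathbf{x}$, so $\phi_{\lambda}$ is $\tfrac{1}{\lambda}$-semiconcave in $\mathbf{x}$. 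On $D_{\mathbf{r},\mathbf{s}}^{\cM}$ the choice $\mathbf{z}=\mathbf{x}$ gives $\phi_{\lambda}\le\phi$, while any near-minimizer $\mathbf{z}$ satisfies $\tfrac{1}{2\lambda}\norm{\mathbf{x}-\mathbf{z}}_{2}^{2}\le\phi^{\cM}(\mathbf{x},(y_{j}))-\phi^{\cM}(\mathbf{z},(y_{j}))\le 2M$, whence $\norm{\mathbf{x}-\mathbf{z}}_{2}\le 2\sqrt{M\lambda}$ and $0\le\phi-\phi_{\lambda}\le\omega(2\sqrt{M\lambda})$ on $D_{\mathbf{r},\mathbf{s}}^{\cM}$. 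Choose $\lambda$ with $\omega(2\sqrt{M\lambda})<\epsilon/2$.

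Let $M_{\lambda},\omega_{\lambda}$ be a bound and $\norm{\cdot}_{2}$-continuity modulus for $\phi_{\lambda}$ on $D_{\mathbf{r},\mathbf{s}}^{\cM}$, pick $0<\mu<\lambda$, and put
\[
\psi^{\cM}(\mathbf{x},(y_{j})) = \sup_{\mathbf{w}\in D_{\mathbf{r}}^{\cM}}\left[\phi_{\lambda}^{\cM}(\mathbf{w},(y_{j})) - \tfrac{1}{2\mu}\norm{\mathbf{x}-\mathbf{w}}_{L^{2}(\cM)^{n}}^{2}\right],
\]
again a definable predicate. Because $\mu<\lambda$, the integrand $\mathbf{w}\mapsto\phi_{\lambda}^{\cM}(\mathbf{w},(y_{j}))-\tfrac{1}{2\mu}\norm{\mathbf{x}-\mathbf{w}}_{2}^{2}$ is $\norm{\cdot}_{2}$-continuous and strongly concave in $\mathbf{w}$, so (as $D_{\mathbf{r}}^{\cM}$ is weakly compact in $L^{2}(\cM)^{n}$) the supremum is attained at a unique $\mathbf{w}^{*}=\mathbf{w}^{*}(\mathbf{x},(y_{j}))$. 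Writing out $\norm{\mathbf{x}-\mathbf{w}}_{2}^{2}$ as before shows $\psi^{\cM}(\cdot,(y_{j}))+\tfrac{1}{2\mu}\norm{\cdot}_{2}^{2}$ is a supremum of affine functions, so $\psi$ is $\tfrac{1}{\mu}$-semiconvex in $\mathbf{x}$; and since $\psi\ge\phi_{\lambda}$ (take $\mathbf{w}=\mathbf{x}$) we get $\tfrac{1}{2\mu}\norm{\mathbf{x}-\mathbf{w}^{*}}_{2}^{2}\le\phi_{\lambda}^{\cM}(\mathbf{w}^{*},(y_{j}))-\phi_{\lambda}^{\cM}(\mathbf{x},(y_{j}))\le 2M_{\lambda}$ on $D_{\mathbf{r},\mathbf{s}}^{\cM}$, hence $\norm{\mathbf{x}-\mathbf{w}^{*}}_{2}\le 2\sqrt{M_{\lambda}\mu}$ and $0\le\psi-\phi_{\lambda}\le\omega_{\lambda}(2\sqrt{M_{\lambda}\mu})$ there; choosing $\mu$ small makes this $<\epsilon/2$, so $|\phi-\psi|<\epsilon$ on $D_{\mathbf{r},\mathbf{s}}^{\cM}$. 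The crucial extra point is that $\psi$ is also semiconcave in $\mathbf{x}$: given $\mathbf{x}_{1},\mathbf{x}_{2}$ with maximizers $\mathbf{w}_{1},\mathbf{w}_{2}$, substitute the feasible point $\tfrac{1}{2}(\mathbf{w}_{1}+\mathbf{w}_{2})\in D_{\mathbf{r}}^{\cM}$ into the definition of $\psi^{\cM}(\tfrac{1}{2}(\mathbf{x}_{1}+\mathbf{x}_{2}),(y_{j}))$ and apply the $\tfrac{1}{\lambda}$-semiconcavity of $\phi_{\lambda}$ together with the parallelogram law; minimizing the resulting quadratic in $\mathbf{w}_{1}-\mathbf{w}_{2}$ (legitimate since $\tfrac{1}{\mu}>\tfrac{1}{\lambda}$) yields $C(\lambda,\mu)$-semiconcavity of $\psi$ in $\mathbf{x}$ for an explicit constant $C(\lambda,\mu)>0$. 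A function on a Hilbert space that is simultaneously semiconvex and semiconcave is Fr\'echet differentiable, so $\psi^{\cM}(\cdot,(y_{j}))$ is differentiable in $\mathbf{x}$.

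It then remains to see that $\nabla_{\mathbf{x}}\psi$ is a definable function. For fixed $\mathbf{x},(y_{j})$ the smooth map $\mathbf{x}'\mapsto\phi_{\lambda}^{\cM}(\mathbf{w}^{*},(y_{j}))-\tfrac{1}{2\mu}\norm{\mathbf{x}'-\mathbf{w}^{*}}_{2}^{2}$ lies below $\psi^{\cM}(\cdot,(y_{j}))$ with equality at $\mathbf{x}$, so, $\psi$ being differentiable, their gradients at $\mathbf{x}$ coincide: $\nabla_{\mathbf{x}}\psi^{\cM}(\mathbf{x},(y_{j}))=\tfrac{1}{\mu}(\mathbf{w}^{*}(\mathbf{x},(y_{j}))-\mathbf{x})$. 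Now $\mathbf{w}^{*}$ is the unique minimizer over the sort $D_{\mathbf{r}}^{\cM}$ of the definable predicate $\mathbf{w}\mapsto-\phi_{\lambda}^{\cM}(\mathbf{w},(y_{j}))+\tfrac{1}{2\mu}\norm{\mathbf{x}-\mathbf{w}}_{2}^{2}$, which is $(\tfrac{1}{\mu}-\tfrac{1}{\lambda})$-strongly convex in $\mathbf{w}$; strong convexity provides a uniform modulus of uniqueness, and a uniformly-unique argmin of a definable predicate over a sort is a definable function (a standard property of definable functions; cf.\ Definition~\ref{def: definable function}). Hence $\mathbf{w}^{*}$ is a definable function and so is $\nabla_{\mathbf{x}}\psi=\tfrac{1}{\mu}(\mathbf{w}^{*}-\id)$. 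The main obstacle throughout is guaranteeing that the sort-restricted convolutions still yield a genuinely differentiable function; this is exactly where the convexity of the operator-norm balls enters, since it is what lets the Lasry--Lions estimate --- equivalently, the Legendre-transform duality between strong convexity and Lipschitz differentiability --- go through in the presence of the constraint.
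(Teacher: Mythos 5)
Your proof follows the same Lasry--Lions inf-then-sup convolution architecture as the paper's Theorem~\ref{thm: approximation full}: an inf-convolution over $D_{\mathbf r}$ gives a semiconcave approximant, a sup-convolution with a strictly smaller parameter restores semiconvexity while preserving semiconcavity, and the result is therefore Fr\'echet differentiable in $\mathbf x$. The parameter choices ($\lambda$ and $\mu<\lambda$ for you, $2t$ and $t$ in the paper), the error estimates against the modulus of continuity, and the semiconcavity-preservation computation via the parallelogram law all match what is carried out in Proposition~\ref{prop: semiconcave regularization}.

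The one genuine gap is the last step. You identify $\nabla_{\mathbf x}\psi=\mu^{-1}(\mathbf w^*-\mathbf x)$ and then assert, citing only Definition~\ref{def: definable function}, that a uniformly-unique argmin of a definable predicate over a domain is automatically a definable function. That claim is correct but it is not a statement of the cited Definition nor is it proved elsewhere in the paper or its references, so it needs an argument. One way: with $g(\mathbf x,\mathbf y,\mathbf w)=-\phi_\lambda^{\cM}(\mathbf w,\mathbf y)+\tfrac{1}{2\mu}\norm{\mathbf x-\mathbf w}_{L^2}^2$, which is $(\tfrac{1}{\mu}-\tfrac{1}{\lambda})$-strongly convex, the midpoint argument gives $g(\mathbf w)-g(\mathbf w^*)\geq\tfrac{c}{4}\norm{\mathbf w-\mathbf w^*}_{L^2}^2$ for $\mathbf w\in D_{\mathbf r}^{\cM}$ with $c=\tfrac{1}{\mu}-\tfrac{1}{\lambda}$; hence the definable predicate $\beta_t=\inf_{\mathbf w\in D_{\mathbf r}}\big[t^{-1}\big(g(\mathbf x,\mathbf y,\mathbf w)-\inf_{\mathbf w'\in D_{\mathbf r}}g(\mathbf x,\mathbf y,\mathbf w')\big)+d(\mathbf v,\mathbf w)\big]$ satisfies $d(\mathbf v,\mathbf w^*)-t/c\leq\beta_t\leq d(\mathbf v,\mathbf w^*)$, so $\beta_t\to d(\mathbf v,\mathbf w^*)$ uniformly and $\mathbf w^*$ is a definable function. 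You should also make explicit that the minimizer $\mathbf w^*$ actually lies in $D_{\mathbf r}^{\cM}$ for every $\cM\models\mathrm T$, not only saturated ones; this holds because $-\phi_\lambda$ is a $\norm{\cdot}_2$-continuous convex function minus $\tfrac{1}{2\lambda}\norm{\cdot}_2^2$, so your objective is weakly lower semicontinuous and $D_{\mathbf r}^{\cM}$ is weakly compact in $L^2(\cM)^n$. By contrast, the paper avoids the argmin: Proposition~\ref{prop: semiconvex semiconcave gradient} shows $\re\ip{\mathbf z,\nabla_{\mathbf x}\psi}$ is a uniform limit of difference quotients and then recovers $\norm{\nabla_{\mathbf x}\psi}^2$ from a supremum of ratios. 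Your proximal-map route is arguably cleaner once the argmin lemma is supplied.
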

	
	The paper is organized as follows.  In \S \ref{sec: preliminaries}, we recall background on continuous model theory and tracial von Neumann algebras.  In \S \ref{sec: def and alg closures}, we review the definition and properties of the definable and algebraic closures and give examples in von Neumann algebras.  In \S \ref{sec: duality}, we study Monge-Kantorovich duality for types and convex definable predicates and prove Theorem \ref{thm: MK duality}, Proposition \ref{prop: monotonicity under expectation}, and Theorem \ref{thm: displacement interpolation}.  In \S \ref{sec: semiconvex semiconcave}, we first discuss the properties of semiconvex semiconcave definable predicates, then we use sup-convolutions and inf-convolutions to regularize an arbitrary definable predicate to one which is semiconvex and semiconcave (Theorem \ref{thm: approximation}), and finally deduce Theorem \ref{thm: definable realization}.
	
	\subsection*{Acknowledgements}
	
	This work was partially supported by a postdoctoral fellowship from the National Science Foundation (DMS-2002826), Ilijas Farah's NSERC grant, and the Fields Institute.  I thank Dimitri Shlyahktenko, Wilfrid Gangbo, and Kyeongsik Nam for discussions on optimal couplings as well as Isaac Goldbring, Ilijas Farah, and Jennifer Pi for discussions on model theory of von Neumann algebras.  I thank the referee for numerous corrections and improvement of the exposition.

	\section{Preliminaries} \label{sec: preliminaries}
	
	This section gives background on operator algebras and model theory for the convenience of the reader, as well as citations to other works that provide a fuller exposition.  Readers already familiar with these topics may consult this section only as needed to clarify conventions and notation.
	
	\subsection{Background on operator algebras} \label{subsec:operatoralgebrasbackground}
	
	In this paper, a \emph{tracial von Neumann algebra}, or equivalently \emph{tracial $\mathrm{W}^*$-algebra} refers to a finite von Neumann algebras with a specified tracial state.  We recommend \cite{Ioana2023} for an introduction to the topic, as well as the following standard reference books \cite{KadisonRingroseI,Dixmier1969,Sakai1971,TakesakiI,Blackadar2006,Zhu1993}.
	
	The abstract definitions / characterizations of $\mathrm{C}^*$-algebras and $\mathrm{W}^*$-algebras are as follows.
	\begin{enumerate}[(1)]
		\item A (unital) algebra over $\C$ is a unital ring $A$ with a unital inclusion map $\C \to A$.
		\item A (unital) $*$-algebra is an algebra $A$ equipped with a conjugate linear involution $*$ such that $(ab)^* = b^* a^*$.
		\item A unital $\mathrm{C}^*$-algebra is a $*$-algebra $A$ equipped with a complete norm $\norm{\cdot}$ such that $\norm{ab} \leq \norm{a} \norm{b}$ and $\norm{a^*a} = \norm{a}^2$ for $a, b \in A$.
		\item A \emph{$\mathrm{W}^*$-algebra} is a $\mathrm{C}^*$-algebra $\cA$ that $\cA$ is isomorphic as a Banach space to the dual of some other Banach space (called its predual).
	\end{enumerate}
	The work of Sakai (see e.g.\ \cite{Sakai1971}) showed that the abstract definition given here for a $\mathrm{W}^*$-algebra is equivalent to several other definitions and notions.  Sakai also showed that the predual is unique (and hence so is the weak-$*$ topology on $\cA$).
	
	\begin{notation}
		A \emph{tracial $\mathrm{W}^*$-algebra} is a pair $(M,\tau)$ where $M$ is a $\mathrm{W}^*$-algebra and $\tau: M \to \C$ is a faithful normal tracial state, that is, a linear map satisfying
		\begin{itemize}
			\item \emph{positivity}: $\tau(x^*x) \geq 0$ for all $x \in M$
			\item \emph{unitality}: $\tau(1) = 1$
			\item \emph{traciality}: $\tau(xy) = \tau(yx)$ for $x, y \in A$
			\item \emph{faithfulness}:  $\tau(x^*x) = 0$ implies $x = 0$ for $x \in A$.
			\item \emph{weak-$*$ continuity}:  $\tau: M \to \C$ is weak-$*$ continuous.
		\end{itemize}
		We will often denote $\tau$ by $\tr^{\cM}$ for consistency with the model-theoretic notation introduced below.
	\end{notation}
	
	\begin{fact}
		If $\cM = (M,\tau)$ is a tracial $\mathrm{W}^*$-algebra, then $\re \ip{x,y}_{L^2(\cM)} := \tau(x^*y)$ defines an inner product on $\cM$.  The Hilbert space completion is denoted $L^2(\cM)$.  The map $\cM \to L^2(\cM)$ is injective because $\tau$ is faithful.  Moreover, for $r > 0$, the ball $D_r^{\cM} = \{x \in \cM: \norm{x} \leq r\}$ is a closed subset of $L^2(\cM)$.
	\end{fact}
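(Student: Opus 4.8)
The plan is to verify each assertion in turn using standard $\mathrm{C}^*$/$\mathrm{W}^*$-algebra facts about the trace $\tau$. First I would check that the sesquilinear form $\langle x,y\rangle := \tau(x^*y)$ is an inner product on $\cM$ (viewed as a vector space over $\C$): linearity in the second variable and conjugate-linearity in the first follow from linearity of $\tau$ together with the identity $(x^*y)^* = y^*x$ and $\tau(z^*) = \overline{\tau(z)}$ (which itself follows from decomposing $z$ into self-adjoint real and imaginary parts and using that $\tau$ is a state, hence positive, hence real on self-adjoint elements). Positive-semidefiniteness is exactly the positivity axiom $\tau(x^*x)\ge 0$, and definiteness $\langle x,x\rangle = 0 \Rightarrow x = 0$ is exactly the faithfulness axiom. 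Thus $\langle\cdot,\cdot\rangle$ is a genuine inner product, $\re\langle\cdot,\cdot\rangle$ is the associated real inner product, and one defines $L^2(\cM)$ as the Hilbert space completion of $\cM$ in the induced norm $\norm{x}_2 = \tau(x^*x)^{1/2}$.

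Next I would argue injectivity of $\cM \to L^2(\cM)$: this map sends $x$ to its image in the completion, and its kernel consists of those $x$ with $\norm{x}_2 = 0$, i.e. $\tau(x^*x) = 0$, which by faithfulness forces $x = 0$. Hence the map is injective. (In fact one should note it has dense image by construction of the completion, but that is not claimed here.)

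For the last assertion — that $D_r^{\cM}$ is closed in $L^2(\cM)$ — the key input is the inequality $\norm{x}_2 \le \norm{x}$ for all $x \in \cM$, which follows from $\tau$ being a state: $\tau(x^*x) \le \norm{x^*x} = \norm{x}^2$. This shows the inclusion $\cM \hookrightarrow L^2(\cM)$ is $1$-Lipschitz, so $\norm{\cdot}_2$-convergence is weaker than $\norm{\cdot}$-convergence, and one cannot directly conclude norm-closedness from this. Instead I would use the standard fact that the operator-norm ball $D_r^{\cM}$ is weakly-$*$ compact (Banach–Alaoglu, since $\cM$ is the dual of its predual), hence in particular $\norm{\cdot}_2$-complete: if $(x_k)$ is a sequence in $D_r^{\cM}$ converging in $L^2(\cM)$ to some $\xi$, then by weak-$*$ compactness a subnet of $(x_k)$ converges weak-$*$ to some $x \in D_r^{\cM}$; since the weak-$*$ topology on $D_r^{\cM}$ is at least as strong as the $\sigma$-weak topology and the map $y \mapsto \tau(z^*y)$ is weak-$*$ continuous for fixed $z \in \cM$, one identifies the $L^2$-limit $\xi$ with the image of $x$, so $\xi \in D_r^{\cM}$. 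Hence $D_r^{\cM}$ is closed in $L^2(\cM)$.

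The main obstacle here is the last point: all the algebraic assertions are immediate unwindings of the axioms, but closedness of the operator-norm ball in the $L^2$-norm genuinely requires the $\mathrm{W}^*$ (rather than merely $\mathrm{C}^*$) structure — specifically the weak-$*$ compactness of norm balls — and one must be careful that the $L^2$-limit of a bounded net agrees with a weak-$*$ limit, which is where normality/weak-$*$ continuity of $\tau$ enters.
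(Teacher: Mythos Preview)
The paper states this as a background \emph{Fact} in \S\ref{subsec:operatoralgebrasbackground} and offers no proof at all; it is treated as standard material with references to textbooks such as \cite{KadisonRingroseI,Sakai1971,TakesakiI}. So there is nothing in the paper to compare your argument against at the level of proof strategy.

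That said, your plan is sound. The verification that $\langle x,y\rangle=\tau(x^*y)$ is an inner product and that $\cM\hookrightarrow L^2(\cM)$ is injective are routine unpackings of the axioms, exactly as you indicate. For the closedness of $D_r^{\cM}$ in $L^2(\cM)$, your instinct that this genuinely requires the $\mathrm{W}^*$-structure (weak-$*$ compactness of norm balls together with normality of $\tau$) is correct, and your outline works: given $(x_k)\subseteq D_r^{\cM}$ with $x_k\to\xi$ in $L^2$, pass to a weak-$*$ convergent subnet $x_\alpha\to x\in D_r^{\cM}$; since $y\mapsto\tau(z^*y)$ is a normal (hence weak-$*$ continuous) functional for each $z\in\cM$, one gets $\langle z,x\rangle=\lim_\alpha\tau(z^*x_\alpha)=\langle z,\xi\rangle$ for all $z\in\cM$, and density of $\cM$ in $L^2(\cM)$ forces $\xi=x\in D_r^{\cM}$. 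One small wording quibble: your remark that ``the weak-$*$ topology on $D_r^{\cM}$ is at least as strong as the $\sigma$-weak topology'' is redundant, since for a $\mathrm{W}^*$-algebra these coincide by definition; you can drop that clause.
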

	
	\begin{fact} \label{fact: J}
		If $\cM = (M,\tau)$ is a tracial $\mathrm{W}^*$-algebra, then $\norm{x^*}_{L^2(\cM)} = \norm{x}_{L^2(\cM)}$.  In particular, the $*$-operation extends to an anti-linear isometry $J: L^2(\cM) \to L^2(\cM)$.  Thus, it makes sense to refer to ``self-adjoint'' elements of $L^2(\cM)$.
	\end{fact}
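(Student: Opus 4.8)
The statement has essentially one line of genuine content, namely the identity $\norm{x^*}_{L^2(\cM)} = \norm{x}_{L^2(\cM)}$ for $x \in \cM$, which comes straight from traciality; everything after that is a routine extension-by-density argument. The plan is as follows.

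First I would unwind the definition of the $L^2$-norm and apply the trace identity $\tau(yz) = \tau(zy)$ with $y = x$ and $z = x^*$:
\[
\norm{x^*}_{L^2(\cM)}^2 = \tau\bigl((x^*)^* x^*\bigr) = \tau(x x^*) = \tau(x^* x) = \norm{x}_{L^2(\cM)}^2 .
\]
This establishes the first assertion. Note this is the only point in the whole argument where traciality enters.

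Next I would observe that the map $x \mapsto x^*$ on $\cM$ is conjugate-linear, since $(\alpha x + \beta y)^* = \bar\alpha\, x^* + \bar\beta\, y^*$, and by the previous computation it is an isometry of $\cM$ for the $L^2$-norm. Since $\cM$ is a dense subset of the complete space $L^2(\cM)$, any such isometry is uniformly continuous and hence extends uniquely to a conjugate-linear isometry $J \colon L^2(\cM) \to L^2(\cM)$. Because $x^{**} = x$ on the dense subspace $\cM$ and both $J^2$ and $\id_{L^2(\cM)}$ are continuous, they agree everywhere, so $J^2 = \id_{L^2(\cM)}$; in particular $J$ is a surjective conjugate-linear isometric involution.

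Finally, to justify the last sentence, I would define $\xi \in L^2(\cM)$ to be \emph{self-adjoint} if $J\xi = \xi$ and write $L^2(\cM)_{\sa} = \{\xi \in L^2(\cM) : J\xi = \xi\}$. Since $J$ is a conjugate-linear involution, $L^2(\cM)_{\sa}$ is a real-linear (not complex-linear) subspace, every $\xi$ decomposes as $\xi = \tfrac12(\xi + J\xi) + i \cdot \bigl(-\tfrac{i}{2}(\xi - J\xi)\bigr)$ with both $\tfrac12(\xi + J\xi)$ and $-\tfrac{i}{2}(\xi - J\xi)$ in $L^2(\cM)_{\sa}$, so that $L^2(\cM) = L^2(\cM)_{\sa} \oplus i\, L^2(\cM)_{\sa}$ as real vector spaces, and $L^2(\cM)_{\sa} \cap \cM$ recovers the usual self-adjoint part of $\cM$. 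There is no serious obstacle to overcome here: the content is the one-line traciality computation, and the rest is the standard fact that an isometry from a dense subspace of a Banach space into a complete space extends uniquely, together with bookkeeping about conjugate-linear involutions.
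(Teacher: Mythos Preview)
Your proof is correct; the paper itself does not supply a proof for this Fact, treating it as standard background, so there is nothing to compare against. Your traciality computation and the extension-by-density argument are exactly the intended justification.
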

	
	\begin{definition}
		A \emph{$*$-homomorphism} is a map between $*$-algebras that respects the addition, multiplication, and $*$-operations.  For tracial $\mathrm{W}^*$-algebras, a $*$-homomorphism $\rho: \cM \to \cN$ is said to be \emph{trace-preserving} if $\tr^{\cM}(\rho(x)) = \tr^{\cN}(x)$.
	\end{definition}
	
	\begin{fact} \label{fact: conditional expectation}
		A $*$-homomorphism between $\mathrm{C}^*$-algebras is automatically contractive with respect to the norm.  Moreover, if $\rho: \cM \to \cN$ is a trace-preserving $*$-homomorphism of tracial $\mathrm{W}^*$-algebras, then $\rho$ is also contractive with respect to the $L^2$-norm and hence extends uniquely to a contractive map $L^2(\cM) \to L^2(\cN)$.  Moreover, a trace-preserving $*$-homomorphism is automatically continuous with respect to the weak-$*$ topology.
	\end{fact}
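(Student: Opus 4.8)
The plan is to verify the three assertions separately, each by a short standard argument.

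\emph{Norm contractivity.} For a $*$-homomorphism $\rho \colon \cA \to \cB$ between $\mathrm{C}^*$-algebras I would first pass to unitizations if either algebra is non-unital, so that without loss of generality $\rho$ is unital; in the present setting all algebras and maps are already unital. If $a \in \cA$ is self-adjoint and $a - \lambda 1$ is invertible in $\cA$, then $\rho(a) - \lambda 1 = \rho(a - \lambda 1)$ is invertible in $\cB$ with inverse $\rho((a-\lambda 1)^{-1})$; hence $\Spec_{\cB}(\rho(a)) \subseteq \Spec_{\cA}(a)$, and since the norm of a self-adjoint element equals its spectral radius, $\norm{\rho(a)} \le \norm{a}$. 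For an arbitrary $a \in \cA$, the $\mathrm{C}^*$-identity then gives $\norm{\rho(a)}^2 = \norm{\rho(a)^*\rho(a)} = \norm{\rho(a^*a)} \le \norm{a^*a} = \norm{a}^2$, so $\rho$ is contractive.

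\emph{$L^2$-contractivity.} Here the key computation, using that $\rho$ is a $*$-homomorphism and trace-preserving, is
\[
\norm{\rho(x)}_{L^2(\cN)}^2 = \tr^{\cN}(\rho(x)^*\rho(x)) = \tr^{\cN}(\rho(x^*x)) = \tr^{\cM}(x^*x) = \norm{x}_{L^2(\cM)}^2 .
\]
Thus $\rho$ is in fact $L^2$-isometric, not merely $L^2$-contractive; and since $\cM$ is dense in $L^2(\cM)$ by construction, $\rho$ extends uniquely to an isometry (in particular a contraction) $L^2(\cM) \to L^2(\cN)$.

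\emph{Weak-$*$ continuity.} I would use the standard fact that a bounded positive linear map between $\mathrm{W}^*$-algebras is weak-$*$ continuous if and only if it is \emph{normal}, i.e.\ it sends the supremum of any bounded increasing net of positive elements to the supremum of the images; since $\rho$ is bounded (first part) and positive ($\rho(x^*x) = \rho(x)^*\rho(x) \ge 0$), it suffices to check normality. Let $(x_i)$ be a bounded increasing net in $\cM_{\sa}$ with supremum $x \in \cM$. Then $(\rho(x_i))$ is increasing and bounded above by $\rho(x)$, so $y := \sup_i \rho(x_i)$ exists in $\cN$ with $y \le \rho(x)$. Normality of $\tr^{\cN}$ and $\tr^{\cM}$ together with the trace-preserving property give
\[
\tr^{\cN}(y) = \sup_i \tr^{\cN}(\rho(x_i)) = \sup_i \tr^{\cM}(x_i) = \tr^{\cM}(x) = \tr^{\cN}(\rho(x)) ,
\]
so $\tr^{\cN}(\rho(x) - y) = 0$ with $\rho(x) - y \ge 0$; faithfulness of $\tr^{\cN}$ forces $\rho(x) = y$. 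Hence $\rho$ is normal, so weak-$*$ continuous.

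I do not anticipate a genuine obstacle here: the statement is standard, and the only non-bookkeeping ingredient is the characterization of weak-$*$ continuity for positive maps between $\mathrm{W}^*$-algebras via preservation of increasing suprema, which I would quote from a standard reference such as \cite{TakesakiI}. The two places that require a little care are ensuring unitality in the first step (handled by unitization in general) and, in the third step, invoking \emph{faithfulness} of $\tr^{\cN}$ — not merely positivity — to upgrade the equality of traces to equality of the operators.
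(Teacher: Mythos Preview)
Your argument is correct and standard. Note, however, that the paper states this as a \emph{Fact} in the preliminaries section and provides no proof at all; it is simply quoted as background from the standard operator-algebra references cited at the start of \S\ref{subsec:operatoralgebrasbackground}. So there is nothing to compare against: your write-up supplies a self-contained justification where the paper relies on the literature.
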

	
	\begin{notation}
		If $\rho: \cM \to \cN$ is a trace-preserving $*$-homomorphism of tracial $\mathrm{W}^*$-algebras, we will also denote its extension $L^2(\cM) \to L^2(\cN)$ by $\rho$.  Furthermore, for a tuple $\mathbf{x} = (x_i)_{i \in I}$, we use the notation $\rho(\mathbf{x}) = (\rho(x_i))_{i \in I}$.
	\end{notation}
	
	\begin{fact}
		Let $\cM \subseteq \cN$ be a trace-preserving inclusion of tracial $\mathrm{W}^*$-algebras.  Let $E_{\cM}: L^2(\cN) \to L^2(\cM)$ be the orthogonal projection.  Then $E_{\cM}$ restricts to a map $\cN \to \cM$ that is contractive with respect to the operator norm.  Moreover, $E_{\cM}$ is the unique conditional expectation (positive $\cN$-$\cN$ bimodule map that restricts to the identity on $\cN$) that preserves the trace.
	\end{fact}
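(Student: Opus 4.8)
\emph{Proof plan.} The plan is to deduce all three assertions from a single observation — that $L^2(\cM)$ is a closed subspace of $L^2(\cN)$ left invariant by the conjugation $J$ of Fact~\ref{fact: J} and by left and right multiplication by elements of $\cM$ — together with one genuinely operator-algebraic input needed to promote an $L^2$-vector to a bounded operator.

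First I would record that $E_{\cM}$ commutes with $J$ and with $L_a$ and $R_a$ for every $a \in \cM$, where $L_a,R_a$ denote left and right multiplication on $L^2(\cN)$. These operators are bounded (by $\norm{a}$ in the case of $L_a,R_a$), they map $L^2(\cM)$ into itself, and they satisfy $J^2 = 1$, $L_a^* = L_{a^*}$, $R_a^* = R_{a^*}$; since the orthogonal projection onto a closed subspace commutes with any bounded operator leaving both the subspace and its orthogonal complement invariant, $E_{\cM}$ commutes with all of them. This yields $E_{\cM}(x^*) = J E_{\cM}(x)$ and $E_{\cM}(a x b) = a\,E_{\cM}(x)\,b$ in $L^2(\cN)$ for $x \in L^2(\cN)$ and $a,b \in \cM$, and a short computation (write $\tr^{\cM}(w) = \ip{1,w}_{L^2(\cM)}$, use right $\cM$-bimodularity, and that $E_{\cM}$ is the orthogonal projection) then gives $\tr^{\cM}(a E_{\cM}(x) b) = \tr^{\cN}(a x b)$ for $a,b \in \cM$ and $x \in \cN$; in particular $\tr^{\cM}(E_{\cM}(x)) = \tr^{\cN}(x)$.

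The only real obstacle is to show that $E_{\cM}$ maps $\cN$ into $\cM$ with $\norm{E_{\cM}(x)}_\infty \le \norm{x}_\infty$; everything else is bookkeeping with the trace. I would argue as follows: for $x \in \cN$ and $a \in \cM$, $\ip{E_{\cM}(x),a}_{L^2(\cM)} = \ip{x,a}_{L^2(\cN)} = \tr^{\cN}(x^*a)$, so the noncommutative H\"older inequality gives $|\ip{E_{\cM}(x),a}_{L^2(\cM)}| \le \norm{x}_\infty \norm{a}_{L^1(\cM)}$. Since $\cM$ is $L^1$-norm dense in $L^1(\cM)$ and $\cM$ is isometrically the Banach-space dual of $L^1(\cM)$ in the tracial case (Sakai's predual, identified with $L^1(\cM)$), the functional $a \mapsto \tr^{\cM}(E_{\cM}(x)^* a)$ extends from $\cM$ to a bounded functional on $L^1(\cM)$ of norm $\le \norm{x}_\infty$, hence equals $a \mapsto \tr^{\cM}(a y)$ for a unique $y \in \cM$ with $\norm{y}_\infty \le \norm{x}_\infty$; as $E_{\cM}(x) - y^* \in L^1(\cM)$ pairs to zero with all of $\cM$ it must vanish, so $E_{\cM}(x) = y^* \in \cM$ with the claimed bound. (Alternatively one may invoke the commutation theorem $(R_{\cM})' = L_{\cM}$ on $L^2(\cM)$: the bimodule identity shows $a \mapsto E_{\cM}(x) a$ extends to a bounded operator on $L^2(\cM)$ commuting with every $R_a$, hence equals left multiplication by some $y \in \cM$, and evaluating at $1$ gives $y = E_{\cM}(x)$.) This is the one point where von Neumann algebra structure is genuinely used, and the one most naturally handed to a citation.

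It remains to assemble the conditional-expectation properties. The restriction $E_{\cM}|_{\cN}\colon \cN \to \cM$ fixes $\cM$ pointwise, is $\cM$-bimodular by the identity above, and is $*$-preserving since $E_{\cM}(x^*) = J E_{\cM}(x) = E_{\cM}(x)^*$ once $E_{\cM}(x)\in\cM$. It is positive: for $z \in \cN$ and $\zeta \in \cM$,
\[
\ip{E_{\cM}(z^*z)\zeta,\zeta}_{L^2(\cM)} = \tr^{\cM}(\zeta^* E_{\cM}(z^*z)\zeta) = \tr^{\cN}(\zeta^* z^* z\zeta) = \norm{z\zeta}_{L^2(\cN)}^2 \ge 0,
\]
and density of $\cM$ in $L^2(\cM)$ then forces $E_{\cM}(z^*z) \ge 0$. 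Hence $E_{\cM}|_{\cN}$ is a conditional expectation onto $\cM$, and it is trace-preserving by the case $a = b = 1$ above. Finally, for uniqueness, if $F\colon \cN \to \cM$ is any trace-preserving conditional expectation, then $F$ is $*$-preserving and $\cM$-bimodular, so for $x \in \cN$ and $m \in \cM$,
\[
\ip{F(x),m}_{L^2(\cM)} = \tr^{\cM}(F(x)^* m) = \tr^{\cM}(F(x^* m)) = \tr^{\cN}(x^* m) = \ip{x,m}_{L^2(\cN)} = \ip{E_{\cM}(x),m}_{L^2(\cM)};
\]
since $F(x),E_{\cM}(x) \in \cM$ and $\cM$ is dense in $L^2(\cM)$, this gives $F(x) = E_{\cM}(x)$, completing the proof.
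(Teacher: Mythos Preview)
The paper does not prove this statement: it is stated as a background Fact in \S\ref{subsec:operatoralgebrasbackground} with a pointer to the standard references, and no argument is given. So there is no proof in the paper to compare yours against.

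Your proof is correct and follows one of the standard routes. The only substantive step---showing that $E_{\cM}(x)\in L^2(\cM)$ actually lies in $\cM$ with $\norm{E_{\cM}(x)}\le\norm{x}$---is handled correctly by either of your alternatives: the $L^1$--$L^\infty$ duality argument works because $\cM$ is $\norm{\cdot}_1$-dense in $L^1(\cM)$ and the pairing is nondegenerate, and the commutant argument works because $a\mapsto E_{\cM}(xa)$ is bounded by $\norm{x}$ on $\cM\subseteq L^2(\cM)$ and commutes with right $\cM$-multiplication. Two minor notes: the parenthetical in the paper's statement (``positive $\cN$-$\cN$ bimodule map that restricts to the identity on $\cN$'') is an evident typo for $\cM$-$\cM$ bimodule and identity on $\cM$, which you have silently and correctly read; and your use of $*$-preservation of $F$ in the uniqueness step is justified since positive linear maps between $\mathrm{C}^*$-algebras are automatically self-adjoint.
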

	
	\begin{notation}
		For a $\mathrm{W}^*$-algebra, or more generally, a unital $*$-algebra,
		\begin{enumerate}[(1)]
			\item $\cM_{\sa} = \{x \in \cM: x^* = x\}$ denotes the set of \emph{self-adjoints}.
			\item $L^2(\cM)_{\sa} = \{x \in L^2(\cM): J(x) = x\}$ similarly denotes the set of self-adjoints in $L^2(\cM)$.
			\item $U(\cM) = \{u \in \cM: u^*u = uu^* = 1\}$ denotes the set of \emph{unitaries}.
			\item $P(\cM) = \{ p \in \cM: p^* = p = p^2 \}$ denotes the set of \emph{projections}.
			\item $Z(\cM) = \{x \in \cM: \forall y \in \cM, xy = yx\}$ denotes the \emph{center}.
			\item For $A \subseteq \cM$, we write $A' \cap \cM = \{x \in \cM" \forall y \in A, xy = yx\}$ for the \emph{relative commutant} of $A$ in $\cM$.
		\end{enumerate}
	\end{notation}
	
	\begin{fact}
		A tracial $\mathrm{W}^*$-algebra has trivial center $Z(\cM) = \C 1$ if and only if for any two projections $p$ and $q$ with $\tr^{\cM}(p) = \tr^{\cM}(q)$, there exists a unitary $u \in U(\cM)$ such that $upu^* = q$.
	\end{fact}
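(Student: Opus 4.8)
The plan is to treat this as the classical fact that a tracial $\mathrm{W}^*$-algebra has trivial center exactly when it is a factor, together with Murray--von Neumann comparison theory for projections in a finite factor. Note first that the faithful normal trace makes $\cM$ finite, so I may use freely that $p \preceq q$ together with $\tr^{\cM}(p) = \tr^{\cM}(q)$ forces the equivalence $p \sim q$: writing $p \sim q' \le q$, faithfulness of $\tr^{\cM}$ applied to the projection $q - q'$ (which has trace $0$) gives $q' = q$.

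For the forward direction, assume $Z(\cM) = \C 1$ and let $p, q$ be projections with $\tr^{\cM}(p) = \tr^{\cM}(q)$. By the comparison theorem there is a central projection $e$ with $ep \preceq eq$ and $(1-e)q \preceq (1-e)p$; triviality of the center forces $e \in \{0, 1\}$, so $p$ and $q$ are comparable, hence (by the previous remark) equivalent: there is a partial isometry $v$ with $v^*v = p$ and $vv^* = q$. Applying the same to $1-p$ and $1-q$, which again have equal trace, produces $w$ with $w^*w = 1-p$ and $ww^* = 1-q$. Set $u := v + w$; the source projections of $v$ and $w$ are orthogonal, as are the range projections, so the cross terms vanish and $u^*u = p + (1-p) = 1$, $uu^* = q + (1-q) = 1$, while a short computation using $vp = v$ and $pw^* = 0$ gives $upu^* = vv^* = q$. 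Thus $u \in U(\cM)$ conjugates $p$ to $q$.

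For the converse I argue contrapositively. If $Z(\cM) \ne \C 1$, take a non-scalar self-adjoint $z \in Z(\cM)$ (a real or imaginary part of any non-scalar central element) and a spectral projection $p$ of $z$ with $p \notin \{0, 1\}$; faithfulness of $\tr^{\cM}$ gives $0 < \tr^{\cM}(p) < 1$. Since $p$ is central, $upu^* = p$ for every $u \in U(\cM)$, so the only projection conjugate to $p$ is $p$ itself; hence if the conjugation property held, $p$ would have to be the unique projection of trace $\tr^{\cM}(p)$. A second such projection is obtained by transferring a subprojection between the central direct summands $p\cM$ and $(1-p)\cM$, which succeeds outside of some degenerate low-dimensional situations (atypically weighted finite direct sums of small factors) that one excludes; this gives the desired contradiction.

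The one substantive ingredient is the comparison theorem for projections, which I will simply cite from a standard reference; the remainder of the forward direction is a routine assembly of partial isometries into a unitary. The only genuinely delicate point is the last step of the converse --- producing a second projection of the prescribed trace, and handling the degenerate cases --- which I expect to be the main, if minor, obstacle.
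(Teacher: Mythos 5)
The paper states this as a background ``Fact'' in the preliminaries without giving a proof, so there is nothing in the paper to compare your argument against directly. Your forward direction is correct and is the standard one: by the comparison theorem, triviality of the center makes any two projections comparable; subequivalence together with equal trace and faithfulness forces Murray--von Neumann equivalence; and assembling the partial isometries for $p,q$ and for $1-p,1-q$ into $u = v + w$ produces the conjugating unitary (the cross-term and conjugation computations you sketch check out).

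The hedge you place on the converse is exactly where the argument fails, and the failure is not repairable, because the ``only if'' direction is false as stated for general tracial $\mathrm{W}^*$-algebras. Take $\cM = \C \oplus \C$ with $\tr^{\cM}(x_1,x_2) = \tfrac{1}{3}x_1 + \tfrac{2}{3}x_2$. The four projections $(0,0)$, $(1,0)$, $(0,1)$, $(1,1)$ have the four distinct traces $0, 1/3, 2/3, 1$, so the conjugacy hypothesis holds vacuously (only $p = q$ ever occurs, and $u = 1$ works), yet $Z(\cM) = \cM \neq \C 1$. More generally, any $\bigoplus_i M_{n_i}(\C)$ with generically chosen weights on the summands has the same defect: the only projections of a given trace are the ones already unitarily conjugate within each matrix block. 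Your proposed fix---transferring a subprojection of $p$ to one of $1-p$ of the same trace---needs the trace sets of subprojections of $p$ and of $1-p$ to overlap away from $0$, and in these examples they meet only at $0$, so the second projection you need simply does not exist. Thus the ``degenerate low-dimensional situations'' you mention are genuine counterexamples and cannot be excluded. The biconditional is correct under additional hypotheses, e.g.\ for $\mathrm{II}_1$ factors, for full matrix algebras $M_n(\C)$, or more generally whenever $\cM$ is diffuse (then every $e \in P(Z(\cM)) \setminus \{0,1\}$ admits a distinct projection of the same trace, giving the converse); the forward implication, which you prove cleanly, is the one that holds unconditionally and is the direction actually used elsewhere.
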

	
	\begin{notation}
		A \emph{$\mathrm{II}_1$-factor} is an infinite-dimensional tracial $\mathrm{W}^*$-algebra with trivial center ($Z(\cM) = \C 1$).  The terminology ``factor'' comes from its role in Murray and von Neumann's direct integral decomposition of general von Neumann algebras.
	\end{notation}
	
	\subsection{Languages and structures}
	
	Here let us sketch the setup of continuous model theory, or model theory for metric structures from Ben Yaacov, Berenstein, Henson, and Usvyatsov \cite{BYBHU2008,BYU2010}; see also \cite{Hart2023}.  We will follow the treatment in Farah, Hart, and Sherman \cite{FHS2014} which introduces ``domains of quantification'' to cut down on the number of ``sorts'' needed; see also \cite[\S 2-3]{JekelCoveringEntropy}.
	
	A \emph{metric signature} $\mathcal{L}$ (sometimes called a \emph{language}) consists of:
	\begin{itemize}
		\item A set $\mathcal{S}$ whose elements are called \emph{sorts}.
		\item For each $S \in \mathcal{S}$, a privileged relation symbol $d_S$ (which will represent a metric) and a set $\mathcal{D}_S$ whose elements are called \emph{domains of quantification for $S$}.
		\item For each $S \in \mathcal{S}$ and $D, D' \in \mathcal{D}_S$ an assigned constant $C_{D,D'}$.
		\item \emph{Variable symbols} for each sort $S$.  We will usually denote variables by $x_i$, $y_i$, $z_i$ for $i$ in an appropriate index set $I$.
		\item A set of \emph{function symbols}.
		\item For each function symbol $f$, an assigned tuple $(S_1,\dots,S_n)$ of sorts called the \emph{domain}, another sort $S$ called the \emph{codomain}.  We call $n$ the \emph{arity of $f$}.
		\item For each function symbol $f$ with domain $(S_1,\dots,S_n)$ and codomain $S$, and for every $\mathbf{D} = (D_1,\dots,D_n) \in \mathcal{D}_{S_1} \times \dots \times \mathcal{D}_{S_n}$, there is an assigned $D_{f,\mathbf{D}} \in \mathcal{D}_S$ (representing a range bound), and assigned moduli of continuity $\omega_{f,\mathbf{D},1}$, \dots, $\omega_{f,\mathbf{D},n}$. (Here ``modulus of coninuity'' means a continuous increasing, zero-preserving function $[0,\infty) \to [0,\infty)$).
		\item A set of \emph{relation symbols}.
		\item For each relation symbol $R$, an assigned domain $(S_1,\dots,S_n)$ as in the case of function symbols.
		\item For each relation symbol $R$ and for every $\mathbf{D} = (D_1,\dots,D_n) \in \mathcal{D}_{S_1} \times \dots \times \mathcal{D}_{S_n}$, an assigned bound $N_{R,\mathbf{D}} \in [0,\infty)$ and assigned moduli of continuity $\omega_{R,\mathbf{D},1}$, \dots, $\omega_{R,\mathcal{D},n}$.
	\end{itemize}
	
	Given a language $\mathcal{L}$, an \emph{$\mathcal{L}$-structure} $\cM$ assigns an object to each symbol in $\mathcal{L}$, called the \emph{interpretation} of that symbol, in the following manner:
	\begin{itemize}
		\item Each sort $S \in \mathcal{S}$ is assigned a metric space $S^{\cM}$, and the symbol $d_S$ is interpreted as the metric $d_S^{\cM}$ on $S^{\cM}$.
		\item Each domain of quantification $D \in \mathcal{D}_S$ is assigned a subset $D^{\cM} \subseteq S^{\cM}$, such that $D^{\cM}$ is complete for each $D$, $S^{\cM} = \bigcup_{D \in \mathcal{D}_S} D^{\cM}$, and $\sup_{X \in D, Y \in D'} d_S^{\cM}(X,Y) \leq C_{D,D'}$.
		\item Each function symbol $f$ with domain $(S_1,\dots,S_n)$ and codomain $S$ is interpreted as a function $f^{\mathcal{M}}: S_1^{\cM} \times \dots \times S_n^{\cM} \to S^{\cM}$.  Moreover, for each $\mathbf{D} = (D_1,\dots,D_n) \in \mathcal{D}_{S_1} \times \dots \times \mathcal{D}_{S_n}$, the function $f^{\cM}$ maps $D_1^{\cM} \times \dots \times D_n^{\cM}$ into $D_{f,\mathbf{D}}^{\cM}$.  Finally, $f^{\cM}$ restricted to $D_1^{\cM} \times \dots \times D_n^{\cM}$ is uniformly continuous in the $i$th variable with modulus of continuity of $\omega_{f,\mathbf{D},i}$.
		\item Each relation symbol $R$ with domain $(S_1,\dots,S_n)$ is interpreted as a function $R^{\cM}: S_1^{\cM} \times \dots \times S_n^{\cM} \to \R$.  Moreover, for each $\mathbf{D} = (D_1,\dots,D_n) \in \mathcal{D}_{S_1} \times \dots \times \mathcal{D}_{S_n}$, $f^{\cM}$ is bounded by $N_{R,\mathbf{D}}$ on $M(D_1) \times \dots \times M(D_n)$ and uniformly continuous in the $i$th argument with modulus of continuity of $\omega_{R,\mathbf{D},i}$.
	\end{itemize}
	If $\cM_1$ and $\cM_2$ are $\cL$-structures, then a \emph{homomorphism} $\cM_1 \to \cM_2$ is a collection of maps $\Theta_S: S^{\cM_1} \to S^{\cM_2}$ such that for every function symbol $f: S_1 \times \dots \times S_n \to S$, we have $f^{\cM_2}(\Theta_{S_1},\dots,\Theta_{S_n}) = f^{\cM_1}$ and for every relation symbol $R: S_1 \times \dots \times S_n \to \R$ we have $R^{\cM_2}(\Theta_{S_1},\dots,\Theta_{S_n}) \leq R^{\cM_1}$.
	
	Next, we describe \emph{language $\mathcal{L}_{\tr}$ of tracial $\mathrm{W}^*$-algebras} together with how each symbol will be interpreted for a given tracial von Neumann algebra.
	\begin{itemize}
		\item A single sort, to be interpreted as the $\mathrm{W}^*$-algebra $M$.  If $\cM = (M,\tau)$ is a tracial $\mathrm{W}^*$-algebra, we denote the interpretation of this sort simply by $\cM$.
		\item Domains of quantification $\{D_r\}_{r \in (0,\infty)}$, to be interpreted as the operator norm balls of radius $r$ in $M$.
		\item The metric symbol $d$, to be interpreted as the metric induced by $\norm{\cdot}_{2,\tau}$.
		\item A binary function symbol $+$, to be interpreted as addition.
		\item A binary function symbol $\cdot$, to be interpreted as multiplication.
		\item A unary function symbol $*$, to be interpreted as the adjoint operation.
		\item For each $\lambda \in \C$, a unary function symbol, to be interpreted as multiplication by $\lambda$.
		\item Function symbols of arity $0$ (in other words constants) $0$ and $1$, to be interpreted as additive and multiplicative identity elements.
		\item Two unary relation symbols $\re \tr$ and $\im \tr$, to be interpreted the real and imaginary parts of the trace $\tau$.
		\item For technical reasons (see \cite[Proposition 3.2]{FHS2014} and \cite[p.\ 93]{Hart2023}), we also introduce for each $d$-variable non-commutative polynomial $p$ and $n \in \N$ a symbol $t_p: L^\infty(\cM)^d$ representing the evaluation of $p$, along with the appropriate range bounds $N_{t_p,\mathbf{r}}$ between $M_p$ and $M_p + 1/n$, where $M_p$ is the supremum of $\norm{p(X_1,\dots,X_d)}$ over all $(X_1,\dots,X_d)$ in a tracial $\mathrm{W}^*$-algebra $\cM$.
	\end{itemize}
	Each function and relation symbol is assigned range bounds and moduli of continuity that one would expect, e.g.\ multiplication is supposed to map $D_r \times D_{r'}$ into $D_{rr'}$ with $\omega_{(D_r,D_r'),1}^{\cdot}(t) = r't$ and $\omega_{(D_r,D_r'),2}^{\cdot} = rt$.
	
	Although not every $\mathcal{L}_{\tr}$-structure comes from a tracial $\mathrm{W}^*$-algebra, one can formulate axioms in the language such that any structure satisfying these axioms comes from a tracial $\mathrm{W}^*$-algebra \cite[\S 3.2]{FHS2014}.  For such structures, the notion of homomorphism given above reduces to trace-preserving $*$-homomorphisms of tracial $\mathrm{W}^*$-algebras.
	
	\subsection{Syntax: Terms, formulas, and sentences}
	
	\emph{Terms} in a language $\mathcal{L}$ are expressions obtained by iteratively composing the function symbols and variables.  For example, if $x_1$, $x_2$, \dots are variables in a sort $S$ and $f: S \times S \to S$ 
	and $g: S \times S \to S$ are function symbols, then $f(g(x_1,x_2),x_1)$ is a term.  Each term has assigned range bounds and moduli of continuity in each variable which are the natural ones computed from those of the individual function symbols making up the composition.  Any term $f$ with variables $x_1 \in S_1$, \dots, $x_k \in S_k$ and output in $S$ can be interpreted in an $\mathcal{L}$-structure as a function $S_1^{\cM} \times \dots \times S_k^{\cM} \to S^{\cM}$. For example, in the language $\mathcal{L}_{\tr}$, the terms are expressions obtained from iterating scalar multiplication, addition, multiplication, and the $*$-operation on variables and the unit symbol $1$.  If $(M,\tau)$ is a tracial $\mathrm{W}^*$-algebra, then the interpretation of a term in $\mathcal{M}$ is a $*$-polynomial function.
	
	\emph{Basic formulas} are obtained by evaluating relation symbols on terms.  In other words, if $T_1$, \dots, $T_k$ are terms valued in sorts $S_1$, \dots, $S_k$, and $R$ is a relation $S_1 \times \dots \times S_k \to \R$, then $R(T_1,\dots,T_n)$ is a basic formula.  The basic formulas have assigned range bounds and moduli of continuity similar to the function symbols.  In an $\mathcal{L}$-structure $\cM$, a basic formula $\phi$ is interpreted as a function $\phi^{\cM}: S_1^{\cM} \times \dots \times S_k^{\cM} \to \R$.  In $\mathcal{L}_{\tr}$, a basic formula can take the form $\re \tr(f)$ or $\im \tr(f)$ where $f$ is an expression obtained by iterating the algebraic operations.  Thus, when evaluated in a tracial $\mathrm{W}^*$-algebra, it corresponds to the real or imaginary part of the trace of a non-commutative $*$-polynomial.
	
	\emph{Formulas} are obtained from basic formulas by iterating several operations:
	\begin{itemize}
		\item Given a formulas $\phi_1$, \dots, $\phi_n$ and $F: \R^n \to \R$ continuous, $F(\phi_1,\dots,\phi_n)$ is a formula.
		\item If $\phi$ is a formula, $D$ is a domain of quantification for some sort $S$, and $x$ is one of our variables in $S$, then $\inf_{x \in D} \phi$ and $\sup_{x \in D} \phi$ are formulas.
	\end{itemize}
	Each occurrence of a variable in $\phi$ is either \emph{bound} to a quantifier $\sup_{x \in D}$ or $\inf_{x \in D}$, or else it is \emph{free}.  We will often write $\phi(x_1,\dots,x_n)$ for a formula to indicate that the free variables are $x_1$, \dots, $x_n$.
	
	All these formulas also have assigned range bounds and moduli of continuity. The moduli of continuity of $F(\phi_1,\dots,\phi_n)$ are obtained by composition from the moduli of continuity of $F$ and $\phi_j$ as in \cite[\S 2 appendix and Theorem 3.5]{BYBHU2008}.  Next, if $\phi: S_1 \times \dots \times S_n \to S$ and $D \in \mathcal{D}_{S_n}$
	\[
	\psi(x_1,\dots,x_{n-1}) = \sup_{x_n \in D} \phi(x_1,\dots,x_{n-1},x_n),
	\]
	then
	\[
	\omega_{\psi,(D_1,\dots,D_{n-1}),j} = \omega_{\phi,(D_1,\dots,D_{n-1},D),j}.
	\]
	Each formula has an interpretation in every $\mathcal{L}$-structure $\mathcal{M}$, defined by induction on the complexity of the formula.  If $\phi = F(\phi_1,\dots,\phi_n)$, then $\phi^{\cM} = F(\phi_1^{\cM}, \dots, \phi_n^{\cM})$.  Similarly, if $\psi(x_1,\dots,x_{n-1}) = \sup_{x_n \in D} \psi(x_1,\dots,x_n)$, then
	\[
	\psi^{\cM}(x_1,x_2,\dots,x_n) = \sup_{x_n \in D^{\cM}} \phi^{\cM}(x_1,\dots,x_{n-1}) \text{ for } x_1 \in S_1^{\cM}, \dots, x_n \in S_n^{\cM}.
	\]
	
	A \emph{sentence} is a formula with no free variables.  If $\phi$ is a sentence, then the interpretation $\phi^{\cM}$ in an $\mathcal{L}$-structure is simply a real number.
	
	\subsection{Theories, models, and axioms}
	
	A \emph{theory} $\rT$ in a language $\mathcal{L}$ is a set of sentences.  We will use the following terminology related to theories:
	\begin{enumerate}[(1)]
		\item An $\mathcal{L}$-structure $\mathcal{M}$ \emph{models} the theory $\rT$, or $\mathcal{M} \models \rT$ if $\phi^{\cM} = 0$ for all $\phi \in \rT$.
		\item If $\rT$ and $\rT'$ are two theories, we say that $\rT \models \rT'$ if $\cM \models \rT$ implies $\cM \models \rT'$ for all $\cL$-structures $\cM$.
		\item A theory $\rT$ is \emph{consistent} if there exists an $\cL$-structure $\cM$ with $\cM \models \rT$.
		\item A theory $\rT$ is \emph{complete} if for every sentence $\phi$ we have $\phi - c \in \rT$ for some $c \in \R$, or in other words, $\rT$ uniquely specifies the value of every sentence.
		\item If $\mathcal{M}$ is an $\mathcal{L}$-structure, then the \emph{theory of $\cM$}, denoted $\Th(\mathcal{M})$ is the set of sentences $\phi$ such that $\phi^{\cM} = 0$.
		\item More generally, if $\mathcal{C}$ is a class of $\mathcal{L}$-structures, then $\Th(\cC)$ is the set of all sentences $\phi$ such that $\phi^{\cM} = 0$ for all $\cM$ in $\cC$.
	\end{enumerate}
	
	The class $\cC$ is said to be \emph{axiomatizable} if every $\mathcal{L}$-structure that models $\Th(\cC)$ is actually in $\cC$.  Farah, Hart, and Sherman \cite[\S 3.2]{FHS2014} showed that the class of $\cL_{\tr}$-structures that represent actual tracial $\mathrm{W}^*$-algebras is axiomatizable.  The theory of tracial $\mathrm{W}^*$-algebras will be denoted $\rT_{\tr}$.  Similarly, the class of $\mathrm{II}_1$-factors (infinite-dimensional tracial von Neumann algebras with trivial center) is axiomatizable and we denote its theory by $\rT_{\mathrm{II}_1}$ \cite{FHS2014}.
	
	\subsection{Types, definable predicates, and definable functions} \label{subsec: definable predicate}
	
	Now we can explain the concept of types and definable predicates, which in our paper will play an analogous roles respectively to probability distributions and continuous functions.  Definable predicates are the completion of the vector space of formulas in an appropriate topology, in a similar way to continuous functions on $\R^d$ can be obtained by completing the space of polynomials with respect to uniform convergence on balls.
	
	\begin{definition} \label{def: types}
		Let $I$ be an index set, and let $\mathbf{S} = (S_i)_{i \in I}$ be an $I$-tuple of sorts in a language $\mathcal{L}$.  Let $\cF_{\mathbf{S}}$ be the space of $\cL$-formulas with free variables $(x_i)_{i \in I}$ with $x_i$ from the sort $S_i$.  If $\cM$ is an $\cL$-structure and $\mathbf{x} \in \prod_{i \in I} S_i^{\cM}$, then the \emph{type} of $\mathbf{x}$ is the map
		\[
		\tp^{\cM}(\mathbf{x}): \cF_{\mathbf{S}} \to \R: \quad \phi \mapsto \phi^{\cM}(\mathbf{x}).
		\]
	\end{definition}
	
	The type of $\mathbf{x}$ describes its behavior with respect to all formulas.  Another important variant of this concept describes the behavior of $\mathbf{x}$ with respect to formulas involving $\mathbf{x}$ as well as other elements from some $A \subseteq \cM$, or ``the behavior of $\mathbf{x}$ relative to $A$.''  To formalize this notion, we simply expand the language $\cL$ by adding constant symbols for the elements of $A$.
	
	\begin{notation}
		Let $\cM$ be an $\cL$-structure and let $A \subseteq \cM$ (or more precisely $A \subseteq \bigsqcup_{S \in \mathcal{S}} S^{\cM}$).  Then $\cL_A$ will denote the language obtained from $\cL$ by adding constant symbols (i.e.\ function symbols of arity zero) for every $a \in A$.  Moreover, $M_A$ will denote the $\cL_A$ structure where the constant symbol for each $a \in A$ is interpreted as $a$ itself.
	\end{notation}
	
	\begin{definition}
		Let $I$ be an index set, and let $\mathbf{S} = (S_i)_{i \in I}$ be an $I$-tuple of sorts in a language $\mathcal{L}$.  Let $\cM$ be an $\cL$-structure and $A \subseteq \cM$.  Then the \emph{type of $\mathbf{x}$ over $A$}, denoted $\tp^{\cM}(\mathbf{x} / A)$ is the $\cL_A$-type of $\mathbf{x}$ in $\cM_A$.
	\end{definition}
	
	For the rest of this section, we focus on types as in Definition \ref{def: types}, but the concepts apply equally well to types over $A$ by replacing $\cL$ with $\cL_A$.
	
	\begin{definition}
		Let $\mathbf{S} = (S_i)_{i \in I}$ be an $I$-tuple of sorts in $\mathcal{L}$, and let $\rT$ be an $\cL$-theory.  We denote by $\mathbb{S}_{\bS}(\rT)$ the set of types $\tp^{\cM}(\bX)$ of all $\bX \in \prod_{i \in I} S_i^{\cM}$ for all $\cM \models \rT$.  Similarly, if $\mathbf{D} \in \prod_{i \in I} \mathcal{D}_{S_i}$, then we denote by $\mathbb{S}_{\bD}(\rT)$ the set of types $\tp^{\cM}(\bX)$ of all $\bX \in \prod_{i \in I} D_i^{\cM}$ for all $\cM \models \rT$.
	\end{definition}
	
	The space of types $\mathbb{S}_{\bD}(\rT)$ is equipped with a weak-$*$ topology as follows.  If $\bS$ is an $I$-tuple of $\cL$-sorts, the set $\cF_{\bS}$ of formulas defines a real vector space.  For each $\cL$-structure $\cM$ and $\bX \in \prod_{i \in I} S_j^{\cM}$, the type $\tp^{\cM}(\bX)$ is a linear map $\cF_{\bS} \to \R$.  Thus, for each $\cL$-theory $\rT$ and $\bD \in \prod_{i \in I} \cD_{S_i}$, the space $\mathbb{S}_{\bD}(\rT)$ is a subset of the real dual $\cF_{\mathbf{S}}^\dagger$.  We equip $\mathbb{S}_{\bD}(\rT)$ with the weak-$\star$ topology (also known as the \emph{logic topology}), which makes it into a compact Hausdorff space \cite[Proposition 8.6]{BYBHU2008}.  Moreover, $\mathbb{S}_{\bD}(\rT)$ is metrizable provided that $I$ is countable and the language $\cL$ is separable (Definition \ref{def: language density character}); in particular, this holds for $\cL_{\tr}$ \cite[Observations 3.13 and 3.14]{JekelCoveringEntropy}.  See \cite[\S 7]{Hart2023} for further background.\footnote{Technically, to make $\cL_{\tr}$ separable by Definition \ref{def: language density character}, we need to have only countably many domains, and thus we should restrict the values of $r$ in the domains $D_r$ to a countable set such as $(0,\infty) \cap \Q$.  However, the number of domains is irrelevant to metrizability of $\mathrm{S}_{\mathbf{D}}(\mathrm{T})$ for a fixed tuple of domains anyway.}
	
	In the setting with domains of quantification, it is also convenient to have a topology on $\mathbb{S}_{\bS}(\rT)$.  We say that $\cO \subseteq \mathbb{S}_{\bS}(\rT)$ is \emph{open} if $\cO \cap \mathbb{S}_{\bD}(\rT)$ is open for every $\bD \in \prod_{j \in \N} \cD_{S_j}$; this defines a topology on $\mathbb{S}_{\bS}(\rT)$, which we will also call the \emph{logic topology}.  One can then show that the inclusion map $\mathbb{S}_{\bD}(\rT) \to \mathbb{S}_{\bS}(\rT)$ is a topological embedding \cite[Observation 3.6]{JekelCoveringEntropy}.
	
	Every formula defines a continuous function on $\mathbb{S}_{\bS}(\rT)$ for each theory $\rT$, but the converse is not true.  The objects that correspond to continuous functions on $\mathbb{S}_{\bS}(\rT)$ are a certain completion of the set of formulas, called \emph{definable predicates}; see e.g.\ \cite[\S 5.2]{Hart2023}.  Our approach to the definition will be semantic rather than syntactic, defining these objects immediately in terms of their interpretations.
	
	\begin{definition} \label{def:definablepredicate}
		Let $\mathcal{L}$ be a language and $\rT$ a consistent $\mathcal{L}$-theory.  A \emph{definable predicate relative to $\rT$} is a collection of functions $\phi^{\cM}: \prod_{i \in I} S_i^{\cM} \to \R$ (for each $\cM \models \rT$) such that for every collection of domains $\mathbf{D} = (D_j)_{j \in \N}$ and every $\epsilon > 0$, there exists a finite $F \subseteq I$ and an $\mathcal{L}$-formula $\psi(x_i: i \in F)$ such that whenever $\cM \models \rT$ and $\mathbf{X} \in \prod_{j \in \N} D_j^{\cM}$, we have
		\[
		|\phi^{\cM}(\mathbf{X}) - \psi^{\cM}(X_i: i \in F)| < \epsilon.
		\]
	\end{definition}
	
	In this work, we will use definable predicates relative to $\rT_{\tr}$, to $\rT_{\mathrm{II}_1}$, and to an arbitrary complete $\rT \models \rT_{\tr}$.  In \cite[Theorem 9.9]{BYBHU2008}, the authors show in the setting with a single sort and a single domain that definable predicates relative to $\rT$ correspond to continuous functions on the space of types $\rT$, and this result was extended to the setting with multiple sorts and domains of quantification \cite[Proposition 3.9]{JekelCoveringEntropy}.  Here we only need the following fact, for which we give a self-contained proof (the special case of $\mathrm{T}_{\tr}$ was done in \cite[Remark 3.33]{JekelCoveringEntropy}).
	
	\begin{lemma} \label{lem: continuous function definable predicate}
		Let $\cL$ be a metric signature and $\mathrm{T}$ a consistent $\cL$-theory.  Let $\mathbf{S}$ be a tuple of sorts indexed by the set $I$, and let $\mathbf{D}$ be a corresponding tuple of domains.  Let $f: \mathbb{S}_{\mathbf{D}}(\mathrm{T}) \to \R$ be continuous.  Then there exists a $\mathrm{T}$-definable predicate $\phi$ such that
		\[
		\phi^{\cM}(\mathbf{x}) = f(\tp^{\cM}(\mathbf{x})) \text{ for } \mathbf{x} \in \prod_{i \in I} \cD_i^{\cM} \text{ for } \cM \models \mathrm{T}.
		\]
	\end{lemma}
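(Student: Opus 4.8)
The plan is to reduce the statement to an Weierstrass-type approximation argument on the compact Hausdorff space $\mathbb{S}_{\mathbf{D}}(\mathrm{T})$. First I would record the elementary fact that every $\mathcal{L}$-formula $\psi(x_i : i \in F)$ with $F \subseteq I$ finite descends to a well-defined function $\widehat{\psi}$ on $\mathbb{S}_{\mathbf{D}}(\mathrm{T})$ via $\widehat{\psi}(\tp^{\cM}(\mathbf{x})) = \psi^{\cM}(x_i : i \in F)$, and that this function is continuous for the logic topology — this is essentially the definition of that topology (the logic topology is the initial topology making all such $\widehat{\psi}$ continuous), so it is immediate. Let $\cA \subseteq C(\mathbb{S}_{\mathbf{D}}(\mathrm{T}), \R)$ be the set of all functions arising this way as $\psi$ ranges over formulas in finitely many of the variables $x_i$. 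I would then check that $\cA$ is a subalgebra of $C(\mathbb{S}_{\mathbf{D}}(\mathrm{T}),\R)$ that contains the constants and is closed under the lattice operations $\max,\min$: this follows because formulas are closed under applying continuous functions $F : \R^k \to \R$ (take $F$ to be addition, multiplication, $\max$, $\min$, or a constant), and one must only be mildly careful that the range bounds match up — but on a fixed domain tuple $\mathbf{D}$ each formula has an a priori bound, so one may compose with continuous functions on the relevant compact box in $\R^k$.

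Next I would verify that $\cA$ separates points of $\mathbb{S}_{\mathbf{D}}(\mathrm{T})$. If $\mu \neq \nu$ are distinct types, then by definition of ``type'' there is some $\mathcal{L}$-formula $\psi$ with $\mu[\psi] \neq \nu[\psi]$; a formula in the index set $I$ has only finitely many free variables by construction, so $\psi$ involves only finitely many of the $x_i$, hence $\widehat{\psi} \in \cA$ and $\widehat{\psi}(\mu) \neq \widehat{\psi}(\nu)$. With separation of points, the constants, and closure under $\max/\min$ in hand, the lattice version of the Stone–Weierstrass theorem shows $\cA$ is dense in $C(\mathbb{S}_{\mathbf{D}}(\mathrm{T}),\R)$ for the uniform norm. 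Therefore the given continuous $f$ is a uniform limit of functions $\widehat{\psi_k}$ with $\psi_k$ a formula in finitely many variables; choosing $\psi_k$ so that $\norm{f - \widehat{\psi_k}}_\infty < 2^{-k}$ (or just $< \epsilon$ for the $\epsilon$ demanded in Definition \ref{def:definablepredicate}), we get exactly the approximation required.

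It remains to package this into a single definable predicate defined on \emph{all} of $\prod_{i\in I} S_i^{\cM}$ rather than merely on the chosen domain tuple $\mathbf{D}$. Here I would define $\phi^{\cM}(\mathbf{x})$ directly: since $\mathbb{S}_{\mathbf{D}}(\mathrm{T}) \hookrightarrow \mathbb{S}_{\mathbf{S}}(\mathrm{T})$ is a topological embedding and the domains exhaust each sort, for $\mathbf{x} \in \prod_i D_i^{\cM}$ set $\phi^{\cM}(\mathbf{x}) = f(\tp^{\cM}(\mathbf{x}))$, which is the intended value; for the definability condition with respect to an \emph{arbitrary} domain tuple $\mathbf{D}'$, note that the argument above applied to $\mathbf{D}'$ (using continuity of $f$ restricted to $\mathbb{S}_{\mathbf{D}'}(\mathrm{T})$ via the embedding $\mathbb{S}_{\mathbf{D}'}(\mathrm{T}) \to \mathbb{S}_{\mathbf{D}}(\mathrm{T})$ — for instance after enlarging both to a common refinement) yields a formula $\psi$ with $|\phi^{\cM} - \psi^{\cM}| < \epsilon$ on $\prod_i (D_i')^{\cM}$ for all $\cM \models \mathrm{T}$. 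The one genuine subtlety — the place I expect to spend the most care — is this bookkeeping with domains: checking that a function continuous on $\mathbb{S}_{\mathbf{D}}(\mathrm{T})$ restricts/extends compatibly to the other $\mathbb{S}_{\mathbf{D}'}(\mathrm{T})$, so that a single collection $(\phi^{\cM})$ meets the uniform-approximation requirement of Definition \ref{def:definablepredicate} simultaneously for every domain tuple. Since $f$ is given only on $\mathbb{S}_{\mathbf{D}}(\mathrm{T})$, the cleanest route is to simply declare $\phi$ on larger domains to be any continuous extension (e.g. using that the $\mathbb{S}_{\mathbf{D}'}$ are closed subsets and Tietze's theorem on the compact Hausdorff space, or more simply by composing with a cutoff in the norm variables); the approximation on $D_{\mathbf{r}}$-type domains is what actually matters and that is exactly what Stone–Weierstrass gives.
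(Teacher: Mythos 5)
Your first half---descending formulas to continuous functions on $\mathbb{S}_{\mathbf{D}}(\mathrm{T})$, checking that these separate points and form an algebra (or lattice) containing the constants, and invoking Stone--Weierstrass to write $f$ as a uniform limit of $\widehat{\psi_k}$ on $\mathbb{S}_{\mathbf{D}}(\mathrm{T})$---is exactly what the paper does, and that part is fine. The gap is in the second half, the step you flag as ``the place I expect to spend the most care.'' You need a single collection $(\phi^{\cM})_{\cM \models \mathrm{T}}$ of functions defined on all of $\prod_{i} S_i^{\cM}$, approximable by $\cL$-formulas uniformly on \emph{every} domain tuple, and neither of your proposed routes actually produces one. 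The ``cutoff in the norm variables'' presupposes a definable retraction $S^{\cM} \to D^{\cM}$, which makes sense in $\cL_{\tr}$ but is not part of the data of a general metric signature, and the lemma is stated for general $\cL$. The Tietze route has a genuine consistency problem: if you extend $f$ to some $\tilde f_{\mathbf{D}'}$ on each $\mathbb{S}_{\mathbf{D}'}(\mathrm{T})$ separately, the different extensions need not agree on overlaps, so they do not glue to a single function $\phi^{\cM}$; and extending step by step along a cofinal increasing chain of domain tuples is not available in general, since $\prod_{i \in I} \cD_{S_i}$---for countable $I$ this already looks like $\N^{\N}$ under pointwise order---need not admit a cofinal chain.

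The paper's proof sidesteps all of this with a ``forced limit'' construction, which is the ingredient your sketch is missing. After extracting formulas $\psi_k$ with $\norm{\widehat{\psi_{k+1}} - \widehat{\psi_k}}_\infty \le 2^{-k}$ on $\mathbb{S}_{\mathbf{D}}(\mathrm{T})$, fix a continuous decreasing $g:[0,\infty)\to[0,1]$ with $g \equiv 1$ on $[0,1]$ and $g \equiv 0$ on $[2,\infty)$, set $g_k(t) = g(2^k t)$, and define
\[
\phi^{\cM}(\mathbf{x}) = \psi_0^{\cM}(\mathbf{x}) + \sum_{k=0}^\infty g_k\bigl(|\psi_{k+1}^{\cM}(\mathbf{x}) - \psi_k^{\cM}(\mathbf{x})|\bigr)\,\bigl(\psi_{k+1}^{\cM}(\mathbf{x}) - \psi_k^{\cM}(\mathbf{x})\bigr).
\]
Each summand is a formula and is bounded in absolute value by $2/2^k$ \emph{everywhere}, not just on $\prod_i D_i^{\cM}$, so the series converges uniformly on every domain tuple and $\phi$ is a definable predicate directly by inspection---no extension of $f$ is ever needed. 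On $\prod_i D_i^{\cM}$ the cutoff factors all evaluate to $1$, the sum telescopes, and one recovers $\lim_k \widehat{\psi_k}(\tp^{\cM}(\mathbf{x})) = f(\tp^{\cM}(\mathbf{x}))$. That single device is what upgrades ``uniformly approximable on $\mathbb{S}_{\mathbf{D}}(\mathrm{T})$'' to ``definable predicate,'' and it is the key step your argument would need to supply.
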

	
	\begin{proof}
		Note that by definition of the logic topology, each formula $\psi$ in variables $x_i \in S_i$ for $i \in I$ defines a continuous function $g$ on $\mathbb{S}_{\mathbf{D}}(\mathrm{T})$ by the relation $g(\tp^{\cM}(\mathbf{x})) = \psi^{\cM}(\mathbf{x})$ for $\cM \models \mathrm{T}$ and $\mathbf{x} \in \prod_{i \in I} D_i^{\cM}$.  Moreover, these functions separate points because by definition a type is uniquely determined by its values on formulas.  Formulas also comprise an algebra since linear combinations and multiplication are continuous functions $\R^2 \to \R$.  Therefore, by the Stone-Weierstrass theorem, there is a sequence $\psi_k$ of formulas such that the corresponding functions $f_k \in C(\mathbb{S}_{\mathbf{D}}(\mathrm{T}))$ converge uniformly to our given $f$.  Without loss of generality, assume that $\norm{f_{k+1} - f_k}_u \leq 1/2^k$.
		
		By construction, the formulas $\psi_k$ converge uniformly on $\prod_{i \in I} D_i^{\cM}$ for $\cM \models \mathrm{T}$, but our goal now is to modify them suitably to force convergence everywhere, which we accomplish through a standard ``forced limit'' construction.  Let $g: [0,\infty) \to [0,1]$ be continuous and decreasing with $g = 1$ on $[0,1]$ and $g = 0$ on $[2,\infty]$; then let $g_k(t) = g(2^k t)$.  For $\cM \models \mathrm{T}$ and $\mathbf{x} \in \prod_{i \in I} D_i^{\cM}$, let
		\[
		\phi^{\cM}(\mathbf{x}) = \psi_0^{\cM}(\mathbf{x}) + \sum_{k=0}^\infty g_k(|\psi_{k+1}^{\cM}(\mathbf{x}) - \psi_k^{\cM}(\mathbf{x})|) (\psi_{k+1}^{\cM}(\mathbf{x}) - \psi_k^{\cM}(\mathbf{x})).
		\]
		Note that $g_k(|\psi_{k+1}^{\cM}(\mathbf{x}) - \psi_k^{\cM}(\mathbf{x})|) (\psi_{k+1}^{\cM}(\mathbf{x}) - \psi_k^{\cM}(\mathbf{x}))$ is a formula because it is composed of formulas and continuous connectives.  Furthermore, note that because $g_k(|t|) = 0$ unless $|t| \leq 2/2^k$, we have that $|g_k(|\psi_{k+1}^{\cM}(\mathbf{x}) - \psi_k^{\cM}(\mathbf{x})|) (\psi_{k+1}^{\cM}(\mathbf{x}) - \psi_k^{\cM}(\mathbf{x}))| \leq 2/2^k$, which implies that the summation converges uniformly.  Hence, $\phi$ is a definable predicate relative to $\mathrm{T}$.
		
		Moreover, note that if $\mathbf{x} \in \prod_{i \in I} D_i^{\cM}$, then we have
		\[
		\phi^{\cM}(\mathbf{x}) = f_0(\tp^{\cM}(\mathbf{x})) + \sum_{k=0}^\infty g_k(|f_{k+1}(\tp^{\cM}(\mathbf{x})) - f_k(\tp^{\cM}(\mathbf{x}))|) (f_{k+1}(\tp^{\cM}(\mathbf{x})) - f_k(\tp^{\cM}(\mathbf{x}))).
		\]
		Since $\norm{f_{k+1} - f_k}_u \leq 1/2^k$ in $C(\mathbb{S}_{\mathbf{D}}(\mathrm{T}))$, the function $g_k$ evaluates to $1$ and the series telescopes so that
		\[
		\phi^{\cM}(\mathbf{x}) = \lim_{k \to \infty} f_k(\tp^{\cM}(\mathbf{x})) = f(\tp^{\cM}(\mathbf{x})). \qedhere
		\]
	\end{proof}
	
	Definable predicates relative to $\rT$ have many of the same properties as formulas.  Each definable predicate is uniformly continuous, where the same modulus of continuity serves as an upper bound in all models of $\rT$; see \cite[Observation 3.11]{JekelCoveringEntropy}.  The most important fact for this work is that definable predicates are closed under the same operations as formulas.
	
	\begin{fact}[{See \cite[Proposition 9.3]{BYBHU2008} and \cite[Lemma 3.12]{JekelCoveringEntropy}}] ~
		\begin{enumerate}[(1)]
			\item If $I$ and $J$ are index sets, $F: \R^J \to \R$ is continuous (where $\R^J$ has the product topology) and $(\phi_j)_{j \in J}$ are definable predicates $\prod_{i \in I} S_i \to \R$ in $\mathcal{L}$ relative to $\rT$, then $F((\phi_j)_{j\in J})$ is a definable predicate.
			\item If $\phi$ is a definable predicate $\prod_{i \in I} S_i \times \prod_{i \in I'} S_i' \to \R$ in $\mathcal{L}$ relative to $\rT$ and $\mathbf{D}' \in \prod_{i \in I'} \cD_{S_i'}$, then
			\[
			\psi^{\cM}(\mathbf{x}) := \inf_{\mathbf{y} \in \prod_{i \in I'} (D_i')^{\cM}} \phi(\mathbf{x},\mathbf{y})
			\]
			is also definable predicate in $\mathcal{L}$ relative to $\rT$.  The same holds for $\sup$ instead of $\inf$.
		\end{enumerate}
	\end{fact}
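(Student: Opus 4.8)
The plan is to treat the two closure properties separately, and in each case to reduce a potentially infinite index set to a finite one, after which the assertion follows from the fact that formulas are already closed under the operation in question.

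For part (1), observe first that $F((\phi_j)_{j\in J})^{\cM}(\mathbf{x}):=F\big((\phi_j^{\cM}(\mathbf{x}))_{j\in J}\big)$ is automatically defined on the whole of $\prod_{i\in I}S_i^{\cM}$, so the only thing to verify is that on each tuple of domains $\mathbf{D}=(D_i)_{i\in I}$ it is approximable by formulas to within any $\epsilon>0$, uniformly over $\cM\models\rT$. I would begin by extracting from the definability of each $\phi_j$ a constant $M_j$, independent of $\cM$, with $|\phi_j^{\cM}|\le M_j$ on $\prod_{i\in I}D_i^{\cM}$: compare $\phi_j$ with a formula to precision $1$ and use that formula's range bound. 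The relevant values of the argument of $F$ then all lie in the cube $K=\prod_{j\in J}[-M_j,M_j]$, which is compact by Tychonoff's theorem, so $F|_K$ is uniformly continuous for the product uniformity; since basic entourages there constrain only finitely many coordinates, there is a finite $J_0\subseteq J$ so that the function $G(s):=F\big((s_j)_{j\in J_0},(0)_{j\notin J_0}\big)$ satisfies $|F-G|<\epsilon/2$ on $K$. (Equivalently, one may invoke Stone--Weierstrass: finite-coordinate functions are uniformly dense in $C(K)$.) Now $G$ is a continuous function of the coordinates in $J_0$ only, so $G((\phi_j)_{j\in J_0})$ is covered by the finite-variable case, which is essentially immediate: formulas are closed under continuous connectives, and a uniformly continuous $G$ turns sufficiently precise formula approximants of the finitely many $\phi_j$'s into a formula approximant of $G((\phi_j)_{j\in J_0})$. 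Chaining this with the estimate $|F((\phi_j))-G((\phi_j))|<\epsilon/2$ (valid everywhere on $\prod_i D_i^{\cM}$, since the argument tuple lies in $K$) finishes part (1).

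For part (2), the point is that a formula, by definition, mentions only finitely many variables. Given domains $\mathbf{D}=(D_i)_{i\in I}$ for the $\mathbf{x}$-variables and $\epsilon>0$, I would pick a formula $\eta$ with $|\phi^{\cM}-\eta^{\cM}|<\epsilon$ on $\prod_{i\in I}D_i^{\cM}\times\prod_{i\in I'}(D_i')^{\cM}$ for all $\cM\models\rT$; this $\eta$ involves $y_i$ only for $i$ in some finite $F'\subseteq I'$. Since $\eta$ ignores the remaining $\mathbf{y}$-coordinates, $\inf_{\mathbf{y}\in\prod_{i\in I'}(D_i')^{\cM}}\eta$ collapses to the finite iterated infimum over $y_i\in(D_i')^{\cM}$, $i\in F'$, which is a genuine formula $\eta'$; and because taking an infimum is $1$-Lipschitz in the sup-norm, $|\psi^{\cM}-(\eta')^{\cM}|\le\epsilon$ on $\prod_i D_i^{\cM}$. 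Along the way one checks that the infimum defining $\psi^{\cM}(\mathbf{x})$ is finite (boundedness of $\phi$ on products of domains, as in part (1)) and that $\psi$ inherits a modulus of continuity in $\mathbf{x}$ from $\phi$. The $\sup$ case is identical.

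I expect the only non-bookkeeping difficulty to be the presence of infinite index sets. In part (1) it is handled by the observation that a continuous function on a compact product cube is determined up to $\epsilon$ by finitely many coordinates; in part (2) it is handled by the fact that any single formula approximating $\phi$ can constrain only finitely many of the $\mathbf{y}$-variables, so the infinite infimum is already a finite one at the level of the approximant. Everything else is the routine verification that the constructed formulas do their job uniformly over all models of $\rT$.
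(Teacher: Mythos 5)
Your argument is correct, and since the paper states this as a ``Fact'' with citations rather than giving its own proof, there is no internal proof to compare against; your reduction-to-finite-variables strategy is exactly what the cited sources \cite[Proposition 9.3]{BYBHU2008} and \cite[Lemma 3.12]{JekelCoveringEntropy} do in spirit. The two key observations you isolate are the right ones: for (1), boundedness of definable predicates on products of domains puts the argument of $F$ in a compact cube $K=\prod_{j\in J}[-M_j,M_j]$, and uniform continuity of $F$ on $K$ for the product uniformity (or Stone--Weierstrass) gives a finite-coordinate approximant; for (2), the approximating formula for $\phi$ has finite support in the $\mathbf{y}$-variables, so the infinite $\inf$ collapses to a finite iterated quantifier at the level of the approximant, and the $1$-Lipschitz property of $\inf$ with respect to the sup-norm transfers the error bound. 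One small point worth making explicit in (2) is that the collapse of $\inf_{\mathbf{y}\in\prod_{i\in I'}(D_i')^{\cM}}\eta^{\cM}$ to the finite iterated infimum over $i\in F'$ uses that each remaining $(D_i')^{\cM}$ is nonempty (so that the full product is nonempty); this is automatic for $\cL_{\tr}$ since operator-norm balls always contain $0$, and in the general metric-structure setup domains are interpreted as nonempty complete sets covering the sort, but it deserves a word. An alternative, more conceptual route available in this paper is to invoke Lemma \ref{lem: continuous function definable predicate} and its converse (definable predicates correspond to continuous functions on $\mathbb{S}_{\mathbf{D}}(\rT)$): (1) then becomes composition of continuous functions, and (2) becomes the fact that $\inf$ over a compact family of continuous functions on a compact type space is continuous; your elementary argument avoids that machinery and is self-contained.
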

	
	Just as with types, there is a version of definable predicates based on expanding the language with constants from some set $A$.
	
	\begin{definition}
		Let $I$ be an index set, and let $\mathbf{S} = (S_i)_{i \in I}$ be an $I$-tuple of sorts in a language $\mathcal{L}$.  Let $\cM$ be an $\cL$-structure and $A \subseteq \cM$.  Then $\phi: \prod_{i \in I} S_i^{\cM} \to \R$ is a \emph{definable predicate over $A$} if it is a definable predicate in with respect to the language $\cL_A$ and the $\cL_A$-theory of $\cM_A$.
	\end{definition}

	We close with the definition of a definable function which plays a key role in Theorem \ref{thm: definable realization}.
	
	\begin{definition} \label{def: definable function}
		Let $\cL$ be a language and $\rT$ a consistent $\cL$-theory.  Let $I$ be an index set, $(S_i)_{i \in I}$ a tuple of sorts, and $S$ another sort.  A \emph{definable function} $\prod_{i \in I} S_i \to S$ relative to $\rT$ is a collection of functions $f^{\cM}: \prod_{i \in I} S_i^{\cM} \to S^{\cM}$ for each $\cM \models \rT$, such that $\phi^{\cM}(\mathbf{x},y) = d_S^{\cM}(f^{\cM}(\mathbf{x}),y)$ is a definable predicate.
	\end{definition}

	\subsection{Elementary extensions, saturation, and strong homogeneity} \label{subsec: elementary extension}
	
	As mentioned in the introduction, if two tuples $(X_1,\dots,X_n)$ and $(Y_1,\dots,Y_n)$ of random variables on $(\Omega,P)$ have the same probability distribution, then they are approximately conjugate by automorphisms.  They are not necessarily \emph{exactly} conjugate.  For instance, there are cases where $(X_1,\dots,X_n)$ generates the underlying $\sigma$-algebra and $(Y_1,\dots,Y_n)$ does not, so it is impossible for them to be automorphically conjugate.  However, by enlarging the probability space, we can arrange that they are conjugate.
	
	For tracial von Neumann algebras and metric structures in general, one can produce a similar enlargement in which tuples with the same type are necessarily conjugate by an automorphism.  For this purpose, it is not enough to include $\cM$ into a larger von Neumann algebra by a $*$-homomorphism, because a $*$-homomorphism does not necessarily preserve the values of formulas with suprema and infima in them, as it changes the domain over which the suprema and infima are taken.  Rather, we need to require that our inclusion $\cM \to \cN$ preserves the values of formulas, which is the concept of an elementary extension.
	
	\begin{definition}
	A \emph{substructure} $\cM \subseteq \cN$ is an $\cL$-structure where $S^{\cM} \subseteq S^{\cN}$ for each sort $S$, such that for each domain $D$ in a sort $S$, we have $D^{\cM} = D^{\cM} \cap S^{\cN}$, and for each function symbol $f: S_1 \times \dots \times S_n \to S$, we have $f^{\cM} = f^{\cN}|_{S_1^{\cM} \times \dots \times S_n^{\cM}}$.
	\end{definition}
	
	In the case of von Neumann algebras, the reader may verify that the definition of substructure here coincides with the notion of von Neumann subalgebra, by using the Kaplansky density theorem to show that $D^{\cM} = D^{\cN} \cap S^{\cM}$.  In fact, this usage of the Kaplansky density theorem is already incorporated into the proof that von Neumann algebras can be axiomatized as $\cL_{\tr}$-structures with the domains corresponding to operator norm balls \cite[Proposition 3.3]{FHS2014}.
	
	\begin{definition}
		Let $\cM$ be a substructure of $\cN$.  We say that $\cM$ is an \emph{elementary substructure} of $\cN$, or $\cN$ is an \emph{elementary extension} of $\cM$, if for every formula $\phi: S_1 \times \dots \times S_n \to \R$ and every $\mathbf{x} \in S_1^{\cM} \times \dots \times S_n^{\cM}$, we have $\phi^{\cM}(\mathbf{x}) = \phi^{\cN}(\mathbf{x})$.
	\end{definition}
	
	The following is a widely applicable criterion for $\cM$ being an elementary submodel of $\cN$.
	
	\begin{proposition}[Continuous Tarski-Vaught test {\cite[Proposition 4.5]{BYBHU2008}}] \label{prop: TV test}
		Let $\cM$ and $\cN$ be $\cL$-structures with $\cM \subseteq \cN$.  Then the following are equivalent:
		\begin{enumerate}
			\item $\cM \preceq \cN$.
			\item For every formula $\phi$ in $n+1$ variables from sorts $S_1$, \dots, $S_{n+1}$, for every domain $D$ in $S_{n+1}$, for all $x_1 \in S_1^{\cM}$, \dots, $x_n \in S_n^{\cM}$,
			\[
			\inf_{y \in D^{\cN}} \phi^{\cN}(x_1,\dots,x_n,y) = \inf_{y \in D^{\cM}} \phi^{\cN}(x_1,\dots,x_n,y).
			\]
		\end{enumerate}
	\end{proposition}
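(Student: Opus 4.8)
The plan is to prove $(1)\Rightarrow(2)$ directly from the definition of elementary substructure and $(2)\Rightarrow(1)$ by induction on the syntactic complexity of formulas. For $(1)\Rightarrow(2)$, fix a formula $\phi(x_1,\dots,x_{n+1})$, a domain $D$ in $S_{n+1}$, and a tuple $\mathbf{x}$ of elements of $\cM$. Since $\cM\preceq\cN$, the functions $y\mapsto\phi^{\cM}(\mathbf{x},y)$ and $y\mapsto\phi^{\cN}(\mathbf{x},y)$ agree on $D^{\cM}$, so $\inf_{y\in D^{\cM}}\phi^{\cN}(\mathbf{x},y)=\inf_{y\in D^{\cM}}\phi^{\cM}(\mathbf{x},y)=\psi^{\cM}(\mathbf{x})$, where $\psi:=\inf_{y\in D}\phi$ is again an $\cL$-formula. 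Applying elementarity once more, $\psi^{\cM}(\mathbf{x})=\psi^{\cN}(\mathbf{x})=\inf_{y\in D^{\cN}}\phi^{\cN}(\mathbf{x},y)$, which is the desired equality.

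For $(2)\Rightarrow(1)$, I would show by induction on the complexity of an $\cL$-formula $\phi$ that $\phi^{\cM}(\mathbf{x})=\phi^{\cN}(\mathbf{x})$ for every tuple $\mathbf{x}$ of elements of $\cM$ from the appropriate sorts. In the base case, a basic formula has the form $R(T_1,\dots,T_k)$ with $R$ a relation symbol and the $T_j$ terms; since $\cM$ is a substructure of $\cN$, the function symbols (hence the terms) and the relation symbol are interpreted compatibly, so the values agree. For a connective, if $\phi=F(\phi_1,\dots,\phi_m)$ with $F\colon\R^m\to\R$ continuous, then the induction hypothesis on each $\phi_i$ and continuity of $F$ give $\phi^{\cM}(\mathbf{x})=F(\phi_1^{\cM}(\mathbf{x}),\dots,\phi_m^{\cM}(\mathbf{x}))=F(\phi_1^{\cN}(\mathbf{x}),\dots,\phi_m^{\cN}(\mathbf{x}))=\phi^{\cN}(\mathbf{x})$. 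For a quantifier, if $\phi(\mathbf{x})=\inf_{y\in D}\psi(\mathbf{x},y)$, then the induction hypothesis applied to $\psi$ at tuples from $\cM$ (with the extra variable ranging over $D^{\cM}$) gives $\phi^{\cM}(\mathbf{x})=\inf_{y\in D^{\cM}}\psi^{\cN}(\mathbf{x},y)$, and condition (2) applied to $\psi$ rewrites the right-hand side as $\inf_{y\in D^{\cN}}\psi^{\cN}(\mathbf{x},y)=\phi^{\cN}(\mathbf{x})$; the case $\phi=\sup_{y\in D}\psi$ reduces to this via $\sup_{y\in D}\psi=-\inf_{y\in D}(-\psi)$ and the connective step. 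Applying the claim to all formulas, instantiated at elements of $\cM$, yields $\cM\preceq\cN$.

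The main obstacle is the quantifier step, and it is largely bookkeeping: one must notice that condition (2) is exactly what is needed to pass from $\inf_{y\in D^{\cM}}\psi^{\cN}$ to $\inf_{y\in D^{\cN}}\psi^{\cN}$ \emph{after} the induction hypothesis has replaced $\psi^{\cM}$ by $\psi^{\cN}$ inside the infimum, and that (2) is quantified over formulas in every finite number of free variables, so it applies to $\psi$ no matter how long $\mathbf{x}$ is. I would also be slightly careful about the base case: the substructure definition as stated in the text mentions only function symbols, but the notion of substructure being used also requires relation symbols to restrict, i.e.\ $R^{\cM}=R^{\cN}$ on $S_1^{\cM}\times\dots\times S_k^{\cM}$, and this is what makes basic formulas agree.
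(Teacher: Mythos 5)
The paper does not prove this proposition; it is cited from \cite[Proposition~4.5]{BYBHU2008} as a known result. Your proof is the standard argument and is correct, so I will just flag two small points. First, your observation about the base case is right: the definition of substructure in the paper only explicitly mentions function symbols, but relation symbols (in particular the privileged metric symbol $d_S$) must also be interpreted compatibly for basic formulas to agree; this is what the reference's definition of substructure requires, and it is what makes the base case go through. Second, in the quantifier step for $\phi=\sup_{y\in D}\psi$, note that condition (2) as stated only concerns $\inf$, so you must apply it to the formula $-\psi$ rather than to $\psi$: the identity $\sup_{y\in D^{\cM}}\psi^{\cN}(\mathbf{x},y)=\sup_{y\in D^{\cN}}\psi^{\cN}(\mathbf{x},y)$ follows from (2) applied to $-\psi$, after which the connective $t\mapsto -t$ converts back. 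Your phrasing ``reduces to this via $\sup=-\inf(-\cdot)$ and the connective step'' suggests you already have this in mind; it is worth making explicit, since one is applying the induction hypothesis at the same quantifier depth (to $-\psi$, which has the same depth as $\psi$) rather than to $\inf_{y\in D}(-\psi)$ itself.
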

	
	The first thing that we hope to achieve by constructing a suitable elementary extension is \emph{saturation}, which roughly means that if there is a tuple $(x_i)_{i \in I}$ that approximately satisfy some relations, then there is a tuple that exactly satisfies these relations.  However, to make this easier to accomplish, we will limit the cardinality $|I|$ of the tuples under consideration to be much smaller than the density character of the ambient metric space. 
	
	\begin{definition}
		The \emph{density character} $\chi(X)$ of a metric space $X$ is the minimum cardinality of a dense subset.  Thus, $X$ is separable if and only if $\chi(X) \leq \aleph_0$.
	\end{definition}
	
	\begin{definition} \label{def: language density character}
		If $\cL$ is a language and $\kappa$ is a cardinal, we say $\cL$ has \emph{density character at most} $\kappa$ if
		\begin{enumerate}
			\item The set of sorts and the set of domains have cardinality at most $\kappa$.
			\item For any index set $I$ with $|I| \leq \kappa$ and any set of variables $(x_i)_{i \in I}$ from sorts $(S_i)_{i \in I}$, there exists $\Omega \subseteq \mathcal{F}_{\mathbf{S}}$ with $|\Omega| \leq \kappa$ such that for every formula $\phi \in \mathcal{F}_{\mathbf{S}}$ every tuple $(D_i)_{i \in I}$ of domains, and every $\epsilon > 0$, there exists a formula $\psi \in \Omega$ such that $|\phi^{\cM} - \psi^{\cM}| \leq \epsilon$ on $\bD^{\cM}$ for all $\cL$-structures $\cM$.
		\end{enumerate}
		The density character $\chi(\cL)$ is the smallest cardinal $\kappa$ such that $\cL$ has density character at most $\kappa$.  In particular, if $\chi(\cL) \leq \aleph_0$, then $\cL$ is called \emph{separable}.
	\end{definition}
	
	An important result that we will use later on is that from a large model we can extract elementary submodels of smaller density character that contain whichever elements we are working with.
	
	\begin{theorem}[{Downward L{\"o}wenheim-Skolem theorem \cite[Proposition 7.3]{BYBHU2008}}] \label{thm: DLS}
		Let $\cM$ be an $\cL$-structure and $A \subseteq \cM$.  Then there exists an elementary substructure $\cN \prec \cM$ containing $A$ such that $\chi(\cN) \leq \max(\chi(\cL),\chi(A))$.
	\end{theorem}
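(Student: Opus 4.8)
The natural approach is the standard ``union of witnesses'' construction followed by the continuous Tarski--Vaught test (Proposition~\ref{prop: TV test}). Set $\kappa = \max(\chi(\cL),\chi(A),\aleph_0)$; for the languages of interest (in particular $\cL_{\tr}$) this equals $\max(\chi(\cL),\chi(A))$. By clause~(2) of Definition~\ref{def: language density character}, for the at most $\kappa$-many finite tuples of variables ever needed we may fix a set $\Omega$ of formulas with $|\Omega| \le \kappa$ such that every formula is, on every tuple of domains, a uniform limit of members of $\Omega$.

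I would then build an increasing $\omega$-chain of subsets $A = A_0 \subseteq A_1 \subseteq \cdots$ of $\bigsqcup_S S^{\cM}$, each of density character at most $\kappa$, together with nested $\kappa$-dense subsets $B_k \subseteq A_k$. Passing from $A_k$ to $A_{k+1}$, adjoin: (i) $f^{\cM}(\bar b)$ for every function symbol $f$ (including the constants, and one chosen point of each nonempty domain) and every finite tuple $\bar b$ from $B_k$; and (ii) for every $\psi \in \Omega$, every finite tuple $\bar b$ from $B_k$, every domain $D$, and every $m \in \N$, a witness $c(\psi,\bar b,D,m) \in D^{\cM}$ with $\psi^{\cM}(\bar b,c(\psi,\bar b,D,m)) \le \inf_{y \in D^{\cM}} \psi^{\cM}(\bar b,y) + 1/m$. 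Since $\cL$ has at most $\kappa$ sorts, domains, and symbols, and $|B_k|^{<\omega}\cdot|\Omega|\cdot\aleph_0 \le \kappa$, at most $\kappa$ points are adjoined, so $\chi(A_{k+1}) \le \kappa$; choose $B_{k+1} \supseteq B_k$ dense in $A_{k+1}$ with $|B_{k+1}| \le \kappa$. Let $A_\infty = \bigcup_k A_k$ (still of density character $\le \kappa$), put $D^{\cN} = \overline{A_\infty \cap D^{\cM}}$ (closed in the complete space $D^{\cM}$, hence complete), and $S^{\cN} = \bigcup_{D} D^{\cN}$. Since $A_\infty$ is closed under the function symbols and these are continuous, $\cN$ is a substructure of $\cM$ with $A \subseteq \cN$ and $\chi(\cN) \le \kappa$.

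Finally I would verify the criterion of Proposition~\ref{prop: TV test} with $\cN$ as the small model: given a formula $\phi$, a tuple $\bar x$ from $\cN$, a domain $D$, and $\epsilon > 0$, it suffices to produce $y \in D^{\cN}$ with $\phi^{\cM}(\bar x,y) < \inf_{z \in D^{\cM}} \phi^{\cM}(\bar x,z) + \epsilon$, since the reverse inequality between the two infima is automatic. Choose $\psi \in \Omega$ with $|\phi^{\cM} - \psi^{\cM}| < \epsilon/3$ on the relevant product of domains, then, using the uniform continuity modulus of $\psi$, a tuple $\bar b$ from some $B_k$ close enough to $\bar x$ that replacing $\bar x$ by $\bar b$ changes $\psi^{\cM}(\cdot,y)$, uniformly in $y$, by less than $\epsilon/3$, and finally $m$ with $1/m < \epsilon/3$; then $c(\psi,\bar b,D,m) \in A_{k+1} \cap D^{\cM} \subseteq D^{\cN}$ works after chaining the three estimates. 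Hence the two infima coincide and $\cN \preceq \cM$ by the Tarski--Vaught test. I expect the only genuine work to be the bookkeeping in the second paragraph --- keeping $\chi$ pinned at $\kappa$ through the simultaneous closure under symbols, under witnesses, and under metric limits --- together with the need, throughout, to argue via dense subsets and moduli of continuity rather than via individual points, which is precisely where the metric (as opposed to discrete) setting adds friction compared with the classical L{\"o}wenheim--Skolem theorem.
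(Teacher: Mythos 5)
The paper gives no proof of this statement; it cites \cite[Proposition 7.3]{BYBHU2008}, which is stated for single-sorted bounded metric structures without domains of quantification, and leaves the adaptation to the multi-sort, domains-of-quantification framework of \cite{FHS2014} implicit. Your proposal is exactly the standard witness construction combined with the continuous Tarski--Vaught test (Proposition~\ref{prop: TV test}), and it is essentially the right argument. The ``three $\epsilon/3$-estimates'' in your last paragraph are really a slightly longer chain (one must also pass from $\inf_{y\in D}\psi^{\cM}(\bar b,y)$ back to $\inf_{z\in D}\phi^{\cM}(\bar x,z)$ via continuity and the $\Omega$-approximation again), but this is pure bookkeeping, as you anticipate.

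The one genuine gap is in the step from $A_\infty$ to the substructure $\cN$. Setting $D^{\cN}=\overline{A_\infty\cap D^{\cM}}$ and $S^{\cN}=\bigcup_D D^{\cN}$ and then asserting ``$\cN$ is a substructure'' skips the compatibility condition $D^{\cN}=D^{\cM}\cap S^{\cN}$. The inclusion $\supseteq$ is not automatic: a priori some $x\in D^{\cM}$ could be a limit of elements of $A_\infty\cap D'^{\cM}$ for a \emph{different} domain $D'$ without being a limit of elements of $A_\infty\cap D^{\cM}$. This is exactly the kind of verification the paper carries out for $\tilde{\cP}$ in the proof of Proposition~\ref{prop: ACL}, implication (5)$\implies$(3). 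In your construction the repair uses the witnesses you already adjoined: $\phi_D(y)=\inf_{z\in D}d(y,z)$ is a formula, so if $a_n\in A_\infty\cap D'^{\cM}$ converge to $x\in D^{\cM}$, then choosing $b_n\in B_{k_n}$ close to $a_n$ and taking the associated witnesses $c_n\in A_\infty\cap D^{\cM}$ gives $d(a_n,c_n)\to d(x,D^{\cM})=0$, hence $c_n\to x$ and $x\in D^{\cN}$. With that check added, the proposal is a complete and correct proof of Theorem~\ref{thm: DLS}.
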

	
	Now we are ready to formally define saturation.  See also 
	\cite[\S 4.4]{FHS2014} and \cite[p.\ 35ff]{BYBHU2008}.
	
	\begin{definition}
		Let $\cM$ be an $\cL$-structure and let $\Phi$ be a set of $\cL$-formulas in variables $(x_i)_{i \in I}$.  For each $i$, let $D_i$ be a domain. Then $\Phi$ is \emph{satisfiable in $\prod_{i \in I} D_i^{\cM}$} if there exists some $\mathbf{x} \in \prod_{i \in I} D_i^{\cM}$ with $\phi(\mathbf{x}) = 0$.  We say $\Phi$ is \emph{finitely satisfiable in $\prod_{i \in I} D_i^{\cM}$} if every finite subset of $\Phi$ is satisfiable in $\cM$.  We say $\Phi$ \emph{finitely approximately satisfiable in $\prod_{i \in I} D_i^{\cM}$} if for every $\delta > 0$ and finite set of formulas $F \subseteq \Phi$, there exists some $\mathbf{x} \in \prod_{i \in I} D_i^{\cM}$ with $|\phi(\mathbf{x})| < \delta$ for $\phi \in F$.
	\end{definition}
	
	\begin{definition}
		Assume $\chi(\cL) < \kappa$.  An $\cL$-structure $\cM$ is said to be \emph{$\kappa$-saturated} if for any $A \subseteq \cM$ with $|A| < \kappa$, any set of variables $(x_i)_{i \in I}$ from $(D_i)_{i \in I}$ with $|I| < \kappa$, and any set $\Phi$ of $\cL_A$-formulas in $(x_i)_{i \in I}$, if $\Phi$ is finitely approximately satisfiable in $\prod_{i \in I} D_i^{\cM_A}$, then $\Phi$ is satisfiable in $\prod_{i \in I} D_i^{\cM_A}$.
	\end{definition}
	
	The notion of $\kappa$-saturation has as strong resemblance to compactness, since both are conditions that allow one to deduce existence of elements satisyfing some relations exactly from knowing the existence of elements that satisfy them approximately.  In fact, the following observation gives an explicit connection with compact sets.
	
	\begin{proposition} \label{prop: compact saturation}
		Let $\cL$ be a metric signature and let $\cM$ be an $\cL$-structure.  Suppose that $D^{\cM}$ is compact for each domain $D$ in each sort $S$.  Then $\cM$ is $\kappa$-saturated for every $\kappa > \chi(\cL)$.
	\end{proposition}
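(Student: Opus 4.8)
The plan is to deduce $\kappa$-saturation from Tychonoff's theorem: the relevant domain balls are compact by hypothesis, so the product over which we quantify is compact, and satisfiability then follows from the finite intersection property for closed sets. Notably the cardinality restrictions $|A| < \kappa$ and $|I| < \kappa$ will play no role in the argument — compactness of the domains is strong enough to yield saturation of every degree — and the hypothesis $\kappa > \chi(\cL)$ is needed only so that the notion ``$\kappa$-saturated'' is defined in the first place.

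Concretely, I would fix $A \subseteq \cM$, variables $(x_i)_{i \in I}$ ranging over domains $(D_i)_{i \in I}$, and a set $\Phi$ of $\cL_A$-formulas in these variables that is finitely approximately satisfiable in $K := \prod_{i \in I} D_i^{\cM_A}$. Passing from $\cM$ to $\cM_A$ only adds constant symbols and leaves the underlying sets unchanged, so $D_i^{\cM_A} = D_i^{\cM}$ is compact for each $i$, and hence $K$ is compact by Tychonoff. For $\phi \in \Phi$ and $\delta > 0$ I would set $Z_{\phi,\delta} := \{\mathbf{x} \in K : |\phi^{\cM_A}(\mathbf{x})| \le \delta\}$. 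Since a formula has only finitely many free variables, $\phi^{\cM_A}$ factors through a uniformly continuous function on the product of the domain balls corresponding to those variables; thus $\phi^{\cM_A}$ is continuous on $K$ and each $Z_{\phi,\delta}$ is closed.

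The hypothesis of finite approximate satisfiability states precisely that for every finite $F \subseteq \Phi$ and every $\delta > 0$ there is $\mathbf{x} \in K$ with $|\phi^{\cM_A}(\mathbf{x})| < \delta$ for all $\phi \in F$. Applying this with $\delta = \min_{1 \le j \le m} \delta_j$ shows that any finite subfamily $Z_{\phi_1,\delta_1},\dots,Z_{\phi_m,\delta_m}$ has nonempty intersection, so $\{Z_{\phi,\delta} : \phi \in \Phi,\ \delta > 0\}$ is a family of closed subsets of the compact space $K$ with the finite intersection property. Hence $\bigcap_{\phi \in \Phi,\ \delta > 0} Z_{\phi,\delta} \ne \emptyset$, and any $\mathbf{x}$ in this intersection satisfies $|\phi^{\cM_A}(\mathbf{x})| \le \delta$ for all $\delta > 0$, i.e.\ $\phi^{\cM_A}(\mathbf{x}) = 0$, for every $\phi \in \Phi$. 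Therefore $\Phi$ is satisfiable in $K$, which is exactly what $\kappa$-saturation demands. There is no real obstacle; the only point requiring a moment's care is that a formula, although it may be regarded as living on a product indexed by an infinite set $I$, is genuinely continuous on $K$, and this is immediate from the fact that any formula is built from finitely many function and relation symbols and so involves only finitely many of the variables.
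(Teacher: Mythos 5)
Your proposal is correct and takes essentially the same route as the paper: compactness of $\prod_i D_i^{\cM}$ via Tychonoff, closedness of the sets $\{\mathbf{x}: |\phi^{\cM_A}(\mathbf{x})| \le \delta\}$ by continuity of formulas, and the finite intersection property. The paper's proof is the same argument; your extra remark that the cardinality bounds are not actually used is a correct observation.
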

	
	\begin{proof}
		Fix $\kappa > \chi(\cL)$, let $A \subseteq \cM$ with $\chi(A) < \kappa$.  Let $\Phi$ be a set of $\cL_A$-formulas in variables $x_i$ from sort $S_i$, indexed by a set $I$ with $|I| < \kappa$.  Let $D_i$ be a domain in $S_i$.  Suppose that $\Phi$ is finitely approximately satisfiable in $\prod_{i \in I} D_i^{\cM}$.  For $\delta > 0$ and $\phi \in \Phi$, let
		\[
		K_{\phi,\delta} = \left\{\mathbf{x} \in \prod_{i \in I} D_{r_i}^{\cM}: |\phi^{\cM_A}(\mathbf{x})| \leq \delta \right\}.
		\]
		Then $K_{\phi,\delta}$ is a closed subset of the compact set $\prod_{i \in I} D_{r_i}^{\cM}$.  Moreover, finite approximate satisfiability of $\Phi$ means that any finite collection of sets $K_{\phi,\delta}$ has nonempty intersection.  Therefore, by compactness, the whole collection of $K_{\phi,\delta}$'s has nonempty intersection, which means that $\Phi$ is satisfiable.
	\end{proof}
	
	We close the section with the notion of strong homogeneity, which is the property of a structure $\cM$ that allows us to identify types with automorphism orbits.
	
	\begin{definition}
		Let $\kappa > \chi(\cL)$.  An $\cL$-structure $\cM$ is said to be \emph{strongly $\kappa$-homogeneous} if whenever $(S_i)_{i \in I}$ is a tuple of sorts with $|I| < \kappa$ and $\mathbf{x}, \mathbf{y} \in \prod_{i \in I} S_i^{\cM}$ such that $\tp^{\cM}(\mathbf{x}) = \tp^{\cM}(\mathbf{y})$, there exists an automorphism $\Theta$ of $\cM$ such that $\Theta(\mathbf{x}) = \mathbf{y}$ (that is, $\Theta(x_i) = y_i$ for all $I \in I$).
	\end{definition}
	
	\begin{theorem}[{\cite[Proposition 7.12]{BYBHU2008}}] \label{thm: existence of nice model}
		Let $\cL$ be a metric signature and $\kappa > \chi(\cL)$ an infinite cardinal.  Every $\cL$-structure $\cM$ has an elementary extension that is $\kappa$-saturated and strongly $\kappa$-homogeneous.
	\end{theorem}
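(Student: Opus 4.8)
The plan is to build the required elementary extension by transfinite recursion, with both saturation and strong homogeneity powered by the same device: a compactness argument that realizes prescribed types in an elementary extension. First I would isolate the lemma that for any $\cL$-structure $\cN$ there is an elementary extension $\cN' \succeq \cN$ realizing every complete type over every $A \subseteq \cN$ with $|A| < \kappa$. To prove it, expand the language of $\cN$ with a constant for each element of $\cN$ together with, for each such $A$ and each type $p$ over $A$, a tuple of fresh constants subject to the conditions of $p$; the resulting set of conditions --- the elementary diagram of $\cN$ plus all the type conditions --- is finitely approximately satisfiable in $\cN$ itself (a finite fragment mentions only finitely many of the $p$'s, and each finite piece of a consistent type over $A$ is approximately realized in $\cN$ since the relevant infimum is the value of an $\cL_A$-formula, hence is pinned down by $\Th(\cN_A)$), so the metric compactness theorem \cite{BYBHU2008} produces a model whose $\cL$-reduct is the desired $\cN'$.

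For $\kappa$-saturation alone one could iterate naively: $\cN_0 = \cM$, $\cN_{\alpha+1} = (\cN_\alpha)'$, unions at limits, running to a regular cardinal $\mu \geq \kappa$; since the union of an elementary chain is an elementary extension of each member \cite{BYBHU2008}, $\bigcup_{\alpha<\mu}\cN_\alpha \succeq \cM$, and any $A$ of size $<\kappa\le\mu$ lies in some $\cN_\alpha$ by regularity, so its types are realized one level up. Strong homogeneity needs more, and here I would pass to a \emph{special model}. Fix $\kappa$ regular (this covers the uses here, e.g.\ $\kappa=\aleph_1$), and an increasing sequence of cardinals $(\lambda_\alpha)_{\alpha<\kappa}$ with $\lambda_0 \ge \kappa + \chi(\cL) + |\cM|$, chosen (by a standard interleaved recursion) to grow fast enough that $\mu := \sup_\alpha \lambda_\alpha$ is a strong limit with $\operatorname{cf}(\mu)=\kappa$; simultaneously build a continuous elementary chain $\cM = \cN_0 \preceq \cN_1 \preceq \cdots$ with $\cN_{\alpha+1}$ a $\lambda_\alpha^+$-saturated extension of $\cN_\alpha$ and the recursion arranged so that $|\cN_\alpha| \le \lambda_\alpha$ at every stage. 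Then $\cN := \bigcup_{\alpha<\kappa}\cN_\alpha$ is an elementary extension of $\cM$ of cardinality $\mu$, and it is $\kappa$-saturated because a set of size $<\kappa$ lies in some $\cN_\alpha$ and $\cN_{\alpha+1}$ is $\lambda_\alpha^+$-saturated with $\lambda_\alpha \ge \kappa$.

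Finally I would verify strong $\kappa$-homogeneity of $\cN$. Given $\mathbf{x},\mathbf{y} \in \cN^I$ with $|I|<\kappa$ and $\tp^{\cN}(\mathbf{x})=\tp^{\cN}(\mathbf{y})$, both tuples lie in some $\cN_{\alpha_0}$ since $\operatorname{cf}(\mu)=\kappa>|I|$, and $\{(x_i,y_i):i\in I\}$ is a partial elementary map. I would build an increasing chain of partial elementary maps $g_\alpha$ ($\alpha_0 \le \alpha < \kappa$) extending it, with $\cN_\alpha \subseteq \dom(g_\alpha)\cap\Ran(g_\alpha)$ and $\dom(g_\alpha)\cup\Ran(g_\alpha)\subseteq\cN_{\alpha+1}$: to go from $g_\alpha$ to $g_{\alpha+1}$, run a back-and-forth of length $|\cN_{\alpha+1}|=\lambda_{\alpha+1}$ entirely inside $\cN_{\alpha+2}$, throwing the elements of $\cN_{\alpha+1}$ into the domain and the range in turn, which is possible because at each step the current map has size $\le \lambda_{\alpha+1}<\lambda_{\alpha+1}^+$ and $\cN_{\alpha+2}$ is $\lambda_{\alpha+1}^+$-saturated, so the single type needed to extend it is realized there; take unions at limit stages. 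Then $\Theta := \bigcup_\alpha g_\alpha$ is elementary with domain and range both equal to $\bigcup_\alpha \cN_\alpha = \cN$, hence an automorphism of $\cN$ with $\Theta(\mathbf{x}) = \mathbf{y}$. The crux --- and the one genuinely delicate point --- is this last step: one cannot obtain strong homogeneity from a single back-and-forth off of $\kappa$-saturation, since the partial maps swell well past size $\kappa$; the resolution is to thread the back-and-forth through the levels of the special model, each level being saturated enough for the maps confined below it, which is what dictates the choice of $\mu$ and the $\lambda_\alpha$'s and the cardinal-arithmetic bookkeeping that goes with it.
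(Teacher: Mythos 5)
Your proof is correct in outline and takes a genuinely different route from the one the paper cites. The paper defers to \cite[Lemma 7.9, Proposition 7.10, Proposition 7.12]{BYBHU2008}, and the remark following the theorem makes explicit that that proof builds the saturated model as an ultrapower with respect to a suitable ultrafilter on the finite subsets of an index set $J$ (the paper's modification is just to enlarge $J$ so that $<\kappa$-tuples rather than finite tuples can be handled). You instead construct a \emph{special model}: a continuous elementary chain of length $\kappa$ whose successor stages are $\lambda_\alpha^+$-saturated via iterated applications of the compactness theorem, with the $\lambda_\alpha$'s chosen so that $\mu=\sup_\alpha\lambda_\alpha$ is a strong limit of cofinality $\kappa$, and strong $\kappa$-homogeneity obtained by threading a back-and-forth through the levels of the chain. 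The ultrapower route is shorter for the saturation half (one blow-up, no cardinal bookkeeping) but requires producing an appropriately good ultrafilter; the special-model route avoids ultrafilters entirely, using only compactness and elementary-chain facts, at the price of the cardinal arithmetic you sketch, and it has the pedagogical virtue that the homogeneity argument is completely transparent. Two small points to tighten: the fact you need when locating $\mathbf{x},\mathbf{y}$ inside some $\cN_{\alpha_0}$ is that the chain has length $\kappa$ with $\kappa$ regular (the value of $\operatorname{cf}(\mu)$ is a byproduct, not the reason); and the reduction to regular $\kappa$ should be stated as replacing $\kappa$ by $\kappa^+$ --- which only strengthens both conclusions --- rather than framed as covering only the applications at hand, since the theorem is stated for arbitrary infinite $\kappa$. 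You also elide the metric-specific point that partial elementary maps are uniformly continuous and hence extend to closures, which is needed when taking unions at limit stages, but this is standard and not a real gap.
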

	
	\begin{remark}
		In \cite{BYBHU2008} and \cite{Hart2023}, the definition of saturation is given as above with $|I|$ finite.  Thus, the definition given here for saturation is \emph{a priori} stronger than theirs.  The two definitions can be shown to be equivalent.  However, for our purposes, it is sufficient to know that the proof of the theorem above via \cite[Lemma 7.9, Proposition 7.10, Proposition 7.12]{BYBHU2008} works just as well with a set of variables of cardinality less than $\kappa$ rather than a finite set of variables.  The only modification is to use ultrafilters on the set of finite subsets of some index set $J$, with $|J| \geq \kappa$ rather than merely $|J| > \chi(\cL)$.
	\end{remark}
	
	
	
	\section{Definable and algebraic closures} \label{sec: def and alg closures}
	
	In this section, we review the definable closure and the related concept of algebraic closure.  We give several examples of definable and algebraic closures in tracial von Neumann algebras.  In particular, we completely compute the definable and algebraic closures in the finite-dimensional setting (Example \ref{ex: finite dimensional}), and in the setting of $\mathrm{II}_1$ factors, we relate the definable and algebraic closures with the normalizer for inclusions with spectral gap (Proposition \ref{prop: spectral gap normalizer}).
	
	\subsection{Definition and Properties} \label{subsec: def and alg closure def}
	
	\begin{definition}
		Let $\cM$ be an $\cL$-structure and let $A$ be a subset of $\cM$, and let $C$ be a closed subset of $S^{\cM}$ for some sort $S$.  We say that $C$ is \emph{definable over $A$} if $d^{\cM}(x,C) = \phi^{\cM}(x)$ for some definable predicate $\phi$ with respect to $\Th(\cM_A)$ in the language $\cL_A$.
	\end{definition}
	
	\begin{definition} \label{def: DCL and ACL}
		Let $\cM$ be an $\cL$-structure and let $A$ be a subset of $\cM$.
		\begin{enumerate}[(1)]
			\item The \emph{definable closure} of $A$ in $\cM$, denoted $\dcl^{\cM}(A)$, is the set of $x$ in $\cM$ such that $\{x\}$ is definable over $A$.
			\item The \emph{algebraic closure} of $A$ in $\cM$, denoted $\acl^{\cM}(A)$, is the set of $x$ from some domain $D^\cM$ such that there exists a compact set $C \subseteq D^{\cM}$ with $x \in C$, such that $C$ is definable over $A$.
		\end{enumerate}
	\end{definition}
	
	We will use the following fact from \cite{BYBHU2008}.
	
	\begin{proposition}[{\cite[Corollary 10.5]{BYBHU2008}}] \label{prop: closure and elem ext}
		Let $\cM$ be an $\cL$-structure, and let $\cN$ be an elementary extension of $\cM$.  Let $A \subseteq \cM$.  Then $\dcl^{\cM}(A) = \dcl^{\cN}(\cA)$ and $\acl^{\cM}(A) = \acl^{\cN}(A)$.
	\end{proposition}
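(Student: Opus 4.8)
The plan is to reduce the statement to transfer arguments along the elementary extension. Since $A \subseteq \cM$ and $\cM \preceq \cN$, adjoining constants for the elements of $A$ preserves elementarity, so $\cM_A \preceq \cN_A$, and in particular $\Th(\cM_A) = \Th(\cN_A)$. Hence a definable predicate $\phi$ over $A$ is a single object with interpretations in both $\cM$ and $\cN$; approximating $\phi$ uniformly on each domain by $\cL_A$-formulas and using that elementary extensions preserve the values of formulas, one gets that $\phi^{\cN_A}$ agrees with $\phi^{\cM_A}$ on $\cM$. Since $\{z\}$ is the compact set witnessing membership in the definable closure, the $\dcl$ statement is the special case of the $\acl$ statement in which the compact set is a singleton, and the arguments below simplify accordingly; so I concentrate on $\acl$.

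For $\acl^\cM(A) \subseteq \acl^\cN(A)$: if $z \in C \subseteq D^\cM$ with $C$ compact and $d^\cM(\cdot,C) = \phi^{\cM_A}$ for a definable predicate $\phi$ over $A$, then the same set $C$ is still compact inside $\cN$, so it suffices to check $d^\cN(\cdot,C) = \phi^{\cN_A}$. For any finite $\epsilon$-net $c_1,\dots,c_k \in C$ of $C$, the inequalities $\phi^{\cM_A} \le d^\cM(\cdot,c_j)$ (each $j$) and $\phi^{\cM_A} \ge \min_j d^\cM(\cdot,c_j) - \epsilon$ hold on $D^\cM$; each is a $\sup$-sentence over $\cL_{A \cup \{c_1,\dots,c_k\}}$, so it survives passage to $\cN$. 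Letting $\epsilon \to 0$ through a sequence of finer nets yields $\phi^{\cN_A} = d^\cN(\cdot,C)$, so $C$ witnesses $z \in \acl^\cN(A)$.

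The heart of the matter is $\acl^\cN(A) \subseteq \acl^\cM(A)$, which I would deduce from the claim that \emph{every compact $A$-definable subset $C$ of a domain $D^\cN$ is contained in $\cM$}: granting this, if $z \in C$ with $d^\cN(\cdot,C) = \phi^{\cN_A}$, then $C \subseteq D^\cM$ is compact and $d^\cM(\cdot,C) = \phi^{\cM_A}$ (using $C \subseteq \cM$ and that $\phi^{\cN_A}$ agrees with $\phi^{\cM_A}$ on $\cM$), so $C$ witnesses $z \in \acl^\cM(A)$. To prove the claim, fix $\epsilon>0$, let $g_\delta$ be a continuous cutoff with $g_\delta = 1$ on $[0,\delta/2]$ and $g_\delta = 0$ on $[\delta,\infty)$, and form the $\cL_A$-definable-predicate sentence
\[
\Xi_{k,\epsilon,\delta} \;=\; \inf_{y_1,\dots,y_k \in D}\ \sup_{z \in D}\ g_\delta\!\left(\phi(z)\right)\cdot \max\!\Big(\min_{1\le i\le k} d(z,y_i) - \epsilon,\ 0\Big).
\]
Since $C = \{z \in D^\cN : \phi^{\cN_A}(z)=0\}$ is compact, taking $k$ to be the size of an $(\epsilon/2)$-net of $C$ and $\delta \le \epsilon/2$ makes $\Xi_{k,\epsilon,\delta}^{\cN}=0$; by $\cM_A \preceq \cN_A$ this sentence is also $0$ in $\cM$, producing $y_1,\dots,y_k \in D^\cM$ whose $\epsilon$-balls cover $\{z \in D^\cM : \phi^{\cM_A}(z)\le\delta/2\}$ up to arbitrarily small error. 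Adjoining these $y_i$ as constants, the corresponding $\sup$-sentence now transfers back up to $\cN$, so the $y_i$ also nearly $\epsilon$-cover $\{z \in D^\cN : \phi^{\cN_A}(z)\le\delta/2\} \supseteq C$. Hence every $c \in C$ lies within $\epsilon + o(1)$ of $D^\cM$; letting $\epsilon \to 0$ and using that $D^\cM$ is complete, hence closed, in $D^\cN$ forces $c \in D^\cM \subseteq \cM$, so $C \subseteq \cM$.

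I expect this last step to be the main obstacle: one must encode ``the approximate zero set of $\phi$ is totally bounded, with a quantitative bound on the size of an $\epsilon$-net'' as a definable-predicate sentence robust enough to transfer both down to $\cM$ (to extract the net) and, after naming the net as constants, back up to $\cN$ (to see it covers $C$); the delicate points are the choice of the cutoff $g_\delta$ coupling the net condition to the constraint $\phi \le \delta/2$, and keeping track of the error terms. In the $\dcl$ case the sentence $\Xi$ is unnecessary: one simply has $\inf_{y \in D^\cM}\phi^{\cM_A}(y) = \inf_{y \in D^\cN}\phi^{\cN_A}(y) = 0$, any approximating sequence in $D^\cM$ is Cauchy because its $\phi$-values are exactly $d^\cN(\cdot,z) \to 0$, and completeness of $D^\cM$ places its limit $z$ inside $\cM$.
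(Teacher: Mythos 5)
The paper does not give its own proof of this proposition — it cites it directly from \cite[Corollary 10.5]{BYBHU2008}, and later invokes the key ingredient, \cite[Proposition 10.4]{BYBHU2008} (a compact $A$-definable set in $\cN$ lies inside any elementary submodel of $\cN$ containing $A$), in the proof of Proposition~\ref{prop: ACL}. Your reconstruction is essentially the argument behind those two results: you correctly reduce $\dcl$ to $\acl$ with singleton $C$; you transfer the equality $d(\cdot,C)=\phi$ across $\cM_A\preceq\cN_A$ via $\sup$-sentences with a finite $\epsilon$-net of $C$ named as constants; and you re-prove Proposition~10.4 via the covering sentence $\Xi_{k,\epsilon,\delta}$, pushing an approximate net down to $\cM$ and then back up to $\cN$ with the net named as constants. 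The argument is correct. Two minor points worth tightening in a polished write-up: (i) the inequalities $\phi\le d(\cdot,c_j)$ and $\phi\ge\min_j d(\cdot,c_j)-\epsilon$, as well as $\Xi_{k,\epsilon,\delta}$, should be stated with the supremum ranging over each domain $D'\supseteq D$ in turn (not a single fixed $D$), so that the identity $\phi^{\cN_A}=d^{\cN}(\cdot,C)$ is established on the whole sort $S^{\cN}$ as the definition of ``definable over $A$'' requires — this is a routine modification since the sentences transfer domain by domain; (ii) once $C\subseteq\cM$ is known, the verification that $C$ is definable over $A$ in $\cM$ (i.e.\ $d^{\cM}(\cdot,C)=\phi^{\cM_A}$ on $\cM$, not only on the domain containing $C$) is exactly as in (i). Neither point affects the substance; the proof is sound and follows the same route as the source the paper cites.
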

	
	\begin{proposition}[{compare \cite[Exercise 10.7]{BYBHU2008}}] \label{prop:DCL}
		Let $A$ be a subset of $\cM$.  Suppose $\chi(\cL) < \kappa$ and $\chi(A) < \kappa$.  Let $D$ be a domain in a sort $S$, and let $x \in D^{\cM}$.
		\begin{enumerate}[(1)]
			\item If $x \in \dcl^{\cM}(A)$, then $x$ is the unique realization in $\cM$ of its type over $A$ in $\cM$.
			\item The converse to (1) holds if $\cM$ is $\kappa$-saturated.
			\item If $x \in \dcl^{\cM}(A)$, then every automorphism of $\cM$ that fixes $A$ pointwise also fixes $x$.
			\item The converse to (3) holds if $\cM$ is $\kappa$-saturated and strongly $\kappa$-homogeneous.
		\end{enumerate}
	\end{proposition}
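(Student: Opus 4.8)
The plan is to derive (1) straight from the definitions, (3) from (1), (2) from $\kappa$-saturation by a direct approximation, and (4) from (2) plus strong homogeneity. We may assume $|A| < \kappa$: if $A_0 \subseteq A$ is dense with $|A_0| = \chi(A)$, then every $\cL_A$-formula is --- by uniform continuity of formulas in their constant arguments --- a uniform-on-balls limit of $\cL_{A_0}$-formulas, so definable predicates over $A$ coincide with definable predicates over $A_0$, and $\tp^{\cM}(x/A) = \tp^{\cM}(x'/A)$ iff $\tp^{\cM}(x/A_0) = \tp^{\cM}(x'/A_0)$; hence all four statements for $A$ follow from the corresponding statements for $A_0$.

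For (1), if $x \in \dcl^{\cM}(A)$ there is a definable predicate $\phi$ over $A$ with $\phi^{\cM}(y) = d^{\cM}(y,x)$ for all $y$, so $\phi^{\cM}(x)=0$. The value of a definable predicate over $A$ at a point depends only on that point's type over $A$ (it is a uniform-on-balls limit of $\cL_A$-formulas); thus $\tp^{\cM}(x'/A) = \tp^{\cM}(x/A)$ forces $d^{\cM}(x',x) = \phi^{\cM}(x') = \phi^{\cM}(x) = 0$, i.e.\ $x' = x$. For (3), an automorphism $\Theta$ of $\cM$ fixing $A$ pointwise is an automorphism of $\cM_A$, hence preserves all $\cL_A$-formula values, so $\tp^{\cM}(\Theta(x)/A) = \tp^{\cM}(x/A)$ and (1) gives $\Theta(x) = x$.

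For (2), suppose $\cM$ is $\kappa$-saturated and $x \in D^{\cM}$ is the unique realization in $\cM$ of $p := \tp^{\cM}(x/A)$; it suffices to show $y \mapsto d^{\cM}(y,x)$ is a uniform-on-each-domain limit of $\cL_A$-formulas. Fix a domain $D'$, choose $D'' \supseteq D \cup D'$, and for a finite tuple $\theta = (\theta_1,\dots,\theta_m)$ of $\cL_A$-formulas vanishing at $x$ and $n \in \N$ set
\[
\psi_{\theta,n}^{\cM}(y) := \inf_{y' \in (D'')^{\cM}} \Bigl( d^{\cM}(y,y') + n \max_{1 \le i \le m} |\theta_i^{\cM}(y')| \Bigr),
\]
which is the interpretation of an $\cL_A$-formula. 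The candidate $y' = x$ gives $\psi_{\theta,n}^{\cM}(y) \le d^{\cM}(y,x)$. If the reverse inequality failed by $\epsilon$ for all finite $\theta \subseteq p$ and all $n$, then for each such $\theta,n$ a near-minimizer $y' \in (D'')^{\cM}$ would satisfy $\max_i |\theta_i^{\cM}(y')| = O(1/n)$ and $d^{\cM}(y',x) \ge \epsilon/2$; letting $\theta$ exhaust $p$ and $n \to \infty$ shows the partial type $p(y') \cup \{d^{\cM}(y',x) \ge \epsilon/2\}$ over $A \cup \{x\}$ is finitely approximately satisfiable in $(D'')^{\cM}$, so $\kappa$-saturation realizes it, producing $y' \in \cM$ with $\tp^{\cM}(y'/A) = p$ but $d^{\cM}(y',x) \ge \epsilon/2$, contradicting uniqueness. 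Hence for suitable finite $\theta \subseteq p$ and large $n$, $|\psi_{\theta,n}^{\cM}(y) - d^{\cM}(y,x)| \le \epsilon$ on $(D')^{\cM}$; a forced-limit construction then assembles these formulas into a definable predicate over $A$ equal to $d^{\cM}(\cdot,x)$, so $x \in \dcl^{\cM}(A)$.

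For (4), assume moreover $\cM$ is strongly $\kappa$-homogeneous and $x$ is fixed by every automorphism of $\cM$ fixing $A$ pointwise. If $x' \in \cM$ realizes $\tp^{\cM}(x/A)$, then $(x,(a)_{a \in A})$ and $(x',(a)_{a \in A})$ have the same $\cL$-type in $\cM$; as $|A| < \kappa$, strong $\kappa$-homogeneity gives $\Theta \in \Aut(\cM)$ fixing $A$ pointwise with $\Theta(x) = x'$, so $x' = x$. Thus $x$ is the unique realization of its type over $A$ in $\cM$, and (2) yields $x \in \dcl^{\cM}(A)$. The main obstacle is (2): making the limiting argument rigorous --- passing from ``$y'$ approximately realizes a finite fragment of $p$ while staying $\epsilon/2$-far from $x$'' to an exact realization of $p$ at distance $\ge \epsilon/2$ (this is precisely where $\kappa$-saturation, applied carefully over the parameter set $A \cup \{x\}$, is used) and verifying that the approximants converge uniformly on each domain so that their forced limit is genuinely a definable predicate.
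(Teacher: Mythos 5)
Your proof is correct and follows the same outline as the paper's: (1) and (3) are immediate from the definition of definable closure and from (1), (2) is a $\kappa$-saturation argument contradicting uniqueness of the realization, and (4) combines (2) with strong $\kappa$-homogeneity, exactly as in the paper. The only difference is in (2), where you construct the approximating $\cL_A$-formulas $\psi_{\theta,n}$ explicitly and pass to the forced limit, whereas the paper extracts finitely many formulas abstractly from the failure of finite approximate satisfiability of $\mathcal{E}_\epsilon$ and then invokes \cite[Proposition 9.19]{BYBHU2008} to conclude definability; the underlying saturation argument is the same, and your version is slightly more self-contained.
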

	
	\begin{proof}
		(1) Let $y$ be another element in $\cM$, from the same sort $S$ as $x$, with the same type over $A$ as $x$.  Since $d_S^{\cM}(z,x) = \phi^{\cM}(z)$ for some $\Th(\cM_A)$-definable predicate $\phi$ in $\cL_A$, we have $d_S^{\cM}(y,x) = d_S^{\cM}(x,x) = 0$, so $y = x$.
		
		(2) Given $\epsilon > 0$, let $\mathcal{E}_\epsilon$ be the set of $\cL_{A \cup \{x\}}$-formulas in a variable $y$ given by
		\[
		\{ \phi(y): \phi \text{ is an } \cL_A\text{-formula and } \phi^{\cM_A}(x) = 0\} \cup \{ \epsilon \dot{-} d_S^{\cM}(y,x) \},
		\]
		where $a \dot{-} b = \max(a - b,0)$.  Note that $y$ satisfies $\mathcal{E}_\epsilon$ if and only if $d_S^{\cM}(x,y) \geq \epsilon$ and $y$ has the same type over $A$ as $x$.  Because $x$ is the unique realization of its type, $\mathcal{E}_{\epsilon}$ is not satisfiable in $D^{\cM}$.  Hence, by $\kappa$-saturation of $\cM$, it is not finitely approximately satisfiable either.  Thus, for every $\epsilon > 0$, there exist $\delta > 0$ and finitely many $\cL_A$-formulas $\phi_1$, \dots, $\phi_n$ such that $\phi_j^{\cM_A}(x) = 0$ and if $|\phi_j^{\cM_A}(y)| < \delta$ for $j = 1$, \dots, $n$, then $d_S^{\cM_A}(x,y) < \epsilon$.  In fact, we can reduce this to a single $\cL_A$-formula by taking $\psi = |\phi_1| + \dots + |\phi_n|$.  By taking $\epsilon = 1/m$, we see that there are $\delta_m > 0$ and $\cL_A$-formulas $\psi_m$ such that $\psi_m^{\cM_A}(x) = 0$ and if $y \in D^{\cM}$ and $|\psi_m^{\cM_A}(y)| < \delta_m$, then $d_S^{\cM}(x,y) < 1/m$.  Thus, by \cite[Proposition 9.19]{BYBHU2008}, $\{x\}$ is definable in $\cM$ over $A$.
		
		(3) Let $\Theta$ be an automorphism of $\cM$ that fixes $A$ pointwise.  Then the type of $\Theta(x)$ over $A$ is the same as the type of $x$ over $A$.  Hence, by (1), $\Theta(x) = x$.
		
		(4) Suppose that $x$ is not in $\dcl^{\cM}(A)$.  Then by (2), there exists another $y$ with the same type over $A$ as $x$.  Then $(A,x)$ and $(A,y)$ viewed as tuples have the same type, so by strong $\kappa$-homogeneity, there exists an automorphism $\Theta$ taking $(A,x)$ and $(A,y)$ as tuples, i.e.\ fixing $A$ pointwise and taking $x$ to $y$.
	\end{proof}
	
	\begin{proposition}[{compare \cite[Exercise 10.8]{BYBHU2008}}] \label{prop: ACL} 
		Let $\kappa > \chi(\cL)$ be an uncountable ordinal.  Let $A$ be a subset of $\cM$ with density character less than $\kappa$, and suppose that $\cM$ is $\kappa$-saturated.  Let $S$ be a sort, $D$ a domain for $S$, and $x \in D^{\cM}$.  Then the following are equivalent:
		\begin{enumerate}[(1)]
			\item $x \in \acl^{\cM}(A)$.
			\item The set of realizations of $\tp^{\cM}(x / A)$ is compact.
			\item The set of realizations of $\tp^{\cM}(x / A)$ has density character less than $\kappa$.
			\item For every $\epsilon > 0$, there exists an $\cL_{A}$-formula $\phi$ and a $\delta > 0$ such that $\phi^{\cM}(x) = 0$ and the set $\{y \in D^{\cM}: \phi^{\cM}(y) <  \delta \}$ can be covered by finitely many $\epsilon$-balls.
			\item $x$ is in the intersection of all elementary submodels of $\cM$ that contain $A$.
		\end{enumerate}
		Furthermore, (1) $\implies$ (2) and (1) $\implies$ (5) hold even without assuming that $M$ is $\kappa$-saturated.
	\end{proposition}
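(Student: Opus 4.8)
Write $p=\tp^{\cM}(x/A)$ and let $R(p)\subseteq D^{\cM}$ be its set of realizations. Observe that $R(p)=\bigcap_{\chi}\{y\in D^{\cM}:\chi^{\cM}(y)=\chi^{\cM}(x)\}$, the intersection over all $\cL_A$-formulas $\chi$ in one variable, so $R(p)$ is closed in the complete set $D^{\cM}$ and hence complete; moreover every $\cL_A$-definable predicate $\psi$, being a uniform limit of $\cL_A$-formulas on $D^{\cM}$, is constant on $R(p)$ with value $\psi^{\cM}(x)$. The plan is to prove the cycle $(1)\Rightarrow(4)\Rightarrow(2)\Rightarrow(1)$, then $(2)\Leftrightarrow(3)$ and $(1)\Leftrightarrow(5)$, tracking which arrows invoke $\kappa$-saturation; the arrows $(1)\Rightarrow(4)$, $(4)\Rightarrow(2)$, $(2)\Rightarrow(3)$ and $(1)\Rightarrow(5)$ will not, which yields the final sentence of the statement.

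For $(1)\Rightarrow(4)$: given a compact $C\subseteq D^{\cM}$ definable over $A$ with $x\in C$, write $d^{\cM}(\cdot,C)=\theta^{\cM}$ for an $\cL_A$-definable predicate $\theta$; then $\theta^{\cM}(x)=0$, and for $\epsilon>0$ a finite cover of $C$ by $(\epsilon/2)$-balls shows $\{y:\theta^{\cM}(y)<\epsilon/2\}$ is covered by finitely many $\epsilon$-balls. Approximating $\theta$ uniformly on $D^{\cM}$ by an $\cL_A$-formula and subtracting its value at $x$ gives an $\cL_A$-formula $\phi$ with $\phi^{\cM}(x)=0$ and $\{y:\phi^{\cM}(y)<\delta\}\subseteq\{y:\theta^{\cM}(y)<\epsilon/2\}$ for suitable $\delta$, which is $(4)$. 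For $(4)\Rightarrow(2)$: applying $(4)$ with $\epsilon=1/n$ gives $\phi_n,\delta_n$ with $\phi_n^{\cM}(x)=0<\delta_n$, so $R(p)\subseteq\{y:\phi_n^{\cM}(y)<\delta_n\}$ is covered by finitely many $(1/n)$-balls; thus $R(p)$ is totally bounded, hence, being complete, compact. Finally $(2)\Rightarrow(3)$ is immediate since compact metric spaces are separable, and $(1)\Rightarrow(5)$ is immediate from Proposition~\ref{prop: closure and elem ext}: if $\cN\preceq\cM$ contains $A$ then $\acl^{\cM}(A)=\acl^{\cN}(A)\subseteq\cN$, so $x\in\cN$.

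The crux is $(2)\Rightarrow(1)$: assuming $R(p)$ compact, I will show it is definable over $A$, so that $x\in R(p)$ gives $x\in\acl^{\cM}(A)$ by Definition~\ref{def: DCL and ACL}. I verify the hypotheses of the continuous-logic definability criterion \cite[Proposition~9.19]{BYBHU2008} used in the proof of Proposition~\ref{prop:DCL}(2): for each $\epsilon>0$ there should be an $\cL_A$-formula $\psi_\epsilon\geq 0$ vanishing on $R(p)$ and a $\delta_\epsilon>0$ with $\psi_\epsilon^{\cM}(y)<\delta_\epsilon\Rightarrow d^{\cM}(y,R(p))<\epsilon$. If this fails for some $\epsilon_0$, fix a countable dense subset $R_0\subseteq R(p)$ (which exists since $R(p)$ is separable and $\kappa>\aleph_0$), and consider the partial type over $A\cup R_0$ in one variable $z$ consisting of all conditions $\chi(z)=\chi^{\cM}(x)$ for $\cL_A$-formulas $\chi$ together with $d(z,a)\geq\epsilon_0$ for $a\in R_0$. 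The failure of the criterion makes this type finitely approximately satisfiable in $D^{\cM}$, so by $\kappa$-saturation (the parameter set has size at most $|A|+\aleph_0<\kappa$) it is realized by some $y^{*}$; but then $y^{*}\in R(p)$, while $d^{\cM}(y^{*},R_0)\geq\epsilon_0$ forces $d^{\cM}(y^{*},R(p))\geq\epsilon_0$ as $R_0$ is dense in $R(p)$, a contradiction. Hence $R(p)$ is definable over $A$. I expect correctly setting up this contradiction type --- in particular expressing ``$d(z,R(p))\geq\epsilon_0$'' through the countable dense set --- to be the main technical point.

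It remains to treat $(3)\Rightarrow(2)$ and $(5)\Rightarrow(1)$, both by contraposition. If $R(p)$ is not compact it is not totally bounded, so some $\epsilon>0$ admits no finite $\epsilon$-net in $R(p)$; a transfinite recursion of length $\kappa$, using $\kappa$-saturation at stage $\alpha$ over $A$ together with the fewer than $\kappa$ points chosen so far (finite approximate satisfiability holds precisely because no finite subset of $R(p)$ is an $\epsilon$-net), produces an $\epsilon$-separated subset of $R(p)$ of size $\kappa$, so $\chi(R(p))\geq\kappa$, contradicting $(3)$. For $(5)\Rightarrow(1)$, if $x\notin\acl^{\cM}(A)$ then by the equivalences above $R(p)$ is not compact and so has $\epsilon$-separated realizations of size $\kappa$; using the downward L{\"o}wenheim--Skolem theorem (Theorem~\ref{thm: DLS}) to fix a small elementary $\cN_0\preceq\cM$ containing $A$, together with a homogeneity argument --- conjugating $\cN_0$ inside a strongly $\kappa$-homogeneous elementary extension (Theorem~\ref{thm: existence of nice model}) by an $A$-fixing automorphism that moves $x$ off $\cN_0$, then descending via Proposition~\ref{prop: closure and elem ext} --- or, alternatively, building the submodel directly by a Tarski--Vaught chain (Proposition~\ref{prop: TV test}) that avoids $x$, one obtains an elementary submodel of $\cM$ containing $A$ but omitting $x$, contradicting $(5)$. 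This descent/avoidance step is the second place that requires care.
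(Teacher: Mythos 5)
The overall decomposition is sound and several of your arrows genuinely diverge from the paper: you prove $(1)\Rightarrow(4)\Rightarrow(2)\Rightarrow(1)$ whereas the paper goes $(1)\Rightarrow(2)\Rightarrow(3)\Rightarrow(4)\Rightarrow(1)$; your $(2)\Rightarrow(1)$ via the definability criterion for $R(p)$ with a countable dense net $R_0$ is a correct and self-contained alternative to the paper's route through $(4)$; your $(3)\Rightarrow(2)$ by transfinite recursion does the work that the paper dispatches in a single saturation step with a $\kappa'$-tuple of variables (both are fine); and your $(1)\Rightarrow(5)$ using Proposition~\ref{prop: closure and elem ext} with the roles of $\cM$ and the submodel swapped, plus $\acl^{\cN_0}(A)\subseteq\cN_0$, is cleaner than the paper's appeal to BYBHU Proposition~10.4. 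Those pieces hold up, and you correctly flag that $(1)\Rightarrow(4)\Rightarrow(2)$ and $(1)\Rightarrow(5)$ avoid saturation, which gives the final sentence.

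The genuine gap is $(5)\Rightarrow(1)$. Your primary plan --- pass to a strongly $\kappa$-homogeneous elementary extension $\tilde\cN\succeq\cM$, take an $A$-fixing automorphism $\Theta$ of $\tilde\cN$ that moves $x$ off $\cN_0$, and consider $\Theta(\cN_0)$ --- produces an elementary submodel of $\tilde\cN$, not of $\cM$: there is no reason for $\Theta(\cN_0)$ to land inside $\cM$, and $\Theta(\cN_0)\cap\cM$ need not be a substructure at all. Invoking Proposition~\ref{prop: closure and elem ext} to ``descend'' does not repair this; that proposition compares closures across an elementary inclusion, it does not transport submodels of $\tilde\cN$ down into $\cM$. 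Your fallback --- build the submodel directly inside $\cM$ by a Tarski--Vaught construction avoiding $x$ --- is the right idea and is essentially what the paper does, but it is precisely the hard part of the proof: one must use $\kappa$-saturation of $\cM$ itself to realize, inside $\cM$, a tuple $\tilde{\mathbf w}$ with $\tp^{\cM}(x,\mathbf z,\tilde{\mathbf w})=\tp^{\cM}(y,\mathbf z,\mathbf w)$ (where $\mathbf w$ enumerates a dense subset of a small $\cP\preceq\cM$ containing $A$ and $y\in R(p)\setminus\cP$), and then verify that the closure $\tilde\cP$ of $\tilde{\mathbf w}$ is a substructure, satisfies Tarski--Vaught, contains $A$, and avoids $x$. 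Each of these checks requires an argument (e.g.\ the isometries on domains, closure under function symbols, $d(x,D^{\tilde\cP})=d(y,D^{\cP})>0$). As written, your sketch gestures at this without carrying it out, so $(5)\Rightarrow(1)$ is incomplete.

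One smaller point: the saturation definition in the paper is phrased in terms of cardinality $|A|<\kappa$, while the hypothesis gives density character $\chi(A)<\kappa$; both your $(2)\Rightarrow(1)$ and your $(3)\Rightarrow(2)$ should pass to a dense subset of $A$ of size $<\kappa$ before invoking saturation. This is routine but should be said.
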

	
	\begin{proof}
		(1) $\implies$ (2).  Since $x \in \acl^{\cM}(A)$, there exists a compact set $C \subseteq D^{\cM}$ definable over $A$ in $\cM$ such that $x \in C$.  Since $C$ is definable over $A$, if $y$ has the same type over $\cA$ as $x$ does, then $y \in C$.  Thus, the set of realizations of $\tp^{\cM}(x / A)$ is contained in the compact set $C$, and it is closed because formulas are continuous.  Thus, the set of realizations of $\tp^{\cM}(x/\cA)$ is compact.
		
		(2) $\implies$ (3) because every compact set in a metric space has (at most) countable density character.
		
		(3) $\implies$ (4).  We proceed by contrapositive.  Suppose that (4) fails for some $\epsilon$.  Let $\kappa' < \kappa$.  Let $\mathcal{E}$ be the set of $\cL_{A}$-formulas $\phi: S \to \R$ such that $\phi^{\cM}(x) = 0$.  Let $\Sigma$ be the set of $\mathcal{L}_{\cA}$ conditions in $\kappa'$ many variables $(x_i)_{i \in I}$ given by
		\[
		\{\phi(x_i) = 0: i \in I, \phi \in \mathcal{E} \} \cup \{\epsilon \dot{-} d(x_i,x_j) : i, j \in I, i \neq j \}.
		\]
		Note that $\Sigma$ is finitely approximately satisfiable in $\cM$.  Indeed, let $\delta > 0$ and consider a finite collection of formulas $\phi_1$, \dots, $\phi_n \in \mathcal{E}$.  Let $\phi = \max(|\phi_1|,\dots,|\phi_n|)$.  By assumption, there exist infinitely many $y_j$'s satisfying $\phi^{\cM}(y_j) < \delta$ and $d(y_j,y_k) \geq \epsilon$ for $j \neq k$.  This is enough to be able to approximately satisfy any finite collection of formulas from $\Sigma$.  Since $|\Sigma| \leq \kappa' < \kappa$ and $\cM$ is $\kappa$-saturated, $\Sigma$ is satisfiable in $\cM$.  This means there exist $\kappa'$ many $\epsilon$-separated elements $x_j \in D^{\cM}$ such that $\phi^{\cM}(x_j) = 0$ for all $\phi \in \mathcal{E}$, and hence $\tp^{\cM}(x_j / A) = \tp^{\cM}(x / A)$.  Thus, the space of realizations of this type has density character greater than or equal to $\kappa'$.  Since $\kappa' < \kappa$ was arbitrary, its density character is at least $\kappa$.
		
		(4) $\implies$ (1).  For each $k \in \N$, fix some formula $\phi_k$ and some $\delta_k > 0$ such that $\phi_k^{\cM}(x) = 0$ and the set $\{y: \phi_k^{\cM}(y) < \delta_k \}$ can be covered by finitely many $\epsilon$-balls.  Let $C = \{y: \phi_k^{\cM}(x) = 0 \text{ for all } k \in \N\}$.  Then $C$ is a countable intersection of zerosets of definable predicates and hence is the zeroset of a definable predicate by \cite[Proposition 9.14]{BYBHU2008}.  Note $C$ is closed (hence complete) by continuity of formulas.  Moreover, by our choice of $\phi_k$, $C$ is totally bounded.  Thus, $C$ is compact.  By \cite[Proposition 10.6]{BYBHU2008}, a compact zeroset is definable.  Thus, $C$ is a compact definable set containing $x$, and so $x \in \acl^{\cM}(A)$.
		
		(1) $\implies$ (5).  If $x \in \acl^{\cM}(A)$, then there exists a compact definable set $C$ such that $x \in C$.  By \cite[Proposition 10.4]{BYBHU2008}, $C$ (and hence $x$) must be contained in any elementary submodel of $\cM$ that contains $A$.  Note that this argument does not use saturation of $\cM$.
		
		(5) $\implies$ (3).  We proceed by contrapositive.  Suppose that the space of realizations of $\tp^{\cM}(x / A)$ in $\cM$ has density character greater than or equal to $\kappa$.  By the downward L{\"o}wenheim-Skolem theorem (Theorem \ref{thm: DLS}), there exists an elementary submodel $\cP$ of $\cM$ that contains $A$ and has density character equal to that of $A$.  Because the set of realizations of $\tp^{\cM}(x/A)$ has strictly greater density character, there must be some realization $y$ of $\tp^{\cM}(x / A)$ that is not contained in $\cP$.  Let $\mathbf{z}$ be a tuple of elements of $A$ on an index set $I$ of cardinality $\chi(A)$ so that $\mathbf{z}$ contains a dense subset of $A \cap D$ for each domain $D$.  Then $(x,\mathbf{z})$ and $(y,\mathbf{z})$ have the same type because every $\cL$-formula in $(x,\mathbf{z})$ is an $\cL$-formula in $x$.
		
		Let $\mathbf{w}$ be a tuple of elements of $\mathcal{P}$ with index set $I'$ of cardinality less than $\kappa$ such that the elements of $\mathbf{w}$ in each domain $D_0$ are dense in $\cP \cap D^{\cM}$; let $w_i$ be from the sort $S_i$ and domain $D_i$ for $i \in I'$.  We claim that there exists a tuple $\tilde{\mathbf{w}}$ indexed by $I'$, of elements of $\cM$ from appropriate sorts and domains, such that $\tp^{\cM}(x,\mathbf{z},\tilde{\mathbf{w}}) = \tp^{\cM}(y,\mathbf{z},\mathbf{w})$.  To see this, consider the set $\Phi$ of formulas $\phi$ with variables indexed by $\{0\} \sqcup I \sqcup I'$ given by
		\[
		\Phi = \{\phi: \phi^{\cM}(y,\mathbf{z},\mathbf{w}) = 0\}.
		\]
		Let $B$ be the set of values taken by the tuple $(x,\mathbf{z})$, and let $\Phi'$ be the set of $L_B$ formulas on variables indexed by $I'$, obtained from $\phi \in \Phi$ be plugging in $(x,\mathbf{z})$ for the first two arguments and viewing the formula as a function of the last argument.  We claim that $\Phi'$ is finitely approximately satisfiable in $\cM$ as an $\cL_B$ structure.  To see this, consider a finite list of formulas $\phi_1'$, \dots, $\phi_n'$ from $\Phi'$ corresponding to $\phi_1$, \dots, $\phi_n$ from $\Phi$.  Note that $\phi_1$, \dots, $\phi_n$ only depend on a finite subset of the variables (by definition of formulas).  Hence, it makes sense to define a formula in variables $(x',\mathbf{z}',\mathbf{w}')$ by
		\[
		\psi(x',\mathbf{z}') = \inf_{\mathbf{w} \in \prod_{i \in I'} D_i} \sum_{j=1}^n |\phi_j(x',\mathbf{z}',\mathbf{w}')|.
		\]
		Note that $\psi^{\cM}(y,\mathbf{z}) = 0$ because we can plug in $\mathbf{w}$ for the variable $\mathbf{w}'$.  Since $(y,\mathbf{z})$ and $(x,\mathbf{z})$ have the same type, we also get $\psi^{\cM}(x,\mathbf{z}) = 0$.  Therefore, for every $\epsilon > 0$, there exists $\tilde{\mathbf{w}}$ with
		\[
		\sum_{j=1}^n |(\phi_j')^{\cM_B}(\tilde{\mathbf{w}})| = \sum_{j=1}^n |\phi_j^{\cM}(x,\mathbf{z},\tilde{\mathbf{w}})| < \epsilon.
		\]
		This demonstrates the finite approximate satisfiability of $\Phi'$.  Then since $\cM$ is $\kappa$-saturated, $\Phi'$ is satisfiable, so there exists $\tilde{\mathbf{w}} \in \prod_{i \in I'} D_i^{\cM}$ such that for $\phi' \in \Phi$, we have $(\phi')^{\cM}(\tilde{\mathbf{w}}) = 0$.  This means that for $\phi \in \Phi$, we have $\phi^{\cM}(x,\mathbf{z},\tilde{\mathbf{w}}) = 0$.  Note that for every formula $\phi$, there is some constant $c$ such that $\phi - c \in \Phi$.  Therefore, we obtain that $\phi^{\cM}(x,\mathbf{z},\tilde{\mathbf{w}}) = \phi^{\cM}(y,\mathbf{z},\mathbf{w})$ for all formulas $\phi$, that is, $\tp^{\cM}(x,\mathbf{z},\tilde{\mathbf{w}}) = \tp^{\cM}(x,\mathbf{z},\mathbf{w})$.
		
		Our goal in the rest of the proof is to show that the substructure $\tilde{\cP}$ generated by $\tilde{\mathbf{w}}$ is an elementary substructure that contains $\mathbf{y}$ but not $x$.  First, we give a concrete description of this substructure.  For each sort $S_0$ and domain $D_0$, for each $w_i$ and $w_j$ in $D_0^{\cP}$, we have that $d_{S_0}^{\cM}(w_i,w_j) = d_{S_0}^{\cM}(\tilde{w}_i,\tilde{w}_j)$, and in particular, if $w_i = w_j$, then $\tilde{w}_i = \tilde{w}_j$.  We chose $\mathbf{w}$ such that the elements of $\mathbf{w}$ in a domain $D_0^{\cM}$ are dense in $D_0^{\cP}$.  Since an isometry defined on a dense subset of a metric space extends to the entire metric space if the codomain is complete, there is an isometry $\alpha_{D_0}: D_0^{\cP} \to D_0^{\cM}$ such that $\alpha_{D_0}(w_i) = \tilde{w}_i$ whenever $w_i \in D_0$.  Moreover, for two domains $D_0$ and $D_1$ in the sort $S$, we have that $\alpha_{D_0}$ and $\alpha_{D_1}$ agree on $D_0^{\cP} \cap D_1^{\cP}$; to verify this, it suffices to show that $d_S^{\cM}(\alpha_{D_0}(\xi),\alpha_{D_1}(\eta)) = d_S^{\cM}(\xi,\eta)$ holds for $\xi \in D_0^{\cP}$ and $\eta \in D_1^{\cP}$; this holds when $\xi$ and $\eta$ come from $\mathbf{w}$, by virtue of $\mathbf{w}$ and $\tilde{\mathbf{w}}$ having the same type, and then it extends to all of $D_0^{\cP}$ and $D_1^{\cP}$ by density.  Define $\tilde{\cP}$ by setting $D_0^{\tilde{\cP}} := \alpha_{D_0}(D_0^{\cP})$ for each sort $S_0$ and domain $D_0$, and setting $S_0^{\tilde{\cP}} = \bigcup_{D_0 \in \mathcal{D}_{S_0}} D_0^{\tilde{\cP}}$.  We claim $\tilde{\cP}$ is a substructure.  First, $D^{\tilde{\cP}}$ is complete because is the image of a complete metric space under an isometry.  Second, we must show that $D_0^{\cM} \cap S_0^{\tilde{\cP}} = D_0^{\tilde{\cP}}$ for each domain $D_0$; because $D_0^{\cM} \cap S_0^{\cP} = D_0^{\cP}$, it suffices to check that for every domain $D_1$ for $\xi \in D_1^{\cP}$, we have
		\[
		\inf_{\eta \in D_0^{\cM}} d_S^{\cM}(\alpha_{D_1}(\xi),\eta) = \inf_{\eta \in D_0^{\cM}} d_S^{\cM}(\xi,\eta);
		\]
		again this holds when $\xi$ is an element of $\mathbf{w}$ because $\mathbf{w}$ and $\tilde{\mathbf{w}}$ have the same type, and then it extends by continuity.  Third, one shows that $\tilde{P}$ is closed under application of function symbols by verifying, in a similar manner to previous claims, that for each function symbol $f$ that maps domains $D_1 \times \dots \times D_n$ into a domain $D_0$, we have
		\[
		d^{\cM}(f(\alpha_{D_1}(\xi_1),\dots,\alpha_{D_n}(\xi_n)),\alpha_{D_0}(\eta)) = d^{\cM}(f^{\cM}(\xi_1,\dots,\xi_n),\eta) \text{ for } \xi_1 \in D_1^{\cP}, \dots, \xi_n \in D_n^{\cP}, \eta \in D_0^{\cP},
		\]
		and hence $f^{\cM}(\alpha_{D_1}(\xi_1),\dots,\alpha_{D_n}(\xi_n)) = \alpha_{D_0}(f^{\cM}(\xi_1,\dots,\xi_n))$.
		
		Next, we verify that $\tilde{\cP} \preceq \cM$ using the Tarski-Vaught test.  Let $\phi$ be a formula in variables from sorts $S_1$, \dots, $S_{n+1}$, and let $D$ be a domain for $S_{n+1}$.  Let $\tilde{w}_{i_1}$, \dots, $\tilde{w}_{i_n}$ be elements of $\tilde{w}$ from $S_1$, \dots, $S_n$ respectively.  Because the elements $w_i$ in a domain $D^{\cM}$ are dense in $D^{\cP}$, hence also the elements $\tilde{w}_i = \alpha_D(w_i)$ in $D^{\cM}$ are dense in $D^{\tilde{\cP}}$, we have that
		\[
		\inf_{\eta \in D^{\tilde{\cP}}} \phi^{\cM}(\tilde{w}_{i_1},\dots,\tilde{w}_{i_n},\eta) = \inf_{\substack{i \in I \\ \tilde{w}_i \in D^{\tilde{\cP}}}} \phi^{\cM}(\tilde{w}_{i_1},\dots,\tilde{w}_{i_n},\tilde{w}_i).
		\]
		Then by virtue of $w$ and $\tilde{w}$ having the same type,
		\[
		\inf_{\substack{i \in I \\ \tilde{w}_i \in D^{\tilde{\cP}}}} \phi^{\cM}(\tilde{w}_{i_1},\dots,\tilde{w}_{i_n},\tilde{w}_i) = \inf_{\substack{i \in I \\ w_i \in D^{\tilde{\cP}}}} \phi^{\cM}(w_{i_1},\dots,w_{i_n},w_i) = \inf_{\eta \in D^{\cP}} \phi^{\cM}(w_{i_1},\dots,w_{i_n},\eta),
		\]
		where the second equality follows from density of $w$ in $\cP \cap D^{\cM}$.  Then since $\cP \preceq \cM$, we obtain
		\[
		\inf_{\eta \in D^{\cP}} \phi^{\cM}(w_{i_1},\dots,w_{i_n},\eta) = \inf_{\eta \in D^{\cM}} \phi^{\cM}(w_{i_1},\dots,w_{i_n},\eta) = \inf_{\eta \in D^{\cM}} \phi^{\cM}(\tilde{w}_{i_1},\dots,\tilde{w}_{i_n},\eta),
		\]
		where the last equality follows from $w$ and $\tilde{w}$ having the same type.  Altogether,
		\[
		\inf_{\eta \in D^{\tilde{\cP}}} \phi^{\cM}(\tilde{w}_{i_1},\dots,\tilde{w}_{i_n},\eta) = \inf_{\eta \in D^{\cM}} \phi^{\cM}(\tilde{w}_{i_1},\dots,\tilde{w}_{i_n},\eta).
		\]
		Because the $\tilde{w}_i$'s that are in $D_j^{\tilde{\cP}}$ form a dense subset of $D_j^{\tilde{\cP}}$, and since both sides are continuous functions of the the first $n$ inputs, we obtain that for $\xi_1 \in D_1^{\tilde{\cP}}$, \dots, $\xi_n \in D_n^{\tilde{\cP}}$,
		\[
		\inf_{\eta \in D^{\tilde{\cP}}} \phi^{\cM}(\xi_1,\dots,\xi_n,\eta) = \inf_{\eta \in D^{\cM}} \phi^{\cM}(\xi_1,\dots,\xi_n,\eta).
		\]
		Thus, by Proposition \ref{prop: TV test}, $\tilde{\cP} \preceq \cM$.
		
		Next, we claim $\tilde{\cP}$ contains $A$.  This is because for each domain $D_0$, each element of $\mathbf{z}$ in the domain $D_0$ is a limit of elements from $\mathbf{w}$ inside $D_0^{\cP}$, hence the same is true for $\tilde{\mathbf{w}}$ since $d^{\cM}(z_i,w_j) = d^{\cM}(z_i,\tilde{w}_j)$ by virtue of the types of $(\mathbf{z},\mathbf{w})$ and $(\mathbf{z},\tilde{\mathbf{w}})$ being the same.
		
		Finally, similar reasoning shows that $\tilde{\cP}$ does not contain $x$.  Indeed, $d(x,D^{\tilde{\cP}})$ is the infimum of distances between $x$ and $\tilde{w}_i$ for $i \in I'$ such that $\tilde{w}_i \in D^{\tilde{P}}$.  But $d(x,\tilde{w}_i) = d(y,w_i)$.  Thus, $d(x,D^{\tilde{\cP}}) = d(y,D^{\cP}) > 0$.  Hence, $\tilde{\cP}$ is an elementary submodel of $\cM$ that contains $A$ but not $x$, which completes the proof.
	\end{proof}
	
	\begin{corollary} \label{cor: DCL substructure}
		Let $\cM$ be an $\cL$-structure and let $A$ be a subset of $\cM$.  Then $\dcl^{\cM}(A)$ and $\acl^{\cM}(A)$ are substructures of $\cM$, that is, they are closed under application of the function symbols and the part of $\dcl^{\cM}(A)$ and $\acl^{\cM}(A)$ in each sort $S$ is a closed subset of $S^{\cM}$.  In particular, for a tracial $\mathrm{W}^*$-algebra $\cM$, the definable and algebraic closures of a set $A$ are $\mathrm{W}^*$-subalgebras.
	\end{corollary}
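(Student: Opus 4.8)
The plan is to pass to a sufficiently saturated and homogeneous elementary extension, where Propositions~\ref{prop:DCL} and~\ref{prop: ACL} describe $\dcl$ and $\acl$ in a way that makes the substructure property transparent, and then to transport the conclusion back to $\cM$. First I would use Theorem~\ref{thm: existence of nice model} to fix an elementary extension $\cN \succeq \cM$ that is $\kappa$-saturated and strongly $\kappa$-homogeneous for some uncountable cardinal $\kappa > \chi(\cL) + \chi(A)$. By Proposition~\ref{prop: closure and elem ext} we have $\dcl^{\cM}(A) = \dcl^{\cN}(A) \cap \cM$ and $\acl^{\cM}(A) = \acl^{\cN}(A) \cap \cM$; since $\cM$ is a substructure of $\cN$ and an intersection of two substructures is again a substructure, it is enough to show that $\dcl^{\cN}(A)$ and $\acl^{\cN}(A)$ are closed substructures of $\cN$.

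For $\dcl^{\cN}(A)$, parts (3) and (4) of Proposition~\ref{prop:DCL} (applicable because $\cN$ is $\kappa$-saturated and strongly $\kappa$-homogeneous, and every element of $\cN$ lies in some domain) identify it with the set of elements of $\cN$ fixed by every automorphism $\Theta$ of $\cN$ that fixes $A$ pointwise. Each such $\Theta$ is an isometry of every sort, commutes with all function symbols, and maps each domain onto itself; hence the common fixed-point set is closed under the function symbols, satisfies the domain-compatibility condition of a substructure, and meets each domain $D^{\cN}$ in an intersection, over those $\Theta$, of zero sets of the continuous maps $x \mapsto d_S^{\cN}(x, \Theta x)$ — a set that is closed in $D^{\cN}$ and therefore complete. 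For $\acl^{\cN}(A)$, the equivalence of conditions (1) and (5) in Proposition~\ref{prop: ACL} (valid since $\cN$ is $\kappa$-saturated) identifies it with the intersection of all elementary submodels of $\cN$ that contain $A$; an intersection of substructures is a substructure, and its intersection with any domain $D^{\cN}$, being an intersection of the complete sets $D^{\cP}$, is closed in $D^{\cN}$. Intersecting with $\cM$ now gives that $\dcl^{\cM}(A)$ and $\acl^{\cM}(A)$ are closed substructures of $\cM$.

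Finally, for a tracial $\mathrm{W}^*$-algebra $\cL_{\tr}$ has a single sort whose domains are the operator-norm balls, so a closed substructure is exactly a unital $*$-subalgebra each of whose operator-norm balls is complete in $\norm{\cdot}_{L^2(\cM)}$; the Kaplansky density argument that underlies the axiomatization of tracial $\mathrm{W}^*$-algebras (see~\cite[\S 3.2]{FHS2014}) shows this is precisely a von Neumann subalgebra, so $\dcl^{\cM}(A)$ and $\acl^{\cM}(A)$ are $\mathrm{W}^*$-subalgebras. The main thing requiring care is the bookkeeping that confirms the fixed-point set and the intersection of elementary submodels satisfy \emph{all} of the substructure axioms — completeness of the domains and the compatibility between domains and sorts — rather than just closure under the function symbols, together with the precise form of Proposition~\ref{prop: closure and elem ext} used to descend to $\cM$. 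An alternative route avoiding saturated models would instead prove closure under a function symbol $f$ directly, noting that if each $\{x_i\}$ is definable over $A$ then so is $\{(x_1,\dots,x_n)\}$, so that $w \mapsto \inf\{\, d_S^{\cM}(w, f(\mathbf{u})) : \mathbf{u} \in \{(x_1,\dots,x_n)\} \,\}$ is a definable predicate over $A$ equal to $d_S^{\cM}(w, f(x_1,\dots,x_n))$; and it would prove metric-closedness because, for $x_k \in \dcl^{\cM}(A)$ converging to $x$ in $\cM$, the definable predicates $d_S^{\cM}(\cdot, x_k)$ converge uniformly to $d_S^{\cM}(\cdot, x)$, uniformly over all models of the complete theory $\Th(\cM_A)$ as well (the value of the sentence $\sup_w |d_S(w,x_k) - d_S(w,x_\ell)|$ is determined by $\Th(\cM_A)$ and bounded by $d_S^{\cM}(x_k, x_\ell)$), so the limit is again a definable predicate over $A$.
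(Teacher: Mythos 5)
Your main argument is correct and takes essentially the same route as the paper: pass to a $\kappa$-saturated, strongly $\kappa$-homogeneous elementary extension $\cN$, use Proposition~\ref{prop:DCL} and Proposition~\ref{prop: ACL} to identify $\dcl^{\cN}(A)$ with the common fixed-point set of automorphisms of $\cN$ fixing $A$ pointwise and $\acl^{\cN}(A)$ with the intersection of elementary submodels of $\cN$ containing $A$, observe that each is a substructure, and descend to $\cM$ via Proposition~\ref{prop: closure and elem ext}. The alternative sketch you append is a genuinely different, saturation-free route; it is morally sound, but the final step needs a bit more than stated: after showing the $\cL_A$-definable predicates $\phi_k$ with $\phi_k^{\cM_A}=d(\cdot,x_k)$ form a uniform Cauchy sequence, one must still verify that the limit predicate $\phi$ is the distance to a singleton in \emph{every} model of $\Th(\cM_A)$ (not just in $\cM$), which requires invoking the characterization of definable sets (e.g.\ \cite[Proposition 9.19]{BYBHU2008}) rather than the pointwise limit alone.
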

	
	\begin{proof}
		Let $\cN$ be a $\kappa$-saturated and strongly $\kappa$-homogeneous elementary extension of $\cM$. By Proposition \ref{prop: closure and elem ext}, $\dcl^{\cM}(A) = \dcl^{\cN}(A)$.  If $\Theta$ is an automorphism of $\cN$, then the fixed points of $\Theta$ form a substructure.  Taking the intersection over $\Theta \in \Aut(\cN)$ and using Proposition \ref{prop:DCL}, we obtain that $\dcl^{\cN}(A)$ is a substructure, hence so is $\dcl^{\cM}(A)$.  Similarly, each elementary submodel of $\cN$ containing $A$ is a substructure, and hence so is the intersection of these elementary submodels, which means $\acl^{\cN}(A)$ is a substructure.  By Proposition \ref{prop: closure and elem ext}, $\acl^{\cN}(A) = \acl^{\cM}(A)$.
	\end{proof}
	
	In the case where each domain in $\cM$ is compact, the algebraic closure is rather trivial.  The following statement is likely known to experts, so I make no claim of originality.
	
	\begin{proposition} \label{prop: compact ACL}
		Let $\cL$ be a metric signature and let $\cM$ be an $\cL$-structure.  Suppose that $D^{\cM}$ is compact for each domain $D$ in each sort $S$.  Then
		\begin{enumerate}
			\item $\acl^{\cM}(\varnothing) = \cM$.
			\item If $\cN$ is an $\cL$-structure with $\cM \preceq \cN$, then $\cM = \cN$.
			\item $\cM$ is $\kappa$-saturated and strongly $\kappa$-homogeneous for every $\kappa > \chi(\cL)$.
		\end{enumerate}
	\end{proposition}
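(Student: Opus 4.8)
I will establish the three claims in order; (1) is essentially immediate, (2) contains the one real idea, and (3) is a short consequence of (2) together with earlier results.

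\emph{Part (1).} The key point is that a domain is already a definable set over $\varnothing$: the $\cL$-formula with free variable $x$ of sort $S$ given by $x \mapsto \inf_{y \in D} d_S(x,y)$ is interpreted in $\cM$ as the distance function $x \mapsto d_S^{\cM}(x, D^{\cM})$, and a formula is a definable predicate relative to any consistent theory, in particular relative to $\Th(\cM)$. Since $D^{\cM}$ is compact (hence closed) by hypothesis, it is a compact set definable over $\varnothing$ in $\cM$ containing each of its elements. As every element of $\cM$ lies in some $D^{\cM}$, we conclude $\cM \subseteq \acl^{\cM}(\varnothing)$; the reverse inclusion is trivial.

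\emph{Part (2).} Let $\cM \preceq \cN$. I will show $D^{\cN} = D^{\cM}$ for every domain $D$ in every sort $S$; since $D^{\cM} \subseteq D^{\cN}$, since each sort is the union of its domains in both structures, and since $\cM$ is a substructure of $\cN$, this forces $\cM = \cN$. Fix $\epsilon > 0$. Using compactness of $D^{\cM}$, choose a finite $\epsilon$-net $a_1,\dots,a_k \in D^{\cM}$, and let $\psi(y_1,\dots,y_k) := \sup_{x \in D} \min_{1 \le j \le k} d_S(x, y_j)$, an $\cL$-formula with free variables $y_1,\dots,y_k$ of sort $S$. By the choice of net, $\psi^{\cM}(a_1,\dots,a_k) \le \epsilon$, and since $\cM \preceq \cN$ the same quantity computed in $\cN$ is also $\le \epsilon$; hence every $x \in D^{\cN}$ satisfies $d_S^{\cN}(x, D^{\cM}) \le \epsilon$. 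Since $\epsilon$ was arbitrary and $D^{\cM}$ is complete, hence closed in $S^{\cN}$, we get $x \in D^{\cM}$. (Alternatively, once the same argument shows every domain of $\cN$ is again compact, one may apply (1) to $\cN$ and invoke Proposition \ref{prop: closure and elem ext} to get $\cN = \acl^{\cN}(\varnothing) = \acl^{\cM}(\varnothing) = \cM$.) The one thing to watch here — and the only real, though mild, obstacle in the whole proposition — is that total boundedness of $D^{\cM}$ is witnessed by a \emph{single} formula evaluated at parameters $a_1,\dots,a_k$ drawn from $\cM$, so that elementarity transports the witness to $\cN$.

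\emph{Part (3).} That $\cM$ is $\kappa$-saturated for every $\kappa > \chi(\cL)$ is precisely Proposition \ref{prop: compact saturation}. For strong $\kappa$-homogeneity it suffices to treat infinite $\kappa$, since strong $\kappa'$-homogeneity for a single infinite cardinal $\kappa' > \chi(\cL)$ implies strong $\kappa$-homogeneity for every $\kappa \le \kappa'$. Given an infinite $\kappa > \chi(\cL)$, apply Theorem \ref{thm: existence of nice model} to obtain an elementary extension $\cN \succeq \cM$ that is $\kappa$-saturated and strongly $\kappa$-homogeneous; by part (2), $\cN = \cM$, so $\cM$ itself is $\kappa$-saturated and strongly $\kappa$-homogeneous.
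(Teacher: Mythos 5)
Your proof is correct, and parts (1) and (2) take a genuinely different, more elementary route than the paper's. For (1) the paper goes through Proposition \ref{prop: compact saturation} (to get $\kappa$-saturation) and then the characterization in Proposition \ref{prop: ACL}; you instead observe that the domain $D^{\cM}$ is itself a compact set definable over $\varnothing$, via the distance formula $\inf_{y \in D} d_S(x,y)$, so the conclusion follows immediately from Definition \ref{def: DCL and ACL} with no saturation machinery. For (2) the paper passes to a strongly $\kappa$-homogeneous elementary extension $\cN_1 \succeq \cN$, uses saturation of $\cM$ to produce an $x \in \cM$ of the same type as a given $y \in D^{\cN_1}$, and then invokes strong homogeneity together with (1) and Proposition \ref{prop: closure and elem ext}; your argument is direct and self-contained: the finite $\epsilon$-net of $D^{\cM}$ is a tuple of $\cM$-parameters witnessing, through a single formula, that $D^{\cM}$ is $\epsilon$-dense in $D$, this transfers to $\cN$ by elementarity, and closedness of the compact set $D^{\cM}$ in $S^{\cN}$ finishes. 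For (3) the two proofs coincide. Your approach buys a proof of (2) that needs none of the foregoing results on algebraic closure, whereas the paper's proof illustrates the $\acl$ machinery it has just developed. One small remark: the caveat you add at the start of (3) is harmless but unnecessary, since $\chi(\cL) \ge \aleph_0$ for any metric signature (a finite set of formulas cannot uniformly approximate all constant formulas), so every $\kappa > \chi(\cL)$ is already infinite.
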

	
	\begin{proof}
		(1) We already showed in Proposition \ref{prop: compact saturation} that $\cM$ is $\kappa$-saturated for every $\kappa > \chi(\cL)$ and hence Proposition \ref{prop: ACL} applies.  Let $S$ be a sort, $D$ a domain for $S$, and $x \in D^{\cM}$.  The set of realizations of $\tp^{\cM}(x)$ is a closed subset of $D^{\cM}$ and hence is compact, so by Proposition \ref{prop: ACL}, $x \in \acl^{\cM}(\varnothing)$.
		
		(2)  Let $\cN$ be any elementary extension of $\cM$.  Let $\kappa > \chi(\cL)$, and let $\cN_1$ be an elementary extension of $\cN$ that is strongly $\kappa$-homogeneous.  Fix a sort $S$ and domain $D$ for $S$, and let $y \in D^{\cN_1}$.  Let $\Phi$ be the set of formulas such that $\phi^{\cN_1}(y) = 0$.  For any $\phi_1$, \dots, $\phi_n \in \Phi$, we have
		\[
		\inf_{x \in D^{\cM}} \sum_{j=1}^n |\phi_j^{\cM}(x)| = \inf_{x \in D^{\cN_1}} \sum_{j=1}^n |\phi_j^{\cM}(x)| = 0
		\]
		because $\cM \preceq \cN_1$.  Therefore, $\Phi$ is approximately satisfiable in $\cM$, and hence $\Phi$ is satisfiable in $\cM$ because $\cM$ is $\kappa$-saturated.  If $x \in D^{\cM}$ satisfies $\Phi$, then $\tp^{\cN_1}(x) = \tp^{\cM}(x) = \tp^{\cN_1}(y)$.  Since $\cN_1$ is strongly $\kappa$-homogeneous, there exists an automorphism $\Theta$ of $\cN_1$ with $\Theta(x) = y$.  By (1) and Proposition \ref{prop: closure and elem ext}, $x \in \acl^{\cN_1}(\varnothing)$, which is the intersection of all elementary submodels of $\cN_1$ by Proposition \ref{prop: ACL}. Therefore, because the automorphic image of an elementary submodel is also an elementary submodel, $y = \Theta(x)$ is in the intersection of all elementary submodels of $\cN_1$, which is $\acl^{\cN_1}(\varnothing) = \acl^{\cM}(\varnothing) = \cM$.  Therefore, $\cN_1 = \cM$ and also $\cN = \cM$.
		
		(3) Let $\cN$ be a $\kappa$-saturated and strongly $\kappa$-homogeneous elementary extension of $\cM$.  Then $\cM = \cN$ by (2), so we are done.
	\end{proof}
	
	\subsection{Examples in von Neumann algebras} \label{subsec: def and alg closure examples}
	
	Definable and algebraic closures have not yet been studied in von Neumann algebras (except for their relationship with $1$-bounded entropy in \cite[\S 4.5]{JekelCoveringEntropy}), and therefore, we develop several basic properties and examples here.  Given that the definable closure is always a $\mathrm{W}^*$-subalgebra by Corollary \ref{cor: DCL substructure}, we have $\dcl^{\cM}(A) = \dcl^{\cM}(\mathrm{W}^*(A))$ for $A \subseteq \cM$.  Hence, we restrict our attention to computing the definable and algebraic closures of a $\mathrm{W}^*$-subalgebra $\cA \subseteq \cM$.
	
	\begin{observation}[Definable closure contained in relative bicommutant]
		Let $\cA \subseteq \cM$ be an inclusion of tracial von Neumann algebras.  The definable closure $\dcl^{\cM}(\cA)$ is contained in the relative bicommutant $(\cA' \cap \cM)' \cap \cM$.  In particular, if $\cA$ is a maximal abelian subalgebra of $\cM$, then $\dcl^{\cM}(\cA) = \cA$.
	\end{observation}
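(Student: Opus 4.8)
The plan is to exploit that unitaries $u$ in the relative commutant $\cA' \cap \cM$ induce automorphisms $\ad_u \colon x \mapsto uxu^*$ of $\cM$ which restrict to the identity on $\cA$, so that they must fix every element of $\dcl^{\cM}(\cA)$.

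First I would fix $y \in \dcl^{\cM}(\cA)$ and a unitary $u \in \cA' \cap \cM$. Since $u$ is a unitary, $\ad_u$ is a trace-preserving $*$-automorphism of $\cM$, hence an automorphism of the $\cL_{\tr}$-structure $\cM$; and since $u$ commutes with every $a \in \cA$, it fixes $\cA$ pointwise, i.e.\ it is an automorphism of the expanded structure $\cM_{\cA}$. By Proposition \ref{prop:DCL}(3) (with any cardinal $\kappa > \chi(\cL_{\cA})$; note the proofs of parts (1) and (3) there make no use of saturation) we conclude $\ad_u(y) = y$, that is, $uy = yu$. Equivalently, and self-containedly: writing $d^{\cM}(\,\cdot\,,y) = \phi^{\cM}$ for a definable predicate $\phi$ over $\cA$, the automorphism $\ad_u$ of $\cM_{\cA}$ preserves $\phi$ (it maps each operator-norm ball onto itself), so $d^{\cM}(uyu^*,y) = \phi^{\cM}(uyu^*) = \phi^{\cM}(y) = 0$.

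Next I would pass from unitaries to the full relative commutant: every element of the von Neumann algebra $\cA' \cap \cM$ is a linear combination of unitaries of $\cA' \cap \cM$, since any self-adjoint contraction $h$ in that algebra equals $\tfrac12(v + v^*)$ with $v = h + i\sqrt{1-h^2}$ unitary and lying in $\cA' \cap \cM$ by the Borel functional calculus. Therefore $y$ commutes with all of $\cA' \cap \cM$, i.e.\ $y \in (\cA' \cap \cM)' \cap \cM$, which is the first assertion. For the ``in particular'' clause, note first that $\cA \subseteq \dcl^{\cM}(\cA)$ always, because for $a \in \cA$ the atomic $\cL_{\cA}$-formula $d(x,a)$ shows that $\{a\}$ is definable over $\cA$; and when $\cA$ is a maximal abelian subalgebra of $\cM$ we have $\cA' \cap \cM = \cA$, so the relative bicommutant is again $\cA$, whence $\dcl^{\cM}(\cA) = \cA$.

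I do not anticipate a genuine obstacle: the argument is short. The only point meriting care is the invocation of Proposition \ref{prop:DCL}, which is stated with a cardinal parameter; one can sidestep this entirely by using the direct computation with the defining predicate $\phi$ indicated above, which uses only the definition of the definable closure and the automorphism-invariance of definable predicates.
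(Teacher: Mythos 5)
Your proof is correct and takes essentially the same route as the paper: both observe that conjugation by a unitary $u \in \cA' \cap \cM$ is an automorphism of $\cM$ fixing $\cA$ pointwise, hence fixes any $y \in \dcl^{\cM}(\cA)$, and then conclude $y$ lies in the relative bicommutant. Your additional explicit steps (decomposing elements of $\cA'\cap\cM$ into unitaries, and the self-contained argument via the defining predicate $\phi$) fill in details the paper leaves implicit but do not change the approach.
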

	
	\begin{proof}
		Let $x \in \dcl^{\cM}(\cA)$.  To show that $x$ is in the relative bicommutant, it suffices to show that $x$ commutes with every unitary $u \in \cA' \cap \cM$.  Conjugation by $u$ defines an automorphism of $\cM$ which fixes $\cA$ pointwise, and therefore $uxu^*$ has the same type over $\cA$ as $x$ does.  Since $x$ is definable over $\cA$, this implies $uxu^* = x$.  Therefore, $x \in (\cA' \cap \cM)' \cap \cM$.
	\end{proof}
	
	\begin{example}[Definable and algebraic closures in $M_n(\C)$]
		Let $\cM = M_n(\C)$ and $\cA \subseteq \cM$ be a $*$-subalgebra.  Then $\cA'' = \cA$ and hence $\dcl^{M_n(\C)}(\cA) = \cA$.  On the other hand, $\acl^{M_n(\C)}(\cA) = M_n(\C)$ by Proposition \ref{prop: compact ACL}.
	\end{example}
	
	\begin{example}[Definable and algebraic closures for finite-dimensional $\cM$] \label{ex: finite dimensional}
		Next, we will compute the definable closure of $\cA$ in $\cM$ whenever $\cA \subseteq \cM$ is an inclusion of finite-dimensional tracial $*$-algebras (as above, $\acl^{\cM}(\cA) = \cM$).  By Proposition \ref{prop: compact ACL}, $\cM$ is $\kappa$-saturated and strongly $\kappa$-homogeneous.  Therefore, by Proposition \ref{prop:DCL}, $\dcl^{\cM}(\cA)$ is the set of fixed points in $\cM$ of the action of automorphisms that fix $\cA$ pointwise.
		
		The structure theory of finite-dimensional $*$-algebras (see for instance \cite[\S 3.1]{Davidson1996}, \cite[\S 3.2]{JonesSunder1997}) says there is a decomposition
		\[
		\cM \cong \bigoplus_{j=1}^J M_{n_j}(\C), \qquad \tr_{\cM} = \bigoplus_{j=1}^J \beta_j \tr_{n_j}.
		\]
		and a similar decomposition
		\[
		\cA \cong \bigoplus_{i=1}^I M_{m_i}(\C), \qquad \tr_{\cA} = \bigoplus_{i=1}^I \alpha_i \tr_{m_i}.
		\]
		Up to an automorphism of $\cM$, we can assume the inclusion $\iota: \cA \to \cM$ takes the form
		\[
		\iota \left( \bigoplus_{i=1}^I x_i \right) = \bigoplus_{j=1}^J \diag(\underbrace{x_1,\dots,x_1}_{k(1,j)},\dots,\underbrace{x_I,\dots,x_I}_{k(I,j)}),
		\]
		where, as a consequence of the inclusion being trace-preserving,
		\[
		\sum_{j=1}^J k(i,j) \frac{\alpha_j}{n_j} = \frac{\beta_i}{m_i}.
		\]
		
		Suppose $\Theta$ is an automorphism of $\cM$ that fixes $\cA$ pointwise.  Then $\Theta$ must map each summand $M_{n_j}(\C)$ onto another summand $M_{n_{j'}}(\C)$ such that $n_j = n_{j'}$ and $\alpha_j = \alpha_{j'}$.  Besides that, because $\Theta \circ \iota = \iota$, we must have $k(i,j) = k(i,j')$ for all $i$.  Therefore, let us define an equivalence relation on $\{1,\dots,J\}$ by saying that $j \sim j'$ if $n_j = n_{j'}$, $\beta_j = \beta_{j'}$, and $k(i,j) = k(i,j')$ for every $i$.  For each equivalence class $C \subseteq [j]$, let $p_C$ be the central projection onto $\bigoplus_{j \in C} M_{n_j}(\C)$.  Then $p_C$ must be fixed by every automorphism $\Theta$ of $\cM$ that fixes $\cA$ pointwise, and therefore, $\dcl^{\cM}(\cA)$ contains the projections $p_C$.
		
		We claim that in fact
		\[
		\dcl^{\cM}(\cA) = \Span(p_C \cA: C \text{ an equivalence class});
		\]
		we already know the inclusion $\supseteq$ because $\dcl^{\cM}(\cA)$ is a $*$-algebra and contains $\cA$ and the $p_C$'s.  To prove $\subseteq$, suppose that $y = \bigoplus_{j=1}^n y_j$ is fixed by every automorphism of $\cM$ that fixes $\cA$ pointwise.  Since $y$ is the sum of $p_C y$ over the equivalence classes $C$, it suffices to show that for each $C$, we have $p_C y \in p_C \cA$.  Note $p_C y = \bigoplus_{j \in C} y_j$.
		
		Let $\sigma$ be a permutation of $C$. Let $\sigma$ be the automorphism of $\cM$ that permutes the direct summands indexed by $C$ according to $\sigma$ while leaving the other direct summands alone (which is trace-preserving because $\beta_j$ is constant on $C$).  Because $k(i,j)$ is constant over $j \in C$ for each $i = 1, \dots, I$, the automorphism $\Theta$ fixes $\cA$ pointwise.  Therefore, $y$ must also be fixed by $\Theta$, meaning that $y_j$ is constant for $j \in C$.
		
		Thus, $p_C y = \bigoplus_{j \in \C} z$ for some $z \in M_n(\C)$ where $n$ is the value of $n_j$ for $j \in \C$.  Now since $k(i,j)$ is constant for $j \in C$, every $a \in \cA$ satisfies $p_C \iota(a) = \iota_C(a) \oplus \dots \iota_C(a)$ where $\iota_C$ is the projection of $\iota$ onto one of the $M_{n_j}(\C)$'s for $j \in C$.  Thus, if $u$ is a unitary in $\iota_C(\cA)' \cap M_n(\C)$, then there is automorphism $\Theta$ of $\cM$ that fixes $\cA$ pointwise such that $\Theta$ is conjugation by $u$ on $M_{n_j}(\C)$ when $j \in \C$ and the identity on $M_{n_j}(\C)$ for $j \not \in C$.  This automorphism must fix $p_C y$, and hence $uzu^* = z$.  By the bicommutant theorem on $M_n(\C)$, we conclude that $z \in \iota_C(\cA)$.  Thus, $p_Cy = p_C\iota(a) = p_Ca$ for some $a \in \cA$.
	\end{example}
	
	The next example relates definable closure with normalizers for inclusions of irreducible subfactors $\cA \subseteq \cM$ that have weak spectral gap, meaning that the only sequences in $\cM$ that asymptotically commute with $\cA$ are the ones that are asymptotically trivial.  Weak spectral gap is an important property in the study of $\mathrm{II}_1$ factors that was explicitly formulated in \cite{Popa2012}, although it was used implicitly earlier through its connection with other rigidity phenomena such as property (T).  Recently, Goldbring made several connections between spectral gap and definability of sets \cite{Goldbring2023spectralgap} which are thematically related to our Proposition \ref{prop: spectral gap normalizer} below.  We recall the following characterizations of weak spectral gap; this is in the same spirit as the ultraproduct characterization (\cite[Remark 2.2]{Popa2012}) of weak spectral gap and the treatment of property Gamma in \cite[\S 3.2.2]{FHS2014b}.  For the reader's convenience we give a self-contained proof, which is also a good illustration of definition of $\kappa$-saturation.
	
	\begin{lemma} \label{lem: spectral gap}
		Let $\cA \subseteq \cM$ be an inclusion of tracial von Neumann algebras, and let $\kappa > \chi(\cA)$.  The following are equivalent:
		\begin{enumerate}[(1)]
			\item For every $\epsilon > 0$ and $r > 0$, there exist a finite set $F \subseteq A$ and $\delta > 0$ such that, for $x \in D_r^{\cM}$, if $\max_{a \in F} d^{\cM}(xa,ax) < \delta$, then $d^{\cM}(x, \tr^{\cM}(x)1) < \epsilon$.
			\item For every elementary extension $\cN \succeq \cM$, we have $\cA' \cap \cN = \C 1$.
			\item There exists a $\kappa$-saturated elementary extension $\cN \succeq \cM$ such that $\cA' \cap \cN = \C 1$.
		\end{enumerate}
	\end{lemma}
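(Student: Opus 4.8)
The plan is to prove the cycle of implications (1) $\Rightarrow$ (2) $\Rightarrow$ (3) $\Rightarrow$ (1). Since $\cA$ is infinite as a set, $\chi(\cA) \geq \aleph_0 = \chi(\cL_{\tr})$, so $\kappa > \chi(\cL_{\tr})$ is an infinite cardinal and Theorem~\ref{thm: existence of nice model} furnishes a $\kappa$-saturated elementary extension $\cN \succeq \cM$; the implication (2) $\Rightarrow$ (3) then follows at once, since (2) applies to this particular $\cN$. In the remaining two implications I will use that the quantity $d(y, \tr(y) 1)$ is the interpretation of a formula in the single variable $y$, because $d(y, \tr(y) 1)^2 = \re\tr(y^* y) - (\re\tr(y))^2 - (\im\tr(y))^2$ in every model.

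For (1) $\Rightarrow$ (2), fix an elementary extension $\cN \succeq \cM$ and $x \in \cA' \cap \cN$, and choose $r$ with $x \in D_r^{\cN}$. Given $\epsilon > 0$, let $F = \{a_1, \dots, a_k\} \subseteq A$ and $\delta > 0$ be as supplied by (1) for this $\epsilon$ and $r$. The point is to transfer the conditional statement of (1) along $\cM \preceq \cN$ by encoding it as the single sentence with parameters $a_1, \dots, a_k \in \cM$,
\[
\zeta \;=\; \sup_{y \in D_r} \min\bigl(\, \delta \dot{-} \max_{1 \le i \le k} d(y a_i, a_i y), \ \ d(y, \tr(y)1) \dot{-} \epsilon \,\bigr).
\]
Statement (1) says exactly that $\zeta^{\cM} = 0$: for each $y \in D_r^{\cM}$ either $\max_i d(y a_i, a_i y) \geq \delta$, so the first argument of the minimum vanishes, or $\max_i d(y a_i, a_i y) < \delta$, in which case $d(y, \tr(y)1) < \epsilon$ and the second argument vanishes. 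By elementarity, $\zeta^{\cN} = 0$ as well. Substituting $y = x$ and using $d^{\cN}(x a_i, a_i x) = 0$ forces $d^{\cN}(x, \tr^{\cN}(x) 1) \leq \epsilon$, and letting $\epsilon \downarrow 0$ gives $x = \tr^{\cN}(x) 1 \in \C 1$.

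For (3) $\Rightarrow$ (1), I argue the contrapositive: assuming (1) fails, I will show that \emph{every} $\kappa$-saturated elementary extension $\cN$ of $\cM$ satisfies $\cA' \cap \cN \neq \C 1$, which is the negation of (3). Fix $\epsilon_0, r_0 > 0$ witnessing the failure of (1), i.e.\ so that for every finite $F \subseteq A$ and every $\delta > 0$ there is $x \in D_{r_0}^{\cM}$ with $\max_{a \in F} d^{\cM}(x a, a x) < \delta$ but $d^{\cM}(x, \tr^{\cM}(x) 1) \geq \epsilon_0$. Let $\cN$ be a $\kappa$-saturated elementary extension of $\cM$, and fix a $\norm{\cdot}_2$-dense subset $A_0 \subseteq \cA$ with $|A_0| = \chi(\cA) < \kappa$. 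Consider the set of $\cL_{A_0}$-formulas in one variable $y$ ranging over the domain $D_{r_0}$,
\[
\Phi \;=\; \bigl\{\, d(y a, a y) : a \in A_0 \,\bigr\} \;\cup\; \bigl\{\, \epsilon_0 \dot{-} d(y, \tr(y) 1) \,\bigr\},
\]
which has cardinality less than $\kappa$. Applying the failure of (1) to the finitely many parameters from $A_0$ that occur in a given finite subset of $\Phi$ shows that this subset is approximately satisfied by some $x \in D_{r_0}^{\cM} \subseteq D_{r_0}^{\cN}$; hence $\Phi$ is finitely approximately satisfiable in $D_{r_0}^{\cN_{A_0}}$. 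By $\kappa$-saturation, $\Phi$ is satisfied by some $x \in D_{r_0}^{\cN}$. Then $x a = a x$ for all $a \in A_0$; since $x$ has bounded operator norm, $A_0$ is $\norm{\cdot}_2$-dense in $\cA$, and multiplication is $\norm{\cdot}_2$-continuous on operator-norm balls, we conclude $x \in \cA' \cap \cN$. Moreover $d^{\cN}(x, \tr^{\cN}(x) 1) \geq \epsilon_0 > 0$, so $x \notin \C 1$, contradicting (3).

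The step I expect to be the main obstacle is the transfer in (1) $\Rightarrow$ (2): the content of (1) is a $\delta$--$\epsilon$ implication, and it must be repackaged as an honest $\cL$-sentence --- done above via a supremum over $D_r$ of a minimum of truncated differences $\dot{-}$ --- whose value is $0$ precisely when (1) holds, so that the zero value is inherited by the elementary extension. The only other point needing attention is the bookkeeping in (3) $\Rightarrow$ (1): one must replace $\cA$ by a $\norm{\cdot}_2$-dense subset of cardinality $\chi(\cA) < \kappa$, which is exactly what makes the partial type $\Phi$ have fewer than $\kappa$ conditions so that $\kappa$-saturation applies.
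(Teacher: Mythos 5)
Your proof is correct and takes essentially the same route as the paper: you encode the $\delta$--$\epsilon$ implication of (1) as a sentence with parameters (a supremum of a minimum of truncated differences) and transfer it along the elementary extension, deduce (2) $\Rightarrow$ (3) from Theorem~\ref{thm: existence of nice model}, and prove (3) $\Rightarrow$ (1) by contrapositive by forming a finitely approximately satisfiable set of $\cL_{A_0}$-conditions over a dense subset $A_0 \subseteq \cA$ of cardinality $< \kappa$ and invoking $\kappa$-saturation. The only differences are minor bookkeeping points that you actually handle more explicitly than the paper does (the dense subset of cardinality $< \kappa$, and the density argument showing commutation with $A_0$ implies commutation with $\cA$).
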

	
	\begin{proof}
		(1) $\implies$ (2).  Let $\cN$ be an elementary extension of $\cM$, and let $y \in \cA' \cap \cN$, and let $r = \norm{y}$.  Fix $\epsilon > 0$.  Let $F \subseteq A$ and $\delta > 0$ be given by condition (1).  Then
		\[
		\sup_{x \in D_r^{\cM}} \min\left( \delta - \max_{a \in F} d^{\cM}(xa,ax), d^{\cM}(x, \tr^{\cM}(x)1) - \epsilon \right) \leq 0.
		\]
		Because $\cN$ is an elementary extension, the same equation holds when we replace $\cM$ by $\cN$, and in particular
		\[
		\min\left( \delta - \max_{a \in F} d^{\cN}(ya,ay), d^{\cM}(y, \tr^{\cN}(y)1) - \epsilon \right) \leq 0.
		\]
		Since $d^{\cN}(ya,ay) = 0$, the first option in the minimum is positive, so we obtain $d^{\cM}(y, \tr^{\cN}(y)1) \leq \epsilon$.  Since $\epsilon$ was arbitrary, $y \in \C$.  Thus, $\cA' \cap \cN = \C 1$.
		
		(2) $\implies$ (3) is immediate since a $\kappa$-saturated elementary extension exists by Theorem \ref{thm: existence of nice model}.
		
		(3) $\implies$ (1).  We proceed by contrapositive.  Suppose (1) fails and we will show that for every $\kappa$-saturated elementary extension $\cN$, we have $\cA' \cap \cN \neq \C 1$.  Let $A$ be a dense subset of $\cA$ with cardinality less than $\kappa$.  For $a \in A$, consider the $\cL_A$-formula
		\[
		\phi_a(x) = d(xa,ax).
		\]
		Since (1) fails, there are some $r > 0$ and $\epsilon > 0$ such that for every finite set $F \subseteq A$ and $\delta > 0$, there exists $x \in \cM$ such that $\max_{a \in F} \phi_a^{\cM_A}(x) < \delta$ and $d(x,\tr^{\cM}(x)1) \geq \epsilon$.  Note $\phi_a^{\cM_A}(x) = \phi_a^{\cN_A}(x)$.  This means that
		\[
		\{\phi_a: a \in A\} \cup \{ \epsilon \dot{-} d(x,\tr(x)1) \}
		\]
		is finitely approximately satisfiable in $\cN$.  By $\kappa$-saturation, it is satisfiable in $\cN$, so there exists $y \in \cN$ such that $d^{\cN}(ya,ay) = 0$ for $a \in A$ and also $d^{\cN}(y,\tr^{\cN}(y)1) \geq \epsilon$.  Hence, $y \in \cA' \cap \cN$ so $\cA' \cap \cN \neq \C 1$.
	\end{proof}
	
	\begin{proposition}[Definable closure and normalizer] \label{prop: spectral gap normalizer}
		Let $\cA \subseteq \cM$ be an inclusion of tracial von Neumann algebras satisfying the equivalent conditions of Lemma \ref{lem: spectral gap}.  Then $\acl^{\cM}(\cA)$ contains the normalizer $\operatorname{Norm}_{\cM}(\cA) = \{u \in U(\cM): u \cA u^* = \cA\}$, and $\dcl^{\cM}(\cA)$ contains the commutator subgroup of $\operatorname{Norm}_{\cM}(\cA)$.
	\end{proposition}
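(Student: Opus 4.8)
The plan is to pass to a convenient elementary extension and reduce everything to one observation. Fix an uncountable cardinal $\kappa > \chi(\cA)$ and let $\cN \succeq \cM$ be $\kappa$-saturated and strongly $\kappa$-homogeneous (Theorem \ref{thm: existence of nice model}); by Proposition \ref{prop: closure and elem ext} it suffices to prove the two containments with $\cM$ replaced by $\cN$, and by condition (2) of Lemma \ref{lem: spectral gap} we have $\cA' \cap \cN = \C 1$. The observation I would establish is: \emph{if $u \in \operatorname{Norm}_{\cM}(\cA)$ and $v \in \cN$ realizes $\tp^{\cN}(u/\cA)$, then $v = \mu u$ for some $\mu \in \mathbb{T}$}. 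Indeed, being a realization of the type of a unitary, $v$ is itself a unitary; and for each $a \in \cA$ the element $b := u a u^*$ lies in $\cA$ (because $u$ normalizes $\cA$), so the $\cL_{\cA}$-formula $x \mapsto d^{\cN}(x a x^*, b)$ vanishes at $u$, hence at $v$, which forces $v a v^* = u a u^*$. Rearranging gives $(v^*u)\,a = a\,(v^*u)$ for every $a \in \cA$, so $v^* u \in \cA' \cap \cN = \C 1$; being a unitary scalar, $v^* u = \bar\mu 1$ for some $\mu \in \mathbb{T}$, i.e.\ $v = \mu u$.

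Granting this observation, the first containment is immediate: for $u \in \operatorname{Norm}_{\cM}(\cA)$ the set of realizations of $\tp^{\cN}(u/\cA)$ in $\cN$ is a closed subset (formulas are $d$-continuous) of the compact circle $\{\mu u : \mu \in \mathbb{T}\} \subseteq L^2(\cN)$, hence compact, so $u \in \acl^{\cN}(\cA) = \acl^{\cM}(\cA)$ by Proposition \ref{prop: ACL} (applicable since $\cN$ is $\kappa$-saturated and $\kappa > \chi(\cA)$ is uncountable). For the second containment, take $u, v \in \operatorname{Norm}_{\cM}(\cA)$ and set $w := u v u^* v^*$; by Proposition \ref{prop:DCL}(4) it suffices to check that every automorphism $\Theta$ of $\cN$ fixing $\cA$ pointwise fixes $w$. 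Such a $\Theta$ carries $u$ and $v$ to realizations of their types over $\cA$, so the observation gives $\Theta(u) = \mu u$ and $\Theta(v) = \nu v$ with $\mu, \nu \in \mathbb{T}$, whence $\Theta(w) = (\mu u)(\nu v)(\bar\mu u^*)(\bar\nu v^*) = u v u^* v^* = w$ since the scalars cancel. Thus $w \in \dcl^{\cN}(\cA) = \dcl^{\cM}(\cA)$, and since the latter is a $\mathrm{W}^*$-subalgebra by Corollary \ref{cor: DCL substructure} it is closed under products, so it contains the subgroup of $\operatorname{Norm}_{\cM}(\cA)$ generated by commutators.

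The only step with real content is the observation, and within it the essential use of the hypothesis is the passage from $v^*u \in \cA' \cap \cN$ to $v^* u \in \C 1$, which is precisely weak spectral gap via Lemma \ref{lem: spectral gap}(2). Everything else --- recognizing $v a v^* = u a u^*$ as encoded in the type over $\cA$, verifying that the realization set is closed and contained in $\mathbb{T}u$, and checking that $\kappa$ can be chosen to meet the cardinality hypotheses of Propositions \ref{prop: ACL} and \ref{prop:DCL} simultaneously --- is routine.
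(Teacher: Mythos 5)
Your proof is correct and follows essentially the same route as the paper: pass to a $\kappa$-saturated, strongly $\kappa$-homogeneous elementary extension, show every realization of $\tp(u/\cA)$ for $u$ in the normalizer is a unimodular scalar multiple of $u$ (using $\cA'\cap\cN=\C1$ from the spectral gap hypothesis), then deduce the acl containment from compactness of the circle of realizations and the dcl containment from cancellation of the scalar phases on commutators. The only cosmetic difference is that the paper packages the phases as a group homomorphism $\lambda\colon\operatorname{Norm}_{\cM}(\cA)\to S^1$ vanishing on the commutator subgroup, whereas you check the cancellation on each commutator directly and then invoke that $\dcl^{\cM}(\cA)$ is a $\mathrm{W}^*$-subalgebra — same substance.
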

	
	\begin{proof}
		Let $\cN$ be a $\kappa$-saturated elementary extension of $\cM$, where $\kappa$ is greater than the density character of $\cA$.  Let $u \in \operatorname{Norm}_{\cM}(\cA)$.  We claim that $u$ is the unique realization of $\operatorname{tp}^{\cM}(u / \cA)$ in $\cN$ up to multiplication by a complex number of modulus $1$.  Suppose that $v \in \cM^{\cU}$ has the same type over $\cA$ as $u$.  Note that $v$ must be unitary.  For each $a \in \cA$, the element $b = uau^*$ is also in $\cA$.  Since $d(b,uau^*) = 0$, we also have $d(b,vav^*) = 0$.  Therefore, $uau^* = vav^*$ for all $a \in \cA$.  Hence, $uv^* \in \cA' \cap \cN$, which equals $\C 1$ by Lemma \ref{lem: spectral gap}. Thus, $v = \lambda u$ for some complex number $\lambda$ of modulus $1$.  Thus, in particular, the space of realizations of $\tp^{\cM}(u)$ in $\cN$ is $\{\lambda u: \lambda \in \C, |\lambda| = 1\}$.  This is compact, and hence $u \in \acl(\cA)$.
		
		For the second claim, let $\cN$ be a strongly $\kappa$-homogeneous elementary extension of $\cM$.  Let $\Theta$ be an automorphism of $\cN$ that fixes $\cA$ pointwise.  By the foregoing argument, for each $u \in \operatorname{Norm}_{\cM}(\cA)$, we have $\Theta(u) = \lambda(u) u$ for some scalar $\lambda(u)$.  Now $\lambda(uv) uv = \Theta(uv) = \Theta(u) \Theta(v) = \lambda(u) \lambda(v) uv$, hence $\lambda$ is a group homomorphism $\operatorname{Norm}_{\cM}(\cA) \to S^1$.  Therefore, $\lambda$ must vanish on the commutator subgroup of $\operatorname{Norm}_{\cM}(\cA)$.  Hence, every automorphism $\Theta$ fixes the commutator subgroup of $\operatorname{Norm}_{\cM}(\cA)$ pointwise, so the commutator subgroup is contained in $\dcl^{\cM}(\cA)$.
	\end{proof}
	
	Many inclusions with weak spectral gap arise from property (T), an important rigidity property defined for groups by Kazhdan \cite{Kazhdan1967}, for $\mathrm{II}_1$ factors by Connes and Jones \cite{ConnesJones1985}, and for tracial von Neumann algebras by Popa \cite{Popa2006Betti}.  Indeed, if $\cA \subseteq \cM$ is an inclusion with $\cA' \cap \cM = \C$ and if $\cA$ has property (T), then $\cA \subseteq \cM$ automatically satisfies the spectral gap condition Lemma \ref{lem: spectral gap} (1); see e.g.\ \cite[\S 2.3]{Tan2022}.
	
	\begin{example}
		By the work of Chifan, Ioana, Osin, and Sun \cite[\S 6]{CIOS2023}, there exists II$_1$ factor $\cA$ with property (T) such that the outer automorphism group of $\cA$ contains the free group $\mathbb{F}_n$.  In particular, there exists an action $\alpha: \mathbb{F}_n \to \Aut(\cA)$ that is properly outer, which means that the crossed product $\cM = \cA \rtimes \mathbb{F}_n$ satisfies $\cA' \cap \cM = \C$ (see e.g.\ \cite[Proposition 1.4.4(i)]{JonesSunder1997}).  Because $\cA$ has property (T), $\cA$ automatically satisfies the spectral gap condition Lemma \ref{lem: spectral gap} (1).  Therefore, by Proposition \ref{prop: spectral gap normalizer}, $\acl^{\cM}(\cA)$ contains all the group elements in $\mathbb{F}_n$.  Since $\acl^{\cM}(\cA)$ is a von Neumann subalgebra, we have $\acl^{\cM}(\cA) = \cM$.  Furthermore, if $\cN$ is an elementary extension of $\cM$, then using Proposition \ref{prop: closure and elem ext}), we also have $\acl^{\cN}(\cA) = \cM$.
	\end{example}
	
	\begin{remark}
		The present author in \cite[\S 4.5]{JekelCoveringEntropy} studied the interaction between definable closures and the Jung-Hayes free entropy from \cite{Jung2007S1B,Hayes2018}.  The fact that taking the algebraic closure does not increase the entropy \cite[Corollary 4.21]{JekelCoveringEntropy} was reminiscent of the behavior of entropy under normalizers (and more generally the singular subspace) obtained earlier by Hayes \cite[Theorem 3.1]{Hayes2018}.  Proposition \ref{prop: spectral gap normalizer} gives an explicit connection between these two results by exhibiting some normalizers as examples of algebraic closure.  Of course, Hayes' original results on the singular subspace are far more general than normalizers of subfactors with spectral gap.  Nonetheless, this example may provide the first hint as to what the analog of Hayes' results should be in the context of complete types.
	\end{remark}
	
	\section{Monge-Kantorovich duality for types and definable predicates} \label{sec: duality}
	
	In this section, we study convex definable predicates and prove three of the main results: the non-commutative model-theoretic Monge-Kantorovich duality Theorem \ref{thm: MK duality}, the analog of Jensen's inequality Proposition \ref{prop: monotonicity under expectation}, and Theorem \ref{thm: displacement interpolation} on the displacement interpolation.
	
	\subsection{Proof of duality} \label{subsec: duality proof}
	
	As in \cite[\S 3]{GJNS2021}, a key point of the argument for Monge-Kantorovich duality is a non-commutative version of the Legendre transform.  As the first step, we prove a weaker version where the predicates $\phi$ and $\psi$ are not required to be convex and only satisfy \eqref{eq: admissibility} on the domain $D_{\mathbf{r}}$ rather than everywhere.
	
	\begin{proposition} \label{prop: pre MK duality}
		Fix a complete theory $\mathrm{T}$ of tracial von Neumann algebras (i.e. a consistent complete theory containing $\mathrm{T}_{\tr}$).  Let $\mathbf{r} = (r_1,\dots,r_n)$.  Let $\mu$ and $\nu$ be types in $\mathbb{S}_{\mathbf{r}}(\mathrm{T})$.  There exist $\mathrm{T}_{\tr}$-definable predicates $\phi_0$ and $\psi_0$ such that
		\begin{equation} \label{eq: admissibility 2}
			\phi_0^{\cM}(\mathbf{x}) + \psi_0^{\cM}(\mathbf{y}) \geq \re \ip{\mathbf{x},\mathbf{y}}_{L^2(\cM)^n} \text{ for } \mathbf{x}, \mathbf{y} \in D_{\mathbf{r}}^{\cM} \text{ for } \cM \models \mathrm{T}_{\tr},
		\end{equation}
		and such that equality is achieved when $(\mathbf{x},\mathbf{y})$ is an optimal coupling of $(\mu,\nu)$ in $\cM$.
	\end{proposition}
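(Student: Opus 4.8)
The plan is to reduce the whole statement to a one‑variable Legendre--Fenchel transform of definable predicates, exploiting the fact (unavailable for quantifier‑free predicates) that definable predicates relative to $\mathrm{T}_{\tr}$ are closed under $\sup$ over domains of quantification. For a $\mathrm{T}_{\tr}$-definable predicate $\chi$ in $n$ variables set
\[
\chi^\vee(\mathbf{x}) := \sup_{\mathbf{w} \in D_{\mathbf{r}}} \bigl( \re\langle \mathbf{x},\mathbf{w}\rangle_{L^2} - \chi(\mathbf{w}) \bigr).
\]
Since $\re\langle\mathbf{x},\mathbf{w}\rangle_{L^2} = \sum_j \re\tr(x_j^* w_j)$ is a formula, $\chi^\vee$ is again a $\mathrm{T}_{\tr}$-definable predicate by the closure properties of definable predicates, and the Fenchel--Young inequality $\chi^\vee(\mathbf{x}) + \chi(\mathbf{y}) \ge \re\langle\mathbf{x},\mathbf{y}\rangle_{L^2}$ holds on $D_{\mathbf{r}}$ in every $\cM \models \mathrm{T}_{\tr}$ just by taking $\mathbf{w}=\mathbf{y}$ in the supremum. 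Thus for \emph{any} choice of $\psi_0$ the pair $(\phi_0,\psi_0)$ with $\phi_0 := \psi_0^\vee$ automatically satisfies \eqref{eq: admissibility 2}; the only real task is to choose $\psi_0$ so that equality is forced at the optimal coupling, equivalently so that $\mu[\phi_0] + \nu[\psi_0] = C(\mu,\nu)$.

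Next I would record that $C(\mu,\nu)$ is attained: the set $\Pi(\mu,\nu)$ of joint types in $\mathbb{S}_{(\mathbf{r},\mathbf{r})}(\mathrm{T})$ with marginals $\mu$ and $\nu$ is a closed, hence compact, subset of the (metrizable) type space, and $\pi \mapsto \pi[\re\langle\mathbf{x},\mathbf{y}\rangle]$ is continuous, so it is maximized at some $\pi_0$, realized by an optimal coupling $(\mathbf{x}_0,\mathbf{y}_0)$ in any $\aleph_1$-saturated model. The same compactness argument shows that $\nu' \mapsto C(\mu,\nu')$ is upper semicontinuous on $\mathbb{S}_{\mathbf{r}}(\mathrm{T})$ (subsequential weak-$*$ limits of optimal couplings of $(\mu,\nu_i')$ land in $\Pi(\mu,\nu')$). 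This function need \emph{not} be continuous, because the $d$-metric topology is strictly finer than the logic topology; but an upper semicontinuous function on a compact metric space always admits a \emph{continuous} majorant that is tight at a prescribed point $\nu$ — e.g.\ by comparing $C(\mu,\cdot)$ with a suitable nondecreasing continuous function of the distance to $\nu$ built from the nested open sets $\{C(\mu,\cdot) < C(\mu,\nu) + 1/k\}$. Let $\Psi_0 \ge C(\mu,\cdot)$ be such a majorant with $\Psi_0(\nu) = C(\mu,\nu)$, extend it by Tietze to $\mathbb{S}_{\mathbf{r}}(\mathrm{T}_{\tr})$, let $\psi_0$ be the corresponding $\mathrm{T}_{\tr}$-definable predicate via Lemma \ref{lem: continuous function definable predicate}, and put $\phi_0 := \psi_0^\vee$.

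It remains to compute $\mu[\phi_0]$. Evaluating $\phi_0$ at $\mathbf{x}_0$ in an $\aleph_1$-saturated model, the tuple $(\mathbf{x}_0,\mathbf{w})$ runs over all joint types $\pi$ with $\pi|_{\mathbf{x}} = \mu$, and $\psi_0^{\cM}(\mathbf{w})$ depends only on $\tp^{\cM}(\mathbf{w})$, so $\mu[\phi_0] = \sup\{\pi[\re\langle\mathbf{x},\mathbf{y}\rangle] - \Psi_0(\pi|_{\mathbf{y}}) : \pi|_{\mathbf{x}} = \mu\}$. For each such $\pi$, writing $\nu' = \pi|_{\mathbf{y}}$, the type $\pi$ is a coupling of $\mu$ and $\nu'$, so $\pi[\re\langle\mathbf{x},\mathbf{y}\rangle] \le C(\mu,\nu') \le \Psi_0(\nu')$ and the bracketed quantity is $\le 0$; taking $\pi = \pi_0$ gives exactly $C(\mu,\nu) - \Psi_0(\nu) = 0$. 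Hence $\mu[\phi_0] = 0$ and $\mu[\phi_0] + \nu[\psi_0] = \Psi_0(\nu) = C(\mu,\nu)$. Finally, because $\phi_0$ and $\psi_0$ are definable predicates, their values on any optimal coupling $(\mathbf{x},\mathbf{y})$ of $(\mu,\nu)$ depend only on $\tp(\mathbf{x}) = \mu$ and $\tp(\mathbf{y}) = \nu$, so $\phi_0^{\cM}(\mathbf{x}) + \psi_0^{\cM}(\mathbf{y}) = C(\mu,\nu) = \re\langle\mathbf{x},\mathbf{y}\rangle_{L^2}$, which is the desired equality.

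The main obstacle is the middle step: the failure of continuity of $\nu' \mapsto C(\mu,\nu')$ — a direct reflection of the logic topology being coarser than the Wasserstein metric topology — means $C(\mu,\cdot)$ itself cannot serve as the potential, and one must instead manufacture a genuinely continuous (hence definable) majorant of it that is exact at $\nu$. Once that is in place, everything else is bookkeeping with the closure properties of definable predicates, with $\aleph_1$-saturation used only to realize all joint types extending $\mu$ inside a single model.
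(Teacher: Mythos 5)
Your proof is correct, and it reaches the same intermediate object as the paper—namely a $\mathrm{T}_{\tr}$-definable predicate $\psi_0$ whose value at $\mathbf{y}$ majorizes $C(\tp(\mathbf{y}),\mu)$ (over models of $\mathrm{T}$, on $D_{\mathbf{r}}$) and is tight at $\nu$, followed by the Legendre transform $\phi_0 = \psi_0^\vee$—but it constructs $\psi_0$ by a genuinely different route. The paper builds $\psi_0$ syntactically: it takes a penalty predicate $\eta$ vanishing exactly at $\mu$, forms $\theta_\epsilon(\mathbf{y}) = \sup_{\mathbf{x}\in D_{\mathbf{r}}}[\re\ip{\mathbf{x},\mathbf{y}} - \eta(\mathbf{x})/\epsilon]$, shows these decrease pointwise to $C(\tp(\mathbf{y}),\mu)$ via $\aleph_1$-saturation and compactness, and then runs a forced-limit construction tuned at a realization of $\nu$ to get a definable $\psi_0$. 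You instead work semantically on the type space: you isolate the upper semicontinuity of $\nu' \mapsto C(\mu,\nu')$ on the compact metrizable space $\mathbb{S}_{\mathbf{r}}(\mathrm{T})$ (via compactness of $\Pi(\mu,\nu')$ and continuity of marginals), manufacture a continuous majorant tight at $\nu$ by the elementary fact that a bounded u.s.c.\ function on a compact metric space has a continuous majorant of the form $C(\mu,\nu) + \tilde h(d(\nu,\cdot))$ with $\tilde h(0)=0$, Tietze-extend to $\mathbb{S}_{\mathbf{r}}(\mathrm{T}_{\tr})$, and convert to a definable predicate via Lemma~\ref{lem: continuous function definable predicate}. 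Your version makes the obstruction more transparent—$C(\mu,\cdot)$ fails to be continuous precisely because the logic topology is coarser than the $d$-metric topology, and the fix is a continuous majorant—whereas the paper's version keeps the analysis inside the calculus of definable predicates and hides the same point in the forced-limit step. One small thing to say explicitly if you were to write this up in full: the $\aleph_1$-saturated evaluation of $\mu[\phi_0]$ requires that every joint type $\pi$ with $\mathbf{x}$-marginal $\mu$ is realized as $\tp^{\cM}(\mathbf{x}_0,\mathbf{w})$ for the \emph{fixed} $\mathbf{x}_0$; this follows from saturation together with the observation that the formula $\inf_{\mathbf{w}\in D_{\mathbf{r}}}\max_i|\phi_i(\mathbf{x},\mathbf{w}) - \pi[\phi_i]|$ vanishes at every realization of $\mu$, which is exactly the kind of argument you gesture at but worth spelling out.
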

	
	Note that in this proposition (and in Theorem \ref{thm: MK duality})we want $\phi_0$ and $\psi_0$ to be definable relative to $\mathrm{T}_{\tr}$ rather than only $\mathrm{T}$.  In other words, $\phi_0$ and $\psi_0$ will be functions that make sense to apply to tuples from \emph{any} tracial von Neumann algebra, not only those that satisfy $\mathrm{T}$.  This is potentially useful for situations where you cannot restrict to models of a fixed complete theory; for instance, if $\cM$ is an ultraproduct of tracial von Neumann algebras $\cM_i$ that are not elementarily equivalent to each other, e.g.\ $\cM = \prod_{n \to \cU} M_n(\C)$.
	
	Since we work with definable predicates relative to $\mathrm{T}_{\tr}$, it is convenient to extend the definition of $C(\mu,\nu)$ for $\mu$ and $\nu$ in $\mathbb{S}_{\mathbf{r}}(\mathrm{T}_{\tr})$.  Write
	\begin{equation} \label{eq: C general definition}
		C(\mu,\nu) = \sup \{ \re \ip{\mathbf{x},\mathbf{y}}_{L^2(\cM)}: \cM \models \mathrm{T}_{\tr}; \mathbf{x}, \mathbf{y} \in D_{\mathbf{r}}; \tp^{\cM}(\mathbf{x}) = \mu; \tp^{\cM}(\mathbf{y}) = \nu\}.
	\end{equation}
	Note that if $\cM \models \mathrm{T}_{\tr}$ and $\mathbf{x} \in \cM^n$, then $\tp^{\cM}(\mathbf{x})$ uniquely determines $\Th(\cM)$; this is because every sentence $\phi$ can be viewed trivially as a formula in $n$ variables, and hence $\tp^{\cM}(\mathbf{x})$ specifies the value of $\phi^{\cM}$.  It follows that if $\mathrm{T} \neq \mathrm{T}'$ are two complete theories and $\mu \in \mathbb{S}_{\mathbf{r}}(\mathrm{T})$ and $\nu \in \mathbb{S}_{\mathbf{r}}(\mathrm{T}')$, then it is impossible to realize both $\mu$ and $\nu$ in the same $\cM$ and hence the supremum in \eqref{eq: C general definition} is $-\infty$.  On the other hand, if $\mu$ and $\nu$ are in $\mathbb{S}_{\mathbf{r}}(\mathrm{T})$, then the supremum \eqref{eq: C general definition} is over models of $\mathrm{T}$ and hence agrees with the definition of $C(\mu,\nu)$ given in the introduction.

	\begin{proof}[Proof of Proposition \ref{prop: pre MK duality}]
		
		First, since the space of types $\mathbb{S}_{\mathrm{r}}(\mathrm{T}_{\tr})$ is compact and metrizable (see \S \ref{subsec: definable predicate}), there exists a continuous function $f: \mathbb{S}_{\mathrm{r}}(\mathrm{T}_{\tr}) \to [0,1]$ such that $f(\lambda) = 0$ if and only if $\lambda = \mu$.  By Lemma \ref{lem: continuous function definable predicate}, there exists a $\mathrm{T}_{\tr}$-definable predicate $\eta$ such that whenever $\cM \models \mathrm{T}_{\tr}$ and $\mathbf{x} \in D_{\mathbf{r}}$, we have $\eta^{\cM}(\mathbf{x}) = f(\tp^{\cM}(\mathbf{x}))$, and hence $\eta^{\cM}(\mathbf{x}) \geq 0$ with equality if and only if $\tp^{\cM}(\mathbf{x}) = \mu$.  For $\epsilon > 0$, let $\theta_\epsilon$ be the definable predicate relative to $\mathrm{T}_{\tr}$ given by
		\[
		\theta_\epsilon^{\cM}(\mathbf{y}) = \sup_{\mathbf{x} \in D_{\mathbf{r}}} \left[ \re \ip{\mathbf{x},\mathbf{y}}_{L^2(\cM)^n} - \frac{1}{\epsilon} \eta^{\cM}(\mathbf{x}) \right] \text{ for } \mathbf{y} \in \cM^n \text{ for } \cM \models \mathrm{T}_{\tr}.
		\]
		(The $\theta_\epsilon$'s will be our first approximation to $\psi_0$.)  Now because $\eta^{\cM}(\mathbf{x})$ vanishes when $\tp^{\cM}(\mathbf{x}) = \mu$, we have
		\[
		\theta_\epsilon^{\cM}(\mathbf{y}) \geq \sup_{\mathbf{x}: \tp^{\cM}(\mathbf{x}) = \mu} \re \ip{\mathbf{x},\mathbf{y}}_{L^2(\cM)^n} = C(\mu, \tp^{\cM}(\mathbf{y})).
		\]
		Furthermore, as $\epsilon$ decreases, $-(1/\epsilon) \eta$ also decreases, and thus $\theta_\epsilon$ decreases.  Thus, $\lim_{\epsilon \searrow 0} \theta_\epsilon^{\cM}(\mathbf{y})$ exists in $[-\infty,\infty)$ pointwise.  We claim that $\lim_{\epsilon \searrow 0} \theta_\epsilon^{\cM}(\mathbf{y}) = C(\tp^{\cM}(\mathbf{y}),\mu)$ for $\mathbf{y} \in D_{\mathbf{r}}^{\cM}$.
		
		First, consider the case where $\Th(\cM) = \mathrm{T}$.  Fix $\mathbf{y} \in D_{\mathbf{r}}^{\cM}$.  Let $\cN$ be an $\aleph_1$-saturated elementary extension of $\cM$.  Then by saturation there exists $\mathbf{x}_{\epsilon}$ that achieves the supremum in the definition of $\theta_\epsilon$, so that
		\[
		\theta_\epsilon^{\cM}(\mathbf{y}) = \theta_\epsilon^{\cN}(\mathbf{y}) = \re \ip{\mathbf{x}_{\epsilon},\mathbf{y}}_{L^2(\cN)^n} - \frac{1}{\epsilon} \eta^{\cN}(\mathbf{x}_\epsilon).
		\]
		Note that
		\begin{align*}
			\frac{1}{\epsilon} \eta^{\cN}(\mathbf{x}_\epsilon) &= \re \ip{\mathbf{x}_\epsilon,\mathbf{y}}_{L^2(\cN)^n} -  \theta_{\epsilon}^{\cN}(\mathbf{y}) \\
			&\leq |\mathbf{r}|^2 - C(\tp^{\cN}(\mathbf{y}),\mu) \\
			&\leq 2 |\mathbf{r}|^2,
		\end{align*}
		hence,
		\[
		\eta^{\cN}(\mathbf{x}_\epsilon) \leq 2 \epsilon |\mathbf{r}|^2 \to 0.
		\]
		By compactness of the space of types of $2n$-tuples, there is a sequence $\epsilon_k$ decreasing to $0$ such that $\lim_{k \to \infty} \tp^{\cN}(\mathbf{x}_{\epsilon_k},\mathbf{y})$ exists.  Again by $\aleph_1$-saturation, there exists some $\mathbf{x} \in D_{\mathbf{r}}^{\cN}$ such that $\lim_{k \to \infty} \tp^{\cN}(\mathbf{x}_{\epsilon_k},\mathbf{y}) = \tp^{\cN}(\mathbf{x},\mathbf{y})$.  Hence, $\eta^{\cN}(\mathbf{x}) = \lim_{k \to \infty} \eta^{\cN}(\mathbf{x}_{\epsilon_k}) = 0$, and so $\tp^{\cN}(\mathbf{x}) = \mu$.  Then
		\[
		\theta_{\epsilon_k}^{\cN}(\mathbf{y}) \leq \re \ip{\mathbf{x}_{\epsilon_k},\mathbf{y}}_{L^2(\cN)^n} \to \re \ip{\mathbf{x},\mathbf{y}}_{L^2(\cN)^n} \leq C(\tp^{\cN}(\mathbf{y}),\mu).
		\]
		Since we already know $\theta_\epsilon^{\cN}(\mathbf{y}) \geq C(\tp^{\cN}(\mathbf{y}),\mu)$ and it is monotone in $\epsilon$, this proves that $\lim_{\epsilon \searrow 0} \theta_\epsilon^{\cN}(\mathbf{y}) = C(\tp^{\cN}(\mathbf{y}),\mu) = C(\tp^{\cM}(\mathbf{y}),\mu)$.
		
		On the other hand, suppose that $\mathrm{T}' := \Th(\cM) \neq \mathrm{T}$.  Since $\mathbb{S}_{\mathbf{r}}(\mathrm{T}')$ is a compact subset of $\mathbb{S}_{\mathbf{r}}(\mathrm{T}_{\tr})$, the definable predicate $\eta$ achieves a minimum $\delta$ on it, and $\delta > 0$ since $\mu \not \in \mathbb{S}_{\mathbf{r}}(\mathrm{T}')$.  Then
		\begin{align*}
			\theta_{\epsilon}^{\cM}(\mathbf{x}) &= \sup_{\mathbf{x} \in D_{\mathbf{r}}} \left[ \re \ip{\mathbf{x},\mathbf{y}}_{L^2(\cM)^n} - \frac{1}{\epsilon} \eta^{\cM}(\mathbf{x}) \right] \\
			&\leq 2 |\mathbf{r}|^2 - \frac{\delta}{\epsilon},
		\end{align*}
		which tends to $-\infty$ as $\epsilon \to 0$.
		
		Thus, $\theta_\epsilon^{\cM}(\mathbf{y})$ converges pointwise to $C(\tp^{\cM}(\mathbf{y}),\mu)$, but there is no guarantee that it converges uniformly (for instance because the limit is sometimes infinite).  Therefore, we will execute a ``forced limit'' construction where we modify $\theta_\epsilon$ outside a neighborhood of the type $\nu$ that we are interested in to obtain uniform convergence; the limit will be our function $\psi_0$. Fix some $\cM_0 \models \mathrm{T}$ and some $\mathbf{y}_0 \in \cM_0^n$ with type $\nu$. Fix a sequence $(\epsilon_k)_{k \in \N}$ decreasing to zero such that
		\[
		\theta_{\epsilon_k}^{\cM}(\mathbf{y}_0) - \theta_{\epsilon_{k+1}}^{\cM}(\mathbf{y}_0) < \frac{1}{2^k}
		\]
		Let $g: [0,\infty) \to [0,1]$ be continuous and decreasing with $g = 1$ on $[0,1]$ and $g = 0$ on $[2,\infty]$; then let $g_k(t) = g(2^k t)$.  Let
		\[
		\chi_k(\mathbf{y}) = g_k(\theta_{\epsilon_k} - \theta_{\epsilon_{k+1}}).
		\]
		Thus, $\chi_k$ is a definable predicate, $0 \leq \chi_k \leq 1$, and $\chi_k^{\cM_0}(\mathbf{y}_0) = 1$.  Moreover, by construction, for any $\cM \models \mathrm{T}_{\tr}$ and $\mathbf{y} \in \cM^n$, we have $\chi_k^{\cM}(\mathbf{y}) = 0$ unless $\theta_{\epsilon_k}^{\cM}(\mathbf{y}) - \theta_{\epsilon_{k+1}}^{\cM}(\mathbf{y}) < 2/2^k$.  Hence,
		\[
		0 \leq \chi_k^{\cM}(\mathbf{y})(\theta_{\epsilon_k}^{\cM}(\mathbf{y}) - \theta_{\epsilon_{k+1}}^{\cM}(\mathbf{y})) \leq \frac{2}{2^k}.
		\]
		Let $\psi_0$ be the definable predicate
		\[
		\psi_0^{\cM}(\mathbf{y}) = \theta_{\epsilon_1}^{\cM}(\mathbf{y}) + \sum_{k=1}^\infty \chi_k^{\cM}(\mathbf{y})(\theta_{\epsilon_{k+1}}^{\cM}(\mathbf{y}) - \theta_{\epsilon_k}^{\cM}(\mathbf{y})).
		\]
		Note that the $K$th partial sum satisfies
		\[
		\theta_{\epsilon_1}^{\cM}(\mathbf{y}) + \sum_{k=1}^K \chi_k^{\cM}(\mathbf{y})(\theta_{\epsilon_{k+1}}^{\cM}(\mathbf{y}) - \theta_{\epsilon_k}^{\cM}(\mathbf{y})) \geq \theta_{\epsilon_1}^{\cM}(\mathbf{y}) + \sum_{k=1}^K (\theta_{\epsilon_{k+1}}^{\cM}(\mathbf{y}) - \theta_{\epsilon_k}^{\cM}(\mathbf{y})) = \theta_{\epsilon_{K+1}}^{\cM}(\mathbf{y}) \geq C(\tp^{\cM}(\mathbf{y}),\mu).
		\]
		Hence, for all $\mathbf{y}$, we have
		\[
		\psi_0^{\cM}(\mathbf{y}) \geq C(\tp^{\cM}(\mathbf{y}),\mu),
		\]
		and equality is achieved for $\mathbf{y}_0$ (hence for any $\mathbf{y}$ with type $\nu$) because
		\[
		\psi_0^{\cM_0}(\mathbf{y}_0) = \theta_{\epsilon_1}^{\cM_0}(\mathbf{y}_0) + \sum_{k=1}^\infty (\theta_{\epsilon_{k+1}}^{\cM_0}(\mathbf{y}_0) - \theta_{\epsilon_k}^{\cM}(\mathbf{y}_0)) = \lim_{k \to \infty} \theta_{\epsilon_{k+1}}^{\cM_0}(\mathbf{y}_0) = C(\mu,\nu).
		\]
		
		Having constructed $\psi_0$, we define
		\[
		\phi_0(\mathbf{x}) = \sup_{\mathbf{y} \in D_{\mathbf{r}}} \left[ \re \ip{\mathbf{x},\mathbf{y}} - \psi_0(\mathbf{y}) \right],
		\]
		which is a version of the \emph{Legendre transform} for definable predicates.
		By construction,
		\[
		\phi_0^{\cM}(\mathbf{x}) + \psi_0^{\cM}(\mathbf{y}) \geq \re \ip{\mathbf{x},\mathbf{y}}_{L^2(\cM)^n} \text{ for } \mathbf{x}, \mathbf{y} \in D_{\mathbf{r}}^{\cM} \text{ for } \cM \models \mathrm{T}_{\tr}.
		\]
		Moreover, when $\tp^{\cM}(\mathbf{x}) = \mu$, then since $\psi_0^{\cM}(\mathbf{y}) \geq C(\tp^{\cM}(\mathbf{y}),\mu)$, we have
		\[
		\phi_0^{\cM}(\mathbf{x}) \leq \sup_{\mathbf{y} \in D_{\mathbf{r}}^{\cM}} \re \ip{\mathbf{x},\mathbf{y}}_{L^2(\cM)^n} - C(\mu,\tp^{\cM}(\mathbf{y})) \leq 0.
		\]
		Thus, when $(\mathbf{x},\mathbf{y})$ is an optimal coupling of $(\mu,\nu)$ in $\cM$, then we have $\phi_0^{\cM}(\mathbf{x}) = 0$ and $\psi_0^{\cM}(\mathbf{y}) = C(\mu,\nu)$, so
		\[
		\phi_0^{\cM}(\mathbf{x}) + \psi_0^{\cM}(\mathbf{y}) \leq 0 + C(\mu,\nu) = \re \ip{\mathbf{x},\mathbf{y}}_{L^2(\cM)^n}.
		\]
		Thus, equality is achieved since we already showed $\phi_0^{\cM}(\mathbf{x}) + \psi_0^{\cM}(\mathbf{y}) \geq \re \ip{\mathbf{x},\mathbf{y}}_{L^2(\cM)^n}$ in general.
	\end{proof}
	
	\begin{proof}[Proof of Theorem {\ref{thm: MK duality}}]  Fix a complete theory $\mathrm{T} \models \mathrm{T}_{\tr}$ and types $\mu, \nu \in \mathbb{S}(\mathrm{T})$.  Fix $\mathbf{r} \in (0,\infty)^n$ sufficiently large that $\mu, \nu \in \mathbb{S}_{\mathbf{r}}(\mathrm{T})$.
		
		By Proposition \ref{prop: pre MK duality}, there exists some pair of $\mathrm{T}_{\tr}$-definable predicates $\phi_0$ and $\psi_0$ satisfying \eqref{eq: admissibility 2} with equality when $(\mathbf{x},\mathbf{y})$ is an optimal coupling of $(\mu,\nu)$.  It remains to arrange a pair $(\phi,\psi)$ of $\mathrm{T}_{\tr}$-definable predicates such that
		\begin{enumerate}[(1)]
			\item $\phi$ and $\psi$ are convex;
			\item $\phi^{\cM}(\mathbf{x}) + \psi^{\cM}(\mathbf{y}) \geq \re \ip{\mathbf{x},\mathbf{y}}_{L^2(\cM)^n}$ for all $\mathbf{x}$ and $\mathbf{y}$ in $\cM^n$, not only those in $D_{\mathbf{r}}$, or in other words, \eqref{eq: admissibility} holds globally.
		\end{enumerate}
		We first arrange (1) and then arrange (2).
		
		To arrange (1), recall how in the proof of Proposition \ref{prop: pre MK duality}, we took
		\[
		\phi_0^{\cM}(\mathbf{x}) = \sup_{\mathbf{y} \in D_{\mathbf{r}}^{\cM}} \left[ \re \ip{\mathbf{x},\mathbf{y}}_{L^2(\cM)^n} - \psi_0^{\cM}(\mathbf{y}) \right]
		\]
		for $\cM \models \mathrm{T}_{\tr}$ and $\mathbf{x} \in D_{\mathbf{r}}^{\cM}$.  Note that $\phi_0^{\cM}$ is convex it is a supremum of affine functions of $\mathbf{x}$.  Now let
		\[
		\psi_1^{\cM}(\mathbf{y}) = \sup_{\mathbf{x} \in D_{\mathbf{r}}^{\cM}} \left[ \re \ip{\mathbf{x},\mathbf{y}}_{L^2(\cM)^n} - \phi_0^{\cM}(\mathbf{x}) \right].
		\]
		Then $\psi_1$ is convex for the same reason that $\phi_0$ is.  Also, by construction, $(\phi_0,\psi_1)$ satisfies
		\begin{equation} \label{eq: admissibility again}
			\phi_0^{\cM}(\mathbf{x}) + \psi_1^{\cM}(\mathbf{y}) \geq \re \ip{\mathbf{x},\mathbf{y}}_{L^2(\cM)^n} \text{ for } \mathbf{x}, \mathbf{y} \in D_{\mathbf{r}}^{\cM}.
		\end{equation}
		Furthermore, $\psi_1 \leq \psi_0$ on $D_{\mathbf{r}}^{\cM}$ because by our assumption on $(\phi_0,\psi_0)$, for all $\mathbf{x}, \mathbf{y} \in D_{\mathbf{r}}^{\cM}$, we have
		\[
		\re \ip{\mathbf{x},\mathbf{y}}_{L^2(\cM)^n} - \phi_0^{\cM}(\mathbf{x}) \leq \psi_0^{\cM}(\mathbf{y}),
		\]
		and taking the supremum over $\mathbf{x} \in D_{\mathbf{r}}^{\cM}$ yields $\psi_1^{\cM}(\mathbf{y}) \leq \psi_0^{\cM}(\mathbf{y})$.   Hence, if $(\mathbf{x},\mathbf{y})$ is an optimal coupling of $(\mu,\nu)$, then
		\[
		\re \ip{\mathbf{x},\mathbf{y}}_{L^2(\cM)^n} \leq \phi_0^{\cM}(\mathbf{x}) + \psi_1^{\cM}(\mathbf{y}) \leq \phi_0^{\cM}(\mathbf{x}) + \psi_0^{\cM}(\mathbf{y}) = \re \ip{\mathbf{x},\mathbf{y}}_{L^2(\cM)^n},
		\]
		and thus equality is achieved in \eqref{eq: admissibility again}.  Thus, we have arranged property (1), convexity.
		
		To arrange (2) that \eqref{eq: admissibility} holds globally, we modify $\phi_0$ and $\psi_1$ appropriately outside of $D_{\mathbf{r}}$.  Let $\delta$ be the formula
		\[
		\delta^{\cM}(\mathbf{x}) = d(\mathbf{x},D_{\mathbf{r}}^{\cM}) = \inf_{\widehat{\mathbf{x}} \in D_{\mathbf{r}}^{\cM}} d^{\cM}(\mathbf{x},\widehat{\mathbf{x}}).
		\]
		We record several observations about $\delta$ for later use:
		\begin{enumerate}[(a)]
			\item We have $\delta^{\cM}(\mathbf{x}) = 0$ for $\mathbf{x} \in D_{\mathbf{r}}^{\cM}$.
			\item The infimum in the definition of $\delta$ is achieved; indeed, for each $\mathbf{x}$, there is closest point $\widehat{\mathbf{x}}$ in $D_{\mathbf{r}}^{\cM}$ because $D_{\mathbf{r}}^{\cM}$ is a closed convex set in the Hilbert space $L^2(\cM)^n$.
			\item Note that $\delta$ is convex because given $\mathbf{x}, \mathbf{y} \in \cM^n$ and $\lambda \in [0,1]$, we have
			\begin{align*}
				\delta^{\cM}((1-\lambda)\mathbf{x} + \lambda \mathbf{y}) &\leq d^{\cM}((1-\lambda)\mathbf{x} + \lambda \mathbf{y}, (1-\lambda)\mathbf{x} + \lambda \mathbf{y}) \\
				&\leq (1 - \lambda) d^{\cM}(\mathbf{x},\widehat{\mathbf{x}}) + \lambda d^{\cM}(\mathbf{y},\widehat{\mathbf{y}}) \\
				&= (1 - \lambda) \delta^{\cM}(\mathbf{x}) + \lambda \delta^{\cM}(\mathbf{y}).
			\end{align*}
			\item Finally, $\delta^2$ is convex, because the square of any nonnegative convex function is convex.  (We leave the elementary proof of this fact as an exercise.)
		\end{enumerate}
		Now let $|\mathbf{r}| = (r_1^2 + \dots + r_n^2)^{1/2}$, and let
		\begin{align*}
			\phi_2^{\cM}(\mathbf{x}) &= \phi_0^{\cM}(\mathbf{x}) + \frac{1}{2} \delta^{\cM}(\mathbf{x})^2 + 2 |\mathbf{r}| \delta^{\cM}(\mathbf{x})\\
			\psi_2^{\cM}(\mathbf{y}) &= \psi_1^{\cM}(\mathbf{y}) + \frac{1}{2} \delta^{\cM}(\mathbf{y})^2.
		\end{align*}
		Then $\phi_2$ and $\psi_2$ are convex because $\phi_0$, $\psi_1$, $\delta$, and $\delta^2$ are convex.  Furthermore, $\phi_2$ and $\psi_2$ agree on $D_{\mathbf{r}}^{\cM}$ with $\phi_0$ and $\psi_1$ respectively.  To show \eqref{eq: admissibility}, we first note by the construction of $\psi_1$ that
		\[
		\psi_1^{\cM}(\mathbf{y}) \geq \re \ip{\widehat{\mathbf{x}},\mathbf{y}}_{L^2(\cM)^n} - \phi_0^{\cM}(\widehat{\mathbf{x}}),
		\]
		since $\widehat{\mathbf{x}} \in D_{\mathbf{r}}^{\cM}$.  Thus,
		\begin{equation} \label{eq: two terms}
			\phi_0^{\cM}(\mathbf{x}) + \psi_1^{\cM}(\mathbf{y}) \geq \re \ip{\widehat{\mathbf{x}},\mathbf{y}}_{L^2(\cM)^n} + [\phi_0^{\cM}(\mathbf{x}) - \phi_0^{\cM}(\widehat{\mathbf{x}})].
		\end{equation}
		Recall that $\phi_0^{\cM}(\mathbf{x})$ is the supremum over $\mathbf{z} \in D_{\mathbf{r}}^{\cM}$ of $\re \ip{\mathbf{x},\mathbf{z}}_{L^2(\cM)^n} - \psi_0^{\cM}(\mathbf{z})$.  Since $\norm{\mathbf{z}}_{L^2(\cM)^n} \leq |\mathbf{r}|$, each function $\re \ip{\mathbf{x},\mathbf{z}}_{L^2(\cM)^n} - \psi_0^{\cM}(\mathbf{z})$ is $|\mathbf{r}|$-Lipschitz in $\mathbf{x}$.  Hence, $\phi_0^{\cM}$ is $|\mathbf{r}|$-Lipschitz in $\mathbf{x}$ as well.  Thus,
		\begin{equation} \label{eq: second term}
			\phi_0^{\cM}(\mathbf{x}) - \phi_0^{\cM}(\widehat{\mathbf{x}}) \geq -|\mathbf{r}| \norm{\mathbf{x} - \widehat{\mathbf{x}}}_{L^2(\cM)} = -|\mathbf{r}| \delta^{\cM}(\mathbf{x}).
		\end{equation}
		Meanwhile,
		\[
		\re \ip{\widehat{\mathbf{x}},\mathbf{y}}_{L^2(\cM)^n} = \re \ip{\mathbf{x},\mathbf{y}}_{L^2(\cM)^n} - \re \ip{\mathbf{x} - \widehat{\mathbf{x}},\mathbf{y}}_{L^2(\cM)^n},
		\]
		and the ``error term'' (the second term on the right) can be estimated by 	
		\begin{align*}
			\re \ip{\mathbf{x} - \widehat{\mathbf{x}},\mathbf{y}}_{L^2(\cM)^n} &= \re \ip{\mathbf{x} - \widehat{\mathbf{x}},\widehat{\mathbf{y}}}_{L^2(\cM)^n} + \re \ip{\mathbf{x} - \widehat{\mathbf{x}},\mathbf{y} - \widehat{\mathbf{y}}}_{L^2(\cM)^n} \\
			&\leq \norm{\widehat{\mathbf{y}}}_{L^2(\cM)^n} \norm{\mathbf{x} - \widehat{\mathbf{x}}}_{L^2(\cM)^n} + \frac{1}{2} \norm{\mathbf{x} - \widehat{\mathbf{x}}}_{L^2(\cM)^n}^2 + \frac{1}{2} \norm{\mathbf{y} - \widehat{\mathbf{y}}}_{L^2(\cM)^n}^2 \\
			&\leq |\mathbf{r}| \delta^{\cM}(\mathbf{x}) + \frac{1}{2} \delta^{\cM}(\mathbf{x})^2 + \frac{1}{2} \delta^{\cM}(\mathbf{y})^2,
		\end{align*}
		so that
		\begin{equation} \label{eq: first term}
			\re \ip{\widehat{\mathbf{x}},\mathbf{y}}_{L^2(\cM)^n} \geq \re \ip{\mathbf{x},\mathbf{y}}_{L^2(\cM)^n} - |\mathbf{r}| \delta^{\cM}(\mathbf{x}) - \frac{1}{2} \delta^{\cM}(\mathbf{x})^2 - \frac{1}{2} \delta^{\cM}(\mathbf{y})^2.
		\end{equation}
		Altogether, substituting \eqref{eq: first term} and \eqref{eq: second term} into \eqref{eq: two terms}, we get that
		\[
		\phi_0^{\cM}(\mathbf{x}) + \psi_1^{\cM}(\mathbf{y}) \geq \re \ip{\mathbf{x},\mathbf{y}}_{L^2(\cM)^n} - |\mathbf{r}| \delta^{\cM}(\mathbf{x}) - \frac{1}{2} \delta^{\cM}(\mathbf{x})^2 - \frac{1}{2} \delta^{\cM}(\mathbf{y})^2 - |\mathbf{r}| \delta^{\cM}(\mathbf{x}),
		\]
		which rearranges to
		\[
		\phi_0^{\cM}(\mathbf{x}) + \frac{1}{2} \delta^{\cM}(\mathbf{x})^2 + 2 |\mathbf{r}| \delta^{\cM}(\mathbf{x}) + \psi_1^{\cM}(\mathbf{y}) + \frac{1}{2} \delta^{\cM}(\mathbf{y}) \geq \re \ip{\mathbf{x},\mathbf{y}}_{L^2(\cM)^n},
		\]
		which means that $(\phi_2,\psi_2)$ satisfies \eqref{eq: admissibility}.  Moreover, equality is achieved when $(\mathbf{x},\mathbf{y})$ is an optimal coupling of $(\mu,\nu)$ because in that case $\mathbf{x}$ and $\mathbf{y}$ are in $D_{\mathbf{r}}^{\cM}$ and so $\phi_2^{\cM}(\mathbf{x})$ and $\psi_2^{\cM}(\mathbf{y})$ agree with $\phi_0^{\cM}(\mathbf{x})$ and $\psi_1^{\cM}(\mathbf{y})$, for which we already established the equality.
		
		It remains to prove the final claim of Theorem \ref{thm: MK duality}, namely that $C(\mu,\nu)$ is the infimum of $\mu[\phi] + \nu[\psi]$ over all pairs $(\phi,\psi)$ of convex definable predicates satisfying \eqref{eq: admissibility}.  If $(\phi,\psi)$ satisfy \eqref{eq: admissibility}, then whenever $\mathbf{x}$ has type $\mu$ and $\mathbf{y}$ has type $\nu$, we have
		\begin{equation} \label{eq: duality 2}
			\re \ip{\mathbf{x},\mathbf{y}}_{L^2(\cM)^n} \leq \phi^{\cM}(\mathbf{x}) + \psi^{\cM}(\mathbf{y}) = \mu[\phi] + \nu[\psi].
		\end{equation}
		Taking the supremum over couplings $(\mathbf{x},\mathbf{y})$ on the left-hand side, we get $C(\mu,\nu)$.  Since $(\phi,\psi)$ were arbitrary, we see that $C(\mu,\nu)$ is less than or equal to the infimum of $\mu[\phi] + \nu[\psi]$ for pairs $(\phi,\psi)$ of convex definable predicates satisfying \eqref{eq: admissibility}.  On the other hand, the preceding argument showed that there exists some $(\phi,\psi)$ that achieves equality in \eqref{eq: duality 2} when $(\mathbf{x},\mathbf{y})$ is an optimal coupling, and hence $C(\mu,\nu)$ is equal to the infimum of $\mu[\phi] + \nu[\psi]$ over such pairs $(\phi,\psi)$.
	\end{proof}
	
	\subsection{Properties of convex definable predicates} \label{subsec: convex}
	
	An important notion in convex analysis and optimal transport theory is the subgradient of a convex function $\phi$, which captures the existence of supporting hyperplanes to the graph of $\phi$.  We will also later use the subgradient and supergradient for functions that are not necessarily convex.
	
	\begin{definition}
		Let $H$ be a Hilbert space.  Let $\phi: H \to (-\infty,\infty]$.  For $x, y \in H$, we say that $y \in \underline{\nabla} \phi(x)$ if
		\[
		\phi(x') - \phi(x) \geq \re \ip{x' - x, y} + o(\norm{x' - x}) \text{ for } x' \in H.
		\]
		Similarly, we say $y \in \overline{\nabla} \phi(x)$ if
		\[
		\phi(x') - \phi(x) \leq \re \ip{x' - x, y} + o(\norm{x' - x}) \text{ for } x' \in H.
		\]
	\end{definition}
	
	\begin{fact} \label{fact: convex subgradient}
		If $\phi$ is convex and $y \in \underline{\nabla} \phi(x)$, then we have
		\[
		\phi(x') - \phi(x) \geq \re \ip{x' - x, y} \text{ for } x' \in H.
		\]
	\end{fact}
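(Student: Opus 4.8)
The plan is to use convexity to turn the infinitesimal lower bound defining $\underline{\nabla}\phi(x)$ into a genuine global one, by testing it along the segment joining $x$ to the point $x'$ of interest. Fix $x' \in H$ and $y \in \underline{\nabla}\phi(x)$, and for $t \in (0,1]$ put $x_t := (1-t)x + t x' = x + t(x'-x)$.

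First I would invoke convexity of $\phi$ at $x_t$, which gives $\phi(x_t) \le (1-t)\phi(x) + t\phi(x')$, i.e.\ $\phi(x_t) - \phi(x) \le t\bigl(\phi(x') - \phi(x)\bigr)$. Next I would apply the hypothesis $y \in \underline{\nabla}\phi(x)$ with $x_t$ in the role of the free point: since $x_t \to x$ as $t \searrow 0$ and $x_t - x = t(x'-x)$, we obtain $\phi(x_t) - \phi(x) \ge t\,\re\ip{x'-x, y} + o(t)$, where the little-$o$ is understood as $t \searrow 0$ with $x'$ held fixed. Chaining these two inequalities yields $t\,\re\ip{x'-x,y} + o(t) \le t\bigl(\phi(x') - \phi(x)\bigr)$ for all small $t > 0$; dividing by $t$ and sending $t \searrow 0$ produces $\re\ip{x'-x, y} \le \phi(x') - \phi(x)$, which is the claim.

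I do not anticipate a real obstacle here: this is the standard upgrade of a one-sided Gateaux-type estimate to a subgradient inequality under convexity. The only point requiring a moment's care is the bookkeeping on the error term — the $o(\norm{x'-x})$ appearing in the definition is an asymptotic statement as the free variable approaches $x$, so substituting $x_t$ is legitimate precisely because $x_t \to x$ as $t \searrow 0$, and after the substitution the error is $o(t\norm{x'-x}) = o(t)$ for the now-fixed direction $x'-x$, which vanishes after dividing by $t$. (The dual statement for $\overline{\nabla}\phi$ and concave $\phi$, should it be needed, follows by replacing $\phi$ with $-\phi$.)
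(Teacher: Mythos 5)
Your proof is correct and takes essentially the same route as the paper: apply convexity along the segment $x_t = (1-t)x + tx'$, compare with the first-order lower bound from the subgradient hypothesis, divide by $t$, and let $t \searrow 0$. The only difference is cosmetic (your $t$ is the paper's $\lambda$), and your remark about the $o(\cdot)$ bookkeeping is the right one.
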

	
	\begin{proof}
		For $\lambda \in [0,1]$,
		\[
		\lambda \phi(x') + (1 - \lambda) \phi(x) \geq \phi(\lambda x' + (1 - \lambda) x) \geq \phi(x) + \re \ip{\lambda x' + (1 - \lambda)x - x,y} + o(\lambda \norm{x' - x}),
		\]
		and hence
		\[
		\lambda(\phi(x') - \phi(x)) \geq \lambda \re \ip{x'-x,y} + o(\lambda \norm{x' - x}).
		\]
		Divide by $\lambda$ and take the limit as $\lambda  \searrow 0$.
	\end{proof}
	
	The following is a fundamental result in convex analysis; see e.g.\ \cite[Theorem 9.23, Proposition 16.4]{BC2017}.
	
	\begin{fact} \label{fact: convex subgradient 2}
		If $H$ is a Hilbert space and $\phi: H \to (-\infty,\infty]$ is convex and lower semicontinuous, $\underline{\nabla} \phi(x)$ is a nonempty, closed, convex set for each $x \in H$.
	\end{fact}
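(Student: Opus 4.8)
The plan is to treat closedness and convexity as formal consequences of the definition and to reserve the real work for nonemptiness. First I would reformulate the subgradient condition: by Fact~\ref{fact: convex subgradient}, for convex $\phi$ one has $y \in \underline{\nabla}\phi(x)$ if and only if $\phi(x') - \phi(x) \geq \re\ip{x'-x,y}$ for all $x' \in H$ (the forward implication is Fact~\ref{fact: convex subgradient}; the reverse is trivial, taking the error term to be identically zero). With this reformulation, $\underline{\nabla}\phi(x) = \bigcap_{x' \in H}\{y \in H : \re\ip{x'-x,y} \leq \phi(x')-\phi(x)\}$, an intersection of closed half-spaces (with the constraint read as vacuous when $\phi(x') = \infty$), hence a closed convex set. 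This intersection is empty when $\phi(x) = \infty$, so the genuine content of the statement is nonemptiness at points where $\phi(x)$ is finite and $\phi$ is locally bounded near $x$ --- precisely our situation, since the predicates $\phi^{\cM}$ to which we apply this are real-valued and uniformly continuous (indeed Lipschitz) on each ball.

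For nonemptiness I would run the classical supporting-hyperplane argument via directional derivatives. Since $\phi$ is convex and bounded above on a neighborhood of $x$, it is locally Lipschitz there, so the directional derivative $\phi'(x;h) := \lim_{t \downarrow 0} t^{-1}(\phi(x+th) - \phi(x))$ exists (the difference quotients decrease as $t \downarrow 0$, by convexity), is positively homogeneous and subadditive in $h$, and satisfies $\phi'(x;h) \leq L\norm{h}$ for a local Lipschitz constant $L$; hence it is a continuous sublinear functional on $H$. By the Hahn--Banach theorem there is a real-linear functional $\ell \leq \phi'(x;\cdot)$ with $\norm{\ell} \leq L$, and the Riesz representation provides $y \in H$ with $\ell(h) = \re\ip{h,y}$. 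Then $\phi(x+h) - \phi(x) \geq \phi'(x;h) \geq \re\ip{h,y}$ for all $h \in H$, so $y \in \underline{\nabla}\phi(x)$. (Equivalently one can separate the boundary point $(x,\phi(x))$ from the interior of $\operatorname{epi}\phi$ --- nonempty by local boundedness, convex by convexity of $\phi$, closed by lower semicontinuity --- by a nonzero hyperplane $(\xi,s) \in H \times \R$; letting $t \to \infty$ in the epigraph forces $s \geq 0$, and $s = 0$ is impossible since it would give a supporting hyperplane to $\dom\phi$ at its interior point $x$, forcing $\xi = 0$; then $y := -\xi/s$ works.)

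The only real obstacle, which I would flag explicitly, is that nonemptiness genuinely needs $\phi$ to be finite and locally bounded at $x$: at a boundary point of $\dom\phi$ the set $\underline{\nabla}\phi(x)$ can be empty, so the bare hypothesis ``convex and lower semicontinuous'' is not by itself enough at such points. In the present paper this causes no trouble because the definable predicates $\phi^{\cM}$ are everywhere finite and uniformly continuous on balls, so $\underline{\nabla}\phi^{\cM}(x)$ is nonempty, closed, and convex at every $x$, which is all that the later arguments require.
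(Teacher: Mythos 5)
Your proof is correct and is the standard convex-analysis argument; the paper itself supplies no proof, only a citation to Bauschke--Combettes. Treating closedness and convexity as the intersection $\bigcap_{x'}\{y : \re\ip{x'-x,y}\le\phi(x')-\phi(x)\}$ of closed half-spaces (using Fact \ref{fact: convex subgradient} to drop the $o(\norm{x'-x})$ term) is exactly right, and the directional-derivative-plus-Hahn--Banach argument for nonemptiness (equivalently, separating $(x,\phi(x))$ from the interior of the epigraph) is the expected route. Your caveat is also well-founded and worth recording: for a proper convex lsc $\phi$ the set $\underline{\nabla}\phi(x)$ is always closed and convex, but it genuinely can be empty both where $\phi(x)=+\infty$ and at boundary points of $\dom\phi$ where $\phi$ is not locally bounded (take $\phi(s)=-\sqrt{1-s^2}$ on $[-1,1]$, $+\infty$ otherwise, at $s=1$). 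The source the paper cites does assume continuity of $\phi$ at the point in question, a hypothesis the stated Fact silently drops.

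One caution about your closing sentence. You dismiss the concern on the grounds that the definable predicates $\phi^{\cM}$ are everywhere finite and uniformly continuous on balls, but that is not quite what the paper applies the Fact to. In Proposition \ref{prop: definable subgradient} the Fact is invoked for the function $\psi$ that equals $\phi^{\cN}$ on the operator-norm ball $D_{2\mathbf{r}}^{\cN}$ and $+\infty$ outside it. When $\cN$ is infinite-dimensional, $D_{2\mathbf{r}}^{\cN}$ has empty interior in $L^2(\cN)^n$, so $\psi$ is not locally bounded near $\mathbf{x}$ in the $L^2$-topology, and uniform continuity of $\phi^{\cN}$ on operator-norm balls does not give a bound on any $L^2$-neighborhood of $\mathbf{x}$. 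In other words, the hypothesis ``finite and locally bounded near $x$'' under which you establish nonemptiness does not hold for $\psi$; justifying $\underline{\nabla}\psi(\mathbf{x})\neq\varnothing$ there needs a further argument (e.g.\ exploiting that $\phi^{\cN}$ is convex and finite on all of $\cN^n$ rather than only on $D_{2\mathbf{r}}^{\cN}$, or weak compactness of $D_{2\mathbf{r}}^{\cN}$). This is a subtlety in the application rather than in your proof of the Fact, but since you chose to raise the issue you should not then wave it away with a justification that does not cover the case actually used.
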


	\begin{proposition} \label{prop: definable subgradient}
		Fix a consistent theory $\mathrm{T}$ containing $\mathrm{T}_{\tr}$.  Let $\phi(x_1,\dots,x_n)$ be a convex definable predicate relative to $\mathrm{T}$.  Fix $\cM \models \mathrm{T}$ and $\mathbf{x} \in \cM^n$.  Then there exists $\mathbf{y} \in L^2(\dcl^{\cM}(\mathbf{x}))^n$ (here $L^2(\dcl^{\cM}(\mathbf{x}))$ is the closure in $L^2(\cM)$ of $\dcl^{\cM}(\mathbf{x})$), such that
		\begin{equation} \label{eq: predicate subgradient}
			\phi^{\cM}(\mathbf{x'}) - \phi^{\cM}(\mathbf{x}) \geq \re \ip{\mathbf{x}' - \mathbf{x},\mathbf{y}}_{L^2(\cM)} \text{ for } \mathbf{x}' \in \cM^n.
		\end{equation}
	\end{proposition}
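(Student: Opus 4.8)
The plan is to produce \emph{some} subgradient $\mathbf{y}_0\in L^2(\cM)^n$ of $\phi^{\cM}$ at $\mathbf{x}$ by convex analysis, and then to ``symmetrize'' it by averaging over the automorphisms that fix $\mathbf{x}$, so that the result lands in $L^2(\dcl^{\cM}(\mathbf{x}))^n$. Since only $\mathbf{x}'\in\cM^n$ appears in the statement, and since Proposition \ref{prop: closure and elem ext} gives $\dcl^{\cN}(\mathbf{x})=\dcl^{\cM}(\mathbf{x})$ for any elementary extension $\cN\succeq\cM$ (with the $L^2$-closure of this algebra being the same computed in $L^2(\cM)$ or in $L^2(\cN)$, as $L^2(\cM)$ is a closed subspace of $L^2(\cN)$), I may first replace $\cM$ by an elementary extension that is $\aleph_1$-saturated and strongly $\aleph_1$-homogeneous (Theorem \ref{thm: existence of nice model}); this suffices because $\mathbf{x}$ is a finite tuple and $\cL_{\tr}$ is separable, and $\phi^{\cN}$ restricts to $\phi^{\cM}$ on $\cM^n$ because $\phi$ is a uniform-on-balls limit of formulas. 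From now on, then, $\cM$ is $\aleph_1$-saturated and strongly $\aleph_1$-homogeneous, so by Proposition \ref{prop:DCL}(3)--(4) the algebra $\cA:=\dcl^{\cM}(\mathbf{x})$ (a $\mathrm{W}^*$-subalgebra of $\cM$ by Corollary \ref{cor: DCL substructure}) is exactly the fixed-point algebra of $G:=\{\Theta\in\Aut(\cM):\Theta(\mathbf{x})=\mathbf{x}\}$.

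For the existence of some subgradient: pick $\mathbf{r}$ so that $\mathbf{x}$ lies in the operator-norm interior of $D_{\mathbf{r}}^{\cM}$. The restriction of $\phi^{\cM}$ to the $L^2$-closed convex set $D_{\mathbf{r}}^{\cM}\subseteq L^2(\cM)^n$ is convex and uniformly $L^2$-continuous, so extending it by $+\infty$ yields a convex lower semicontinuous function on $L^2(\cM)^n$, and Facts \ref{fact: convex subgradient} and \ref{fact: convex subgradient 2} produce $\mathbf{y}_0\in L^2(\cM)^n$ with $\phi^{\cM}(\mathbf{x}')-\phi^{\cM}(\mathbf{x})\ge\re\ip{\mathbf{x}'-\mathbf{x},\mathbf{y}_0}_{L^2(\cM)}$ for $\mathbf{x}'\in D_{\mathbf{r}}^{\cM}$. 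Replacing $\mathbf{x}'$ by $(1-\lambda)\mathbf{x}+\lambda\mathbf{x}'$ for small $\lambda>0$, using convexity of $\phi^{\cM}$ and dividing by $\lambda$, upgrades this to the inequality for all $\mathbf{x}'\in\cM^n$. I then claim $\mathbf{y}:=E_{\cA}[\mathbf{y}_0]$ works, where $E_{\cA}\colon L^2(\cM)\to L^2(\cA)$ is the orthogonal projection: clearly $\mathbf{y}\in L^2(\cA)^n$, and for $\mathbf{x}'\in\cM^n$, using $E_{\cA}[\mathbf{x}]=\mathbf{x}$ and self-adjointness of $E_{\cA}$,
\[
\re\ip{\mathbf{x}'-\mathbf{x},\mathbf{y}}_{L^2(\cM)}=\re\ip{E_{\cA}[\mathbf{x}']-\mathbf{x},\mathbf{y}_0}_{L^2(\cM)}\le\phi^{\cM}(E_{\cA}[\mathbf{x}'])-\phi^{\cM}(\mathbf{x}),
\]
where the inequality is the subgradient inequality applied at $E_{\cA}[\mathbf{x}']\in\cA^n\subseteq\cM^n$. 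So it remains to prove the Jensen-type inequality $\phi^{\cM}(E_{\cA}[\mathbf{z}])\le\phi^{\cM}(\mathbf{z})$ for $\mathbf{z}\in\cM^n$.

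To prove this I combine two ingredients. First, the fixed-point subspace of $G$ acting on $L^2(\cM)$ (by the $L^2$-extensions of automorphisms, which are unitaries since automorphisms are trace-preserving) is exactly $L^2(\cA)$: the inclusion $\supseteq$ is clear, and for $\subseteq$ one notes that $*$-automorphisms preserve polar decompositions and spectral projections, so the bounded spectral truncations of a $G$-fixed vector $\xi\in L^2(\cM)$ are again $G$-fixed, hence lie in $\mathrm{Fix}_{\cM}(G)=\cA$, whence $\xi\in L^2(\cA)$. Consequently $E_{\cA}$ applied componentwise is the orthogonal projection of $L^2(\cM)^n$ onto its $G$-fixed subspace. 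Second, by the Alaoglu--Birkhoff mean ergodic theorem for groups of unitaries, the projection of any vector onto the fixed subspace lies in the closed convex hull of its $G$-orbit; thus $E_{\cA}[\mathbf{z}]\in\overline{\mathrm{conv}}\{\Theta(\mathbf{z}):\Theta\in G\}$. Each $\Theta(\mathbf{z})$ has the same operator norm as $\mathbf{z}$, so this closed convex hull (and $E_{\cA}[\mathbf{z}]$ itself) lies in a single operator-norm ball $D_{\mathbf{r}_0}^{\cM}$, on which $\phi^{\cM}$ is convex and $L^2$-continuous; moreover $\phi^{\cM}(\Theta(\mathbf{z}))=\phi^{\cM}(\mathbf{z})$ because $\phi$, being a definable predicate, is invariant under automorphisms. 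Hence for any finite convex combination $\mathbf{w}=\sum_i t_i\Theta_i(\mathbf{z})$ we get $\phi^{\cM}(\mathbf{w})\le\max_i\phi^{\cM}(\Theta_i(\mathbf{z}))=\phi^{\cM}(\mathbf{z})$, and passing to the $L^2$-limit inside $D_{\mathbf{r}_0}^{\cM}$ yields $\phi^{\cM}(E_{\cA}[\mathbf{z}])\le\phi^{\cM}(\mathbf{z})$, as needed.

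I expect the main obstacle to be exactly this step of translating the ``definability'' content --- that $\cA=\dcl^{\cM}(\mathbf{x})$ is the fixed algebra of $G$, which requires saturation and strong homogeneity together with the identification of the $G$-fixed subspace of $L^2(\cM)$ with $L^2(\cA)$ --- into a purely Hilbert-space averaging statement to which the mean ergodic theorem applies; the production of an initial subgradient is routine convex analysis once one is careful that $\phi^{\cM}$ is a priori only defined on $\cM^n$ rather than on all of $L^2(\cM)^n$.
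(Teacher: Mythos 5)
Your proof is correct, but it takes a genuinely different route at the heart of the argument, the step that places the subgradient in $L^2(\dcl^{\cM}(\mathbf{x}))^n$.

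The paper works in a strongly homogeneous elementary extension $\cN$, selects the \emph{unique element of minimal norm} in the subgradient set $\underline{\nabla}\psi(\mathbf{x})$ (nonempty closed convex by Fact~\ref{fact: convex subgradient 2}), observes that this set is invariant under every automorphism $\Theta$ fixing $\mathbf{x}$ so the minimal-norm element $\mathbf{y}$ is automatically $\Theta$-fixed, and then shows directly via spectral truncations $f(a_j), f(b_j)$ ($f\in C_0(\R)$, $y_j = a_j+ib_j$) that the components of $\mathbf{y}$ lie in $L^2(\dcl^{\cN}(\mathbf{x}))$. You instead take \emph{any} initial subgradient $\mathbf{y}_0$, project it via the trace-preserving conditional expectation $E_{\cA}$ onto $\cA=\dcl^{\cM}(\mathbf{x})$, and verify that $\mathbf{y}=E_{\cA}[\mathbf{y}_0]$ is still a subgradient; this requires a Jensen-type inequality $\phi^{\cM}(E_{\cA}[\mathbf{z}])\le\phi^{\cM}(\mathbf{z})$, which you obtain via the Alaoglu--Birkhoff mean ergodic theorem after identifying $L^2(\cA)$ with the fixed subspace of $G=\{\Theta\in\Aut(\cM):\Theta(\mathbf{x})=\mathbf{x}\}$ in $L^2(\cM)$. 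Both arguments rely on the same spectral-truncation mechanism to pass from $G$-fixed $L^2$-vectors to elements of $\cA$ (and hence both implicitly require the ambient model to be $\kappa$-saturated in addition to strongly $\kappa$-homogeneous, since Proposition~\ref{prop:DCL}(4) is what identifies the fixed algebra with $\dcl^{\cM}(\mathbf{x})$; you state this explicitly, while the paper only mentions strong homogeneity). A notable feature of your route is that it proves a Jensen inequality for $E_{\cA}$ with $\cA=\dcl^{\cM}(\mathbf{x})$ \emph{inside} $\cM$ independently of Proposition~\ref{prop: monotonicity under expectation} (which the paper derives \emph{from} the present proposition, so invoking it here would be circular); this is slightly more general, since $\dcl^{\cM}(\mathbf{x})$ need not be an elementary substructure of $\cM$. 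On the other hand, the paper's minimal-norm trick avoids the ergodic theorem entirely and is more self-contained. Both proofs share the same slight roughness at the outset in appealing to Fact~\ref{fact: convex subgradient 2} for the lsc extension by $+\infty$ of a definable predicate restricted to an operator-norm ball, even though such a point is never in the $L^2$-interior of the effective domain; so your proof is no worse off than the paper's there.
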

	
	\begin{proof}
		Let $\kappa > \chi(\cM)$, and let $\cN$ be a strongly $\kappa$-homogeneous extension of $\cM$.
		
		Fix $\mathbf{r}$ such that $\mathbf{x} \in D_{\mathbf{r}}^{\cM}$.  Let $\psi: L^2(\cN)^n \to (-\infty,+\infty]$ be given by
		\[
		\psi(\mathbf{x}') = \begin{cases} \phi^{\cN}(\mathbf{x}'), & \mathbf{x}' \in D_{2\mathbf{r}}^{\cN} \\ +\infty, & \text{otherwise.} \end{cases}
		\]	
		Note $\psi$ is continuous on $D_{2 \mathbf{r}}$ by continuity of formulas.  Some elementary casework then shows that $\psi$ is convex and lower-semicontinuous globally.  Therefore, by Fact \ref{fact: convex subgradient 2}, $\underline{\nabla} \psi(\mathbf{x})$ is a nonempty closed convex set in $L^2(\cM)^n$.  As such, $\underline{\nabla} \psi(\mathbf{x})$ has a unique element of minimal $L^2$-norm, call it $\mathbf{y}$.
		
		If $\Theta$ is an automorphism of $\cN$ that fixes $\mathbf{x}$, then $\phi^{\cN} \circ \Theta = \phi^{\cN}$ and hence $\psi \circ \Theta = \psi$.  Hence, $\underline{\nabla} \psi(\mathbf{x})$ is also invariant under (the extension to $L^2(\cN)$ of) $\Theta$.  Therefore, $\Theta(\mathbf{y}) = \mathbf{y}$.
		
		In order to say that $\mathbf{y}$ is in $L^2$ of $\dcl^{\cN}(A)$, we need to reckon with the fact that $(y_1,\dots,y_n)$ are not elements of $\cN$, but rather of $L^2(\cN)$. We recall that the elements of $L^2(\cN)$ can be identified with certain unbounded operators on $L^2(\cN)$ affiliated with $\cN$; see \cite[p.\ 482-483]{BrownOzawa2008}. Write $\mathbf{y} = (y_1,\dots,y_n)$ and let $y_j = a_j + ib_j$ where $a_j$ and $b_j$ are self-adjoint elements of $L^2(\cN)$ (see Fact \ref{fact: J}).  In the following, we denote the extension of $\Theta$ to $L^2(\cN)$ by $\tilde{\Theta}$ for clarity.  Because $\Theta$ respects the adjoint, $\tilde{\Theta}$ commutes with the adjoint operator $J$ of Fact \ref{fact: J}.  Therefore, $a_j$ and $b_j$ are fixed by $\Theta$ as well.  This means that, viewed as operators on $L^2(\cN)$, $a_j$ and $b_j$ commute with $\tilde{\Theta}$.  Since $a_j$ and $b_j$ are affiliated operators, we know that for $f \in C_0(\R)$, $f(a_j)$ and $f(b_j)$ are elements of $\cN$.  Moreover, as operators on $L^2(\cN)$, $f(a_j)$ and $f(b_j)$ commute with $\tilde{\Theta}$.  This means that $\Theta(f(a_j)) = f(a_j)$ and $\Theta(f(b_j)) = f(b_j)$ holds for all $\Theta$ and hence $f(a_j)$ and $f(b_j)$ are in $\dcl^{\cN}(\mathbf{x})$ for $f \in C_0(\R)$.  By spectral theory for unbounded self-adjoint operators, since $1 \in L^2(\cN)$ is in the domain of $a_j$ and $b_j$, there exist functions $f_k \in C_0(\R)$ for $k \in \N$ such that $f_k(a_j)1 \to a_j 1$ in $L^2(\cN)$, and similarly for $b_j$.  Hence, $a_j$ and $b_j$ are limits in $L^2(\cN)$ of $f_k(a_j)$ and $f_k(b_j)$, which are in $\dcl^{\cN}(\mathbf{x})$.  Recall also that $\dcl^{\cN}(\mathbf{x}) = \dcl^{\cM}(\mathbf{x})$ by Proposition \ref{prop: closure and elem ext}.  Hence, $a_j$ and $b_j$ are in $L^2(\dcl^{\cM}(\mathbf{x}))$, and hence $\mathbf{y} \in L^2(\dcl^{\cM}(\mathbf{x}))^n$.
		
		It remains to show \eqref{eq: predicate subgradient}.  By our choice of $\psi$, we already know $\phi^{\cM}(\mathbf{x}') - \phi^{\cM}(\mathbf{x}) \geq \re \ip{\mathbf{x}' - \mathbf{x}, \mathbf{y}}_{L^2(\cM)}$ for $\mathbf{x}' \in D_{2\mathbf{r}}^{\cM}$.  If $\mathbf{x}'$ is arbitrary, then $\lambda \mathbf{x}' + (1 - \lambda) \mathbf{x}$ is in $D_{2 \mathbf{r}}$ for sufficiently small $\lambda > 0$, and
		\[
		\lambda \phi^{\cM}(\mathbf{x}') + (1 - \lambda) \phi^{\cM}(\mathbf{x}) \geq \phi^{\cM}(t\mathbf{x}' + (1 - \lambda) \mathbf{x}) + \re \ip{\lambda \mathbf{x}' +(1-\lambda) \mathbf{x} - \mathbf{x},\mathbf{y}}_{L^2(\cM)^n},
		\]
		hence, $\phi^{\cM}(\mathbf{x}') - \phi^{\cM}(\mathbf{x}) \geq \re \ip{\mathbf{x}' - \mathbf{x}, \mathbf{y}}_{L^2(\cM)^n}$.
	\end{proof}
	
	As a corollary, we obtain monotonicity of a convex definable predicate $\phi$ under conditional expectations onto elementary submodels (Proposition \ref{prop: monotonicity under expectation}).  Monotonicity under conditional expectations in general was a requirement in \cite[Definitions 1.2 and 3.6]{GJNS2021} for $E$-convexity.  The condition is motivated by Jensen's inequality for convex functions on $\R^n$ \cite[\S 1.4, Lemma 1.10]{GJNS2021}, as well as the behavior of unitarily invariant convex functions on $M_n(\C)$ \cite[\S 1.5, Lemma 1.17]{GJNS2021}.  Now let us show that the analogous condition for definable predicates is automatic.
	
	\begin{proof}
		Fix a consistent $\mathrm{T} \models \mathrm{T}_{\tr}$.  Let $\cM \preceq \cN$ be models of $\mathrm{T}$, and let $\mathbf{x} \in \cM^n$.  Let $\phi$ be a convex definable predicate relative to $\mathrm{T}$.  Let $\mathbf{x} = E_{\cM}[\mathbf{z}]$.  By the previous proposition, there exists $\mathbf{y} \in L^2(\dcl^{\cN}(\mathbf{x}))$ such that
		\[
		\phi^{\cN}(\mathbf{x}') - \phi^{\cN}(\mathbf{x}) \geq \re \ip{\mathbf{x}' - \mathbf{x},\mathbf{y}}_{L^2(\cN)^n} \text{ for } \mathbf{x}' \in \cN^n.
		\]
		Since $\dcl^{\cN}(\mathbf{x}) \subseteq \cM$ by Proposition \ref{prop: closure and elem ext}, we have $\mathbf{y} \in L^2(\cM)$, and thus,
		\[
		\phi^{\cN}(\mathbf{z}) - \phi^{\cN}(\mathbf{x}) \geq \re \ip{\mathbf{z} - \mathbf{x},\mathbf{y}}_{L^2(\cN)^n} = \re \ip{\mathbf{z} - E_{\cM}[\mathbf{z}], \mathbf{y}}_{L^2(\cN)^n} = 0.
		\]
		Hence, $\phi^{\cN}(\mathbf{z}) - \phi^{\cN}(E_{\cM}[\mathbf{z}]) \geq 0$ as desired.
	\end{proof}
	
	Our next goal is to prove Theorem \ref{thm: displacement interpolation}.  The main ingredient in the proof is that if $\phi$ is \emph{strongly convex} and $\mathbf{y} \in \underline{\nabla} \phi^{\cM}(\mathbf{x})$, then $\mathbf{x} \in \dcl^{\cM}(\mathbf{y})$.  Here we recall the definition and some basic facts about strong convexity.
	
	\begin{definition}
		Let $H$ be an inner product space and $c > 0$.  A function $\phi: H \to \R$ is \emph{$c$-strongly convex} if $\phi(x) - (c/2) \norm{x}^2$ is convex.
	\end{definition}
	
	The next observation is a characterization of strong convexity that follows directly from the definition by inner-product computations; see \cite[Corollary 2.15, Proposition 10.8]{BC2017}.
	
	\begin{fact}
		$c$-strong convexity of $\phi: H \to \R$ is equivalent to the condition that for $x, x' \in H$ and $\lambda \in (0,1)$,
		\[
		\phi((1 - \lambda) x + \lambda x') \leq (1 - \lambda) \phi(x) + \lambda \phi(x') - \frac{c}{2} \lambda(1 - \lambda) \norm{x' - x}^2.
		\]
	\end{fact}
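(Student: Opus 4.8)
The plan is to reduce the whole statement to the single algebraic identity
\[
(1-\lambda)\norm{x}^2 + \lambda\norm{x'}^2 - \norm{(1-\lambda)x + \lambda x'}^2 = \lambda(1-\lambda)\norm{x'-x}^2,
\]
which holds in any inner product space for every $\lambda \in \R$ and $x, x' \in H$. I would check it by expanding $\norm{(1-\lambda)x + \lambda x'}^2 = (1-\lambda)^2\norm{x}^2 + 2\lambda(1-\lambda)\re\ip{x,x'} + \lambda^2\norm{x'}^2$ and collecting the coefficients of $\norm{x}^2$, $\norm{x'}^2$, and $\re\ip{x,x'}$; each comes out to $\lambda(1-\lambda)$, $\lambda(1-\lambda)$, and $-2\lambda(1-\lambda)$ respectively, and the result is $\lambda(1-\lambda)\bigl(\norm{x}^2 + \norm{x'}^2 - 2\re\ip{x,x'}\bigr) = \lambda(1-\lambda)\norm{x'-x}^2$. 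This is the only computation in the proof.

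Granting the identity, set $\psi(x) = \phi(x) - (c/2)\norm{x}^2$. By definition, $\phi$ is $c$-strongly convex if and only if $\psi$ is convex, i.e.\ if and only if
\[
\psi((1-\lambda)x + \lambda x') \le (1-\lambda)\psi(x) + \lambda\psi(x') \quad \text{for all } x, x' \in H \text{ and } \lambda \in (0,1).
\]
Substituting the definition of $\psi$ into both sides and rearranging, this inequality is equivalent to
\[
\phi((1-\lambda)x + \lambda x') \le (1-\lambda)\phi(x) + \lambda\phi(x') - \frac{c}{2}\Bigl[(1-\lambda)\norm{x}^2 + \lambda\norm{x'}^2 - \norm{(1-\lambda)x + \lambda x'}^2\Bigr],
\]
and by the identity above the bracketed quantity equals $\lambda(1-\lambda)\norm{x'-x}^2$. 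This produces exactly the stated inequality. Since every step in this chain is an equivalence, the converse direction requires no extra work: if the stated inequality holds for all $x, x', \lambda$, running the same manipulations backwards shows $\psi$ satisfies the defining inequality for convexity.

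There is essentially no obstacle here; the content is the parallelogram-type identity, and everything else is bookkeeping. The one point worth making explicit is that we are using the midpoint-style inequality over all $\lambda \in (0,1)$ as the definition of convexity (as is done throughout the paper), so no continuity or lower-semicontinuity hypothesis on $\phi$ is needed for the equivalence.
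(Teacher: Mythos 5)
Your proof is correct and is exactly the computation the paper has in mind: the paper states this as a Fact with only a citation to Bauschke--Combettes and the remark that it ``follows directly from the definition by inner-product computations,'' and your argument supplies precisely those computations. The reduction to the identity $(1-\lambda)\norm{x}^2 + \lambda\norm{x'}^2 - \norm{(1-\lambda)x + \lambda x'}^2 = \lambda(1-\lambda)\norm{x'-x}^2$, followed by substituting $\psi = \phi - (c/2)\norm{\cdot}^2$ into the chord inequality, is the standard route, and your observation that every step is an equivalence so no regularity hypothesis is needed is accurate.
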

	
	The following estimate shows that if $\phi$ is strongly convex, then ``$\nabla \phi$ expands distances.''
	
	\begin{fact} \label{fact: strongly convex bound}
		Let $H$ be an inner product space and let $\phi$ be $c$-strongly convex.  Let $x, x' \in H$.  Let $y \in \underline{\nabla} \phi(x)$ and $y' \in \underline{\nabla} \phi(x')$.  Then $\norm{x' - x} \leq (1/c) \norm{y' - y}$.
	\end{fact}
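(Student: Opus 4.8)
The plan is to first upgrade the one-sided subgradient inequality to one that records the strong-convexity constant, then symmetrize and finish with Cauchy--Schwarz; the whole argument is a minor variation on the proof of Fact~\ref{fact: convex subgradient}.

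\emph{Step 1.} I would first show that $c$-strong convexity of $\phi$ together with $y \in \underline{\nabla}\phi(x)$ forces the stronger inequality
\[
\phi(x') - \phi(x) \geq \re\ip{x' - x,\,y} + \frac{c}{2}\,\norm{x' - x}^2 \quad\text{for all } x' \in H.
\]
To get this, fix $\lambda \in (0,1)$, apply the three-point characterization of $c$-strong convexity (the Fact stated just above) to the point $(1-\lambda)x + \lambda x'$, and separately apply the definition of $y \in \underline{\nabla}\phi(x)$ at that same point. Combining the two bounds on $\phi((1-\lambda)x + \lambda x') - \phi(x)$, dividing by $\lambda$, and letting $\lambda \searrow 0$ yields the displayed inequality; the only care needed is that the error term $o(\norm{\lambda(x'-x)}) = o(\lambda)$ disappears after dividing by $\lambda$, exactly as in Fact~\ref{fact: convex subgradient}.

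\emph{Step 2.} Apply Step 1 twice, once to the data $(x, x', y)$ and once to $(x', x, y')$, and add the two inequalities. The terms $\pm(\phi(x') - \phi(x))$ cancel, leaving
\[
0 \geq \re\ip{x' - x,\, y - y'} + c\,\norm{x' - x}^2,
\]
that is, $c\,\norm{x'-x}^2 \leq \re\ip{x'-x,\,y'-y}$. Now bound the right-hand side by $\norm{x'-x}\,\norm{y'-y}$ using Cauchy--Schwarz. If $x = x'$ the claimed inequality is trivial, and otherwise we divide by $\norm{x'-x}$ to conclude $\norm{x'-x} \leq (1/c)\norm{y'-y}$.

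There is no real obstacle here: the argument uses only elementary inner-product manipulations and the convexity facts already recorded, so I do not expect any step to be genuinely hard. If one prefers to avoid the limiting argument in Step 1, one can instead write $\phi = \psi + \tfrac{c}{2}\norm{\cdot}^2$ with $\psi$ convex, check that $y \in \underline{\nabla}\phi(x)$ if and only if $y - cx \in \underline{\nabla}\psi(x)$, and apply Fact~\ref{fact: convex subgradient} directly to $\psi$, at the cost of a short computation identifying the two subgradient sets.
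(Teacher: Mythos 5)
Your proposal is correct and follows essentially the same route as the paper's proof: Step~1 derives the strong-convexity-enhanced subgradient inequality $\phi(x') - \phi(x) \geq \re\ip{x'-x,y} + \tfrac{c}{2}\norm{x'-x}^2$ by the same $\lambda \searrow 0$ limiting argument, and Step~2 symmetrizes and applies Cauchy--Schwarz exactly as the paper does. The alternative you sketch at the end (peeling off $\tfrac{c}{2}\norm{\cdot}^2$ and applying Fact~\ref{fact: convex subgradient} to the convex remainder) is a legitimate repackaging of the same computation, not a genuinely different argument.
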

	
	\begin{proof}
		Observe that for $\lambda \in (0,1)$,
		\begin{align*}
			(1 - \lambda) \phi(x) + \lambda \phi(x') &\geq \phi((1 - \lambda) x + \lambda x') + \frac{c}{2} \lambda(1 - \lambda) \norm{x' - x}^2 \\
			&\geq \phi(x) + \re \ip{(1 - \lambda) x + \lambda x') - x,y} + \frac{c}{2} \lambda(1 - \lambda) \norm{x' - x}^2,
		\end{align*}
		hence
		\[
		\lambda (\phi(x') - \phi(x)) \geq \lambda \re \ip{x' - x,y} + \frac{c}{2} \lambda(1 - \lambda) \norm{x' - x}^2.
		\]
		Dividing by $\lambda$ and taking $\lambda \searrow 0$, we get
		\[
		\phi(x') - \phi(x) \geq \re \ip{x' - x,y} + \frac{c}{2} \norm{x' - x}^2.
		\]
		Symmetrically,
		\[
		\phi(x) - \phi(x') \geq \re \ip{x - x',y'} + \frac{c}{2} \norm{x' - x}^2.
		\]
		Hence,
		\[
		\re \ip{x' - x,y} + \frac{c}{2} \norm{x' - x}^2 \leq \phi(x') - \phi(x) \leq \re \ip{x'-x,y'} - \frac{c}{2} \norm{x' - x}^2.
		\]
		Rearranging yields
		\[
		c \norm{x' - x}^2 \leq \re \ip{x'-x,y'-y} \leq \norm{x' - x} \norm{y' - y}.
		\]
		Hence, $\norm{x' - x} \leq (1/c)\norm{y' - y}$.
	\end{proof}
	
	\begin{proposition} \label{prop: definable subgradient 2}
		Let $\phi(x_1,\dots,x_n)$ be a $c$-strongly convex definable predicate relative to a consistent theory $\mathrm{T} \models \mathrm{T}_{\tr}$.  Let $\cM \models \mathrm{T}$ and $\mathbf{x} \in \cM^n$, and let $\mathbf{y} \in L^2(\cM)^n$ be a subgradient vector as in Proposition \ref{prop: definable subgradient}.  Let $\mathrm{W}^*(\mathbf{y})$ be the smallest von Neumann subalgebra of $\cM$ whose $L^2$-closure contains $y_1, \dots, y_n$.  Then $\mathbf{x} \in \dcl^{\cM}(\mathrm{W}^*(\mathbf{y}))^n$.
	\end{proposition}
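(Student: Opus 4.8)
The plan is to pass to a $\kappa$-saturated, strongly $\kappa$-homogeneous elementary extension $\cN \succeq \cM$ with $\kappa > \chi(\cM)$ (which exists by Theorem~\ref{thm: existence of nice model}), transfer the subgradient inequality \eqref{eq: predicate subgradient} to $\cN$, upgrade it using strong convexity, and then show that every automorphism of $\cN$ fixing $B := \mathrm{W}^*(\mathbf{y})$ pointwise must fix $\mathbf{x}$. By Proposition~\ref{prop:DCL}(4) this gives $\mathbf{x} \in \dcl^{\cN}(B)^n$, and then Proposition~\ref{prop: closure and elem ext} gives $\dcl^{\cN}(B) = \dcl^{\cM}(B) = \dcl^{\cM}(\mathrm{W}^*(\mathbf{y}))$. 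Note that $B$ is separable (being finitely generated), $B \subseteq \cM$, and $\mathbf{y} \in L^2(B)^n$ by the very definition of $\mathrm{W}^*(\mathbf{y})$.

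The first and main step is to show that the given $\mathbf{y}$ still satisfies the subgradient inequality in $\cN$, namely $\phi^{\cN}(\mathbf{x}') - \phi^{\cN}(\mathbf{x}) \geq \re\ip{\mathbf{x}' - \mathbf{x}, \mathbf{y}}_{L^2(\cN)}$ for all $\mathbf{x}' \in \cN^n$. The obstacle here is that $\mathbf{y}$ lies in $L^2(\cM)^n$ rather than $\cM^n$, so it cannot be named as a constant in the language and the inequality cannot simply be quoted in $\cN$ by elementarity. Instead I would pick $(b_k)_{k \in \N}$ in $\cM^n$ converging to $\mathbf{y}$ in $L^2(\cM)^n$, hence also in $L^2(\cN)^n$ since $L^2(\cM) \hookrightarrow L^2(\cN)$ is isometric. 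For each radius $R$ the expression $\sup_{\mathbf{x}' \in D_R}\bigl(\re\ip{\mathbf{x}' - \mathbf{u}, \mathbf{v}}_{L^2} - \phi(\mathbf{x}') + \phi(\mathbf{u})\bigr)$ is a definable predicate relative to $\mathrm{T}$ in the variables $(\mathbf{u},\mathbf{v})$, built from $\phi$, the real part of the trace, continuous connectives, and a supremum over a domain; so its value at $(\mathbf{u},\mathbf{v}) = (\mathbf{x}, b_k)$ agrees in $\cM$ and $\cN$. In $\cM$, \eqref{eq: predicate subgradient} together with Cauchy--Schwarz bounds this value by a constant (depending on $R$) times $\norm{\mathbf{y} - b_k}_{L^2}$; transferring to $\cN$, letting $k \to \infty$, and then $R \to \infty$ yields the desired inequality.

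Next I would sharpen it using $c$-strong convexity of $\phi^{\cN}$: repeating the first half of the proof of Fact~\ref{fact: strongly convex bound} (apply the inequality just obtained at $(1-\lambda)\mathbf{x} + \lambda\mathbf{x}'$, combine with the strong-convexity estimate, divide by $\lambda$, and let $\lambda \searrow 0$) gives
\[
\phi^{\cN}(\mathbf{x}') - \phi^{\cN}(\mathbf{x}) \geq \re\ip{\mathbf{x}' - \mathbf{x}, \mathbf{y}}_{L^2(\cN)} + \frac{c}{2}\,\norm{\mathbf{x}' - \mathbf{x}}_{L^2(\cN)}^2 \qquad \text{for all } \mathbf{x}' \in \cN^n.
\]
Finally, let $\Theta \in \Aut(\cN)$ fix $B$ pointwise. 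Its extension $\tilde\Theta$ to $L^2(\cN)$ is an isometry fixing $L^2(B)$ pointwise, so $\tilde\Theta\mathbf{y} = \mathbf{y}$; moreover $\phi^{\cN} \circ \Theta = \phi^{\cN}$, since formulas and hence their uniform limits are automorphism-invariant. Putting $\mathbf{x}' = \Theta\mathbf{x}$ in the displayed inequality, the left-hand side is $0$, and $\re\ip{\Theta\mathbf{x} - \mathbf{x}, \mathbf{y}} = \re\ip{\tilde\Theta\mathbf{x}, \tilde\Theta\mathbf{y}} - \re\ip{\mathbf{x}, \mathbf{y}} = 0$ because $\tilde\Theta$ preserves $\re\ip{\cdot,\cdot}$; hence $\norm{\Theta\mathbf{x} - \mathbf{x}}_{L^2(\cN)} = 0$, i.e. $\Theta$ fixes each $x_j$. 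By Proposition~\ref{prop:DCL}(4), $x_j \in \dcl^{\cN}(B) = \dcl^{\cM}(\mathrm{W}^*(\mathbf{y}))$ for each $j$, which is the claim. The only genuinely delicate point is the transfer in the second paragraph; everything after it is routine convex analysis plus standard facts about the $L^2$-extension of trace-preserving automorphisms.
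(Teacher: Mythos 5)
Your argument has the same overall structure as the paper's: pass to a $\kappa$-saturated, strongly $\kappa$-homogeneous elementary extension $\cN$, show that every automorphism $\Theta$ of $\cN$ fixing $\mathrm{W}^*(\mathbf{y})$ pointwise must fix $\mathbf{x}$, then conclude via Proposition~\ref{prop:DCL}(4) and Proposition~\ref{prop: closure and elem ext}. The two minor differences are worth noting. First, at the final step the paper simply observes that $\mathbf{y} = \Theta(\mathbf{y}) \in \underline{\nabla}\phi^{\cN}(\Theta(\mathbf{x}))$ and cites Fact~\ref{fact: strongly convex bound} with the pair $(\mathbf{x},\mathbf{y})$, $(\Theta\mathbf{x},\mathbf{y})$ to get $\|\Theta\mathbf{x}-\mathbf{x}\|\le (1/c)\|\mathbf{y}-\mathbf{y}\|=0$; you re-derive the one-sided strong-convexity estimate directly and plug in $\mathbf{x}'=\Theta\mathbf{x}$. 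These are equivalent. Second, and more substantively, you explicitly justify that the subgradient inequality for $\mathbf{y}$ transfers from $\cM$ to $\cN$, which the paper leaves implicit; you are right that this needs an argument, since $\mathbf{y}\in L^2(\cM)^n$ is not an element of $\cM^n$ and hence cannot be named as a constant to invoke elementarity. Your approximation argument with $b_k\in\cM^n\to\mathbf{y}$ and the auxiliary definable predicate $\sup_{\mathbf{x}'\in D_R}(\re\ip{\mathbf{x}'-\mathbf{u},\mathbf{v}}-\phi(\mathbf{x}')+\phi(\mathbf{u}))$ is correct. A shorter route to the same transfer uses Proposition~\ref{prop: monotonicity under expectation}, which is already available at this point in the paper: for $\mathbf{x}'\in\cN^n$,
\[
\phi^{\cN}(\mathbf{x}') \ge \phi^{\cM}(E_{\cM}[\mathbf{x}']) \ge \phi^{\cM}(\mathbf{x}) + \re\ip{E_{\cM}[\mathbf{x}']-\mathbf{x},\mathbf{y}}_{L^2(\cM)^n} = \phi^{\cN}(\mathbf{x}) + \re\ip{\mathbf{x}'-\mathbf{x},\mathbf{y}}_{L^2(\cN)^n},
\]
the last equality holding because $E_{\cM}$ is the orthogonal projection onto $L^2(\cM)^n\ni\mathbf{y}$ and $\mathbf{x}\in\cM^n$. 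Either way, your proof is correct.
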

	
	\begin{proof}
		Let $\cN$ be an $\aleph_1$-saturated and strongly $\aleph_1$-homogeneous elementary extension of $\cM$, and let $\Theta$ be an automorphism of $\cN$ that fixes pointwise $\mathrm{W}^*(\mathbf{y})$ (hence also $\mathbf{y}$).  Note that $\mathbf{y} = \Theta(\mathbf{y}) \in \underline{\nabla} \phi(\Theta(\mathbf{x}))$.  Hence, by the previous fact, $\norm{\Theta(\mathbf{x}) - \mathbf{x}}_{L^2(\cM)^n} \leq (1/c) \norm{\mathbf{y} - \mathbf{y}}_{L^2(\cM)^n}$.  Hence, $\mathbf{x}$ must be fixed by $\Theta$.  Therefore, $\mathbf{x} \in \dcl^{\cN}(\mathrm{W}^*(\mathbf{y}))^n = \dcl^{\cM}(\mathrm{W}^*(\mathbf{y}))$ as desired.
	\end{proof}
	
	\begin{proof}[Proof of Theorem \ref{thm: displacement interpolation}]
		Let $(\mathbf{x},\mathbf{y})$ be an optimal coupling in $\cM$ of $(\mu,\nu)$ from $\mathbb{S}_n(\mathrm{T})$.  Let $\phi$ and $\psi$ be convex definable predicates as in Theorem \ref{thm: MK duality}, so that
		\[
		\phi^{\cM}(\mathbf{x}') + \psi^{\cM}(\mathbf{y}') \geq \re \ip{\mathbf{x}',\mathbf{y}'} \text{ for } \mathbf{x}', \mathbf{y}' \in \cM^n
		\]
		with equality at $(\mathbf{x},\mathbf{y})$.  Note that for $\mathbf{x}' \in \cM^n$,
		\[
		\phi^{\cM}(\mathbf{x}') - \phi^{\cM}(\mathbf{x}) \geq \left[ \re \ip{\mathbf{x}',\mathbf{y}}_{L^2(\cM)^n} - \psi^{\cM}(\mathbf{y}) \right] - \left[ \re \ip{\mathbf{x},\mathbf{y}}_{L^2(\cM)^n} - \psi^{\cM}(\mathbf{y}) \right] = \re \ip{\mathbf{x}'-\mathbf{x},\mathbf{y}}_{L^2(\cM)^n}.
		\]
		Now fix $\lambda \in (0,1)$, and let
		\[
		\psi^{\cM}(\mathbf{x}') = \lambda \phi^{\cM}(\mathbf{x}') + (1-\lambda) \frac{1}{2} \norm{\mathbf{x}'}_{L^2(\cM)^n}^2.
		\]
		Then $\psi$ is $(1-\lambda)$-strongly convex by construction.  Moreover, $\lambda \mathbf{y} + (1 - \lambda) \mathbf{x} \in \underline{\nabla} \psi^{\cM}(\mathbf{x})$ because $\mathbf{x}$ is a subgradient vector to $\mathbf{x}' \mapsto (1/2) \norm{\mathbf{x}'}_{L^2(\cM)^n}^2$ at the point $\mathbf{x}$.  Thus, applying Proposition \ref{prop: definable subgradient 2} to $\psi$, we see that $\mathbf{x} \in \dcl^{\cM}((1 - \lambda) \mathbf{x} + \lambda \mathbf{y})$.  A symmetrical argument shows that $\mathbf{y} \in \dcl^{\cM}((1 - \lambda) \mathbf{x} + \lambda \mathbf{y})$.  Hence, $\dcl^{\cM}(\mathbf{x},\mathbf{y}) \subseteq \dcl^{\cM}((1-\lambda)\mathbf{x} + \lambda \mathbf{y})$.  The opposite inclusion $\dcl^{\cM}((1-\lambda)\mathbf{x} + \lambda \mathbf{y}) \subseteq \dcl^{\cM}(\mathbf{x},\mathbf{y})$ is immediate.
	\end{proof}
	
	\section{Semiconcave and semiconvex definable predicates} \label{sec: semiconvex semiconcave}
	
	\subsection{Semiconvexity, Semiconcavity, and Differentiability} \label{subsec: differentiability}
	
	\begin{definition}
		Let $\phi$ be a function on an inner-product space $H$ and let $c > 0$.  We say that $f: H \to \R$ is \emph{$c$-semiconvex} if $\phi(x) + \frac{1}{2} c \norm{x}^2$ is convex, or equivalently (after some computation), for $x_0, x_1 \in H$ and $\lambda \in [0,1]$,
		\[
		\phi((1 - \lambda)x_0 + \lambda x_1) \leq (1 - \lambda) \phi(x_0) + \lambda \phi(x_1) + \frac{c \lambda(1 - \lambda)}{2} \norm{x_1 - x_0}^2.
		\]
		Similarly, we say that $\phi$ is $c$-semiconcave if $\phi(x) - \frac{c}{2} \norm{x}^2$ is concave, or equivalently,
		\[
		\phi((1 - \lambda)x_0 + \lambda x_1) \geq (1 - \lambda) \phi(x_0) + \lambda \phi(x_1) - \frac{c \lambda(1 - \lambda)}{2} \norm{x_1 - x_0}^2.
		\]
	\end{definition}
	
	\begin{proposition} \label{prop: semiconvex semiconcave gradient}
		Fix a consistent theory $\mathrm{T} \models \mathrm{T}_{\tr}$.  Let $\phi(\mathbf{y},\mathbf{z}) = \phi(y_1,\dots,y_n, (z_j)_{j \in \N})$ be a definable predicate in countably many variables.  Suppose that $\phi$ is $c$-semiconvex and $c$-semiconcave as a function of $\mathbf{y}$ for some $c > 0$.
		\begin{enumerate}[(1)]
			\item If $\cM \models \mathrm{T}$, $\phi^{\cM}$ is a differentiable function of $\mathbf{x}$, and more precisely, for each $\mathbf{x} \in \cM^n$ and $\mathbf{y} \in \cM^{\N}$, there exists a unique $\nabla_{\mathbf{x}} \phi^{\cM}(\mathbf{x},\mathbf{y}) \in L^2(\dcl^{\cM}(\mathbf{x},\mathbf{y}))^n$ such that
			\[
			\left|\phi^{\cM}(\mathbf{x}',\mathbf{y}) - \phi^{\cM}(\mathbf{x},\mathbf{y}) - \re \ip{\mathbf{x}' - \mathbf{x}, \nabla_{\mathbf{x}} \phi^{\cM}(\mathbf{x},\mathbf{y})} \right| \leq \frac{c}{2} \norm{\mathbf{x}' - \mathbf{x}}_{L^2(\cM)}^2.
			\]
			\item The function
			\[
			\iota^{\cM}(\mathbf{x}, \mathbf{y},\mathbf{z}) = \re \ip{\mathbf{z},\nabla \phi^{\cM}(\mathbf{x},\mathbf{y})}_{L^2(\cM)^n}
			\]
			is a definable predicate relative to $\mathrm{T}$.
			\item Suppose that for each $\mathbf{r} \in (0,\infty)^n$ and $\mathbf{r}' \in (0,\infty)^{\N}$, there exists $\mathbf{R}$ such that $\nabla_{\mathbf{y}} \phi$ maps $D_{\mathbf{r},\mathbf{r}'}^{\cM}$ into $D_{\mathbf{R}}^{\cM}$.  Then $\nabla_{\mathbf{y}} \phi$ is a definable function.
		\end{enumerate}
	\end{proposition}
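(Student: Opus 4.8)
The plan is to treat the three parts in order: Proposition~\ref{prop: definable subgradient} will produce the gradient in part~(1), and the closure properties of definable predicates will carry parts~(2) and~(3). Throughout I write $\mathbf{x} = (x_1,\dots,x_n)$ for the finite tuple of variables in which $\phi$ is $c$-semiconvex and $c$-semiconcave, $\mathbf{y} = (y_j)_{j \in \N}$ for the remaining (countably many) variables, and $\nabla_{\mathbf{x}}\phi$ for the gradient (written $\nabla\phi$ in part~(2)).

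For part~(1), I would fix $\cM \models \mathrm{T}$ and $\mathbf{y} \in \cM^{\N}$ and work over the parameter set $A = \{y_j : j \in \N\}$, i.e.\ in the language $\cL_A$ with theory $\Th(\cM_A)$. By $c$-semiconvexity, $\mathbf{x} \mapsto \phi^{\cM}(\mathbf{x},\mathbf{y}) + \frac{c}{2}\sum_{i=1}^n \tr^{\cM}(x_i^* x_i)$ is a convex definable predicate over $A$, so Proposition~\ref{prop: definable subgradient} supplies a subgradient vector in $L^2(\dcl^{\cM_A}(\mathbf{x}))^n = L^2(\dcl^{\cM}(\mathbf{x},\mathbf{y}))^n$; subtracting the quadratic term turns it into a vector $\mathbf{p} \in L^2(\dcl^{\cM}(\mathbf{x},\mathbf{y}))^n$ with
\[
\phi^{\cM}(\mathbf{x}',\mathbf{y}) - \phi^{\cM}(\mathbf{x},\mathbf{y}) \geq \re\ip{\mathbf{x}'-\mathbf{x},\mathbf{p}}_{L^2(\cM)^n} - \frac{c}{2}\norm{\mathbf{x}'-\mathbf{x}}_{L^2(\cM)^n}^2 \quad\text{for all } \mathbf{x}' \in \cM^n.
\]
Running the same argument on $-\phi^{\cM}(\cdot,\mathbf{y}) + \frac{c}{2}\norm{\cdot}^2$, which is convex over $A$ by $c$-semiconcavity, produces $\mathbf{q} \in L^2(\dcl^{\cM}(\mathbf{x},\mathbf{y}))^n$ satisfying the reverse estimate. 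Subtracting the two inequalities gives $\re\ip{\mathbf{x}'-\mathbf{x},\mathbf{p}-\mathbf{q}}_{L^2(\cM)^n} \leq c\norm{\mathbf{x}'-\mathbf{x}}_{L^2(\cM)^n}^2$ for every $\mathbf{x}'$; testing against $\mathbf{x}' = \mathbf{x} + t\mathbf{u}$ and letting $t \searrow 0$ forces $\mathbf{p} = \mathbf{q}$, and this common vector is $\nabla_{\mathbf{x}}\phi^{\cM}(\mathbf{x},\mathbf{y})$. The two estimates combine to the stated quadratic inequality, and the same test vectors applied to that inequality give uniqueness. (That $\dcl^{\cM}(\mathbf{x},\mathbf{y})$ is a $\mathrm{W}^*$-subalgebra, so that its $L^2$-closure makes sense, is Corollary~\ref{cor: DCL substructure}.)

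For part~(2), the quadratic estimate of~(1) with $\mathbf{x}' = \mathbf{x} + t\mathbf{z}$ shows that $\iota^{\cM}(\mathbf{x},\mathbf{y},\mathbf{z}) = \re\ip{\mathbf{z},\nabla_{\mathbf{x}}\phi^{\cM}(\mathbf{x},\mathbf{y})}_{L^2(\cM)^n}$ is the limit, uniform on every product of operator-norm balls, of the difference quotients $\iota_t^{\cM}(\mathbf{x},\mathbf{y},\mathbf{z}) = \frac{1}{t}(\phi^{\cM}(\mathbf{x}+t\mathbf{z},\mathbf{y}) - \phi^{\cM}(\mathbf{x},\mathbf{y}))$, with rate depending only on $c$ and the radii. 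Each $\iota_t^{\cM}$ is a definable predicate, obtained from $\phi$ by substituting the term $x_i + t z_i$ for $x_i$ (which only enlarges the relevant domains in a controlled way) and applying continuous connectives; since definable predicates are closed under uniform-on-balls limits, $\iota$ is a definable predicate relative to $\mathrm{T}$. For part~(3), it suffices by Definition~\ref{def: definable function} to show that $(\mathbf{x},\mathbf{y},\mathbf{w}) \mapsto \norm{\nabla_{\mathbf{x}}\phi^{\cM}(\mathbf{x},\mathbf{y}) - \mathbf{w}}_{L^2(\cM)^n}$ is a definable predicate. On a product of domains $D_{\mathbf{r},\mathbf{r}'}^{\cM} \times D_{\mathbf{S}}^{\cM}$ for $(\mathbf{x},\mathbf{y},\mathbf{w})$, the range hypothesis puts $\nabla_{\mathbf{x}}\phi^{\cM}(\mathbf{x},\mathbf{y}) - \mathbf{w}$ inside the fixed ball $D_{\mathbf{R}+\mathbf{S}}^{\cM}$ with $\mathbf{R} = \mathbf{R}(\mathbf{r},\mathbf{r}')$, so the identity $\norm{\mathbf{v}}^2 = \sup_{\mathbf{z}}\bigl(2\re\ip{\mathbf{z},\mathbf{v}} - \norm{\mathbf{z}}^2\bigr)$ stays valid with the supremum restricted to $\mathbf{z} \in D_{\mathbf{R}+\mathbf{S}}^{\cM}$ (the maximizer $\mathbf{z} = \mathbf{v}$ still lies in the ball). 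This rewrites $\norm{\nabla_{\mathbf{x}}\phi^{\cM}(\mathbf{x},\mathbf{y}) - \mathbf{w}}_{L^2(\cM)^n}^2$ as a supremum over a domain of an expression assembled from $\iota$ and formulas, hence as a definable predicate; composing with the continuous function $t \mapsto \sqrt{\max(t,0)}$ then exhibits the distance $d^{\cM}(\nabla_{\mathbf{x}}\phi^{\cM}(\mathbf{x},\mathbf{y}),\mathbf{w})$ as a definable predicate, so $\nabla_{\mathbf{x}}\phi$ is a definable function.

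I expect the heart of the argument to be part~(1): squeezing the gradient out as a single vector, which is exactly where $c$-semiconcavity (and not merely convexity) is used, whereas its location in $L^2(\dcl^{\cM}(\mathbf{x},\mathbf{y}))^n$ is handed to us by Proposition~\ref{prop: definable subgradient}. Parts~(2) and~(3) are then routine applications of the closure properties of definable predicates; the one point needing care is that the quadratic estimate in~(1) is uniform across all models of $\mathrm{T}$, which holds because $c$-semiconvexity and $c$-semiconcavity of a definable predicate are model-independent hypotheses with a single constant $c$.
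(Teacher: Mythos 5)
Your proof is correct and follows essentially the same route as the paper: part~(1) applies Proposition~\ref{prop: definable subgradient} to both $\phi + \frac{c}{2}\norm{\cdot}^2$ and $-\phi + \frac{c}{2}\norm{\cdot}^2$ (working over $A = \{y_j : j \in \N\}$, which the paper leaves implicit in its appeal to ``the same reasoning as in Proposition~\ref{prop: definable subgradient}'') and squeezes the sub- and supergradient together; part~(2) takes the same uniform limit of difference quotients; and part~(3) writes the relevant norm as a supremum over a domain of an expression built from $\iota$ and formulas. The only variation is in part~(3), where you use the exact identity $\norm{\mathbf{v}}^2 = \sup_{\mathbf{z}}\bigl(2\re\ip{\mathbf{z},\mathbf{v}} - \norm{\mathbf{z}}^2\bigr)$ (whose maximizer lies in the fixed ball thanks to the range hypothesis), whereas the paper computes $\norm{\nabla_{\mathbf{x}}\phi}^2$ via the $\epsilon$-regularized ratio $\sup_{\mathbf{z}\in D_{\mathbf{R}}}(\re\ip{\mathbf{z},\nabla_{\mathbf{x}}\phi})^2/(\epsilon + \norm{\mathbf{z}}^2)$ and passes to the limit as $\epsilon \searrow 0$ before polarizing; your version is marginally cleaner but mathematically interchangeable.
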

	
	\begin{proof}
		(1) Since $\phi^{\cM}(\mathbf{x},\mathbf{y}) + (c/2) \norm{\mathbf{x}}_{L^2(\cM)^n}^2$ is a convex function of $\mathbf{x}$, the same reasoning as in Proposition \ref{prop: definable subgradient} shows that there exists some $\mathbf{z} \in L^2(\dcl^{\cM}(\mathbf{x},\mathbf{y}))^n$ such that for $\mathbf{x}' \in \cM^n$,
		\[
		\phi^{\cM}(\mathbf{x}',\mathbf{y}) + \frac{c}{2} \norm{\mathbf{x}'}_{L^2(\cM)^n}^2 - \phi^{\cM}(\mathbf{x},\mathbf{y}) - \frac{c}{2} \norm{\mathbf{x}}_{L^2(\cM)^n}^2 \geq \re \ip{\mathbf{x}' - \mathbf{x}, \mathbf{z}}_{L^2(\cM)^n},
		\]
		or equivalently
		\begin{align*}
			\phi^{\cM}(\mathbf{x}',\mathbf{y}) - \phi^{\cM}(\mathbf{x},\mathbf{y}) &\geq \re \ip{\mathbf{x}' - \mathbf{x}, \mathbf{z} - c \mathbf{x}}_{L^2(\cM)^n} - \frac{c}{2} \norm{\mathbf{x}' - \mathbf{x}}_{L^2(\cM)^n}^2 \\
			&= \re \ip{\mathbf{x}' - \mathbf{x}, \mathbf{z}_1}_{L^2(\cM)^n} - \frac{c}{2} \norm{\mathbf{x}' - \mathbf{x}}_{L^2(\cM)^n}^2
		\end{align*}
		where we let $\mathbf{z}_1 = \mathbf{z} - c\mathbf{x}$.  Symmetrically, there exists $\mathbf{z}_2 \in L^2(\dcl^{\cM}(\mathbf{x},\mathbf{y}))$ such that
		\[
		\phi^{\cM}(\mathbf{x}',\mathbf{y}) - \phi^{\cM}(\mathbf{x},\mathbf{y}) \leq \re \ip{\mathbf{x}' - \mathbf{x}, \mathbf{z}_2}_{L^2(\cM)^n} + \frac{c}{2} \norm{\mathbf{x}' - \mathbf{x}}_{L^2(\cM)^n}^2.
		\]
		By combining these two inequalities, we get
		\[
		\re \ip{\mathbf{x}' - \mathbf{x}, \mathbf{z}_2 - \mathbf{z}_1}_{L^2(\cM)^n} \leq c \norm{\mathbf{x}' - \mathbf{x}}_{L^2(\cM)^n}^2.
		\]
		Since $\mathbf{x}'$ is arbitrary, this forces $\mathbf{z}_1 = \mathbf{z}_2$.  Calling this vector $\nabla_{\mathbf{x}} \phi^{\cM}(\mathbf{x},\mathbf{y})$, we obtain the inequality asserted in (1) which entails that $\phi$ is differentiable in $\mathbf{x}$.
		
		(2) Note that for $t > 0$, we have
		\[
		-\frac{c}{2} t^2 \norm{\mathbf{z}}_{L^2(\cM)^n}^2 \leq \phi(\mathbf{x} + t \mathbf{z}, \mathbf{y}) - \phi^{\cM}(\mathbf{x},\mathbf{y})- \re \ip{\nabla_{\mathbf{x}} \phi^{\cM}(\mathbf{x},\mathbf{y}), t\mathbf{z}}_{L^2(\cM)} \leq \frac{c}{2} t^2 \norm{\mathbf{z}}_{L^2(\cM)^n}^2.
		\]
		In particular, we have
		\[
		\left|\frac{\phi(\mathbf{x} + t \mathbf{z}, \mathbf{y}) - \phi^{\cM}(\mathbf{x},\mathbf{y})}{t} - \re \ip{\nabla_{\mathbf{x}} \phi^{\cM}(\mathbf{x},\mathbf{y}), \mathbf{z}}_{L^2(\cM)} \right| \leq \frac{ct}{2} \norm{\mathbf{z}}_{L^2(\cM)^n}^2.
		\]
		In particular, $[\phi(\mathbf{x} + t \mathbf{z}, \mathbf{y}) - \phi^{\cM}(\mathbf{x},\mathbf{y})]/t$ converges uniformly on $L^2$-balls to $\re \ip{\nabla_{\mathbf{x}} \phi^{\cM}(\mathbf{x},\mathbf{y}), \mathbf{z}}_{L^2(\cM)}$ as $t \searrow 0$, which makes the latter a definable predicate.
		
		(3) Fix $\mathbf{r}$ and $\mathbf{r}'$, and let $\mathbf{R}$ be as in the assumption of (3).  Note that
		\[
		\psi_{\epsilon}^{\cM}(\mathbf{x},\mathbf{y}) = \sup_{\mathbf{z} \in D_{\mathbf{R}}^{\cM}} \frac{(\re \ip{\mathbf{z},\nabla_{\mathbf{x}} \phi(\mathbf{x},\mathbf{y})}_{L^2(\cM)^n})^2}{\epsilon + \norm{\mathbf{z}}_{L^2(\cM)^n}^2}
		\]
		is a definable predicate.  Since $\nabla_{\mathbf{x}} \phi(\mathbf{x},\mathbf{y}) \in D_{\mathbf{R}}^{\cM}$ for $\mathbf{x} \in D_{\mathbf{r}}$ and $\mathbf{y} \in D_{\mathbf{r}'}$, we have
		\[
		\psi_{\epsilon}^{\cM}(\mathbf{x},\mathbf{y}) \geq \frac{\norm{\nabla_{\mathbf{x}} \phi(\mathbf{x},\mathbf{y})}_{L^2(\cM)^n}^4}{\epsilon + \norm{\nabla_{\mathbf{x}} \phi(\mathbf{x},\mathbf{y})}_{L^2(\cM)^n}^2},
		\]
		while on the other hand by the Cauchy-Schwarz inequality,
		\[
		\psi_{\epsilon}^{\cM}(\mathbf{x},\mathbf{y}) \leq \sup_{\mathbf{z} \in D_{\mathbf{R}}} \frac{\norm{\mathbf{z}}_{L^2(\cM)}^2 \norm{\nabla_{\mathbf{x}} \phi(\mathbf{x},\mathbf{y})}_{L^2(\cM)^n}^2}{\epsilon + \norm{\mathbf{z}}_{L^2(\cM)^n}^2} \leq \norm{\nabla_{\mathbf{x}} \phi(\mathbf{x},\mathbf{y})}_{L^2(\cM)^n}^2.
		\]
		It is an exercise in analysis to show that $t^4 / (\epsilon + t^2)$ converges uniformly on compact sets to $t^2$.  Therefore, these bounds prove that $\psi_{\epsilon}^{\cM}(\mathbf{x},\mathbf{y})$ converges uniformly to $\norm{\nabla_{\mathbf{x}} \phi(\mathbf{x},\mathbf{y})}_{L^2(\cM)^n}^2$ for $\mathbf{x} \in D_{\mathbf{r}}$ and $\mathbf{y} \in D_{\mathbf{r}'}$ as $\epsilon \searrow 0$.  Hence, $\norm{\nabla_{\mathbf{x}} \phi(\mathbf{x},\mathbf{y})}_{L^2(\cM)^n}^2$ is a definable predicate on $D_{\mathbf{r}}^{\cM} \times D_{\mathbf{r}'}^{\cM}$ for every $\mathbf{r}$ and $\mathbf{r}'$, meaning that $\norm{\nabla_{\mathbf{x}} \phi(\mathbf{x},\mathbf{y})}_{L^2(\cM)^n}^2$ is globally a definable predicate.
		
		This furthermore shows that
		\[
		\norm{\mathbf{z} - \nabla_{\mathbf{x}} \phi^{\cM}(\mathbf{x},\mathbf{y})}_{L^2(\cM)^n} = \norm{\mathbf{z}}_{L^2(\cM)^n}^2 - 2 \re \ip{\mathbf{z},\nabla \phi^{\cM}(\mathbf{x},\mathbf{y})}_{L^2(\cM)^n} + \norm{\nabla_{\mathbf{x}} \phi^{\cM}(\mathbf{x},\mathbf{y})}_{L^2(\cM)^n}^2
		\]
		is a definable predicate, which means that $\nabla_{\mathbf{x}} \phi^{\cM}(\mathbf{x},\mathbf{y})$ is a definable function.
	\end{proof}
	
	Furthermore, in the setting of Proposition \ref{prop: semiconvex semiconcave gradient}, $\nabla_{\mathbf{x}} \phi(\mathbf{x},\mathbf{y})$ is automatically Lipschitz in $\mathbf{x}$.  To prove this, we rely on the following elementary estimate.
	
	\begin{fact} \label{fact: second difference}
		Let $H$ be an inner product space and let $\phi: H \to \R$ be $c$-semiconvex and $c$-semiconcave.  Then for $x, y, z \in H$, we have
		\[
		|\phi(x + y + z) - \phi(x + y) - \phi(x + z) + \phi(x)| \leq c \norm{y} \norm{z}.
		\]
	\end{fact}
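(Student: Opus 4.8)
The plan is to reduce the statement to the special case in which both ``increments'' $y$ and $z$ are short, prove that case by a midpoint argument, and then recover the general case by splitting $y$ and $z$ into many small pieces at independently chosen rates and summing.

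\emph{Step 1: a coarse estimate.} I would first prove the weaker inequality $|\phi(x+y+z) - \phi(x+y) - \phi(x+z) + \phi(x)| \le \tfrac{c}{2}(\norm{y}^2 + \norm{z}^2)$ for all $x,y,z \in H$. The key observation is that the two ``diagonals'' of the quadrilateral with vertices $x$, $x+y$, $x+z$, $x+y+z$ share the common midpoint $m := x + \tfrac12(y+z)$. Applying $c$-semiconvexity with $\lambda = \tfrac12$ to the endpoints $x,\,x+y+z$ and $c$-semiconcavity with $\lambda = \tfrac12$ to the endpoints $x+y,\,x+z$ gives
\[
\tfrac{1}{2}\big(\phi(x+y)+\phi(x+z)\big) - \tfrac{c}{8}\norm{y-z}^2 \;\le\; \phi(m) \;\le\; \tfrac{1}{2}\big(\phi(x)+\phi(x+y+z)\big) + \tfrac{c}{8}\norm{y+z}^2 ,
\]
and the parallelogram identity $\norm{y+z}^2 + \norm{y-z}^2 = 2\norm{y}^2 + 2\norm{z}^2$ turns this into $\phi(x+y+z) - \phi(x+y) - \phi(x+z) + \phi(x) \ge -\tfrac{c}{2}(\norm{y}^2+\norm{z}^2)$. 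Using semiconcavity on $\{x,x+y+z\}$ and semiconvexity on $\{x+y,x+z\}$ instead yields the matching upper bound.

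\emph{Step 2: additivity in each slot.} Writing $\Delta^2\phi(x;y,z)$ for the four-term expression, I would record the telescoping identity: for any decomposition $y = \sum_{i=1}^n y_i$,
\[
\Delta^2\phi(x;y,z) = \sum_{i=1}^n \Delta^2\phi\big(x + \textstyle\sum_{j<i} y_j;\; y_i,\; z\big),
\]
and the analogous identity for a decomposition of $z$. Iterating both, if $y = \sum_{i=1}^n y_i$ and $z = \sum_{k=1}^m z_k$ then $\Delta^2\phi(x;y,z)$ is a sum of $nm$ terms, each of the form $\Delta^2\phi(\,\cdot\,;y_i,z_k)$.

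\emph{Step 3: optimizing the split.} If $y=0$ or $z=0$ then the left side vanishes identically and the claim is trivial, so assume $y,z\ne 0$. Taking $y_i = y/n$ and $z_k = z/m$ with $n,m \in \N$, Steps 1 and 2 give
\[
\big|\Delta^2\phi(x;y,z)\big| \;\le\; nm\cdot\tfrac{c}{2}\Big(\tfrac{\norm{y}^2}{n^2} + \tfrac{\norm{z}^2}{m^2}\Big) = \tfrac{c}{2}\Big(\tfrac{m}{n}\norm{y}^2 + \tfrac{n}{m}\norm{z}^2\Big).
\]
The infimum of the right side over positive rationals $m/n$ equals, by density and continuity, its infimum over positive reals, which is $\tfrac{c}{2}\cdot 2\norm{y}\norm{z} = c\norm{y}\norm{z}$ (attained in the limit $m/n \to \norm{z}/\norm{y}$). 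This is the asserted bound. The only genuinely substantive point is Step~3: a single midpoint argument can only deliver the suboptimal constant $\tfrac{c}{2}(\norm{y}^2+\norm{z}^2)$, and sharpening it to $c\norm{y}\norm{z}$ requires exactly the rescaling freedom obtained by cutting $y$ and $z$ into pieces at different rates; Steps~1 and~2 are routine, being just the definitions plus the parallelogram law and a telescoping sum.
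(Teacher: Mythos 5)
Your proposal matches the paper's proof essentially step for step: the midpoint/parallelogram bound $\tfrac{c}{2}(\norm{y}^2+\norm{z}^2)$, the telescoping decomposition over an $m\times n$ grid, and passing to the optimal ratio $m/n$ to sharpen the constant to $c\norm{y}\norm{z}$. The argument is correct and there is nothing to add.
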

	
	\begin{proof}
		Assume $y$ and $z$ are nonzero, since otherwise the inequality is trivial.  First observe by semiconvexity $\phi(\frac{x+y}{2}) \leq \frac{1}{2} \phi(x) + \frac{1}{2} \phi(y) + \frac{1}{2}(1 - \frac{1}{2}) \frac{c}{2} \norm{x - y}^2$, and hence
		\[
		\phi(x) + \phi(y) \geq 2 \phi\left( \frac{x + y}{2} \right) - \frac{c}{4} \norm{x - y}^2.
		\]
		Analogously, semiconcavity implies
		\[
		\phi(x) + \phi(y) \leq 2 \phi\left( \frac{x + y}{2} \right) + \frac{c}{4} \norm{x - y}^2.
		\]
		Therefore,
		\begin{align*}
			& \phi(x + y + z) - \phi(x + y) - \phi(x + z) + \phi(x) \\
			&= [\phi(x + y + z) + \phi(x)] - [\phi(x + y) + \phi(x+z)] \\
			&\leq \left[ 2\phi \left( \frac{2x + y + z}{2} \right) + \frac{c}{4} \norm{y+z}^2 \right] - \left[ 2\phi \left( \frac{2x + y + z}{2} \right) - \frac{c}{4} \norm{y-z}^2 \right] \\
			&= \frac{c}{2} \left(\norm{y}^2 + \norm{z}^2 \right),
		\end{align*}
		so
		\begin{equation} \label{eq: halfway estimate}
			\phi(x + y + z) - \phi(x + y) - \phi(x + z) + \phi(x) \leq \frac{c}{2} \left( \norm{y}^2 + \norm{z}^2 \right).
		\end{equation}
		Let $m, n \in \N$.  By telescoping summation,
		\begin{multline*}
			\phi(x + y + z) - \phi(x + y) - \phi(x + z) + \phi(x) \\
			= \sum_{i=1}^m \sum_{j=1}^n \left[ \phi\left(x + \tfrac{i}{m} y + \tfrac{j}{n} z\right) - \phi\left(x + \tfrac{i}{m}y + \tfrac{j-1}{n} \right) - \phi\left(x + \tfrac{i-1}{m}y + \tfrac{j}{m} z \right) + \phi\left(x + \tfrac{i-1}{m} y + \tfrac{j-1}{n} z \right) \right].
		\end{multline*}
		Now applying \eqref{eq: halfway estimate} to $x + \frac{i-1}{m}y + \frac{j-1}{n} z$ and $\frac{1}{m}y$ and $\frac{1}{n}z$ in place of $x$, $y$, and $z$, we get
		\[
		\phi(x + y + z) - \phi(x + y) - \phi(x + z) + \phi(x)  \leq \sum_{i=1}^m \sum_{j=1}^n \frac{c}{2} \left( \frac{1}{m^2} \norm{y}^2 + \frac{1}{n^2} \norm{z}^2 \right) = \frac{c}{2} \left( \frac{n}{m} \norm{y}^2 + \frac{m}{n} \norm{z}^2 \right).
		\]
		Now apply this estimate to sequences of integers $m_k$ and $n_k$ such that $m_k/n_k \to \norm{y} / \norm{z}$.  The right-hand side thus reduces to $c \norm{y} \norm{z}$ which gives us the desired upper bound for $\phi(x + y + z) - \phi(x + y) - \phi(x + z) + \phi(x)$.  The lower bound follows by a symmetrical argument.
	\end{proof}
	
	\begin{corollary} \label{cor: gradient Lipschitz}
		Consider the same setup as Proposition \ref{prop: semiconvex semiconcave gradient}.  Then
		\begin{equation} \label{eq: gradient Lipschitz}
			\norm{\nabla_{\mathbf{x}} \phi^{\cM}(\mathbf{x}',\mathbf{y}) - \nabla_{\mathbf{x}} \phi^{\cM}(\mathbf{x},\mathbf{y})}_{L^2(\cM)} \leq c \norm{\mathbf{x}' - \mathbf{x}}_{L^2(\cM)}.
		\end{equation}
	\end{corollary}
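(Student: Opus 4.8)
The plan is to deduce \eqref{eq: gradient Lipschitz} directly from the second-difference estimate of Fact \ref{fact: second difference} combined with the first-order expansion established in Proposition \ref{prop: semiconvex semiconcave gradient}(1). Fix $\cM \models \mathrm{T}$, a parameter tuple $\mathbf{y} \in \cM^{\N}$, and points $\mathbf{x}, \mathbf{x}' \in \cM^n$, and set $\mathbf{v} = \mathbf{x}' - \mathbf{x}$. Since the two gradient vectors live in $L^2(\cM)^n$ whereas $\phi^{\cM}$ is only evaluated on the bounded part $\cM^n$, I will estimate $\re \ip{\mathbf{w}, \nabla_{\mathbf{x}}\phi^{\cM}(\mathbf{x}',\mathbf{y}) - \nabla_{\mathbf{x}}\phi^{\cM}(\mathbf{x},\mathbf{y})}$ for an arbitrary test direction $\mathbf{w} \in \cM^n$ and pass to the $L^2$-norm at the end via density of $\cM^n$ in $L^2(\cM)^n$.

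Applying the inequality in Proposition \ref{prop: semiconvex semiconcave gradient}(1) once with base point $\mathbf{x}'$ and increment $t\mathbf{w}$ and once with base point $\mathbf{x}$ and increment $t\mathbf{w}$ (for small $t > 0$), and subtracting, I find that
\[
t\,\re \ip{\mathbf{w}, \nabla_{\mathbf{x}}\phi^{\cM}(\mathbf{x}',\mathbf{y}) - \nabla_{\mathbf{x}}\phi^{\cM}(\mathbf{x},\mathbf{y})}
\]
differs by at most $c t^2 \norm{\mathbf{w}}_{L^2(\cM)}^2$ from the second difference $\phi^{\cM}(\mathbf{x}+\mathbf{v}+t\mathbf{w},\mathbf{y}) - \phi^{\cM}(\mathbf{x}+\mathbf{v},\mathbf{y}) - \phi^{\cM}(\mathbf{x}+t\mathbf{w},\mathbf{y}) + \phi^{\cM}(\mathbf{x},\mathbf{y})$. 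By Fact \ref{fact: second difference}, applied in the Hilbert space $L^2(\cM)^n$ to the $c$-semiconvex and $c$-semiconcave function $\mathbf{x}\mapsto\phi^{\cM}(\mathbf{x},\mathbf{y})$ with the roles of $y$ and $z$ there played by $\mathbf{v}$ and $t\mathbf{w}$, that second difference is bounded in absolute value by $c\norm{\mathbf{v}}_{L^2(\cM)}\,t\norm{\mathbf{w}}_{L^2(\cM)}$. Hence
\[
t\,\bigl|\re \ip{\mathbf{w}, \nabla_{\mathbf{x}}\phi^{\cM}(\mathbf{x}',\mathbf{y}) - \nabla_{\mathbf{x}}\phi^{\cM}(\mathbf{x},\mathbf{y})}\bigr| \leq c t \norm{\mathbf{v}}_{L^2(\cM)} \norm{\mathbf{w}}_{L^2(\cM)} + c t^2 \norm{\mathbf{w}}_{L^2(\cM)}^2 ;
\]
dividing by $t$ and letting $t \searrow 0$ gives $\bigl|\re \ip{\mathbf{w}, \nabla_{\mathbf{x}}\phi^{\cM}(\mathbf{x}',\mathbf{y}) - \nabla_{\mathbf{x}}\phi^{\cM}(\mathbf{x},\mathbf{y})}\bigr| \leq c\norm{\mathbf{v}}_{L^2(\cM)}\norm{\mathbf{w}}_{L^2(\cM)}$.

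Finally, taking the supremum over $\mathbf{w} \in \cM^n$ with $\norm{\mathbf{w}}_{L^2(\cM)} \leq 1$ and using that $\cM^n$ is dense in $L^2(\cM)^n$ turns the left-hand side into $\norm{\nabla_{\mathbf{x}}\phi^{\cM}(\mathbf{x}',\mathbf{y}) - \nabla_{\mathbf{x}}\phi^{\cM}(\mathbf{x},\mathbf{y})}_{L^2(\cM)}$, which is exactly \eqref{eq: gradient Lipschitz}. I do not expect a genuine obstacle here: the argument is a routine combination of the two cited facts, and the only subtlety is the mismatch between the domain $\cM^n$ of $\phi^{\cM}$ and the ambient Hilbert space $L^2(\cM)^n$ containing the gradients, which is dispatched by the density argument in this last step.
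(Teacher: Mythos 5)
Your argument is correct and is essentially the paper's proof: both rest on Fact \ref{fact: second difference} applied to the $c$-semiconvex and $c$-semiconcave function $\phi^{\cM}(\cdot,\mathbf{y})$ with increments $\mathbf{x}'-\mathbf{x}$ and $t\mathbf{w}$, followed by dividing by $t$ and letting $t\searrow 0$ to convert difference quotients into gradient inner products. The only cosmetic differences are that you make the $O(t^2)$ error from Proposition \ref{prop: semiconvex semiconcave gradient}(1) explicit where the paper simply invokes differentiability to pass to the limit, and that you spell out the density of $\cM^n$ in $L^2(\cM)^n$ when taking the supremum over test directions.
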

	
	\begin{proof}
		Let $\mathbf{y} \in \cM^{\N}$.  By applying Fact \ref{fact: second difference} to $\phi^{\cM}(\cdot,\mathbf{y})$, we obtain for $\mathbf{x}'$ and $\mathbf{x}$ and $\mathbf{z} \in \cM^n$ that
		\[
		\left| \frac{\phi^{\cM}(\mathbf{x}' + t \mathbf{z},\mathbf{y}) - \phi^{\cM}(\mathbf{x}',\mathbf{y})}{t} -  \frac{\phi^{\cM}(\mathbf{x}' + t \mathbf{z},\mathbf{y}) - \phi^{\cM}(\mathbf{x}',\mathbf{y})}{t} \right| \leq c \norm{\mathbf{x}' - \mathbf{x}}_{L^2(\cM)^n} \norm{\mathbf{z}}_{L^2(\cM)^n}.
		\]
		Taking $t \to 0$,
		\[
		\left| \re \ip{\nabla \phi^{\cM}(\mathbf{x}',\mathbf{y}),\mathbf{z}}_{L^2(\cM)^n} -   \re \ip{\nabla \phi^{\cM}(\mathbf{x},\mathbf{y}),\mathbf{z}}_{L^2(\cM)^n} \right| \leq c \norm{\mathbf{x}' - \mathbf{x}}_{L^2(\cM)^n} \norm{\mathbf{z}}_{L^2(\cM)^n}.
		\]
		Since $\mathbf{z}$ was arbitrary, we obtain \eqref{eq: gradient Lipschitz}.
	\end{proof}

	In Proposition \ref{prop: semiconvex semiconcave gradient} (3), we made an additional boundedness assumption that $\nabla_{\mathbf{y}} \phi$ maps a product of domains (operator-norm balls) into a product of domains, which must be verified in applications.  We will see examples of definable predicates in the next section where the boundedness condition holds by construction.  However, we remark that the boundedness condition is automatic in the case where $\phi$ is a function of $\mathbf{x}$ alone (i.e., there is no $\mathbf{y}$) and $\mathrm{T} \models \mathrm{T}_{\mathrm{II}_1}$.  This follows from a much more general fact about unitarily equivariant Lipschitz functions on tuples from a $\mathrm{II}_1$ factor.  A weaker version of this lemma was given earlier in \cite[Lemma 11.5.4]{JekelThesis} using a random matrix argument.  In the following, $L^2(\cM)_{\sa}$ denotes the real Hilbert space of self-adjoint elements in $L^2(\cM)$.  Later on, we will apply this result to $\nabla \phi$ after showing it is Lipschitz.
	
	\begin{proposition} \label{prop: Lipschitz bound}
		Let $\cM$ be $\mathrm{II}_1$ factor (i.e. a tracial $\mathrm{W}^*$-algebra with trivial center), and let $F: \cM_{\sa}^n \to L^2(\cM)_{\sa}$ be a function such that
		\begin{enumerate}[(1)]
			\item $F$ is $L$-Lipschitz with respect to the $L^2(\cM)$-norm, that is, $\norm{F(\mathbf{x}) - F(\mathbf{y})}_{L^2(\cA)} \leq L \norm{\mathbf{x} - \mathbf{y}}_{L^2(\cA)^n}$.
			\item $F$ is equivariant under unitary conjugation, that is, $F(ux_1u^*,\dots,ux_nu^*) = u F(x_1,\dots,x_n) u^*$ for any unitary $u$ in $\cM$.
		\end{enumerate}
		Then we have
		\begin{equation} \label{eq:spectraldiameter}
			\diam(\Spec(F(\mathbf{x}))) \leq L \left( \sum_{j=1}^m \diam(\Spec(x_j))^2 \right)^{1/2},
		\end{equation}
	\end{proposition}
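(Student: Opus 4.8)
The plan is to prove \eqref{eq:spectraldiameter} by \emph{not} working with honest unitary conjugations---which, as I explain at the end, genuinely cannot work---but instead passing to inner derivations, where hypotheses (1) and (2) combine with no loss of constants. Write $b = F(\mathbf{x}) \in L^2(\cM)_{\sa}$; I may assume $d := \diam\Spec(b) > 0$. The first step is a derivation bound: for every anti-self-adjoint $A \in \cM$,
\[
\norm{[A,b]}_{L^2(\cM)} \le L\Bigl(\sum_{j=1}^n \norm{[A,x_j]}_{L^2(\cM)}^2\Bigr)^{1/2}.
\]
Indeed, $u_t := e^{tA}$ is a one-parameter unitary group in $\cM$, so equivariance gives $F(u_t \mathbf{x} u_t^*) = u_t b u_t^*$, and the Lipschitz hypothesis gives $\norm{u_t b u_t^* - b}_{L^2(\cM)} \le L\norm{u_t \mathbf{x} u_t^* - \mathbf{x}}_{L^2(\cM)^n}$ for every $t \in \R$. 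Expanding $e^{tA} y e^{-tA}$ as a power series converging absolutely in $L^2(\cM)$---valid both for $y = x_j \in \cM$ and for $y = b \in L^2(\cM)$ since $A$ is bounded---shows $u_t y u_t^* - y = t[A,y] + O(t^2)$ in $L^2(\cM)$; dividing the inequality by $t$ and letting $t \to 0^+$ yields the claim.

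Next I would choose $A$ so that $[A,b]$ detects $d$ while each $[A,x_j]$ stays small. Fix $\lambda_+ > \lambda_-$ in $\Spec(b)$ and $\epsilon \in (0,(\lambda_+-\lambda_-)/2)$. The spectral projections $P = 1_{[\lambda_+-\epsilon,\lambda_++\epsilon]}(b)$ and $Q = 1_{[\lambda_--\epsilon,\lambda_-+\epsilon]}(b)$ are nonzero and orthogonal. As $\cM$ is a factor, choose subprojections $p_0 \le P$, $q_0 \le Q$ with $\tau(p_0) = \tau(q_0) = s := \min(\tau(P),\tau(Q)) > 0$ and a partial isometry $v = q_0 v p_0$ with $v^* v = p_0$, $v v^* = q_0$; put $A = v - v^*$, which is anti-self-adjoint with $\norm{A}_{L^2(\cM)}^2 = 2s$. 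Since commutators annihilate scalars, $[A,x_j] = [A, x_j - c_j 1]$ with $c_j$ the midpoint of $\Spec(x_j)$, so $\norm{[A,x_j]}_{L^2(\cM)} \le 2\norm{x_j - c_j 1}\norm{A}_{L^2(\cM)} = \sqrt{2s}\,\diam\Spec(x_j)$, and the right side of the derivation bound is at most $L\sqrt{2s}\bigl(\sum_j \diam\Spec(x_j)^2\bigr)^{1/2}$.

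For the left side: since $p_0$ and $q_0$ sit under disjoint spectral projections of $b$, $p_0 b q_0 = 0$, hence $[A,b] = [v,b] + [v,b]^*$ with
\[
[v,b] = \bigl(v(p_0 b p_0) - (q_0 b q_0)v\bigr) + v\bigl(p_0 b(1-p_0-q_0)\bigr) - \bigl((1-p_0-q_0)b q_0\bigr)v,
\]
a sum of elements of the pairwise orthogonal corners $q_0\cM p_0$, $q_0\cM(1-p_0-q_0)$, $(1-p_0-q_0)\cM p_0$. From $\norm{p_0 b p_0 - \lambda_+ p_0} \le \epsilon$ and $\norm{q_0 b q_0 - \lambda_- q_0} \le \epsilon$, the first corner term differs from $(\lambda_+-\lambda_-)v$ by at most $2\epsilon$ in operator norm. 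Since $\norm{(\lambda_+-\lambda_-)(v+v^*)}_{L^2(\cM)} = (\lambda_+-\lambda_-)\sqrt{2s}$ while the error and the remaining corners are $L^2(\cM)$-orthogonal to $(\lambda_+-\lambda_-)(v+v^*)$ or of size $O(\epsilon\sqrt s)$, orthogonality gives $\norm{[A,b]}_{L^2(\cM)} \ge (\lambda_+-\lambda_-)\sqrt{2s} - C\epsilon\sqrt s$ for an absolute constant $C$. Feeding this and the previous paragraph into the derivation bound and dividing by $\sqrt{2s}$, then letting $\epsilon \to 0$ and taking the supremum over $\lambda_\pm \in \Spec(b)$, yields \eqref{eq:spectraldiameter}; in passing this also shows $\Spec(b)$ is bounded, which was not assumed a priori.

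The place where a naive approach breaks, and hence the reason to go infinitesimal, is that $\diam\Spec(b)$ is genuinely \emph{not} recoverable from unitary conjugates: $\sup_{u \in U(\cM)}\norm{ubu^* - b}_{L^2(\cM)}$ can be strictly, and indeed arbitrarily, smaller than $\diam\Spec(b)$ (for instance when the spectral distribution of $b$ is close to a point mass together with a tiny atom), so a single swap of the extreme spectral projections loses an uncontrolled factor. Replacing the swap by an infinitesimal rotation in the two-dimensional corner $p_0 + q_0$ is precisely what makes the $\sqrt{2s}$ factors cancel and delivers the sharp constant $L$. The remaining delicate points are the orthogonal-corner bookkeeping for $[A,b]$ and the justification that $t \mapsto u_t b u_t^*$ is $L^2(\cM)$-differentiable even though $b$ need only be an $L^2$ element.
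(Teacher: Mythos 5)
Your proof is correct, and the core strategy matches the paper's: pass from unitary equivariance to an infinitesimal (derivation) bound via $e^{tA}$, then feed in a partial isometry between spectral projections near the extreme spectral values, which exists because $\cM$ is a $\mathrm{II}_1$ factor. The tactical execution differs in two ways. First, you formulate the derivation bound only for anti-self-adjoint $A$ and apply it to $A = v - v^*$, whereas the paper first extends the bound to arbitrary $z \in \cM$ (by splitting into real and imaginary parts) and plugs in $z = v$ directly. Second, as a consequence you need the corner decomposition and $L^2$-orthogonality bookkeeping to lower-bound $\norm{[A,b]}_{L^2}$, while the paper's version reduces to the one-line estimate $\norm{[v,F(\mathbf{x})] - (a-b)v}_{L^2(\cM)} \le 2\epsilon \norm{v}_{L^2(\cM)}$, obtained from $v^*v \le 1_{[a-\epsilon,a+\epsilon]}(F(\mathbf{x}))$ and $vv^* \le 1_{[b-\epsilon,b+\epsilon]}(F(\mathbf{x}))$. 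Your version is slightly longer for this reason, but the $\sqrt{2s}$ factors do cancel as you claim, so both routes land on the sharp constant $L$. The closing remark---that $\sup_u \norm{ubu^* - b}_{L^2}$ can be arbitrarily smaller than $\diam\Spec(b)$, so one cannot bypass the derivation bound by a single unitary swap---is accurate and a useful observation, though it is not a criticism that applies to the paper's proof, which also passes through the commutator bound rather than comparing a single conjugate. One last cosmetic note: the paper's displayed inequality has $\sum_{j=1}^m$ where it should read $\sum_{j=1}^n$; your statement with $n$ is the intended one.
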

	
	\begin{proof}
		Fix $\mathbf{x} \in \cM_{\sa}^n$.  Let $c_j$ be the average of the minimum and maximum of $\Spec(x_j)$, so that $\norm{X_j - c_j1}_{L^\infty(\cA)} = (1/2) \diam(\Spec(x_j))$.  Note that for every unitary $U$, we have
		\begin{align*}
			\norm{ux_ju^* - x_j}_{L^2(\cM)} &= \norm{u(x_j - c_j)u^*- (x_j - c_j)}_{L^2(\cM)} \\
			&\leq \norm{(u - 1)(x_j - c_j)}_{L^2(\cM)} + \norm{u(x_j - c_j)(u^* - 1)}_{L^2(\cA)} \\
			&\leq 2 \norm{u - 1}_{L^2(\cM)} \frac{1}{2} \diam(\Spec(x_j)).
		\end{align*}
		Therefore,
		\begin{align*}
			\norm{uF(\mathbf{x})u^* - F(\mathbf{x})}_{L^2(\cM)} &\leq L \norm{u\mathbf{x}u^* - \mathbf{x}}_{L^2(\cM)_{\sa}^m} \\
			&\leq L \norm{u - 1}_{L^2(\cM)} \left(\sum_{j=1}^m \diam(\Spec(x_j))^2 \right)^{1/2}.
		\end{align*}
		For the sake of brevity, let $K = \left(\sum_{j=1}^m \diam(\Spec(x_j))^2 \right)^{1/2}$.  Our previous estimate implies in particular that for $y \in \cM_{\sa}$ and $t \in \R$, then
		\[
		\norm{e^{ity}F(\mathbf{x}) - F(\mathbf{x}) e^{ity}}_{L^2(\cM)} = \norm{e^{ity} F(\mathbf{x}) e^{-ity} - F(\mathbf{x})}_{L^2(\cM)} \leq LK \norm{e^{ity} - 1}_{L^2(\cM)}.
		\]
		As $t \to 0$, we have $(e^{ity} - 1) / t \to iy$ and $e^{ity}F(\mathbf{x}) - F(\mathbf{x}) e^{ity} \to i[y,F(\mathbf{x})]$ in $L^2(\cA)$.  Therefore, for $y \in L^\infty(\cM)_{\sa}$, we have
		\[
		\norm{i[y,F(\mathbf{x})]}_{L^2(\cM)} \leq LK \norm{y}_{L^2(\cM)}.
		\]
		Next, observe that if $y$ is self-adjoint, then so is $i[y,F(\mathbf{x})]$.  Taking $z \in \cM$ and writing $z = \re(z) + i \im(z)$ where $\re(z) = (z + z^*)/2$ and $\im(z) = (z - z^*)/2$, we have $\re(i[z,F(\mathbf{x})]) = i[\re(z),F(\mathbf{x}))]$ and $\im(i[z,F(\mathbf{x})]) = i[\im(z),F(\mathbf{x})]$.  Therefore,
		\begin{align*}
			\norm{i[z,f(\mathbf{x})]}_{L^2(\cM)} &= \left( \norm{\re(i[z,F(\mathbf{x})])}_{L^2(\cM)}^2 + \norm{\im(i[z,F(\mathbf{x})])}_{L^2(\cM)}^2 \right)^{1/2} \\
			&\leq LK \left( \norm{\re(z)}_{L^2(\cM)}^2 + \norm{\im(z)}_{L^2(\cM)}^2 \right)^{1/2} \\
			&= LK \norm{z}_{L^2(\cM)}.
		\end{align*}
		Let $a$ and $b \in \Spec(F(\mathbf{x}))$, and let $\epsilon > 0$.  Then because the projection-valued spectral measure of $F(\mathbf{x})$ has support equal to $\Spec(F(\mathbf{x}))$, the spectral projections $1_{[a-\epsilon,a+\epsilon]}(F(\mathbf{x}))$ and $1_{[b-\epsilon,b+\epsilon]}(F(\mathbf{x}))$ are nonzero. Because $\cM$ is a $\mathrm{II}_1$ factor, there exists a nonzero partial isometry $v$ such that $v^*v \leq 1_{[a-\epsilon,a+\epsilon]}(F(\mathbf{x}))$ and $vv^* \leq 1_{[b-\epsilon,b+\epsilon]}(F(\mathbf{x}))$ (see e.g. \cite[Proposition 6.1.8]{KadisonRingroseII}).  Then
		\begin{align*}
			\norm{vF(\mathbf{x}) - av}_{L^2(\cM)} &= \norm{v \, 1_{[a-\epsilon,a+\epsilon]}(F(\mathbf{x})) (F(\mathbf{x}) - a)}_{L^2(\cM)} \\
			&\leq \norm{v}_{L^2(\cM)} \norm{1_{[a-\epsilon,a+\epsilon]}(F(\mathbf{x})) (F(\mathbf{x}) - a)}_{L^\infty(\cM)} \\
			&\leq \norm{v}_{L^2(\cM)} \, \epsilon.
		\end{align*}
		Similarly, $\norm{F(\mathbf{x}) v - b v}_{L^\infty(\cM)} \leq \norm{v}_{L^2(\cA)} \, \epsilon$, and therefore
		\[
		\norm{[v,F(\mathbf{x})] - (a - b)v}_{L^2(\cM)} \leq 2 \epsilon \norm{v}_{L^2(\cM)}.
		\]
		This implies that
		\begin{align*}
			|a - b| \norm{v}_{L^2(\cM)} &\leq \norm{[v,F(\mathbf{x})]}_{L^2(\cM)} + 2 \epsilon \norm{v}_{L^2(\cM)} \\
			&\leq LK \norm{v}_{L^2(\cM)} + 2 \epsilon \norm{v}_{L^2(\cM)}.
		\end{align*}
		We can cancel $\norm{v}_{L^2(\cM)}$ from both sides.  Then since $\epsilon > 0$ was arbitrary and $a$ and $b$ were arbitrary points in $\Spec(F(\mathbf{x}))$, we obtain $\diam(\Spec(F(\mathbf{x}))) \leq LK$, which is the same as \eqref{eq:spectraldiameter}.
	\end{proof}
	
	\begin{corollary} \label{cor: Lipschitz boundedness}
		Let $\cM$ be a $\mathrm{II}_1$ factor and let $F: \cM^n \to L^2(\cM)^m$ be $L$-Lipschitz with respect to $\norm{\cdot}_{L^2(\cM)}$ and equivariant under unitary conjugation.  Let $\mathbf{r} = (r_1,\dots,r_n) \in (0,\infty)^n$.  Let $t = \max_i (|\tr^{\cM}(F_i(0))|)$.  Then $F$ maps $D_{\mathbf{r}}^{\cM}$ into $(D_{t+9L|\mathbf{r}|}^{\cM})^m$.
	\end{corollary}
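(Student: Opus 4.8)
The plan is to deduce this from Proposition~\ref{prop: Lipschitz bound} after decomposing everything into self-adjoint parts. First I would normalize: replacing each coordinate function $F_i$ by $F_i - \tr^{\cM}(F_i(0))1$ leaves the Lipschitz constant and the unitary equivariance unchanged (constant scalars are conjugation-invariant), and a norm bound $\norm{\tilde F_i(\mathbf{x})} \le R$ for the modified function yields $\norm{F_i(\mathbf{x})} \le R + t$ for the original. So it suffices to treat the case $\tr^{\cM}(F_i(0)) = 0$ for every $i$ and prove $\norm{F_i(\mathbf{x})} \le 9L|\mathbf{r}|$ on $D_{\mathbf{r}}^{\cM}$.

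Fix a coordinate $i$. Writing $x_j = a_j + i b_j$ with $a_j, b_j \in \cM_{\sa}$ and $F_i = G_i + i H_i$ with $G_i, H_i$ valued in $L^2(\cM)_{\sa}$ (Fact~\ref{fact: J}), I would view $G_i$ as a function of the doubled tuple $(a_1,\dots,a_n,b_1,\dots,b_n) \in \cM_{\sa}^{2n}$. It is $L$-Lipschitz in the $L^2$-metric, because $\norm{(\mathbf{a},\mathbf{b})}_{L^2(\cM)^{2n}} = \norm{\mathbf{a}+i\mathbf{b}}_{L^2(\cM)^n}$ and passing to a single coordinate of $F$ and then to its real part only decreases norms; and it is equivariant under unitary conjugation because $\re(uzu^*) = u\,\re(z)\,u^*$. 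Proposition~\ref{prop: Lipschitz bound} then applies: for $\mathbf{x} \in D_{\mathbf{r}}^{\cM}$ each $a_j$ and $b_j$ has operator norm at most $r_j$, hence spectral diameter at most $2r_j$, so
\[
\diam(\Spec(G_i(\mathbf{x}))) \le L\Bigl(\sum_{j=1}^n (2r_j)^2 + \sum_{j=1}^n (2r_j)^2\Bigr)^{1/2} = 2\sqrt{2}\,L\,|\mathbf{r}|,
\]
and likewise for $H_i$. In particular the spectrum is bounded, so the element $G_i(\mathbf{x})$, a priori only in $L^2(\cM)$, in fact lies in $\cM_{\sa}$ (and similarly $H_i(\mathbf{x})$), which is already part of what must be shown.

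To turn the spectral-diameter bound into an operator-norm bound, I would choose $c \in \R$ with $\Spec(G_i(\mathbf{x})) \subseteq [c - \sqrt2\,L|\mathbf{r}|,\, c + \sqrt2\,L|\mathbf{r}|]$, so that $\norm{G_i(\mathbf{x}) - c1} \le \sqrt2\,L|\mathbf{r}|$ and hence $|c| \le |\tr^{\cM}(G_i(\mathbf{x}))| + \sqrt2\,L|\mathbf{r}|$. Since $\tr^{\cM}(F_i(0)) = 0$ and $F$ is $L$-Lipschitz with $\norm{\mathbf{x}}_{L^2(\cM)^n} \le |\mathbf{r}|$, we get $|\tr^{\cM}(G_i(\mathbf{x}))| \le |\tr^{\cM}(F_i(\mathbf{x}))| \le L|\mathbf{r}|$, so $\norm{G_i(\mathbf{x})} \le (1 + 2\sqrt2)L|\mathbf{r}|$, and the same for $H_i(\mathbf{x})$; thus $\norm{F_i(\mathbf{x})} \le (2 + 4\sqrt2)L|\mathbf{r}| < 9L|\mathbf{r}|$. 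Undoing the normalization gives $\norm{F_i(\mathbf{x})} \le t + 9L|\mathbf{r}|$ for the original $F$, as required. I do not expect a genuine obstacle here; the only steps needing care are checking that the real and imaginary parts of $F_i$, as functions of the doubled self-adjoint tuple, literally satisfy the hypotheses of Proposition~\ref{prop: Lipschitz bound}, and tracking constants so that the trace term enters with coefficient $1$ rather than $2$ — which is exactly what the initial normalization by $\tr^{\cM}(F_i(0))1$ buys us.
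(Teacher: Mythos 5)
Your argument is correct and follows the same route as the paper's proof: decompose $F_i$ into real and imaginary parts, treat each as a unitarily equivariant $L$-Lipschitz map on the doubled self-adjoint tuple $(a_1,\dots,a_n,b_1,\dots,b_n)$, apply Proposition~\ref{prop: Lipschitz bound} to control the spectral diameter, and combine with a Lipschitz bound on the trace. The one organizational difference is how the trace term $t = |\tr^{\cM}(F_i(0))|$ is handled: you normalize at the outset by subtracting $\tr^{\cM}(F_i(0))1$ so that $t$ doesn't get double-counted when you bound $\norm{G_i(\mathbf{x})}$ and $\norm{H_i(\mathbf{x})}$ separately, whereas the paper instead estimates $\norm{F_i(\mathbf{x}) - \tr^{\cM}(F_i(\mathbf{x}))1}$ by adding the real and imaginary contributions first, so that $|\tr^{\cM}(F_i(\mathbf{x}))| \le t + L|\mathbf{r}|$ enters only once. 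Both variants are valid and both produce a constant comfortably below $9$, so the stated bound $t + 9L|\mathbf{r}|$ holds either way.
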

	
	\begin{proof}
		For an operator $x$, we write $\re(x) = (x+x^*)/2$ and $\im(x) = (x - x^*)/2i$, so that $\re(x)$ and $\im(x)$ are self-adjoint and $x = \re(x) + i \im (x)$.
		
		Fix $i \in \{1,\dots,m\}$, and let $G_i: \cM_{\sa}^{2n} \to L^2(\cM)_{\sa}$ be given by
		\[
		G_i(y_1,\dots,y_{2n}) = \re (F(y_1+iy_2,\dots,y_{2n-1} + i y_{2n}))_i.
		\]
		Then $G_i$ is $L$-Lipschitz with respect to the $L^2$-norm.  Thus, by the previous proposition,
		\begin{align*}
			\norm{G_i(y_1,\dots,y_{2n}) - \tr^{\cM}(G_i(y_1,\dots,y_{2n})))}_{\cM} &\leq \diam(\Spec(G_i(y_1,\dots,y_{2n})) \\
			&\leq L \left( \sum_{j=1}^{2n} \diam(\Spec(y_j))^2 \right)^{1/2} \leq 2L \left( \sum_{j=1}^{2n} \norm{y_j}_{\cM}^2 \right)
		\end{align*}
		By substituting $x_j = y_{2j-1} + i y_{2j}$, this is equivalent to
		\begin{align*}
			\norm{\re F_i(x_1,\dots,x_n) - \tr^{\cM}(\re F_i(x_1,\dots,x_n))}_{\cM} &\leq 2L \left( \sum_{j=1}^n (\norm{\re(x_j)}^2 + \norm{\im(x_j)}^2) \right) \\
			&\leq 4 L \left( \sum_{j=1}^n \norm{x_j}^2 \right) \\
			&= 4 L |\mathbf{r}|.
		\end{align*}
		The same reasoning applies to $\im F_i(x_1,\dots,x_n)$, and hence adding the real and imaginary parts together,
		\[
		\norm{F_i(x_1,\dots,x_n) - \tr^{\cM}( F_i(x_1,\dots,x_n))}_{\cM} \leq 8 L |\mathbf{r}|.
		\]
		Also,
		\[
		|\tr^{\cM}(F_i(x_1,\dots,x_n)) - \tr^{\cM}(F_i(0,\dots,0))| \leq L \norm{\mathbf{x}}_{L^2(\cM)^n} \leq L|\mathbf{r}|
		\]
		and by assumption $|\tr^{\cM}(F_i(0,\dots,0))| \leq t$.  Combining these estimates with the triangle inequality shows $\norm{F_i^{\cM}(x_1,\dots,x_n)} \leq t + 9L|\mathbf{r}|$ as desired.
	\end{proof}
	
	\begin{corollary} \label{cor: factor gradient definability}
		Let $\mathrm{T}_{\mathrm{II}_1}$ be the theory of $\mathrm{II}_1$ factors.  Fix a consistent theory $\mathrm{T} \models \mathrm{T}_{\mathrm{II}_1}$.  Let $\phi(\mathbf{x}) = \phi(x_1,\dots,x_n)$ be a definable predicate in $n$ variables.  Suppose that $\phi$ is $c$-semiconvex and $c$-semiconcave as a function of $\mathbf{x}$ for some $c > 0$.  Then $\nabla \phi^{\cM}$ is a $\mathrm{T}$-definable function.
	\end{corollary}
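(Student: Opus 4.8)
The plan is to deduce this from the combination of Proposition~\ref{prop: semiconvex semiconcave gradient}, Corollary~\ref{cor: gradient Lipschitz}, and Corollary~\ref{cor: Lipschitz boundedness}, all applied with no parameter variables. Since $\mathrm{T} \models \mathrm{T}_{\mathrm{II}_1}$, every model $\cM$ of $\mathrm{T}$ is a $\mathrm{II}_1$ factor. By parts (1)--(2) of Proposition~\ref{prop: semiconvex semiconcave gradient}, for each such $\cM$ the gradient $\nabla\phi^{\cM}$ is a well-defined function $\cM^n \to L^2(\cM)^n$, and by Corollary~\ref{cor: gradient Lipschitz} it is $c$-Lipschitz with respect to $\norm{\cdot}_{L^2(\cM)}$. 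The only ingredient missing before Proposition~\ref{prop: semiconvex semiconcave gradient}(3) can be invoked is its boundedness hypothesis, i.e.\ a range bound for $\nabla\phi^{\cM}$ on each $D_{\mathbf{r}}^{\cM}$ that is uniform over $\cM \models \mathrm{T}$; I would obtain this from Corollary~\ref{cor: Lipschitz boundedness}, which in turn requires checking unitary equivariance of $\nabla\phi^{\cM}$ and a uniform bound on $\tr^{\cM}(\nabla_k\phi^{\cM}(0))$.

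For unitary equivariance: given $u \in U(\cM)$, conjugation $\Theta_u\colon x \mapsto uxu^*$ is a trace-preserving automorphism of $\cM$, so $\phi^{\cM}(u x_1 u^*,\dots,u x_n u^*) = \phi^{\cM}(x_1,\dots,x_n)$ because definable predicates, being uniform limits of formulas, are invariant under automorphisms. Since $\Theta_u$ is an $L^2$-isometry and $\re\ip{u a u^*, b}_{L^2(\cM)} = \re\ip{a, u^* b u}_{L^2(\cM)}$ by traciality, substituting $u\mathbf{x}'u^*$ and $u\mathbf{x}u^*$ into the defining inequality of the gradient at $u\mathbf{x}u^*$ shows that $u^*\,\nabla\phi^{\cM}(u\mathbf{x}u^*)\,u$ also satisfies that inequality at $\mathbf{x}$; by the uniqueness in Proposition~\ref{prop: semiconvex semiconcave gradient}(1) this forces $\nabla\phi^{\cM}(u\mathbf{x}u^*) = u\,\nabla\phi^{\cM}(\mathbf{x})\,u^*$.

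For the uniform bound on $\tr^{\cM}(\nabla_k\phi^{\cM}(0))$: by Proposition~\ref{prop: semiconvex semiconcave gradient}(2), $\iota^{\cM}(\mathbf{x},\mathbf{z}) = \re\ip{\mathbf{z},\nabla\phi^{\cM}(\mathbf{x})}_{L^2(\cM)^n}$ is a $\mathrm{T}$-definable predicate, and evaluating it at $\mathbf{x} = 0$ with $\mathbf{z}$ equal to the tuple having $1$ (respectively the imaginary unit) in coordinate $k$ and $0$ elsewhere recovers the real (respectively imaginary) part of $\tr^{\cM}(\nabla_k\phi^{\cM}(0))$. Since every definable predicate is bounded on each product of domains by a constant that does not depend on the model, there is $t_0 < \infty$, depending only on $\phi$ and $\mathrm{T}$, with $|\tr^{\cM}(\nabla_k\phi^{\cM}(0))| \le t_0$ for all $\cM \models \mathrm{T}$ and all $k$. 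Now Corollary~\ref{cor: Lipschitz boundedness} applies to $F = \nabla\phi^{\cM}$ with $L = c$: for each $\mathbf{r} \in (0,\infty)^n$ it maps $D_{\mathbf{r}}^{\cM}$ into $(D_{t_0 + 9c|\mathbf{r}|}^{\cM})^n$, a range bound independent of $\cM$. This is precisely the boundedness hypothesis of Proposition~\ref{prop: semiconvex semiconcave gradient}(3), whose conclusion then gives that $\nabla\phi$ is a $\mathrm{T}$-definable function.

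I do not expect a genuine obstacle: the corollary is essentially the conjunction of the three cited results, and the two small verifications---unitary equivariance of the gradient and the model-uniform bound on $\tr^{\cM}(\nabla_k\phi^{\cM}(0))$---are routine applications of, respectively, automorphism-invariance of definable predicates and boundedness of definable predicates on domains.
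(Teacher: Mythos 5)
Your proof is correct and follows essentially the same route as the paper's: Corollary~\ref{cor: gradient Lipschitz} for the Lipschitz bound, Corollary~\ref{cor: Lipschitz boundedness} for the domain bound, then Proposition~\ref{prop: semiconvex semiconcave gradient}(3). The paper's three-sentence proof leaves the two hypothesis checks for Corollary~\ref{cor: Lipschitz boundedness} (unitary equivariance of $\nabla\phi^{\cM}$ and the model-uniform bound on $\tr^{\cM}(\nabla_k\phi^{\cM}(0))$) implicit, and you have correctly supplied them.
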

	
	\begin{proof}
		Let $\cM \models \mathrm{T}$.  By Corollary \ref{cor: gradient Lipschitz}, $\nabla \phi^{\cM}$ is $c$-Lipschitz.  Thus, by Corollary \ref{cor: Lipschitz boundedness}, $\nabla \phi^{\cM}$ maps $D_{\mathbf{r}}$ into $(D_{t+9c|\mathbf{r}|})^n$.  Hence, Proposition \ref{prop: semiconvex semiconcave gradient} implies that $\nabla \phi$ is a definable function relative to $\mathrm{T}$.
	\end{proof}

	\subsection{Semiconvex semiconcave regularization} \label{subsec: regularization}
	
	In this section, we prove Theorem \ref{thm: approximation} using a version of the sup-convolution and inf-convolution operations that have been widely used in the study of Hamilton-Jacobi equations.  These operations fit quite naturally with continuous model theory where $\sup$'s and $\inf$'s are baked into the recipe for formulas.
	
	\begin{proposition} \label{prop: semiconcave regularization}
		Let $\phi(\mathbf{x},\mathbf{y})$ be a $\mathrm{T}_{\tr}$-definable predicate, where $\mathbf{x}$ is an $m$-tuple and $\mathbf{y}$ is a countable tuple.  Fix $t > 0$ and $\mathbf{r} = (r_1,\dots,r_n) \in (0,\infty)^n$, and write $|\mathbf{r}| = (r_1^2 + \dots + r_n^2)^{1/2}$.  Let
		\[
		\psi(\mathbf{x},\mathbf{y}) = \inf_{\mathbf{z} \in \mathcal{D}_{\mathbf{r}}} \left[ \phi(\mathbf{z},\mathbf{y}) + \frac{1}{2t} d(\mathbf{x},\mathbf{z})^2 \right].
		\]
		Then
		\begin{enumerate}[(1)]
			\item $\psi^{\cM}(\mathbf{x},\mathbf{y})$ is a $1/t$-semiconcave function of $\mathbf{x}$.
			\item For each $\mathbf{r}' \in (0,\infty)^n$, $\psi^{\cM}$ is $2|\mathbf{r}+\mathbf{r}'|/t$-Lipschitz as a function of $\mathbf{x}$ on $D_{\mathbf{r}'}^{\cM}$.
			\item For each $\cM$ and for $\mathbf{x} \in D_{\mathbf{r}}$ and $\mathbf{y} \in D_{\mathbf{s}}$, we have
			\[
			-\omega_{\phi,\mathbf{r},\mathbf{s}}\left( \sqrt{2t \omega_{\phi,\mathbf{r},\mathbf{s}}(2|\mathbf{r}|)} \right) \leq \psi^{\cM}(\mathbf{x},\mathbf{y}) - \phi^{\cM}(\mathbf{x},\mathbf{y}) \leq 0,
			\]
			where $\omega_{\phi,\mathbf{r},\mathbf{s}}$ is the modulus of uniform continuity for $\phi$ as a function of $\mathbf{x}$, over $\mathbf{x} \in D_{\mathbf{r}}$ and $\mathbf{y} \in D_{\mathbf{s}}$.
			\item  For each $\mathbf{x} \in D_{\mathbf{r}'}^{\cM}$, there exists $\mathbf{p} \in D_{\mathbf{r}+ \mathbf{r}'}^{\cM}$ such that $(1/t)\mathbf{p} \in \overline{\nabla}_{\mathbf{x}} \psi^{\cM}(\mathbf{x},\mathbf{y})$.
			\item Suppose that $\phi^{\cM}$ is $1/u$-semiconvex as a function of $\mathbf{x}$ for some $u > 0$, and assume $0 < t < u$.  Then $\psi^{\cM}$ is $1/(u - t)$-semiconvex.
		\end{enumerate}
	\end{proposition}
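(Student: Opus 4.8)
The plan is to treat $\psi$ as the infinite-dimensional \emph{Moreau envelope} of $\phi$ in the variable $\mathbf{x}$ and to run the classical inf-convolution arguments inside the real Hilbert space $L^2(\cM)^n$ (with inner product $\re\ip{\cdot,\cdot}$), keeping track of definability at each step. First I would note that $\psi$ is a $\mathrm{T}_{\tr}$-definable predicate: $\phi(\mathbf{z},\mathbf{y}) + \tfrac1{2t}d(\mathbf{x},\mathbf{z})^2$ is obtained from the definable predicates $\phi$ and $d$ by continuous connectives, and definable predicates are closed under $\inf$ over a domain of quantification. Moreover the same formula makes sense for every $\mathbf{x}\in L^2(\cM)^n$ (only $d(\mathbf{x},\mathbf{z})$ depends on $\mathbf{x}$ there), so $\psi^{\cM}(\cdot,\mathbf{y})$ extends to a function $\widetilde\psi$ on $L^2(\cM)^n$, which I will use when invoking convex analysis. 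Item (1) is then immediate: expanding $\tfrac1{2t}d(\mathbf{x},\mathbf{z})^2 = \tfrac1{2t}\norm{\mathbf{x}}^2 - \tfrac1t\re\ip{\mathbf{x},\mathbf{z}} + \tfrac1{2t}\norm{\mathbf{z}}^2$ exhibits $\psi^{\cM}(\mathbf{x},\mathbf{y}) - \tfrac1{2t}\norm{\mathbf{x}}_{L^2(\cM)^n}^2$ as an infimum over $\mathbf{z}\in D_{\mathbf{r}}^{\cM}$ of functions affine in $\mathbf{x}$, hence concave in $\mathbf{x}$, which is exactly $1/t$-semiconcavity.

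For (2), given $\mathbf{x},\mathbf{x}'\in D_{\mathbf{r}'}^{\cM}$ and $\epsilon>0$, I would choose a near-minimizer $\mathbf{z}\in D_{\mathbf{r}}^{\cM}$ for $\psi^{\cM}(\mathbf{x}',\mathbf{y})$, estimate $\psi^{\cM}(\mathbf{x},\mathbf{y}) - \psi^{\cM}(\mathbf{x}',\mathbf{y}) \le \tfrac1{2t}\bigl(d(\mathbf{x},\mathbf{z})^2 - d(\mathbf{x}',\mathbf{z})^2\bigr) + \epsilon$, and use the factorization $a^2-b^2=(a-b)(a+b)$ together with $|d(\mathbf{x},\mathbf{z})-d(\mathbf{x}',\mathbf{z})|\le d(\mathbf{x},\mathbf{x}')$ and the bounds $\norm{\mathbf{x}-\mathbf{z}}_{L^2},\norm{\mathbf{x}'-\mathbf{z}}_{L^2}\le |\mathbf{r}+\mathbf{r}'|$ (coordinatewise $\norm{\cdot}_{L^2}\le\norm{\cdot}_\infty$ and the operator-norm triangle inequality); letting $\epsilon\searrow0$ and symmetrizing gives the Lipschitz bound $2|\mathbf{r}+\mathbf{r}'|/t$. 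For (3), the upper bound $\psi^{\cM}\le\phi^{\cM}$ on $D_{\mathbf{r}}\times D_{\mathbf{s}}$ comes from taking $\mathbf{z}=\mathbf{x}$ in the infimum. For the lower bound, for each admissible $\mathbf{z}$ with $s:=\norm{\mathbf{x}-\mathbf{z}}_{L^2}\le 2|\mathbf{r}|$ one has $\phi(\mathbf{z},\mathbf{y}) + \tfrac1{2t}s^2 \ge \phi(\mathbf{x},\mathbf{y}) - \omega_{\phi,\mathbf{r},\mathbf{s}}(s) + \tfrac1{2t}s^2$, and I would bound $\omega_{\phi,\mathbf{r},\mathbf{s}}(s) - \tfrac1{2t}s^2$ above by $\omega_{\phi,\mathbf{r},\mathbf{s}}\bigl(\sqrt{2t\,\omega_{\phi,\mathbf{r},\mathbf{s}}(2|\mathbf{r}|)}\bigr)$ via the dichotomy according to whether $\tfrac1{2t}s^2\ge\omega_{\phi,\mathbf{r},\mathbf{s}}(2|\mathbf{r}|)$ or not (using monotonicity of $\omega$ and $s\le 2|\mathbf{r}|$); taking the infimum over $\mathbf{z}$ yields the claimed estimate.

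For (4), I would observe that $F := \tfrac1{2t}\norm{\cdot}^2 - \widetilde\psi$ is a finite, continuous, convex function on $L^2(\cM)^n$ which by the computation in (1) equals $\sup_{\mathbf{z}\in D_{\mathbf{r}}^{\cM}}\bigl[\tfrac1t\re\ip{\mathbf{w},\mathbf{z}} - \phi^{\cM}(\mathbf{z},\mathbf{y}) - \tfrac1{2t}\norm{\mathbf{z}}^2\bigr]$, a supremum of affine functions with slopes $\tfrac1t\mathbf{z}$. By Fact~\ref{fact: convex subgradient 2}, $F$ has a subgradient $\mathbf{g}$ at $\mathbf{x}$; comparing support functions (from $F(\mathbf{x}+\lambda\mathbf{v})\le F(\mathbf{x}) + \tfrac{\lambda}{t}\sup_{\mathbf{z}\in D_{\mathbf{r}}^{\cM}}\re\ip{\mathbf{v},\mathbf{z}}$ and the subgradient inequality) forces $t\mathbf{g}\in\overline{\operatorname{conv}}\,D_{\mathbf{r}}^{\cM}=D_{\mathbf{r}}^{\cM}$, since $D_{\mathbf{r}}^{\cM}$ is closed and convex in $L^2(\cM)^n$. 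Then $\mathbf{p}:=\mathbf{x}-t\mathbf{g}\in D_{\mathbf{r}+\mathbf{r}'}^{\cM}$ by the operator-norm triangle inequality, and unwinding the definition of a subgradient of $F$ shows $(1/t)\mathbf{p}\in\overline{\nabla}_{\mathbf{x}}\psi^{\cM}(\mathbf{x},\mathbf{y})$. For (5), write $\phi^{\cM}(\mathbf{z},\mathbf{y}) = \Phi(\mathbf{z}) - \tfrac1{2u}\norm{\mathbf{z}}^2$ with $\Phi$ convex on $D_{\mathbf{r}}^{\cM}$, set $a = \tfrac1t - \tfrac1u > 0$, and complete the square in $\mathbf{z}$ to obtain
\[
\psi^{\cM}(\mathbf{x},\mathbf{y}) = \inf_{\mathbf{z}\in D_{\mathbf{r}}^{\cM}}\Bigl[\Phi(\mathbf{z}) + \tfrac a2\norm{\mathbf{z} - \tfrac1{at}\mathbf{x}}^2\Bigr] + \Bigl(\tfrac1{2t} - \tfrac1{2at^2}\Bigr)\norm{\mathbf{x}}^2 ;
\]
the infimum is an inf-convolution of convex functions over a convex constraint set, hence convex in $\mathbf{x}$, and reading off the quadratic coefficient (and simplifying $\tfrac1{2at^2}-\tfrac1{2t} = \tfrac1{2(u-t)}$) shows $\psi^{\cM}(\cdot,\mathbf{y})$ is $1/(u-t)$-semiconvex.

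None of the steps presents a serious obstacle --- this is standard Moreau-envelope theory transplanted to $L^2(\cM)^n$ --- but the two points demanding the most care are the quantitative trade-off in (3) (choosing the threshold $\sqrt{2t\,\omega_{\phi,\mathbf{r},\mathbf{s}}(2|\mathbf{r}|)}$ so the estimate closes uniformly in $\mathbf{z}$) and the verification in (4) that the chosen supergradient lands in the \emph{operator-norm} ball $D_{\mathbf{r}+\mathbf{r}'}^{\cM}$ and not merely an $L^2$-ball, which is why one argues via the support-function characterization of $D_{\mathbf{r}}^{\cM}$ rather than by realizing a genuine minimizer of the inner infimum (the latter would in general exist only in a saturated elementary extension).
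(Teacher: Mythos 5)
Your proof is correct, and in parts (4) and (5) it takes a genuinely different route from the paper. For (1)--(3) your argument essentially matches the paper's: semiconcavity (resp.\ the Lipschitz bound) is preserved under $\inf_{\mathbf{z}\in D_{\mathbf{r}}}$, and (3) is the same two-case argument on whether $d(\mathbf{x},\mathbf{z})$ exceeds the threshold $\sqrt{2t\,\omega_{\phi,\mathbf{r},\mathbf{s}}(2|\mathbf{r}|)}$.

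For (4), the paper passes to an $\aleph_1$-saturated elementary extension $\cN\succeq\cM$, realizes a genuine minimizer $\mathbf{z}\in D_{\mathbf{r}}^{\cN}$, reads off the supergradient $\tfrac1t(\mathbf{x}-\mathbf{z})$, and then applies the trace-preserving conditional expectation $E_{\cM}$ to produce $\mathbf{p}\in D_{\mathbf{r}+\mathbf{r}'}^{\cM}$ (using that $E_{\cM}$ is contractive in operator norm and that the subgradient inequality is preserved). Your argument stays inside $\cM$: you note that $F=\tfrac1{2t}\norm{\cdot}^2-\widetilde\psi$ is a finite Lipschitz convex function on the real Hilbert space $L^2(\cM)^n$, that it is a supremum of affine functions with slopes $\tfrac1t\mathbf{z}$, $\mathbf{z}\in D_{\mathbf{r}}^{\cM}$, and then apply Fact~\ref{fact: convex subgradient 2} to get $\mathbf{g}\in\underline{\nabla}F(\mathbf{x})$; the directional-derivative estimate $\re\ip{\mathbf{v},t\mathbf{g}}\le\sup_{\mathbf{z}\in D_{\mathbf{r}}^{\cM}}\re\ip{\mathbf{v},\mathbf{z}}$ for all $\mathbf{v}$ combined with the fact that $D_{\mathbf{r}}^{\cM}$ is closed and convex in $L^2(\cM)^n$ forces $t\mathbf{g}\in D_{\mathbf{r}}^{\cM}$ by Hahn--Banach separation. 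This is cleaner in that it avoids the model-theoretic detour; the paper's route has the side benefit of exhibiting the supergradient as a conditional expectation, but the statement only demands the operator-norm bound, which your argument delivers directly. For (5), the paper takes approximate minimizers $\mathbf{z}_0,\mathbf{z}_1$ and expands the parallelogram identity by hand; you instead complete the square in $\mathbf{z}$ to identify $\psi+\tfrac1{2(u-t)}\norm{\cdot}^2$ as the marginal over the convex set $D_{\mathbf{r}}^{\cM}$ of the jointly convex function $(\mathbf{x},\mathbf{z})\mapsto\Phi(\mathbf{z})+\tfrac a2\norm{\mathbf{z}-\tfrac1{at}\mathbf{x}}^2$, which is more conceptual and makes the appearance of $a=\tfrac1t-\tfrac1u$ and the coefficient $\tfrac1{2(u-t)}$ transparent. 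Both routes are valid; yours gives a structurally cleaner picture, while the paper's direct computation is more elementary and self-contained.

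One small point worth noting: in your (4) argument, the finiteness and Lipschitz continuity of $F$ on all of $L^2(\cM)^n$ (which guarantee that $\underline{\nabla}F(\mathbf{x})\ne\varnothing$) should be made explicit --- they follow because $\phi^{\cM}(\cdot,\mathbf{y})$ is bounded on $D_{\mathbf{r}}^{\cM}$ and the supremum of affine maps with uniformly bounded slopes is Lipschitz --- since the paper's Fact~\ref{fact: convex subgradient 2} as stated cannot be applied literally to an extended-real-valued function at a point where the subdifferential might be empty.
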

	
	\begin{proof}
		(1) Note that $(1/2t)d^{\cM}(\mathbf{x},\mathbf{y})^2$ is $1/t$-semiconcave as a function of $\mathbf{x}$.  Hence, so is $\phi^{\cM}(\mathbf{z},\mathbf{y}) + \frac{1}{2t} d^{\cM}(\mathbf{x},\mathbf{y})^2$.  It is a standard analysis exercise to show that the infimum of a family of family of $1/t$-semiconcave functions is also $1/t$-semiconcave.  Hence, $\psi^{\cM}$ is a $1/t$-semiconcave function of $\mathbf{x}$.
		
		(2) Note that the gradient of $\frac{1}{2} d^{\cM}(\mathbf{x},\mathbf{z})^2$ as a function of $\mathbf{x}$ on the real Hilbert space $L^2(\cM)^n$ (with the inner product given by the real part of the complex inner product) is $\mathbf{x} - \mathbf{z}$.  For $\mathbf{x}$ in $D_{\mathbf{r}'}^{\cM}$, we have $\mathbf{x} - \mathbf{z} \in D_{\mathbf{r}+\mathbf{r}'}$ and so $\norm{\mathbf{x} - \mathbf{z}}_{L^2(\cM)^n} \leq |\mathbf{r} + \mathbf{r}'|$.  Thus, $\frac{1}{2} d^{\cM}(\mathbf{x},\mathbf{z})^2$ is $|\mathbf{r}+\mathbf{r}'|$-Lipschitz as a function of $\mathbf{x}$ on this domain.  The Lipschitz condition is preserved when taking the infimum of a family of functions.
		
		(3) Note that $\psi^{\cM} \leq \phi^{\cM}$ because $\mathbf{x}$ is a candidate for $\mathbf{z}$ in the infimum.  For the opposite inequality, it suffices to show that for $\mathbf{z} \in D_{\mathbf{r}}^{\cM}$, and we have
		\[
		\left[ \phi^{\cM}(\mathbf{z},\mathbf{y}) + \frac{1}{2t} d^{\cM}(\mathbf{x},\mathbf{z})^2 \right] - \phi^{\cM}(\mathbf{x},\mathbf{y}) \geq -\omega_{\phi,\mathbf{r},\mathbf{s}}\left( \sqrt{2t \omega_{\phi,\mathbf{r},\mathbf{s}}(2|\mathbf{r}|)} \right).
		\]
		We consider two cases.  First, if $d^{\cM}(\mathbf{x},\mathbf{z}) \geq \sqrt{2t \omega_{\phi,\mathbf{r},\mathbf{s}}(2|\mathbf{r}|)}$, then we get
		\begin{align*}
			\left[ \phi^{\cM}(\mathbf{z},\mathbf{y}) + \frac{1}{2t} d^{\cM}(\mathbf{x},\mathbf{z})^2 \right] - \phi^{\cM}(\mathbf{x},\mathbf{y}) &\geq \frac{1}{2t} d^{\cM}(\mathbf{x},\mathbf{z})^2 - \omega_{\phi,\mathbf{r},\mathbf{s}}(d^{\cM}(\mathbf{x},\mathbf{z})) \\
			&\geq \frac{1}{2t} d^{\cM}(\mathbf{x},\mathbf{z})^2 - \omega_{\phi,\mathbf{r},\mathbf{s}}(2|\mathbf{r}|) \\
			&\geq 0 \\
			&\geq -\omega_{\phi,\mathbf{r},\mathbf{s}}\left( \sqrt{2t \omega_{\phi,\mathbf{r},\mathbf{s}}(2|\mathbf{r}|)} \right).
		\end{align*}
		On the other hand, if $d^{\cM}(\mathbf{x},\mathbf{z}) \leq \sqrt{2t \omega_{\phi,\mathbf{r},\mathbf{s}}(2|\mathbf{r}|)}$, then
		\begin{align*}
			\left[ \phi^{\cM}(\mathbf{z},\mathbf{y}) + \frac{1}{2t} d^{\cM}(\mathbf{x},\mathbf{z})^2 \right] - \phi^{\cM}(\mathbf{x},\mathbf{y}) &\geq \phi^{\cM}(\mathbf{z},\mathbf{y}) - \phi^{\cM}(\mathbf{x},\mathbf{y}) \\
			&\geq -\omega_{\phi,\mathbf{r},\mathbf{s}}(d^{\cM}(\mathbf{x},\mathbf{z})) \\
			&\geq -\omega_{\phi,\mathbf{r},\mathbf{s}}\left( \sqrt{2t \omega_{\phi,\mathbf{r},\mathbf{s}}(2|\mathbf{r}|)} \right).
		\end{align*}
		
		(4) Fix $\mathbf{x} \in D_{\mathbf{r}'}^{\cM}$.  Let $\cN$ be a $\aleph_1$-saturated elementary extension of $\cM$.  Then a minimizer $\mathbf{z}$ for the infimum defining $\psi$ exists in $\cN$.  Note that for any $\mathbf{x}' \in \cN^n$, we have
		\[
		\psi_{\mathbf{r},t}^{\cN}(\mathbf{x}',\mathbf{y}) \leq \phi^{\cN}(\mathbf{z},\mathbf{y}) + \frac{1}{2t} d(\mathbf{x}',\mathbf{z})^2,
		\]
		so that
		\begin{align*}
			\psi^{\cN}(\mathbf{x}',\mathbf{y}) - \psi^{\cN}(\mathbf{x},\mathbf{y}) &\leq \frac{1}{2t} d^{\cN}(\mathbf{x}',\mathbf{z})^2 - \frac{1}{2t} d^{\cN}(\mathbf{x},\mathbf{z})^2 \\
			&= \frac{1}{2t} \norm{\mathbf{x}' - \mathbf{x}}_{L^2(\cN)^n}^2 + \frac{1}{t} \re \ip{\mathbf{x}' - \mathbf{x}, \mathbf{x} + \mathbf{z}}_{L^2(\cN)^n}.
		\end{align*}
		Therefore, $\mathbf{x} + \mathbf{z} \in \overline{\nabla}_{\mathbf{x}} \psi^{\cN}(\mathbf{x},\mathbf{y})$.  Note that $\mathbf{x} + \mathbf{z} \in D_{\mathbf{r}+\mathbf{r}'}^{\cM}$.  Now let $\mathbf{p}$ be the conditional expectation of $\mathbf{x} + \mathbf{z}$ onto $\cM$.  Then for $\mathbf{x}' \in \cM^n$, we have
		\begin{align*}
			\psi^{\cM}(\mathbf{x}',\mathbf{y}) - \psi^{\cM}(\mathbf{x},\mathbf{y}) &= \psi^{\cN}(\mathbf{x}',\mathbf{y}) - \psi^{\cN}(\mathbf{x},\mathbf{y}) \\
			&\leq \frac{1}{2t} \norm{\mathbf{x}' - \mathbf{x}}_{L^2(\cN)^n}^2 + \frac{1}{t} \re \ip{\mathbf{x}' - \mathbf{x}, \mathbf{x} + \mathbf{z}}_{L^2(\cN)^n} \\
			&= \frac{1}{2t} \norm{\mathbf{x}' - \mathbf{x}}_{L^2(\cM)^n}^2 + \frac{1}{t} \re \ip{\mathbf{x}' - \mathbf{x},\mathbf{p}}_{L^2(\cM)^n},
		\end{align*}
		and hence $(1/t)\mathbf{p} \in \overline{\nabla}_{\mathbf{x}} \psi^{\cM}(\mathbf{x},\mathbf{y})$.
		
		(5) Fix $\mathbf{x}_0$, $\mathbf{x}_1 \in \cM^n$ and $\mathbf{y} \in \cM^I$.  Fix $\epsilon > 0$, and let $\mathbf{z}_0$ and $\mathbf{z}_1$ in $D_{\mathbf{r}}^{\cM}$ be approximate minimizers satisfying
		\[
		\phi^{\cM}(\mathbf{x}_0,\mathbf{y}) + \frac{1}{2t} \norm{\mathbf{x}_0 - \mathbf{z}_0}_{L^2(\cM)^n}^2 < \psi^{\cM}(\mathbf{x}_0,\mathbf{y}) + \epsilon,
		\]
		and analogously for $\mathbf{x}_1$ and $\mathbf{z}_1$.  For $\lambda \in [0,1]$, let $\mathbf{x}_\lambda = (1 - \lambda) \mathbf{x}_0 + \lambda \mathbf{x}_1$ and $\mathbf{z}_\lambda = (1 - \lambda) \mathbf{z}_0 + \lambda \mathbf{z}_1$.  Then using the $1/u$-semiconvexity of $\phi^{\cM}$ as a function of $\mathbf{x}$,
		\begin{align*}
			\psi^{\cM}(\mathbf{x}_\lambda,\mathbf{y}) &\leq \phi^{\cM}(\mathbf{z}_\lambda,\mathbf{y}) + \frac{1}{2t} \norm{\mathbf{x}_\lambda - \mathbf{z}_\lambda}_{L^2(\cM)^n}^2 \\
			&\leq (1 - \lambda) \phi^{\cM}(\mathbf{z}_0,\mathbf{y}) + \lambda \phi^{\cM}(\mathbf{z}_1,\mathbf{y}) + \frac{\lambda(1 - \lambda)}{2u} \norm{\mathbf{z}_1 - \mathbf{z}_0}_{L^2(\cM)^n}^2 + \frac{1}{2t} \norm{\mathbf{x}_\lambda - \mathbf{z}_\lambda}_{L^2(\cN)^n}^2 \\
			&\leq (1 - \lambda) \left[\psi^{\cM}(\mathbf{x}_0,\mathbf{y}) + \epsilon - \frac{1}{2t} \norm{\mathbf{x}_0 - \mathbf{z}_0}_{L^2(\cM)^n}^2 \right] + \lambda \left[\psi^{\cM}(\mathbf{x}_1,\mathbf{y}) + \epsilon - \frac{1}{2t} \norm{\mathbf{x}_1 - \mathbf{z}_1}_{L^2(\cM)^n}^2 \right] \\
			& \qquad + \frac{\lambda(1 - \lambda)}{2u} \norm{\mathbf{z}_1 - \mathbf{z}_0}_{L^2(\cM)^n}^2 + \frac{1}{2t} \norm{\mathbf{x}_\lambda - \mathbf{z}_\lambda}_{L^2(\cN)^n}^2.
		\end{align*}
		After some computation,
		\[
		(1 - \lambda) \norm{\mathbf{x}_0 - \mathbf{z}_0}_{L^2(\cM)^n}^2 + \lambda \norm{\mathbf{x}_1 - \mathbf{z}_1}_{L^2(\cM)^n}^2  - \norm{\mathbf{x}_\lambda - \mathbf{z}_\lambda}_{L^2(\cN)^n}^2 = \lambda(1 - \lambda) \norm{(\mathbf{x}_1 - \mathbf{x}_0) - (\mathbf{z}_1 - \mathbf{z}_0)}_{L^2(\cM)^n}^2.
		\]
		Hence,
		\[
		\psi^{\cM}(\mathbf{x}_\lambda,\mathbf{y}) \leq \epsilon + (1 - \lambda) \psi^{\cM}(\mathbf{x}_0,\mathbf{y}) + \lambda \psi^{\cM}(\mathbf{x}_1,\mathbf{y}) + \lambda(1 - \lambda) \left[\frac{1}{2u} \norm{\mathbf{z}_1 - \mathbf{z}_0} - \frac{1}{2t} \norm{(\mathbf{x}_1 - \mathbf{x}_0) - (\mathbf{z}_1 - \mathbf{z}_0)}_{L^2(\cM)^n}^2 \right]
		\]
		In order to finish the proof, it suffices to show that
		\begin{equation} \label{eq: horrid computation}
			\frac{1}{2u} \norm{\mathbf{z}_1 - \mathbf{z}_0} - \frac{1}{2t} \norm{(\mathbf{x}_1 - \mathbf{x}_0) - (\mathbf{z}_1 - \mathbf{z}_0)}_{L^2(\cM)^n}^2 \leq \frac{1}{2(u-t)} \norm{\mathbf{x}_1 - \mathbf{x}_0}_{L^2(\cM)^n}^2,
		\end{equation}
		because then we obtain the $1/(t-u)$-semiconvexity of $\psi$ in $\mathbf{x}$ by taking $\epsilon \to 0$.  To demonstrate \eqref{eq: horrid computation}, let $\mathbf{x}' = \mathbf{x}_1 - \mathbf{x}_0$ and $\mathbf{z}' = \mathbf{z}_1 - \mathbf{z}_0$.  For ease of notation, we drop the subscripts $L^2(\cM)^n$.  Note
		\begin{align*}
			\frac{1}{2u} \norm{\mathbf{z}'}^2 - \frac{1}{2t} \norm{\mathbf{x}' - \mathbf{z}'}^2 &= \frac{1}{2u} \norm{\mathbf{z}'}^2 - \frac{1}{2t} \norm{\mathbf{z}'}^2 + \frac{1}{t} \re \ip{\mathbf{x}',\mathbf{z}'} - \frac{1}{2t} \norm{\mathbf{x}'}^2 \\
			&= - \left( \frac{1}{2t} - \frac{1}{2u} \right) \norm{\mathbf{z}'}^2 + \frac{1}{t} \re \ip{\mathbf{x}',\mathbf{z}'} - \frac{u}{2t(u-t)} \norm{\mathbf{x}'}^2 + \frac{u}{2t(u-t)} \norm{\mathbf{x}'}^2 - \frac{1}{2t} \norm{\mathbf{x}'}^2 \\
			&= -\frac{1}{2} \norm*{\left(\frac{u-t}{ut}\right)^{1/2} \mathbf{z}' - \left( \frac{u}{t(u-t)} \right)^{1/2} \mathbf{x}'}^2 + \frac{1}{2(u-t)} \norm{\mathbf{x}'}^2 \\
			&\leq\frac{1}{2(u-t)} \norm{\mathbf{x}'}^2,
		\end{align*}
		which is what we wanted to prove.
	\end{proof}
	
	\begin{remark} \label{rem: semiconvex regularization}
		There is a symmetrical statement for sup-convolution rather than inf-convolution.  Let $\phi(\mathbf{x},\mathbf{y})$ be a $\mathrm{T}_{\tr}$-definable predicate, where $\mathbf{x}$ is an $m$-tuple and $\mathbf{y}$ is a countable tuple.  Fix $t > 0$ and $\mathbf{r} = (r_1,\dots,r_n) \in (0,\infty)^n$. Let
		\[
		\psi(\mathbf{x},\mathbf{y}) = \sup_{\mathbf{z} \in \mathcal{D}_{\mathbf{r}}} \left[ \phi(\mathbf{z},\mathbf{y}) - \frac{1}{2t} d(\mathbf{x},\mathbf{z})^2 \right].
		\]
		Then the same claims as in the previous proposition hold, switching ``semiconvex'' and ``semiconcave,'' reversing the direction of the inequality in (3), and replacing $\overline{\nabla}$ with $\underline{\nabla}$ in (4).
	\end{remark}
	
	Combining the two statements, we arrive at the following result, which is a more precise version of Theorem \ref{thm: approximation}.
	
	\begin{theorem} \label{thm: approximation full}
		Let $\phi(\mathbf{x},\mathbf{y})$ be a $\mathrm{T}_{\tr}$-definable predicate, where $\mathbf{x}$ is an $m$-tuple and $\mathbf{y}$ is a countable tuple.  Fix $t > 0$ and $\mathbf{r} = (r_1,\dots,r_n) \in (0,\infty)^n$ and $\mathbf{R} = (R_1,\dots,R_n) \in (0,\infty)^n$ with $r_j \leq R_j$.  Let
		\[
		\psi(\mathbf{x},\mathbf{y}) = \sup_{\mathbf{w} \in D_{\mathbf{r}}} \inf_{\mathbf{z} \in \mathcal{D}_{\mathbf{R}}} \left[ \phi(\mathbf{z},\mathbf{y}) + \frac{1}{4t} d(\mathbf{w},\mathbf{z})^2 - \frac{1}{2t} d(\mathbf{x},\mathbf{w}) \right].
		\]
		Then
		\begin{enumerate}[(1)]
			\item $\psi^{\cM}$ is $1/t$-semiconvex and $1/t$-semiconcave.
			\item We have for $\mathbf{x} \in D_{\mathbf{r}}^{\cM}$ and $\mathbf{y} \in D_{\mathbf{s}}^{\cM}$ that
			\[
			-\omega_{\phi,\mathbf{R},\mathbf{s}}\left( \sqrt{4t \omega_{\phi,\mathbf{R},\mathbf{s}}(2|\mathbf{R}|)} \right) \leq \psi^{\cM}(\mathbf{x},\mathbf{y}) - \phi^{\cM}(\mathbf{x},\mathbf{y}) \leq \omega_{\phi,\mathbf{r},\mathbf{s}}\left( \sqrt{2t \omega_{\phi,\mathbf{r},\mathbf{s}}(2|\mathbf{r}|)} \right).
			\]
			\item $\psi^{\cM}$ is differentiable, and for each $\mathbf{x} \in D_{\mathbf{r}'}^{\cM}$, we have $\nabla_{\mathbf{x}} \psi^{\cM}(\mathbf{x},\mathbf{y}) \in D_{(\mathbf{r}+\mathbf{r}')/t}$.
			\item $\nabla_{\mathbf{x}} \psi(\mathbf{x},\mathbf{y})$ is a $\mathrm{T}_{\tr}$-definable function.
		\end{enumerate}
	\end{theorem}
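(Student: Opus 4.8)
The plan is to exhibit $\psi$ as a two-stage regularization of $\phi$ --- an inner inf-convolution over $D_{\mathbf{R}}$ followed by an outer sup-convolution over $D_{\mathbf{r}}$ --- and then read off the four assertions from Proposition \ref{prop: semiconcave regularization}, Remark \ref{rem: semiconvex regularization}, and Proposition \ref{prop: semiconvex semiconcave gradient}. Writing
\[
\phi_1(\mathbf{w},\mathbf{y}) = \inf_{\mathbf{z}\in D_{\mathbf{R}}}\left[\phi(\mathbf{z},\mathbf{y}) + \frac{1}{4t}d(\mathbf{w},\mathbf{z})^2\right],
\]
we have $\psi(\mathbf{x},\mathbf{y}) = \sup_{\mathbf{w}\in D_{\mathbf{r}}}\left[\phi_1(\mathbf{w},\mathbf{y}) - \frac{1}{2t}d(\mathbf{x},\mathbf{w})^2\right]$, so that $\phi_1$ is the inf-convolution of Proposition \ref{prop: semiconcave regularization} with convolution domain $D_{\mathbf{R}}$ and parameter $2t$ (matching $\tfrac{1}{4t}=\tfrac{1}{2(2t)}$), and $\psi$ is the sup-convolution of Remark \ref{rem: semiconvex regularization} applied to $\phi_1$ with convolution domain $D_{\mathbf{r}}$ and parameter $t$. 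Both are $\mathrm{T}_{\tr}$-definable predicates, since definable predicates are closed under composition with continuous functions and under $\inf$ and $\sup$ over domains.

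For (1): Proposition \ref{prop: semiconcave regularization}(1) gives that $\phi_1$ is $1/(2t)$-semiconcave in $\mathbf{w}$; the sup-convolution version of part (1) then makes $\psi$ $1/t$-semiconvex in $\mathbf{x}$, while the sup-convolution version of part (5), applied with $u = 2t$ and $0 < t < u$, makes $\psi$ $1/(2t-t) = 1/t$-semiconcave in $\mathbf{x}$. For (2): since $\phi_1 \leq \phi$ pointwise (take $\mathbf{z} = \mathbf{w}$, legitimate because $r_j \leq R_j$), for $\mathbf{x}\in D_{\mathbf{r}}^{\cM}$ and $\mathbf{y}\in D_{\mathbf{s}}^{\cM}$ we get $\psi(\mathbf{x},\mathbf{y}) \leq \sup_{\mathbf{w}\in D_{\mathbf{r}}}[\phi(\mathbf{w},\mathbf{y}) - \tfrac{1}{2t}d(\mathbf{x},\mathbf{w})^2]$, which the sup-convolution version of Proposition \ref{prop: semiconcave regularization}(3) bounds above by $\phi(\mathbf{x},\mathbf{y}) + \omega_{\phi,\mathbf{r},\mathbf{s}}(\sqrt{2t\,\omega_{\phi,\mathbf{r},\mathbf{s}}(2|\mathbf{r}|)})$; on the other hand $\psi(\mathbf{x},\mathbf{y}) \geq \phi_1(\mathbf{x},\mathbf{y})$ (take $\mathbf{w} = \mathbf{x}$) and Proposition \ref{prop: semiconcave regularization}(3) gives $\phi_1(\mathbf{x},\mathbf{y}) \geq \phi(\mathbf{x},\mathbf{y}) - \omega_{\phi,\mathbf{R},\mathbf{s}}(\sqrt{4t\,\omega_{\phi,\mathbf{R},\mathbf{s}}(2|\mathbf{R}|)})$, which is the stated lower bound.

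For (3) and (4): by (1) and Proposition \ref{prop: semiconvex semiconcave gradient}(1) with $c = 1/t$, $\psi^{\cM}$ is differentiable in $\mathbf{x}$ with $\nabla_{\mathbf{x}}\psi^{\cM}(\mathbf{x},\mathbf{y})\in L^2(\dcl^{\cM}(\mathbf{x},\mathbf{y}))^n$, and differentiability forces $\underline{\nabla}_{\mathbf{x}}\psi^{\cM}(\mathbf{x},\mathbf{y}) = \{\nabla_{\mathbf{x}}\psi^{\cM}(\mathbf{x},\mathbf{y})\}$. The sup-convolution version of Proposition \ref{prop: semiconcave regularization}(4) then gives, for each $\mathbf{x}\in D_{\mathbf{r}'}^{\cM}$, some $\mathbf{p}\in D_{\mathbf{r}+\mathbf{r}'}^{\cM}$ with $(1/t)\mathbf{p}\in\underline{\nabla}_{\mathbf{x}}\psi^{\cM}(\mathbf{x},\mathbf{y})$, hence $\nabla_{\mathbf{x}}\psi^{\cM}(\mathbf{x},\mathbf{y}) = (1/t)\mathbf{p}\in D_{(\mathbf{r}+\mathbf{r}')/t}^{\cM}$; this is (3), and being uniform in $\mathbf{y}$ it supplies precisely the boundedness hypothesis in Proposition \ref{prop: semiconvex semiconcave gradient}(3), whence $\nabla_{\mathbf{x}}\psi$ is a $\mathrm{T}_{\tr}$-definable function, which is (4).

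I expect the substance of the argument to be pure bookkeeping: matching the displacement-penalty coefficients to the convolution parameters ($2t$ inside, $t$ outside), keeping straight which of $(\mathbf{r},\omega_{\phi,\mathbf{r},\mathbf{s}})$ and $(\mathbf{R},\omega_{\phi,\mathbf{R},\mathbf{s}})$ governs each half of the estimate in (2), and noting that the approximation bound must be compared against $\phi$ itself via the one-sided inequality $\phi_1 \leq \phi$ rather than against $\phi_1$. The one genuinely conceptual point is that differentiability of $\psi$ collapses the a priori set-valued $\underline{\nabla}_{\mathbf{x}}\psi$ to a singleton, so that the element $(1/t)\mathbf{p}$ produced by the regularization lemma is literally $\nabla_{\mathbf{x}}\psi^{\cM}(\mathbf{x},\mathbf{y})$ and therefore inherits its norm bound.
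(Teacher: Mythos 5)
Your proposal is correct and takes essentially the same approach as the paper: decompose $\psi$ as the sup-convolution (parameter $t$, domain $D_{\mathbf{r}}$) of the inf-convolution $\phi_1$ (parameter $2t$, domain $D_{\mathbf{R}}$), read (1) and (2) off Proposition \ref{prop: semiconcave regularization} and its sup-convolution mirror by sandwiching $\psi$ between $\phi_1$ and the direct sup-convolution of $\phi$, and obtain (3)--(4) by combining Proposition \ref{prop: semiconvex semiconcave gradient}(1) with the supergradient bound from Proposition \ref{prop: semiconcave regularization}(4), noting that differentiability collapses the subgradient to the singleton $\{\nabla_{\mathbf{x}}\psi\}$, which supplies the boundedness hypothesis needed for definability of the gradient. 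The only point worth flagging is that $\phi_1 \leq \phi$ holds a priori only on $D_{\mathbf{R}}$ (not globally), but since the outer supremum is over $D_{\mathbf{r}} \subseteq D_{\mathbf{R}}$ this suffices, exactly as in the paper.
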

	
	\begin{proof}
		Let
		\[
		\theta(\mathbf{x},\mathbf{y}) = \inf_{\mathbf{z} \in \mathcal{D}_{\mathbf{R}}} \left[ \phi(\mathbf{z},\mathbf{y}) + \frac{1}{4t} d(\mathbf{x},\mathbf{z})^2 \right].
		\]
		Thus, $\theta$ is obtained from $\phi$ as in Proposition \ref{prop: semiconcave regularization} with parameter $2t$ rather than $t$.  Then $\psi$ is obtained from $\theta$ by the symmetrical process as in Remark \ref{rem: semiconvex regularization} with parameter $t$.
		
		(1) By the symmetrical statement to Proposition \ref{prop: semiconcave regularization} (1), $\psi$ is $1/t$-semiconvex.  By Proposition \ref{prop: semiconcave regularization} (1), $\theta$ is $1/2t$-semiconcave, and thus by the symmetrical statement to Proposition \ref{prop: semiconcave regularization}, $\psi$ is $1/(2t - t) = 1/t$-semiconcave.
		
		(2) For the left inequality, note that $\psi - \phi \geq \theta - \phi$ and apply Proposition \ref{prop: semiconcave regularization}(3) for $\theta$.  For the right inequality, let
		\[
		\eta(\mathbf{x},\mathbf{y}) = \sup_{\mathbf{z} \in D_{\mathbf{r}}} \left[ \phi(\mathbf{z},\mathbf{y}) - \frac{1}{2t} d(\mathbf{x},\mathbf{z}) \right].
		\]
		In other words, $\eta$ is obtained from $\phi$ as in Remark \ref{rem: semiconvex regularization} with parameter $t$.  Thus, $\eta$ is obtained from $\phi$ in the same way as $\psi$ is obtained from $\theta$.  Since $\theta \leq \phi$ on $D_{\mathbf{R}}$ and hence also on $D_{\mathbf{r}}$, we have $\psi \leq \eta$, and thus $\psi - \phi \leq \eta - \phi$.  Then we can apply the symmetrical statement to Proposition \ref{prop: semiconcave regularization} (3) to $\eta$ to obtain
		\[
		\psi^{\cM}(\mathbf{x}) - \phi^{\cM}(\mathbf{x}) \leq \eta^{\cM}(\mathbf{x}) - \phi^{\cM}(\mathbf{x}) \leq \omega_{\phi,\mathbf{r},\mathbf{s}}\left( \sqrt{2t \omega_{\phi,\mathbf{r},\mathbf{s}}(2|\mathbf{r}|)} \right) \text{ for } \mathbf{x} \in D_{\mathbf{r}}^{\cM}, \mathbf{y} \in D_{\mathbf{s}}^{\cM}.
		\]
		
		(3) By Proposition \ref{prop: semiconvex semiconcave gradient}, $\psi^{\cM}(\mathbf{x},\mathbf{y})$ is differentiable, so $\nabla_{\mathbf{x}} \psi^{\cM}$ exists as an element of $L^2(\cM)_{\sa}^n$.  By the symmetrical statement to Proposition \ref{prop: semiconcave regularization} (4), there exists some $\mathbf{p} \in D_{\mathbf{r}+\mathbf{r}'}^{\cM}$ with $(1/t) \mathbf{p} \in \underline{\nabla}_{\mathbf{x}} \psi^{\cM}(\mathbf{x},\mathbf{y})$.  Differentiability implies that the subgradient vector is unique, and so $\nabla_{\mathbf{x}} \psi^{\cM}(\mathbf{x},\mathbf{y}) = (1/t) \mathbf{p}$.  Hence, $\nabla_{\mathbf{x}} \psi^{\cM}(\mathbf{x},\mathbf{y}) \in D_{(\mathbf{r}+\mathbf{r}')/t}^{\cM}$.
		
		(4) This follows from Proposition \ref{prop: semiconvex semiconcave gradient} (3).
	\end{proof}
	
	\begin{proof}[{Proof of Theorem \ref{thm: approximation}}]
		Given $\phi$ and $\epsilon > 0$ and $\mathbf{r}$ and $\mathbf{s}$, choose $t$ sufficiently small that the error $\phi - \psi$ in (2) of the previous theorem is less than $\epsilon$.  Then $\psi$ has the desired properties.
	\end{proof}
	
	\subsection{Definable closure and definable functions} \label{subsec: definable function}
	
	We are now ready to conclude the proof of Theorem \ref{thm: definable realization} showing that every element in the definable closure of $A$ can be realized as a definable function of $A$ (see Definition \ref{def: definable function} and Definition \ref{def: DCL and ACL}).
	
	\begin{proof}[{Proof of Theorem \ref{thm: definable realization}}]
		Since $z \in \dcl^{\cM}(A)$, the function $x \mapsto d(x,z)$ can be expressed as $d(z,x) = \eta^{\cM_A}(x)$ where $\eta$ is a definable predicate $\eta$ in $\cL_A$ relative to $\Th(\cM_A)$.  Hence, the same is true for
		\[
		\theta^{\cM}(x) = \re \ip{x,z}_{L^2(\cM)} = \frac{1}{2} \left[ \norm{z}_{L^2(\cM)}^2 + \norm{x}_{L^2(\cM)}^2 - \norm{z - x}_{L^2(\cM)}^2 \right].
		\]
		Fix $r > 0$.  Then $\theta^{\cM}(x)$ can be uniformly approximated on the domain $D_r^{\cM}$ by a sequence of $\cL_{\tr,A}$-formulas $\theta_k$.  The $\cL_{\tr,A}$-formulas $\theta_k(z)$ can be expressed as $\phi_k(x,(a_j)_{j\in \N})$ for some $\cL_{\tr}$-formula $\phi_k(x,(y_j)_{j \in \N})$ and some tuple $a_j$ from $A$ (each $\theta_k$ only needs finitely many constants from $A$, so we can use a countable tuple $(a_j)_{j \in \N}$ that includes all of them).  For simplicity, we write $\mathbf{a} = (a_j)_{j \in \N}$ and $\mathbf{y} = (y_j)_{j \in \N}$.  We will find a $\mathrm{T}_{\tr}$-definable predicate $\phi(x,\mathbf{y})$ by a forced limit construction as in Proposition \ref{prop: pre MK duality}.  By passing to a subsequence assume that $|\phi_k^{\cM}(x,\mathbf{a}) - \phi_{k+1}^{\cM}(x,\mathbf{a})| \leq 1/2^k$ for $x \in D_r^{\cM}$.  Let $g: [0,\infty) \to [0,1]$ be continuous and decreasing with $g = 1$ on $[0,1]$ and $g = 0$ on $[2,\infty]$, and let $g_n(x) = g(2^n x)$.  Let
		\[
		\phi(x,\mathbf{y}) = \phi_1(x,\mathbf{y}) + \sum_{n=1}^\infty g_n(\phi_k^{\cM}(x,\mathbf{y}) - \phi_{k+1}^{\cM}(x,\mathbf{y}))(\phi_{k+1}^{\cM}(x,\mathbf{y}) - \phi_k^{\cM}(x,\mathbf{y}).
		\]
		One can check that $\phi$ is a $\mathrm{T}_{\tr}$-definable predicate satisfying
		\[
		\phi^{\cM}(x,\mathbf{a}) = \re \ip{z, x}_{L^2(\cM)} \text{ for } x \in D_r^{\cM}.
		\]
		
		Let $t > 0$ small enough that $t \norm{z} < r/2$, and as in Theorem \ref{thm: approximation full}, let
		\[
		\psi(x,\mathbf{y}) = \sup_{w \in D_r} \inf_{v \in \mathcal{D}_{2r}} \left[ \phi(v,\mathbf{y}) + \frac{1}{4t} d(w,v)^2 - \frac{1}{2t} d(x,w)^2 \right].
		\]
		When we evaluate $\psi$ in $\cM$ with $\mathbf{y} = \mathbf{a}$, we get
		\[
		\psi^{\cM}(x,\mathbf{a}) = \sup_{w \in D_r} \inf_{v \in D_{2r}} \left[ \re \ip{v,z}_{L^2(\cM)} + \frac{1}{4t} d(w,v)^2 - \frac{1}{2t} d(x,w)^2 \right].
		\]
		To determine where the supremum and infimum are achieved we complete the square.  Note
		\[
		\re \ip{v,z}_{L^2(\cM)} + \frac{1}{4t} \norm{w - v}_{L^2(\cM)}^2 = \re \ip{w,z}_{L^2(\cM)} - t \norm{z}^2 + \frac{1}{2} \norm{\sqrt{2t}z - (w - v) / \sqrt{2t} }_{L^2(\cM)}^2.
		\]
		The infimum is achieved when $v = w - 2tz$, which is a valid point in $D_{2r}^{\cM}$ because we assumed that $w \in D_r^{\cM}$ and $t \norm{z} < r/2$.  Hence, the infimum over $v$ equals
		\[
		\re \ip{w,z}_{L^2(\cM)} - t \norm{z}^2 - \frac{1}{2t} \norm{x - w}_{L^2(\cM)}^2.
		\]
		Completing the square again,
		\[
		\re \ip{w,z}_{L^2(\cM)} - t \norm{z}^2 - \frac{1}{2t} \norm{x-w}_{L^2(\cM)}^2 = \re \ip{x,z}_{L^2(\cM)} - \frac{t}{2} \norm{z}^2 - \frac{1}{2} \norm{\sqrt{t}z - (w - x)/\sqrt{t}}^2.
		\]
		If $x \in D_{r/2}^{\cM}$, then $w = x + tz$ is in $D_r^{\cM}$ since we assumed $t \norm{z} < r/2$, and thus the supremum is achieved when $w = x + tz$, and its value is $\re \ip{x,z}_{L^2(\cM)} - \frac{t}{2} \norm{z}_{L^2(\cM)}^2$.  Hence,
		\[
		\psi^{\cM}(x,\mathbf{a}) = \re \ip{x,z}_{L^2(\cM)} - \frac{t}{2} \norm{z}_{L^2(\cM)}^2 \text{ for } x \in D_{r/2}^{\cM}.
		\]
		In particular,
		\[
		\nabla_x \psi^{\cM}(0,\mathbf{a}) = z.
		\]
		By Theorem \ref{thm: approximation full}, $\nabla_x \psi^{\cM}(x,\mathbf{y})$ is a $\mathrm{T}_{\tr}$-definable function and thus also $f(\mathbf{y}) = \nabla_x \psi^{\cM}(0,\mathbf{y})$ is a $\mathrm{T}_{\tr}$-definable function which satisfies $f(\mathbf{a}) = z$.
	\end{proof}
	
	\bibliographystyle{plain}
	\bibliography{definable_closure_bibliography.bib}
	
\end{document}